\documentclass[12pt,reqno]{amsart}
\usepackage{graphicx,indentfirst, caption}
\usepackage{amsmath,amssymb,mathrsfs}
\usepackage{amsthm,amscd}
\usepackage{verbatim}
\usepackage{appendix}
\usepackage{enumerate}
\usepackage{enumitem,titletoc}

\usepackage[utf8]{inputenc}

\usepackage{fancyhdr}
\usepackage{amsfonts,color}
\usepackage[all]{xy}
\usepackage{tikz-cd}
\usepackage{syntonly}
\usepackage{float}
\usepackage{pgfplots}
\pgfplotsset{compat=1.18}
\usepackage{array}
\usepackage[normalem]{ulem}
\usepackage[left=2.5cm,right=2.5cm,bottom=2cm,top=2.1cm]{geometry}

\usepackage{tikz}
\usetikzlibrary{calc}
\usepackage{extarrows}
\usepackage{hyperref}
\usepackage{cancel}

\def\XXint#1#2#3{{\setbox0=\hbox{$#1{#2#3}{\int}$ }
		\vcenter{\hbox{$#2#3$ }}\kern-.6\wd0}}

\theoremstyle{theorem}

\newtheorem{theorem}{Theorem}[section]

\newtheorem{lem}[theorem]{Lemma}
\newtheorem{prop}[theorem]{Proposition}
\newtheorem{cor}[theorem]{Corollary}
\newtheorem{ques}[theorem]{Question}
\newtheorem{example}[theorem]{Example}

\newtheorem{conj}[theorem]{Conjecture}
\newtheorem{remark}[theorem]{Remark}

\theoremstyle{definition}
\newtheorem{defn}[theorem]{Definition}

\newcommand{\del}{\partial}

\newcommand{\R}{\mathbb R}

\newcommand{\detla}{\delta}

\makeindex

\definecolor{ForestGreen}{RGB}{34,139,34}
\hypersetup{
	colorlinks=true,
	citecolor=ForestGreen,
	filecolor=ForestGreen,
	linkcolor=blue,
	urlcolor=black
}

\numberwithin{equation}{section}

\usetikzlibrary{intersections}
\usepgfplotslibrary{fillbetween}
\usetikzlibrary{patterns,patterns.meta}

\usepackage{mathtools}
\mathtoolsset{showonlyrefs}

\author{Tristan C. Collins}
\author{Benjy Firester}
\email{\href{mailto:benjyfir@mit.edu}{benjyfir@mit.edu}}
\address{Department of Mathematics, Massachusetts Institute of Technology, 77 Massachusetts Ave., Cambridge, MA, USA}
\author{Freid Tong}
\email{\href{mailto:tristanc@math.toronto.edu}{tristanc@math.toronto.edu}}
\email{\href{mailto:freid.tong@utoronto.ca}{freid.tong@utoronto.ca}}
\address{Department of Mathematics, University of Toronto, 40 St. George Street, Toronto, ON, Canada}
\date{\today}

\newcommand{\p}{\partial}

\newcommand{\ve}{\varepsilon}

\newcommand{\la}{\langle}
\newcommand{\rg}{\rangle}

\newcommand{\mr}[1]{{\rm #1}}

\usepackage{wrapfig,caption}
\newcommand{\cA}{\mathcal{A}}

\newcommand{\cF}{\mathcal{F}}
\newcommand{\cG}{\mathcal{G}}\newcommand{\cH}{\mathcal{H}}

\newcommand{\cL}{\mathcal{L}}
\newcommand{\cM}{\mathcal{M}}
\newcommand{\cO}{\mathcal{O}}
\newcommand{\cQ}{\mathcal{Q}}

\newcommand{\cX}{\mathcal{X}}

\newcommand{\bB}{\mathbb{B}}

\newcommand{\bP}{\mathbb{P}}
\newcommand{\bR}{\mathbb{R}}
\newcommand{\bS}{\mathbb{S}}

\newcommand{\bZ}{\mathbb{Z}}

\newcommand{\sln}{\textup{SL}(n)}
\newcommand{\son}{\textup{SO}(n)}
\newcommand{\tC}{\mathtt{C}}

\newcommand{\ol}{\overline}
\newcommand{\ul}{\underline}
\usepackage{accents}

\title[Optimal regularity of optimal transport]{Optimal regularity of optimal transport and degenerations of convex cones}

\begin{document}

\begin{abstract}
 If $T$ is an optimal transport map between bounded convex domains $\Omega$ and $\Omega'$, we characterize the regularity/singularity dichotomy for $DT$ at $(x,T(x))\in\partial\Omega \times \partial \Omega'$. This is obtained through a new approach to the regularity theory based on affine degenerations of the tangent cones to $(\Omega, \Omega')$ at $(x,T(x))$. We formulate a general principle relating the optimal boundary regularity to a stability criterion and an $\mathrm{SL}(n)$ moduli space of pairs of convex cones. Among other results, we establish geometric criteria for existence and non-existence of homogeneous optimal transport maps between convex cones.
\end{abstract}
\maketitle
\vspace{-1cm}
\tableofcontents

\vspace{-1cm}
\section{Introduction}
We develop a new approach to the optimal boundary regularity of optimal transport maps between convex domains.
Central to our approach is a connection between the regularity/singularity dichotomy for optimal transport maps and the $\sln$-moduli space of pairs of convex cones $(\tC,\tC')\subset V\times V^*$ for $V$ a finite dimensional vector space.

We recall the basic setting for regularity theory of optimal transport with the quadratic cost.
Let $\Omega, \Omega'\subset \mathbb{R}^n$ be bounded, convex domains.
Equip $\Omega$ (resp.~$\Omega'$) with a probability measure $d\mu= g(x)dx$ (resp.~$ d\nu = g'(y)dy$).
The optimal transport problem for the quadratic cost seeks a map $T: \Omega \rightarrow \Omega'$ such that $T_{\#}d\mu = d\nu$ and $T$ minimizes the cost
\[
c(T) := -\int_{x\in\Omega} \la x, T(x)\rg \, d\mu.
\]
Optimal transport for the quadratic cost has applications spanning economics, engineering, and fluid mechanics, in addition to deep connections with geometry, functional and geometric inequalities, and probability~\cite{DPF, Evans, RR, Villani, Villani2, AmbCaffBren, BLMR, MaggiFigalli, Brendle, BrendleEichmair}.

Foundational results of Brenier~\cite{Brenier} and Gangbo-McCann~\cite{Gangbo-McCann} establish the existence and uniqueness of weak solutions of the classical optimal transport formulation, and show that, for the quadratic cost $T=\nabla u$, where $u$ is a convex potential function solving a Monge-Amp\`ere equation.
Landmark work of Caffarelli~\cite{Caffarelli, Caffarelli2, Caffarelli3} developed the regularity theory for optimal maps.
The convexity of $\Omega,\Omega'$ is necessary for the optimal map $T$ to be continuous \cite{Caffarelli}. Caffarelli proved that if $\Omega, \Omega'$ are bounded convex domains, and $C^{-1} < g,g'< C$, then $u$ is globally $C^{1,\delta}$-continuous for some $\delta\in (0, 1)$~\cite{Caffarelli, Caffarelli2}.
In the interior, $u\in W^{2,p}_{\mr{loc}}(\Omega)$ and is strictly convex \cite{Caffarelli4}.
Furthermore, $u$ is an Alexandrov solution of the Monge-Amp\`ere equation
\begin{equation}\label{eqn:OT}\tag{OT}
    \det D^2 u = \frac{g(x)}{g'(\nabla u)}\qquad \text{and}\qquad
    \nabla u(\Omega) = \Omega'.
\end{equation}

For more regular densities, higher order interior regularity follows from the interior regularity theory of the real Monge-Amp\`ere equation \cite{Caffarelli}.  While this theory provides a satisfactory resolution of the interior regularity theory, much less is known about the boundary regularity of solutions to~\eqref{eqn:OT}, beyond Caffarelli's $C^{1, \delta}$-estimate, for general convex domains.  In this generality, the optimal global regularity is $u\in C^{1,1-\epsilon}(\overline{\Omega}) \cap W^{2,p}(\overline{\Omega})$ for all $\epsilon <1$ and for all $p \geq 1$.
This was established:
\begin{itemize}
    \item for arbitrary convex $\Omega, \Omega'\subset \mathbb{R}^2$ and $g=g'=1$ by Savin-Yu \cite{Savin-Yu};
    \item for arbitrary convex $\Omega,\Omega'\subset \mathbb{R}^n$ and $g,g'$ Dini continuous and uniformly bounded away from zero by Collins-Tong \cite{TristanFreid};
    \item for arbitrary convex $\Omega,\Omega'\subset \mathbb{R}^n$ and continuous $g,g'$ uniformly bounded away from zero by Chen-Li-Wang~\cite{Chen-Li-Wang} making use of the monotonicity formula in \cite{TristanFreid}.
\end{itemize}

Currently, pushing beyond global $C^{1,1-\epsilon}$ and $W^{2,p}$-regularity requires imposing additional assumptions on the domains $\Omega, \Omega'$ and the densities $g,g'$. For example, global $C^{2,\alpha}$-regularity results under various assumptions have been established by Delanoe~\cite{Delanoe} (in dimension $2$), Urbas~\cite{Urbas}, Caffarelli \cite{Caffarelli3}, Chen-Liu-Wang \cite{Chen-Liu-Wang}, and Collins-Tong \cite{TristanFreid}.  The latter result \cite{TristanFreid} establishes global $C^{2,\alpha}$-regularity assuming that $g,g'$ are $C^{\alpha}$ and $\Omega,\Omega'$ are $C^{1,\alpha}$-bounded; to our knowledge these are the weakest assumptions under which higher regularity has been established.

In this paper we develop a new approach for establishing $C^{1,1}$-boundary regularity for general convex domains.  Our methods characterize the regularity/singularity dichotomy for $D^2u$ at the boundary.  Results of this type lead, for example, to improved convergence rate estimates for numerical approximations of optimal transport maps between general (e.g., polyhedral) convex domains \cite{Berman}.

Suppose $\Omega, \Omega'$ are convex domains and, for simplicity, assume $g=g'=1$.
Without loss of generality assume $0 \in \del \Omega \cap \del \Omega'$ and that the solution $u$ of~\eqref{eqn:OT} satisfies $u(0)=0$ and $\nabla u(0)=0$.
We examine the following question:

\begin{ques}\label{ques: toC11orNot} Can the existence or failure of pointwise $C^{1,1}$-regularity of $u$ be characterized purely in terms of the geometry of $\Omega$ and $\Omega'$?
\end{ques}

Let $\mathtt{C}$ denote the tangent cone of $\Omega$ at $0$, and let $\mathtt{C}'$ denote the tangent cone of $\Omega'$ at $0$.  The main observation is that, thanks to the monotonicity formula of \cite{TristanFreid}, the failure of pointwise $C^{1,1}$-regularity of $u$ at $0$ implies the existence of a sequence of matrices $M_j \in \sln$ with $\|M_j\| \rightarrow +\infty$ such that
\[
(M_j\cdot \tC, M_j^{-T}\cdot \tC')\leadsto (\tC_{\infty}, \tC_{\infty}'),
\]
where $(\tC_{\infty}, \tC_{\infty}')$ are convex cones admitting a $2$-homogeneous convex function $\varphi:\mathtt{C}_{\infty} \rightarrow \mathbb{R}_{>0}$ solving the homogeneous optimal transport (H.O.T.) problem
\begin{equation}\label{eq: hotIntro}
\det D^2\varphi(X)=1,  \qquad \nabla \varphi(\tC_{\infty})=\tC'_{\infty}.
\end{equation}
Suppose that $\Omega$ and $\Omega'$ are locally $C^{1,\alpha}$-epigraphs over their tangent cones $(\tC, \tC')$ and that the $\sln$ orbit of $(\tC, \tC')$ is locally closed in the Hausdorff topology.
If the tangent cones $(\tC,\tC')$ have compact automorphism group, then necessarily the pair $(\tC_{\infty}, \tC_{\infty}')$ is not $\sln$ equivalent to $(\tC,\tC')$. 
In particular, if such a degeneration {\em cannot occur}, then it follows that $u$ is $C^{1,1}$ at $0$. This motivates the following definition:

\begin{defn}\label{defn: OTSeparableIntro}
Let $(\mathtt{C},\mathtt{C}')$ be a pair of convex cones.
We say that $(\mathtt{C},\mathtt{C}')$ is \textbf{OT-separable} if the following holds: if there exists an $\sln$-degeneration $(\tC,\tC')\leadsto (\tC_\infty, \tC'_\infty)$ such that $(\tC_\infty, \tC'_\infty)$ admits a homogeneous optimal transport map, then $(\tC_\infty, \tC'_\infty) = M \cdot(\tC, \tC')$ for some $M \in  \sln$.
\end{defn}

With this motivation, it is clear that understanding when a particular pair of cones satisfies the property of OT-separability is essential to establishing optimal local regularity results for~\eqref{eqn:OT}.
In this paper, we establish OT-separability for several broad classes of cones.
In particular, this yields pointwise $C^{1,1}$-regularity.
What is perhaps surprising about these results is that the $C^{1,1}$-regularity is obtained without directly analyzing the Monge-Amp\`ere equation underlying the optimal transport equation.
As an example of our techniques, we obtain the following:

\begin{theorem}\label{thm: C11CrossingIntro}
    Let $(\Omega, \Omega')$ be bounded convex domains in $\bR^n$ equipped with probability measures $(g(x)dx, g'(y)dy)$ where $C^{-1}\leq g,g'\leq C$ are Dini continuous up to the boundary.  
    Suppose that $0\in \del\Omega  \cap \del\Omega'$, and let $(\tC,\tC')$.
    Denote the tangent cones of $(\Omega, \Omega')$ at $0$, and suppose $(\Omega, \Omega')$ are locally $C^{1,\alpha}$-epigraphs over their tangent cones. 
    Suppose that:
    \begin{itemize}
        \item[(i)] $(\tC,\tC')$ is a pair of pointed cones with compact automorphism group, and its $\sln$-orbit is locally closed in the Hausdorff topology,
        \item[(ii)] if $\tC^{\vee}$ denotes the dual cone of $\tC$, then for any non-zero $p\in \del\tC' \cap \del \tC^{\vee}$, the convex cones $\tC'$ and $\tC^{\vee}$ do not share a supporting hyperplane at $p$.
    \end{itemize}
    If $u$ solves the optimal transport problem~\eqref{eqn:OT}, and $\nabla u(0)=0$, then $u$ is $C^{1,1}$ at $0$.
\end{theorem}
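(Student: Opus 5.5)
The plan is to combine the general principle described in the introduction with a direct geometric obstruction coming from hypothesis (ii). Suppose $u$ is not $C^{1,1}$ at $0$. By the monotonicity formula of \cite{TristanFreid}, together with the assumption that $(\Omega,\Omega')$ are locally $C^{1,\alpha}$-epigraphs over $(\tC,\tC')$ and that $g,g'$ are Dini continuous, we obtain matrices $M_j\in\sln$ with $\|M_j\|\to\infty$ and a degeneration
\[
(M_j\tC,M_j^{-T}\tC')\leadsto(\tC_\infty,\tC'_\infty),
\]
where the limit pair admits a homogeneous optimal transport map $\varphi$ as in \eqref{eq: hotIntro}. Hypothesis (i) (compact automorphism group, locally closed orbit) rules out $(\tC_\infty,\tC'_\infty)\in \sln\cdot(\tC,\tC')$: an unbounded sequence $M_j$ whose orbit points converge back into a locally closed orbit would, after composing with a bounded correction, yield a divergent sequence in the compact stabilizer. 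It therefore suffices to show that no pair in the orbit closure but outside the orbit carries an H.O.T. map. In other words, it suffices to prove that $(\tC,\tC')$ is OT-separable, and in fact in the strong form that degenerate limits never admit H.O.T. maps.

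For the H.O.T. side, I will extract a necessary condition relating $\tC'_\infty$ and the dual cone $\tC_\infty^{\vee}$. Since $\varphi$ is $2$-homogeneous and positive, Euler's identity gives $\langle x,\nabla\varphi(x)\rangle=2\varphi(x)>0$ for $x\neq0$. Together with $\nabla\varphi(\tC_\infty)=\tC'_\infty$, this gives $\tC'_\infty\subset\tC_\infty^{\vee}$. The equation $\det D^2\varphi=1$ and the boundary condition should then force a finer compatibility. If $p\neq0$ lies in $\del\tC'_\infty\cap\del\tC_\infty^\vee$, then $\langle x,p\rangle=0$ for some $x\in\del\tC_\infty$. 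I will use the Legendre dual $\varphi^*$, which is $2$-homogeneous on $\tC'_\infty$ with $\nabla\varphi^*(\tC'_\infty)=\tC_\infty$, to show that $\langle\nabla\varphi^*(p),p\rangle=2\varphi^*(p)>0$. Hence $\nabla\varphi^*(p)$ is not orthogonal to $p$. I then aim to show this is incompatible with $\tC'_\infty$ and $\tC_\infty^\vee$ sharing a supporting hyperplane at $p$, using a comparison/barrier argument along the common face.

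For the degeneration side, I will use that dual cones transform equivariantly: $(M\tC)^\vee=M^{-T}\tC^\vee$. So the pair $(\tC^\vee,\tC')$ is degenerated by the \emph{same} sequence $M_j^{-T}$. Normalizing $M_j$ by $\sln$-invariance (a KAK/Cartan decomposition, $M_j=K_jD_jK_j'$ with diagonal $D_j$ and passing to subsequences of $K_j,K_j'$), the unbounded diagonal part squeezes some directions and stretches others. I expect to show the following dichotomy. Either the limits are degenerate (not full-dimensional, or not pointed), in which case an H.O.T. map with $\det D^2\varphi=1$ cannot exist. Or the stretching forces $\del\tC'_\infty$ and $\del\tC_\infty^\vee$ to touch tangentially, i.e.\ to share a supporting hyperplane at a common nonzero boundary point. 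Heuristically, transversal contact in (ii) is an open condition, and unbounded linear maps flatten the angle between the two cones into tangency.

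The main obstacle is this last step: proving that every genuinely degenerate limit produces tangential contact of $\tC'_\infty$ and $\tC^\vee_\infty$, as opposed to merely losing transversality in some weaker sense. The delicate point is tracking which common boundary points of $(\tC^\vee,\tC')$ survive in the limit along the expanding eigendirections of $D_j$. My first attempt would be to parametrize near a contact ray $p$ and compute how the angle between the supporting hyperplanes of the two cones at $p$ scales under $D_j^{-1}$. The aim is to show this angle tends to zero whenever $\|D_j\|\to\infty$ and the limit is nondegenerate, and to close the argument with the incompatibility from the previous paragraph.
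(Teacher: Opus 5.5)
Your first paragraph is the paper's own reduction (Proposition~\ref{prop:OTsep->round}). Failure of $C^{1,1}$ yields a degeneration to a pair carrying an H.O.T. map, and (LC) together with compactness of $\mathrm{Aut}(\tC,\tC')$ makes it non-trivial, so OT-separability suffices.

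There is a genuine gap in how you propose to get OT-separability from (ii): both facts you want to use on the limit pair are false.
\begin{itemize}
\item Euler's identity gives $\langle X,\nabla\varphi(X)\rangle=2\varphi(X)>0$ only for the \emph{same} $X$. That is positive alignment (Lemma~\ref{lem:posAlign}), not $\tC'_\infty\subset\tC^\vee_\infty$. For instance, $\varphi=\tfrac12|X|^2$ maps any convex cone onto itself, and $\tC\not\subset\tC^\vee$ as soon as $\tC$ contains two vectors with negative inner product. Obtuse and crossing H.O.T. pairs are the subject of Theorem~\ref{thm: HotExistIntro} and Section~\ref{sec:notConvex}.
\item More seriously, tangency of $\tC'_\infty$ and $\tC^\vee_\infty$ at a common boundary point is not an obstruction. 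For $\tC=\tC'=\bR^n_{>0}$ one has $\tC'=\tC^\vee$, $\varphi=\tfrac12|X|^2$ is an H.O.T. map, and the two cones share a supporting hyperplane at every nonzero boundary point. Your computation is consistent with this ($p=e_1^*$, $\nabla\varphi^*(p)=e_1\in\p\tC$, $\langle e_1,e_1^*\rangle=1$), so no comparison or barrier argument can produce a contradiction.
\item Pairs with $\tC'_\infty=\tC^\vee_\infty$ are in fact the typical H.O.T.-carrying limits of degenerations. Cases (e) and (f) of Figure~\ref{fig:2dConePairs} degenerate to (b); see also Figure~\ref{fig:nestingEllipsesBoost}.
\item The other branch of your dichotomy also fails. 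Non-pointed limits can carry H.O.T. maps: positively aligned half-spaces, e.g.\ the blow-up model in Figure~\ref{fig:ovidiu}. Theorem~\ref{thm:splitting} only excludes limits whose lineality spaces do not pair perfectly.
\end{itemize}

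So (ii) cannot be transported to the limit and used there; the tangency has to be located in the \emph{original} pair $(\tC^\vee,\tC')$. The paper extracts it variationally (Corollary~\ref{cor:Crossing=OTsep}):
\begin{itemize}
\item Suppose a non-trivial degeneration to an H.O.T. pair existed. The converse half of Proposition~\ref{prop:PS=SLNdegToPolystable} then produces saturated $\varphi_j$ on the fixed pair with $|\cM(\varphi_j)|\le C$, $\mathfrak{a}(\varphi_j)\to0$, and no convergent subsequence.
\item With the normalization $v_j(0)=1$, this forces $\inf_P v_j\to0$.
\item Lemma~\ref{lem:IvariationEstimate}, together with the balance of $\alpha_{v_j},\beta_{v_j}$ coming from constant variations (Proposition~\ref{prop:residueLimit}), shows that $K'_j$ flattens onto a residue $H$. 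This residue lies in a face of $\p\tC'$ and reaches height $1$.
\item Hypothesis (ii) makes the saturated residue have positive volume, so $J(v_j)$ stays bounded below. Meanwhile, the uniform gradient bounds from saturation and $\inf_P v_j\to0$ give $I(v_j)\to\infty$, contradicting $|\cM(\varphi_j)|\le C$.
\end{itemize}

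A purely geometric proof would have to show directly that a degeneration whose limit carries an H.O.T. map forces $\tC'$ and $\tC^\vee$ themselves to share a supporting hyperplane at a nonzero common boundary point. In dimension two your Cartan-decomposition bookkeeping does give this. A pointed limit forces both cones to have the limiting repelling eigenray as a boundary ray, and positive alignment puts them on the same side; non-pointed limits are excluded by Theorem~\ref{thm:splitting} or by positive alignment. In higher dimensions, this is exactly what the residue and energy argument supplies.
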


Theorem~\ref{thm: C11CrossingIntro} suggests that $C^{1,1}$-regularity is  ``generic" at corners.  Conversely, we can establish the failure of $C^{1,1}$-regularity in stratified settings.

\begin{theorem}\label{thm: introSplitting}
    In the optimal transport setting $(\Omega, \Omega', g(x)dx, g'(y)dy)$ of Theorem~\ref{thm: C11CrossingIntro}, suppose that $\tC$ splits orthogonally as $ \tC= \bR^k\times \widehat{\tC}$ for a pointed cone $\widehat{\tC}\subset \bR^{n-k}$. If $u$ solves the optimal transport problem~\eqref{eqn:OT}, and $\nabla u(0)=0$, then $u$ is not $C^{1,1}$ at $0$ unless $\tC'$ splits as $\tC'= \bR^k\times \widehat{\tC}'$, where $\widehat{\tC}'\subset \bR^{n-k}$ is a pointed cone and the lower dimensional cone pair $(\widehat{\tC},\widehat{\tC}')$ admits a $2$-homogeneous optimal transport map and the linear subspaces of $\tC$ and $\tC'$ pair perfectly.
\end{theorem}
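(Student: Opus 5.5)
We argue by contraposition: assume $u$ is $C^{1,1}$ at $0$ and deduce that $\tC'$ splits as claimed. The approach is a blow-up analysis that produces a homogeneous optimal transport map between the tangent cones and then reads off its structure along the linear factor $\bR^k$.

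\textbf{Step 1 (blow-up).} If $u$ is $C^{1,1}$ at $0$, consider the rescalings
\[
u_r(x)=r^{-2}u(rx).
\]
These are uniformly $C^{1,1}$ near the origin. Dini continuity of $g,g'$ lets us normalize $g(0)=g'(0)=1$ after a linear change of variables. Because the domains are $C^{1,\alpha}$-epigraphs over their tangent cones, $r^{-1}\Omega\to\tC$ and $r^{-1}\Omega'\to\tC'$ locally in Hausdorff distance. Stability of Alexandrov solutions of the second boundary value problem under such convergence yields a subsequential limit $\varphi$. The monotonicity formula of \cite{TristanFreid}, which is precisely the tool that upgrades blow-ups to homogeneous ones, makes $\varphi$ $2$-homogeneous. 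The limit satisfies $\det D^2\varphi=1$ on $\tC$, $\nabla\varphi(\tC)=\tC'$, and $D^2\varphi$ is bounded since the $C^{1,1}$ bound passes to the limit. So the pair $(\tC,\tC')$ itself admits a H.O.T.\ map with bounded Hessian.

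\textbf{Step 2 (splitting).} Here we exploit $\tC=\bR^k\times\widehat{\tC}$. For $v\in\bR^k$, the translated function $\varphi(x+v)$ is again a solution on the same cone pair. Convexity together with the uniform Hessian bound lets us use 2-homogeneity: $s^{-2}\varphi(s(x+v))\to\varphi(x)$, and comparing $\varphi(x+tv)$ with $\varphi(x)$ in the limit $t\to\infty$ shows that $t\mapsto\varphi(x+tv)$ is a convex function of at most quadratic growth. We then aim to prove $\varphi(x+v)=\varphi(x)+\langle \nabla\varphi(x),v\rangle+q(v)$ with $q$ quadratic, i.e.\ that $\varphi$ is quadratic in the $\bR^k$ directions. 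Two routes seem available. The first is uniqueness of H.O.T.\ maps up to the symmetry group (Brenier/McCann uniqueness of the transport between the cone measures restricted to normalized slices), which forces $\nabla\varphi(x+v)=\nabla\varphi(x)+Av$. The second is a Liouville-type argument: apply Pogorelov/Caffarelli interior estimates on large balls inside $\bR^k\times(\widehat{\tC}+\text{shift})$, where the domain is invariant under $\bR^k$-translation, to get that $\partial_v^2\varphi$ is constant.

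\textbf{Step 3 (reading off the target).} Once $\nabla\varphi(x+v)=\nabla\varphi(x)+Av$ with $A$ positive definite on $\bR^k$ (positivity coming from $\det D^2\varphi=1$ and convexity), the image $\tC'$ is invariant under translation by $A\bR^k$. It therefore contains the linear subspace $A\bR^k$, which pairs perfectly with $\bR^k$ because $A$ is nondegenerate. Splitting off this subspace writes $\tC'=A\bR^k\times\widehat{\tC}'$, and after an $\sln$ change of coordinates this becomes $\bR^k\times\widehat{\tC}'$. Writing $\varphi=\tfrac12\langle Ax',x'\rangle+\widehat\varphi(x'')$ then gives a $2$-homogeneous solution $\widehat\varphi$ on $(\widehat{\tC},\widehat{\tC}')$, with determinant constant after rescaling. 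The cone $\widehat{\tC}'$ is pointed because any further line in $\tC'$ would have to pull back to a line in $\widehat{\tC}$, which is impossible since $\widehat{\tC}$ is pointed.

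I expect Step 2 to be the main obstacle. It requires showing rigidity of H.O.T.\ maps along the linear factor, and it is exactly where the $C^{1,1}$ (bounded Hessian) hypothesis must be used; without it, $\varphi$ could grow degenerately in the $\bR^k$ directions. I would try the uniqueness route first, using that $\varphi(\cdot+v)-\langle\nabla\varphi(\cdot),v\rangle$-type modifications remain optimal, and fall back to the Liouville route if that fails.
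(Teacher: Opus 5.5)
Your Step~1 is essentially the paper's Proposition~\ref{prop:NotHotNotC11}: roundness of sections plus the monotonicity formula give an H.O.T.\ map on $(\tC,\tC')$ itself. Your Step~3 works once Step~2 is known. The genuine gap is Step~2, which is the whole content of the splitting theorem. You only list two routes there, and neither works.

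\textbf{The uniqueness route.} There is no uniqueness theorem to invoke.
\begin{itemize}
\item Brenier--McCann uniqueness is for finite measures, and it fails for Lebesgue measure on unbounded sets. Already for $\tC=\tC'=\bR^n$, every map $X\mapsto MX$ with $M$ symmetric positive definite and $\det M=1$ is monotone and measure-preserving.
\item The translate $\varphi(\cdot+v)$ is not $2$-homogeneous, so it does not lie in any class where ``normalized slices'' make sense.
\item Uniqueness of H.O.T.\ maps modulo $\mathrm{Aut}(\tC,\tC')$ is false in general. Section~\ref{sec:notConvex} constructs pairs with trivial automorphism group carrying two distinct isolated H.O.T.\ maps.
\item Appealing to the symmetry group would in any case presuppose the structure of $\tC'$ you are trying to prove.
\end{itemize}

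\textbf{The Liouville route.} Since $D^2\varphi$ is $0$-homogeneous, interior Pogorelov/Caffarelli estimates on large balls $B_{cR}(Re)\subset\tC$ only give $|D^3\varphi|\lesssim|X|^{-1}$. That bound is automatic from homogeneity and gives no rigidity. The function $\partial_{vv}\varphi$ is a subsolution of $\varphi^{ij}\partial_{ij}$, but a maximum principle needs control on $\partial\tC\supset\bR^k\times\{0\}$, which you do not have. More fundamentally, an interior argument never sees the boundary condition $\nabla\varphi(\tC)=\tC'$, and that condition is exactly what must force $\tC'$ to split.

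You have also misplaced where $C^{1,1}$ enters. Pointwise $C^{1,1}$ at $0$ only gives the two-sided bounds~\eqref{eqn:C11atBdy}, not a bound on $D^2u_r$, so your claimed Hessian bound on $\varphi$ is unjustified. It is also not needed.

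\textbf{The missing idea} is a global integral identity coming from translation invariance of $\tC$. For $v\in\bR^k$, integrating by parts in direction $v$ against $e^{-\varphi}$ produces no boundary terms, so
\[
\int_{\tC}\partial_v\varphi\,e^{-\varphi}\,dX=0,\qquad \int_{\tC}(\partial_v\varphi)^2e^{-\varphi}\,dX=\int_{\tC}\partial_{vv}\varphi\,e^{-\varphi}\,dX .
\]
Now change variables by $Y=\nabla\varphi(X)$, which is measure-preserving, with $\varphi^*(Y)=\varphi(X)$, $\partial_v\varphi=\la Y,v\rg$ and $\partial_{vv}\varphi=\la D^2\varphi^*(Y)^{-1}v,v\rg$. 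The identities then say that the linear function $\la Y,v\rg$ attains equality in the Brascamp--Lieb inequality (Theorem~\ref{thm:BL}) for the measure $e^{-\varphi^*}dY$ on $\tC'$.

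The equality case gives $\la Y,v\rg=\la a,\nabla\varphi^*(Y)\rg$ for a fixed $a$, that is, $D^2\varphi(X)v\equiv a$ on $\tC$. Hence $\nabla\varphi(X+tv)=\nabla\varphi(X)+ta$ for all $t\in\bR$, which is exactly what your Step~3 needs. It also gives $\tC'+ta=\tC'$ and $\la v,a\rg=\la v,D^2\varphi\,v\rg>0$.

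This argument uses only $\det D^2\varphi=1$ and the Gaussian decay of $e^{-\varphi}$, so it applies to every H.O.T.\ map (Theorem~\ref{thm:splitting}). The $C^{1,1}$ hypothesis is used only to produce $\varphi$.

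Finally, pointedness of $\widehat{\tC}'$ is not automatic, as your last sentence suggests. You must run the same rigidity argument with $(\varphi^*,\tC')$ in place of $(\varphi,\tC)$. This gives an injection from the lineality space of $\tC'$ into that of $\tC$, which forces the former to have dimension exactly $k$.
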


See Theorem~\ref{thm:splitting} and the subsequent discussion for the precise and affine invariant formulation of Theorem~\ref{thm: introSplitting}.  An immediate corollary of this result is the following:

\begin{cor}
 Let $(\Omega, \Omega')$ be bounded convex domains in $\bR^n$ equipped with probability measures $(g(x)dx, g'(y)dy)$, where $g,g'$ are strictly bounded below by a positive constant and Dini continuous up to the boundary.  Suppose that $u$ solves~\eqref{eqn:OT}.  If $u \in C^{1,1}(\ol{\Omega})$, then $\nabla u$ defines an isomorphism of skeletal filtrations $\mr{Sk}_*(\Omega) \cong \mr{Sk}_*(\Omega')$. 
\end{cor}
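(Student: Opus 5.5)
The plan is to deduce the corollary from Theorem~\ref{thm: introSplitting}, applied at every boundary point, together with a stratum-by-stratum bookkeeping argument.

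First, recall what the skeletal filtration is. For a convex domain $\Omega$ and $x\in\partial\Omega$, the tangent cone $\tC_x$ has a maximal linear subspace $L_x$. After an affine change of coordinates it splits as $\tC_x=L_x\times\widehat{\tC}_x$ with $\widehat{\tC}_x$ pointed; the splitting can be taken orthogonal after a linear change, and the hypotheses are affine invariant by the precise form of Theorem~\ref{thm:splitting}. We set $\mr{Sk}_k(\Omega)=\{x\in\ol\Omega : \dim L_x\le k\}$, with interior points having $\dim L_x=n$. This gives a filtration by closed sets, and likewise for $\Omega'$. Since $\nabla u$ is a homeomorphism $\ol\Omega\to\ol\Omega'$ (Caffarelli, with the inverse given by $\nabla u^*$), it suffices to show that $\dim L_x=\dim L'_{\nabla u(x)}$ for every $x\in\partial\Omega$.

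Fix $x\in\partial\Omega$ and $y=\nabla u(x)\in\partial\Omega'$, and translate so that $x=y=0$. Subtract the affine part, so that $u(0)=0$ and $\nabla u(0)=0$; this does not change $D^2u$. Write $\tC=\bR^k\times\widehat{\tC}$ with $k=\dim L_0$. The hypothesis $u\in C^{1,1}(\ol\Omega)$ gives pointwise $C^{1,1}$ at $0$, so the contrapositive of Theorem~\ref{thm: introSplitting} forces $\tC'=\bR^k\times\widehat{\tC}'$ with $\widehat{\tC}'$ pointed. In particular $\dim L'_0=k$: pointedness of $\widehat{\tC}'$ means the lineality space of $\tC'$ is exactly $\bR^k$. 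Hence $\nabla u(\mr{Sk}_k\setminus\mr{Sk}_{k-1})\subset\mr{Sk}'_k\setminus\mr{Sk}'_{k-1}$ for every $k$.

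For the reverse inclusion, apply the same argument to the dual problem: $\nabla u^*$ transports $\Omega'$ to $\Omega$ and solves the dual equation. Uniform $C^{1,1}$ of $u$ together with the Monge--Amp\`ere equation ($\det D^2u\ge c>0$) gives $D^2u\ge c'I$ almost everywhere, so $u^*\in C^{1,1}(\ol{\Omega'})$. Combined with bijectivity, this yields the isomorphism of filtrations.

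The main obstacle is the regularity hypothesis of Theorem~\ref{thm: introSplitting}: that theorem is stated in the setting of Theorem~\ref{thm: C11CrossingIntro}, where the domains are local $C^{1,\alpha}$-epigraphs over their tangent cones. For general convex domains I would first check that the precise version, Theorem~\ref{thm:splitting}, needs only the blow-up limits, meaning the tangent cones obtained from the monotonicity formula. I would also handle points where the "cone" is a half-space ($k=n-1$) or where $\widehat{\tC}$ is trivial; these cases reduce to the statement that smooth boundary points correspond to smooth boundary points. If the epigraph hypothesis is genuinely needed, the corollary should be read with that standing assumption.
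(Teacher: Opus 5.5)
Your proposal is correct and is essentially the paper's argument. The paper combines Proposition~\ref{prop:NotHotNotC11} with Theorem~\ref{thm:splitting}. The proposition says that pointwise $C^{1,1}$ at $x$ gives round sections, so the monotonicity formula yields a H.O.T.\ map on the undegenerated tangent cone pair $(\mr{Tan}_x\Omega,\mr{Tan}_{\nabla u(x)}\Omega')$, with no epigraph hypothesis. The splitting theorem's perfect pairing of lineality spaces then gives $\dim L_x=\dim L'_{\nabla u(x)}$ at every boundary point. That is exactly your fallback, so your $C^{1,\alpha}$-epigraph concern disappears.

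Given that equality and the bijectivity of $\nabla u:\ol\Omega\to\ol{\Omega'}$, your separate duality argument for the reverse inclusion is not needed. The bound $D^2u\ge c'I$ behind it is still a reasonable way to see that the two-sided pointwise estimate~\eqref{eqn:C11atBdy} holds at each boundary point.
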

The skeletal filtration $\mr{Sk}_k(\Omega)$ consists of all the points $x \in \ol{\Omega}$ such that the tangent cone $\mr{Tan}_x \Omega$ does not split off $k + 1$ lines; see Section~\ref{sec:obstructions} for more details.
For example, this result implies that optimal transport maps between polyhedra $\Omega, \Omega'$ with a distinct number of vertices (or edges, faces, etc.) can never be globally $C^{1,1}$.  However, the conclusion that the skeletal isomorphism is induced by the gradient of a convex function implies stronger obstructions; see Figure~\ref{fig:ovidiu} for an example of polyhedra with isomorphic skeletal filtrations that cannot admit a globally $C^{1,1}$-optimal transport map.

Theorem~\ref{thm: C11CrossingIntro} and Theorem~\ref{thm: introSplitting} are obtained as part of a larger package of results aimed at establishing OT-separability.
We take two approaches to this. 
The first approach is to directly rule out the possibility of $\sln$-degenerations $(\tC,\tC')\leadsto(\tC_{\infty},\tC_{\infty}')$ where $(\tC_{\infty},\tC_{\infty}')$ admits a homogeneous optimal transport map.
This is done by establishing volume estimates for a Mahler type volume associated to the degeneration; see Section~\ref{sec:OTsep}.
The second approach is to find explicit obstructions ruling out optimal transport maps between the limiting cones $(\tC_{\infty},\tC_{\infty}')$; see Section~\ref{sec:obstructions}.
We expect that, using the methods of this paper, OT-separability is effectively checkable for broad classes of cone pairs, such as polyhedral convex cones.

The monotonicity formula of \cite{TristanFreid} implies that, when $u$ solves~\eqref{eqn:OT} and $C^{1,1}$-regularity holds at $0\in\del\Omega \cap \del\Omega'$, the tangent cones $(\tC,\tC')$ admit a $2$-homogeneous optimal transport map.  In particular, the existence of such a map is a necessary condition for $C^{1,1}$-regularity at $0$. We prove the following result:

\begin{theorem}\label{thm: HotExistIntro}
    Suppose $(\tC,\tC')$ are pointed convex cones satisfying
    \begin{itemize}
        \item[$(i)$] $\overline{\tC}\cap \overline{(-\tC')^{\vee}}=\{0\}$ and $\overline{-\tC^{\vee}}\cap \overline{\tC'}=\{0\}$;
        \item[$(ii)$] either $\tC' \subset \tC^\vee$, or $\widetilde{H}_{*}({\tC'}\setminus \tC^\vee) \neq 0$, where $\widetilde{H}_*$ denotes the reduced homology; and
        \item[$(iii)$] the cones $\tC^\vee$ and $\tC'$ do not share a supporting hyperplane that intersects $\p \tC^\vee \cap \p \tC'$.
    \end{itemize}
    Then, there is a convex function $\varphi:\tC\rightarrow \mathbb{R}_{>0}$ solving the homogeneous optimal transport (H.O.T.) equation
    \begin{equation}\label{eqn:HOT}\tag{HOT}
    \det D^2 \varphi = 1,\qquad \varphi(tX) = t^2\varphi(X) \quad \text{for all } t\in\bR_{>0}, \qquad \text{and}\qquad \nabla \varphi (\tC) = \tC'.
    \end{equation}
\end{theorem}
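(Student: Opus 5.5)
The plan is a continuity/degree argument for a second boundary value problem on the link of the cones. A $2$-homogeneous convex $\varphi$ is determined by its restriction to a cross-section $\Sigma=\tC\cap\{\langle X,e\rangle=1\}$, for a fixed $e$ in the interior of $\tC^\vee$; this $e$ exists because $\tC$ is pointed. In these coordinates \eqref{eqn:HOT} becomes a fully nonlinear Monge--Amp\`ere type equation on $\Sigma$. The condition $\nabla\varphi(\tC)=\tC'$ becomes a second boundary condition: $\nabla\varphi$ maps $\partial\Sigma$ into the ruled boundary $\partial\tC'$, and the induced map on rays $\Sigma\to\mathbb{P}(\tC')$ is a diffeomorphism.

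I would first record two structural identities. Euler's relation gives $\langle X,\nabla\varphi(X)\rangle=2\varphi(X)>0$. Combined with condition (i), this forces every solution (and every $\varphi$ along a deformation) to avoid the two degenerate configurations $\overline{\tC}\cap\overline{(-\tC')^{\vee}}$ and $\overline{-\tC^{\vee}}\cap\overline{\tC'}$. These are exactly where $\varphi$ could tend to $0$ or $\infty$ on a ray, or where $\nabla\varphi$ could escape to a ray of $\partial\tC'$ paired to zero with $X$. By compactness of $\overline\Sigma$ and of the projectivized $\overline{\tC'}$, this yields uniform two-sided bounds $c\le \varphi\le C$ and $|\nabla\varphi|\le C$ on $\Sigma$. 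These are the $C^0$ and $C^1$ estimates.

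Next, I would prove higher a priori estimates. In the interior of $\Sigma$ these follow from Caffarelli's theory, since by the $C^1$ bound the homogeneous solution is strictly convex transversally to rays. Near $\partial\Sigma$ the key input is condition (iii): at a point where $\partial\tC^\vee$ and $\partial\tC'$ meet, the absence of a common supporting hyperplane is precisely an obliqueness condition for the second boundary problem, in the spirit of Urbas and of the $C^{1,\alpha}$-boundary regularity in \cite{TristanFreid}. I would use it to derive a uniform obliqueness constant and then global $C^{1,1}$ (indeed $C^{2,\alpha}$ away from lower strata) estimates. I expect this to be the \textbf{main obstacle}, because $\Sigma$ and $\tC'$ generally have corners. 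My first attempt would be to approximate $(\tC,\tC')$ by cones with smooth, uniformly convex links, obtain estimates depending only on the constants in (i) and (iii), and then pass to the limit using stability of Alexandrov/Brenier solutions. The limit is then a weak H.O.T. solution, which suffices for the statement.

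Finally, existence via degree. Deform $\tC'$ through cones $\tC'_t$, $t\in[0,1]$, preserving (i) and (iii), starting from a model pair with an explicit solution. Whenever $\tC'\subset\tC^\vee$, I would use $\tC'_0=M\cdot \tC^{\vee}$ or a round-cone model, where $\varphi$ is quadratic. The a priori estimates make the solution set compact along the path, so the Leray--Schauder degree of the associated nonlinear operator is constant in $t$. In the case $\tC'\not\subset\tC^\vee$ no such homotopy within $\tC^\vee$ exists. There I would identify the degree with the degree of the boundary-ray map $\partial\Sigma\to\mathbb{P}(\partial\tC')$ relative to the sets where $\langle X,Y\rangle\le 0$. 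Through Alexander duality on the sphere of directions, this is detected by $\widetilde H_*(\tC'\setminus\tC^\vee)$; hypothesis (ii) then says this degree is nonzero, so a solution exists at $t=1$. Making this homological identification precise is the second delicate point, and I would handle it by reducing to the boundary map and computing on the model configuration.
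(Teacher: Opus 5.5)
Your proposal has a genuine gap at the compactness step, and it gives (iii) the wrong role. Euler's identity only puts $\nabla\varphi(X)$ in $\tC'\cap\{\langle X,\cdot\rangle>0\}$. It gives no two-sided bound on $\varphi$ or $|\nabla\varphi|$ over the link, because the scale is fixed only globally by $\det D^2\varphi=1$, and solutions can escape along $\sln$: their John-normalized sections $\{\varphi^*<1/4\}\cap\tC'$ can degenerate. Bounds depending only on (i) are in fact false. The acute pair of Figure~\ref{fig:nestingEllipsesBoost} (a disk $Q$ with an internally tangent ellipse $\Sigma$) is positively aligned but nesting, so it has no solution by Proposition~\ref{prop:nesting}. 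Shrinking $\Sigma$ slightly gives strictly acute pairs that satisfy (i) uniformly and do have solutions. Your $C^0$/$C^1$ bounds plus Alexandrov stability would then produce a solution on the nesting pair, a contradiction.

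What breaks in that limit is exactly (iii), and (iii) is not an obliqueness condition. The supporting hyperplanes of $\tC^\vee$ at a nonzero $p\in\partial\tC^\vee\cap\partial\tC'$ are $X^\perp$ with $X\in\partial\tC$ and $\langle X,p\rangle=0$. Such pairs $(X,p)$ are never matched by the gradient, since $\langle X,\nabla\varphi(X)\rangle=2\varphi(X)>0$. In the paper, (iii) instead rules out degenerations. A collapsing near-critical sequence converges to a residue on a face of $\partial\tC'$ (Proposition~\ref{prop:residueLimit}), and (iii) forces $\cM\to\infty$ there (Corollary~\ref{cor:Crossing=OTsep}). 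The resulting OT-separability is the Palais--Smale substitute in Proposition~\ref{prop:PS=SLNdegToPolystable}.

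Your uniform higher-order estimates cannot hold either. Take $\tC=\bR^3_+$ and $\tC'$ a round cone whose link strictly contains $\ol{Q}$; then (i)--(iii) hold. Along an edge ray of $\tC$ the tangent cone splits one line, while the tangent cone at its image is a half-space. By Theorem~\ref{thm:splitting} and Proposition~\ref{prop:NotHotNotC11}, applied to the bounded sections, no solution is $C^{1,1}$ there. The paper never needs boundary regularity: convex criticality is turned into Kantorovich optimality, and Gangbo--McCann plus Caffarelli's interior theory finish the argument (Proposition~\ref{prop:crit->HOT}).

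The degree step has a separate gap. Hypothesis (ii) only says that \emph{some} reduced homology group of $\tC'\setminus\tC^\vee$ is nonzero, but a degree is a single integer. Any Poincar\'e--Hopf count on a model gives an alternating sum, essentially $\pm\tilde\chi(\Sigma\setminus\ol{Q})$, and this can vanish while $\widetilde H_*\neq 0$. For example, take $n=4$ and $\Sigma$ built from $h$ as in Section~\ref{sec:examples}, with $\{h>0\}\subset\bS^2$ a disk disjoint from an annulus and $0$ a regular value of $h$. Then $\widetilde H_0=\widetilde H_1=\bZ$, the reduced Euler characteristic is $0$, and (i)--(iii) hold. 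The two solutions of Section~\ref{sec:notConvex} show such cancellations really occur: their reduced Hessians $\pm D^2P(a)$ have determinants of opposite sign in $\bR^3$.

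The paper instead runs a max-min over a fixed relative class, which detects any nonzero class. The Steiner point $\mathfrak{q}:\bP(\cH)\to\Sigma$ is a homotopy equivalence (Lemma~\ref{lem:affineSection}), so linking gives a class $\alpha\in H_*(\cH,\cO)$ with nonzero image in $H_*(\cH,\cH\setminus\cA_\varepsilon)$. Tilting (Lemma~\ref{lem:tilt=king}) makes sweepouts admissible, Lemma~\ref{lem:EnergyInfBound} makes the width finite, and the weak-slope pulltight (Lemma~\ref{lem:pulltight}) produces $\mathfrak{a}(v_j)\to 0$; OT-separability then gives the limit. In the acute alternative no topology is needed: the Blaschke--Santal\'o bound makes $\cM$ bounded above, and one maximizes directly.
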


Theorem~\ref{thm: HotExistIntro} is a special case of a more general result (Theorem~\ref{prop:WidthAchieved}) in which condition $(ii)$ is replaced with a more general topological result and condition $(iii)$ is replaced by $OT$-separability. In Theorem~\ref{thm:polystableWidthFinite}, we prove an equivariant version that applies whenever the pairs $(\tC,\tC')$ admit the action of a reductive group.  
    
    Let us explain the assumptions appearing in Theorem~\ref{thm: HotExistIntro}. Condition $(i)$ is a necessary structural condition that there exist homogeneous convex functions on $\tC$ with gradient image $\tC'$.
This is automatically satisfied for any pair of tangent cones arising from optimal transport at $(x, T(x))$.
Condition $(ii)$ is a \textit{topological linking} condition for the cones.
Such a condition is needed since OT-separability can hold for cones without homogeneous optimal transport maps.
See Section~\ref{sec:Linking} and Figure~\ref{fig:acute/obtuse/maxmin} to see this linking property and Section~\ref{sec:polystable} for the equivariant version. 
Indeed, there are examples, (already occurring in dimension $2$), of cones satisfying $(i)$, but not satisfying $(ii)$, for which there are not even local solutions of the optimal transport problem.
That is, one cannot find open sets $N, N'$ containing the origin such that there is a convex function $u:N\cap \tC\rightarrow \bR$ solving the local optimal transport problem
\[
\det D^2u=1, \qquad \nabla u(N\cap \tC)=N'\cap \tC',\qquad \nabla u(0)=0;
\]
see, for example, Section~\ref{sec:modulin=2}.
The proof of Theorem~\ref{thm: HotExistIntro} involves a min-max argument.
Condition $(ii)$ is used to construct appropriate sweepouts, while the OT-separability condition is shown to imply a weak version of the Palais-Smale condition.
Condition $(iii)$ will imply the pair is OT-separable.

The assumptions of Theorem~\ref{thm: HotExistIntro} are sharp in the following sense.
In dimension $2$, Theorem~\ref{thm: HotExistIntro} recovers the classification of cones admitting homogeneous optimal transport maps \cite{TristanFreid}.
In higher dimensions, we show via example that without further structural assumptions, if any condition is omitted, then there exists a pair of cones satisfying the remaining conditions which cannot admit a homogeneous optimal transport map; see Section~\ref{sec:obstructions}. 

It may come as a surprise that the existence of homogeneous optimal transport maps requires min-max techniques.
Indeed, for optimal transport maps between bounded convex domains, the optimal transport problem can be formulated as a convex minimization problem~\cite{Kantorovich,Brenier, Gangbo-McCann}.
This is no longer true for homogeneous optimal transport maps between domains of infinite volume.
Indeed, in Section~\ref{sec:notConvex} we use perturbative methods to construct pairs of cones with trivial automorphism group and two distinct, isolated homogeneous optimal transport maps.

There is a connection between the approach to regularity developed here, particularly the notion of OT-separability, and a putative, Hausdorff $\sln$-moduli space of convex cones.
We make this connection explicit by formulating a general conjecture relating the existence of local and/or homogeneous optimal transport maps to a version of symplectic reduction.
We formulate notions of stability for pairs of cones and propose conjectures relating stability to regularity for optimal transport maps.
In dimension $2$, we construct the full moduli space and prove most of these conjectures.

We expect these results to have broad implications in understanding collapsing of Calabi-Yau metrics. In the context of the Strominger-Yau-Zaslow picture for mirror symmetry \cite{SYZ}, recent works \cite{Li-Fermat, HJMM, AHJMM, JMPS, Li-SYZ} have shown that optimal transport maps between convex polytopes with their respective Lebesgue densities arise naturally as adiabatic limits for large complex structure degeneration of Calabi-Yau metrics, and the boundary regularity of such optimal transport maps is intimately related to bubbling behaviour of the Calabi-Yau metrics under degeneration. More generally, optimal transport maps between polytopes with degenerate densities \cite{Li-Intermediate} and homogeneous optimal transport maps between cones \cite{TristanFreidYau} have also been shown to arise in various contexts as adiabatic limits of Calabi-Yau metrics. We expect the methods developed in this paper to be useful in studying the behaviour of collapsing Calabi-Yau metrics in these more general settings as well. 

There is a formal similarity between the results obtained here and the local regularity theory of singular K\"ahler-Einstein metrics, specifically the conjecture of Donaldson-Sun \cite{DS2}, its resolution by Li-Wang-Xu \cite{LWX}, and recent work of Zhang \cite{Zhang}. 
These results can be viewed as part of a broader theme arising throughout K\"ahler geometry that the existence of solutions to geometric PDEs (usually describing \textit{canonical metrics}) is intimately tied to the construction of moduli spaces (e.g., of germs of klt $K$-stable singularities).
At a conceptual level, this is due to the fact that many such PDEs arise as zeroes of a moment map for an infinite dimensional group admitting a (formal) complexification.
In particular, by the Kempf-Ness theorem~\cite{KN}, these PDEs can also be described as critical points of a convex functional describing \textit{stable points} in some infinite dimensional version of Geometric Invariant Theory~\cite{GIT}; see e.g.,~\cite{DoMoment, DoMoment2} for general discussion. 

We emphasize that the existence of \hyperref[eqn:HOT]{H.O.T.} maps does not fit this framework. Indeed, the examples of non-uniqueness constructed in Section~\ref{sec:notConvex} show that the existence of \hyperref[eqn:HOT]{H.O.T.} maps cannot be formulated as a convex problem.
From this point of view the apparent connection, described in Section~\ref{sec:ModuliMoment}, between optimal transport and a moduli space of convex cones obtained through a generalized form of symplectic reduction is rather unexpected.

\subsection{Outline}
The outline of this paper is as follows:
In Section~\ref{sec:prelim}, we establish the general principle relating $\sln$-degenerations to the $C^{1,1}$-boundary regularity of optimal transport using the monotonicity formula. 
This strategy reveals that the OT-separability property of the tangent cones determines the $C^{1,1}$-regularity at the boundary without directly analyzing the optimal transport equation.
A \hyperref[eqn:HOT]{H.O.T.} map $\varphi$ between a pair of cones $(\tC, \tC')$ is a critical point of the functional
\[
\cM_{(\tC,\tC')}(\varphi) := \log \int_\tC e^{-\varphi}\,dX +\log \int_{\tC'}e^{-\varphi^*}\,dY
\]
which can be written alternatively as a conical Mahler type volume,
\[
\cM_{(\tC,\tC')}(\varphi) = \log(|\{\varphi < 1\} \cap \tC|) + \log(|\{\varphi < 1\}^\circ \cap \tC'|) + C(n).
\]
Recall that the standard Mahler volume of a set $K$, $\log(|K|)+\log(|K^\circ|)$, is uniformly lower bounded, while the Blaschke-Santal\'o inequality provides an upper bound provided $K$ is centered at the origin.
For the conical M\"ahler volume $\cM_{(\tC, \tC')}$, both of these properties fail in general.
Central to our strategy is identifying conditions under which these properties are restored, see Definitions~\ref{def:saturated} and~\ref{eqn:widthDef}.

In Section~\ref{sec:acute}, we prove Theorem~\ref{thm: HotExistIntro} for the case $\tC' \subset \tC^\vee$, generalizing~\cite[Theorem 1.1]{TristanBenjyFreid}. 
The assumption that $\tC' \subset \tC^\vee$ is shown to imply a uniform upper bound for $\cM_{(\tC,\tC')}$. 
With this estimate, OT-separability implies the required compactness to find a critical point.

We prove our main theorem in Section~\ref{sec:maxmin}, which is extended to the equivariant case in Section~\ref{sec:polystable}.
The linking property (see Theorem~\ref{thm: HotExistIntro} condition $(ii)$, or Definition~\ref{def:Linked}) provides a non-trivial homology class over which we can sweepout.
A key step in the proof is to show that OT-separability of $(\tC,\tC')$ implies a weak version of the Palais-Smale condition necessary to deduce the existence of a critical point.

In Section~\ref{sec:notConvex}, we examine the linearization of the homogeneous optimal transport problem the associated perturbation theory.
We demonstrate that there exist cone pairs with multiple isolated \hyperref[eqn:HOT]{H.O.T.} maps, showing that, unlike in the compact optimal transport setting, there is no convex formulation of this problem.

Section~\ref{sec:obstructions} describes the conditions when the geometry of $(\tC, \tC')$ rules out $C^{1,1}$-regularity of the boundary via obstructions to solutions of~\eqref{eqn:HOT}.
We use the Brascamp-Lieb inequality to establish a splitting theorem.
We also introduce a symplectic structure on the space of cone pairs $(\tC, \tC')$ decorated by \textit{sections}. 
This symplectic structure has a moment map for the natural $\sln$ action, and solutions to~\eqref{eqn:HOT} lie in the zero locus of the moment map.
From this, we deduce obstructions to the existence of solutions to~\eqref{eqn:HOT}.

Finally, in Section~\ref{sec:ModuliMoment}, we elaborate on the moment map and symplectic structure to illustrate how one could construct an $\sln$-moduli space of pairs of convex cones admitting \hyperref[eqn:HOT]{H.O.T.} maps and we conjecture that the $C^{1,1}$-boundary regularity of optimal transport can equivalently be described by the structure of this moduli space.
We define a stability framework which, under very general analyticity assumptions, we conjecture to provide a complete affirmative answer to Question~\ref{ques: toC11orNot}.  For completely general convex cones, the $\sln$ orbit of a pair of cones can fail to be locally closed in the Hausdorff topology, suggesting that in full generality, the determination of $C^{1,1}$-boundary regularity may not be local.

\medskip
\noindent\textbf{Acknowledgments:} B.F.~recognizes support from a Simons Dissertation Fellowship, a MathWorks Fellowship, and the Citadel GQS PhD Fellowship.
T.C.C.~is supported in part by NSERC Discovery grant RGPIN-2024-03853.  
F.T.~is supported in part by NSERC Discovery grant RGPIN-2025-06760. 

\medskip
\noindent \textbf{AI disclosure}: ChatGPT 5.6 was used for literature review, drafting Tikz code for the figures per authors' specifications, and as a proof reader, which did not impact the mathematical content.

\section{Preliminaries}\label{sec:prelim}
\subsection{Notation}
We first recall and standardize some notions from convex analysis:

\begin{itemize}[leftmargin=*]
    \item A convex cone $\tC \subset \bR^n$ is a convex subset invariant under positive rescalings about its vertex. 
    For our purposes, every cone has its vertex at the origin and is open.

    \item For $x \in \ol{\Omega}$, we define the tangent cone at $x$ as $\mr{Tan}_x \Omega :=\{tX : X \in \mr{int}(\Omega - x), t > 0\}$.
    For a map $T : \Omega \to \Omega'$, the tangent cone pair at $(x, T(x))$ is given by the pair of cones $(\mr{Tan}_x \Omega, \mr{Tan}_{T(x)}\Omega')$.
    \item We say $\tC$ is pointed if the origin is the only point about which it is scale-invariant; equivalently $\overline{\tC} \cap \overline{(-\tC)} = \{0\}$.
    \item The Legendre dual of a convex function $u$ on $K$ is given by $u^*(y) = \sup_{x \in K}\la x, y \rg - u(x)$.
    \item For $K$ a convex body, the convex support function is defined by $\phi_K(Y) :=\sup_{X \in K}\la X, Y \rg$. We have
    \[
    \phi_{K}^*(X)=\mathbf{1}_K(X):= \begin{cases} 0 & \text{ if } X \in K, \\
    +\infty & \text{ else.}
    \end{cases}
    \]
   
    We denote $\chi_K := e^{-\mathbf{1}_K}$ the standard indicator function.
    \item The polar dual of a convex body $K$ is defined by $K^\circ := \{Y : \phi_K(Y) < 1\}$.
    \item For a convex body $K$, let $\rho_K(X) := \inf \{\lambda :  X \in \lambda K\} = \phi_{K^\circ}(X)$ be its radial support function.
    \item The John ellipsoid of a convex set $K$ is the unique maximal-volume inscribed ellipsoid with center at $c \in K$ satisfying $c + E \subset K \subset c + nE$.
    \item The dual of a cone $\tC$ is given by $\tC^\vee := \mr{relint}\{Y : \la X, Y \rg \geq 0 \text{ for all } X \in \tC\}$.
    The duals of non-pointed cones collapse in dimension which corresponds to $\tC$ splitting some lines. 

    \item We will call points $Y \in \tC'\setminus \tC^\vee$ \textbf{obtuse} and points $Y \in \tC' \cap \tC^\vee$ \textbf{acute}.
    A pair is acute (resp.~obtuse) if $\tC' \subset \tC^\vee$ (resp.~$\tC^\vee \subset \tC'$), and they are strongly acute (resp.~strongly obtuse) if $\ol{\tC'}\setminus\{0\} \subset \tC^\vee$ (resp.~$\ol{\tC^\vee}\setminus\{0\} \subset \tC')$.
    
    \item For $M \in \sln$, we define the action on $V \times V^*$ by $M \cdot (X,Y) := (MX, M^{-T}Y)$, which preserves the duality pairing. 

    \item The automorphism group of the pair $(\tC, \tC')$ is defined by $\mr{Aut}(\tC, \tC') := \{g \in \sln :(g\cdot \tC, g^{-T}\cdot \tC') = (\tC, \tC')\}$.

\end{itemize}

Throughout this paper we will consider pairs of convex cones $(\tC, \tC')$ satisfying the following condition:

\begin{defn}\label{def:posalign}
    The pair of cones $(\tC,\tC') \subset V \times V^*$ is \textbf{positively aligned} if $\tC \cap (\tC')^\vee \neq \emptyset$ and $\tC^\vee \cap \tC' \neq \emptyset$. 
\end{defn}

We denote points in $V, V^*$ with capital letters $X$ and $Y$, respectively.  Throughout, we will define coordinates $(X_1,\ldots, X_n)$ and $(Y_1,\ldots, Y_n)$ from dual bases $e_i$ and $e_i^*$, so $\la e_i, e_j^*\rg = \delta_{ij}$.
For positively aligned cones $(\tC,\tC')$ we may further assume that
\[
P := \tC \cap \{X_n = 1\}, \quad Q := \tC^\vee \cap \{Y_n = 1\}, \quad \text{ and } \quad \Sigma := \tC'\cap  \{Y_n = 1\}
\]
are non-empty convex sets. 

Let $X' := (X_1,\ldots, X_{n-1})$ and $Y' := (Y_1,\ldots, Y_{n-1})$. The lowercase variables $x := \frac{X'}{X_n}$ and $y := \frac{Y'}{Y_n}$ are the projectivized coordinates on the respective links. From these coordinates, we have a choice of affine unit balls $\bB :=\{X : \sum X_i^2 < 1\}$ and $\bB^* := \{Y : \sum Y_i^2 < 1\}$.

\subsection{Monotonicity, OT-separability, and \texorpdfstring{$C^{1,1}$}{C1,1}-regularity}\label{sec:OTSepRound}

We first outline our general strategy to answer Question~\ref{ques: toC11orNot}. Recall that $T = \nabla u$ is an optimal transport map from $\Omega \to \Omega'$ satisfies $u \in C^{1,1-\ve}(\ol{\Omega})$ from~\cite{TristanFreid}.
The pointwise $C^{1,1}$-boundary regularity of $u$ and $u^*$ at a point $(x_0,\nabla u(x_0)) \in \partial \Omega \times \del\Omega'$ is characterized by
\begin{equation}\label{eqn:C11atBdy}
    C^{-1}|x-x_0|^2 \leq u(x) - u(x_0) - \nabla u(x_0)(x - x_0)\leq C|x-x_0|^2
\end{equation}
for some $C$.
For simplicity, assume that $\Omega$ and $\Omega'$ locally agree with their tangent cones $\mr{Tan}_{x_0}(\Omega)$ and $\mr{Tan}_{\nabla u(x_0)}(\Omega')$, or are locally $C^{1,\alpha}$-epigraphs over them.
We may assume that $x_0$ is at the origin and $\nabla u(0) = 0$, so the $C^{1,1}$-regularity is simply $|\nabla u(x) | \leq C|x|$. 
Recall that the \textbf{centered sections} of $u$ at $0$, of height $h$ is
\[
S^c_h(u,0) := \{x : \bar{u}(x) - \bar{u}(0) -p_h\cdot x \leq h\} 
\]
where $\bar{u}$ is the minimal convex extension of $u$ and $p_h$ is chosen so that $0$ is the barycenter of $S^c_h(u,0)$.  The sections of $u$ at $0$ are \textbf{round} if
\[
B_{C^{-1}\sqrt{h}}(0) \subset S^c_h(u,0) \subset B_{C \sqrt{h}}(0).
\]
A direct consequence of this is
\[
C^{-1}|x|^2 \leq u(x) - u(0) - \nabla u(0)\cdot x \leq C|x|^2
\]
meaning that if the sections are round then $u$ is pointwise $C^{1,1}$ at the origin; cf.~\cite{TristanFreid}. 

The monotonicity formula for optimal transport shows that we may extract a blow-up limit $\varphi$ together with a pair of convex cones $(\mathtt{C}_{\infty},\mathtt{C}_{\infty}')$~\cite[Theorem 4.1]{TristanFreid}.  
Precisely, $\varphi:\mathtt{C}_{\infty} \rightarrow \mathbb{R}_{>0}$ is a $2$-homogeneous convex function on $\mathtt{C}_{\infty}$ such that
\begin{equation}\label{eq: introHOT} 
\det D^2\varphi(X)=1 \quad \text{ on  } \mathtt{C}_{\infty} \qquad \text{and} \qquad \nabla \varphi(\mathtt{C}_{\infty}) = \mathtt{C}_{\infty}'.
\end{equation}
The key point is that the failure of roundness yields a sequence of matrices $M_j \in \sln$ with $\|M_j\|\rightarrow \infty$ such that $(M_j\cdot \mathtt{C}, M_j^{-T}\cdot \mathtt{C}') \leadsto (\mathtt{C}_{\infty},\mathtt{C}_{\infty}')$.  We introduce the following definition

\begin{defn}
Let $(\tC,\tC')$ be a pair of positively aligned cones.
An $\sln$-degeneration $(\tC,\tC')\leadsto(\tC_{\infty},\tC_{\infty}')$ is given by:
\begin{itemize}
    \item[$(i)$] A sequence of matrices $M_t \in \sln$ with $\|M_t\| \to \infty$ as $t\rightarrow \infty$, and
    \item[$(ii)$] A pair of positively aligned cones $(\tC_\infty, \tC'_\infty)$ with non-empty interiors, such that
    \[
    (M_t \tC, M_t^{-T}\tC') \rightarrow (\tC_\infty, \tC'_\infty)
    \]
    in the Hausdorff sense on compact sets.
\end{itemize}
We say that the degeneration is non-trivial if $(\tC_\infty, \tC'_\infty)$ is not $\sln$ equivalent to $(\tC, \tC')$.
\end{defn}

For general convex cones it can happen that ${\rm Aut}(\tC,\tC')=\{\mr{Id}\}$ and yet there is a degeneration $(\tC,\tC')\leadsto (\tC,\tC')$.
In order to avoid this situation we make the following definition
\begin{defn}\label{defn:LC}
    We say a pair $(\tC, \tC')$ satisfies condition $(\mr{LC})$ if its $\sln$-orbit is locally closed, meaning  $\ol{\sln \cdot (\tC, \tC')} \setminus \sln \cdot (\tC, \tC')$ is closed in the Hausdorff topology.
\end{defn}

Condition (LC) is very general; see Section~\ref{sec: posAlginLC} for more discussion.  If $(\tC,\tC')$ satisfy condition (LC) and $\mr{Aut}(\tC, \tC')$ is compact, the limiting pairs $(\mathtt{C}_{\infty},\mathtt{C}_{\infty}')$ is not $\sln$ equivalent to $(\mathtt{C},\mathtt{C}')$.
If $\mr{Aut}(\tC, \tC')$ is non-compact then it is possible that the failure of $C^{1,1}$ may be realized by a diverging sequence $M_j \in \mr{Aut}(\tC, \tC')$, although we conjecture this cannot occur (see Section~\ref{sec:ModuliMoment}).
Thus, we are led to the following:
\begin{itemize}
    \item[(1)]  The {\em failure} of $C^{1,1}$-regularity is related to the existence of a non-trivial $\sln$-degeneration of $(\mathtt{C},\mathtt{C}')$ to a pair  of convex cones $(\mathtt{C}_{\infty},\mathtt{C}_{\infty}')$ admitting a homogeneous optimal transport map.
    \item[(2)] Conversely, if $u$ is $C^{1,1}$-regular at $0$, then we expect the asymptotics $u = \varphi(X) + o(|X|^2)$ where $\varphi$ is a $2$-homogeneous convex function solving the optimal transport problem~\eqref{eq: introHOT} for cones $(\mathtt{C}_{\infty},\mathtt{C}_{\infty}')=(\mathtt{C},\mathtt{C}')$. 
\end{itemize}

We can formalize these results as follows:

\begin{prop}\label{prop:OTsep->round}
    Let $\nabla u:\Omega \rightarrow \Omega'$ be an optimal transport map between bounded convex sets.
    Let $(\Omega,\Omega')$ be locally $C^{1,\alpha}$-epigraphs over $(\mathtt{C}, \tC')$ which is the tangent cone pair $(x, \nabla u(x)) \in \p\Omega \times \p \Omega'$. 
    If $(\tC, \tC')$ satisfies $(\textup{LC})$ and OT-separability, and $\mr{Aut}(\tC, \tC')$ is compact, then the sections of $u$ are round and $u$ is pointwise $C^{1,1}$ at $(x,\nabla u(x))$.
\end{prop}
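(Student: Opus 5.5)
We argue by contradiction, following the blow-up strategy sketched above. Translate so that $x=0$, $\nabla u(0)=0$. Suppose the centered sections $S^c_h(u,0)$ are not round. Then there is a sequence $h_j\to 0$ such that the normalized sections are increasingly eccentric. Concretely, let $E_j$ be the John ellipsoid of $S^c_{h_j}(u,0)$ and write $E_j = \sqrt{h_j}\,A_j^{-1}\bB$ with $A_j$ normalized by $\det A_j=1$. The bound $\det D^2u\approx 1$, together with the standard volume estimate for sections of Monge--Amp\`ere, gives $|S^c_h|\sim h^{n/2}$. This justifies the $\sln$ normalization up to a bounded scalar. Non-roundness then means $\|A_j\|\to\infty$ along some subsequence.

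Consider the rescalings $u_j(X):=h_j^{-1}u(\sqrt{h_j}A_j^{-1}X)$, defined on $\Omega_j := h_j^{-1/2}A_j\Omega$. By construction $\nabla u_j$ transports onto $\Omega_j' := h_j^{-1/2}A_j^{-T}\Omega'$, with densities $g(\sqrt{h_j}A_j^{-1}\cdot)$ and $g'$ correspondingly rescaled. These densities converge to the constants $g(0)$ and $g'(0)$, which we normalize to $1$.

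The $C^{1,\alpha}$-epigraph hypothesis means $\Omega$ agrees with $\tC$ up to an error $O(|x|^{1+\alpha})$ near $0$. Combined with Caffarelli's $C^{1,\delta}$ estimate, which shows the sections shrink at a polynomial rate $\mathrm{diam}\, S_h\lesssim h^{\gamma}$, this error is negligible after rescaling. Hence $(\Omega_j,\Omega_j')$ and $(A_j\tC, A_j^{-T}\tC')$ have the same Hausdorff limits on compact sets.

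We then invoke the monotonicity formula of \cite[Theorem 4.1]{TristanFreid}. It shows that along a further subsequence, after the possibly more refined normalization dictated by monotonicity, $u_j\to\varphi$, where $\varphi$ is $2$-homogeneous and solves \eqref{eq: introHOT} on a limiting pair $(\tC_\infty,\tC'_\infty)$. By the previous step, this pair is the Hausdorff limit of $(M_j\tC, M_j^{-T}\tC')$ with $M_j\in\sln$. The limiting cones have non-empty interior and are positively aligned, because the normalized sections contain a fixed ball and $\varphi$ is a genuine solution. So this is an $\sln$-degeneration.

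Whether $\|M_j\|\to\infty$ must be checked carefully: the matrices produced by monotonicity must be comparable to the John normalizations $A_j$. I expect this bookkeeping to be the main obstacle. I would control it through the uniform $C^{1,1-\epsilon}$ and doubling estimates, which bound how much the section normalization can change between scales $h$ and $h/2$.

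OT-separability then gives $(\tC_\infty,\tC'_\infty)=M\cdot(\tC,\tC')$ for some $M\in\sln$. So $M_j\cdot(\tC,\tC')$ converges to a point of the orbit. Condition (LC) says the orbit is open in its closure, hence a locally compact homogeneous space. Convergence within the orbit therefore lifts: there are $g_j\in\mr{Aut}(\tC,\tC')$ with $M^{-1}M_jg_j$ bounded. Since $\mr{Aut}(\tC,\tC')$ is compact, $M_j$ itself is bounded, contradicting $\|M_j\|\to\infty$.

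The lifting step needs the orbit map $\sln/\mr{Aut}\to\sln\cdot(\tC,\tC')$ to be a homeomorphism. This follows from local closedness via the Baire category argument for orbits of Lie group actions on a Polish space, here pairs of closed cones with the local Hausdorff metric.

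Hence the sections are round: $B_{C^{-1}\sqrt h}\subset S^c_h(u,0)\subset B_{C\sqrt h}$ for all small $h$. The pointwise $C^{1,1}$ bound \eqref{eqn:C11atBdy} then follows as recalled above.
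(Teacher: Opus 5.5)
Your proposal is correct and follows the paper's proof. Non-roundness, through the blow-up of \cite[Theorem 4.1]{TristanFreid}, yields an $\sln$-degeneration $(\tC,\tC')\leadsto(\tC_\infty,\tC'_\infty)$ to a pair admitting an H.O.T.\ map, and OT-separability together with (LC) and compactness of $\mr{Aut}(\tC,\tC')$ rules this out; your Effros-type lifting argument only makes explicit the orbit step that the paper asserts without proof. One minor point: absorbing the $C^{1,\alpha}$-epigraph error under the anisotropic rescaling needs the $C^{1,1-\epsilon}$ control of both $u$ and $u^*$ from \cite{TristanFreid}, not Caffarelli's small-$\delta$ estimate, but this is in any case part of the cited blow-up theorem.
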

\begin{proof}
    We argue by contradiction; suppose that roundness at $(x, \nabla u(x)) = (0,0)$ fails. 
    From~\cite[Theorem 4.1]{TristanFreid}, normalizing the sections $S^h_c(u,0)$ as $h \to 0$ produces a degeneration $(\tC,\tC')\leadsto (\tC_{\infty},\tC_{\infty}')$  with the limit  pair $(\tC_{\infty},\tC_{\infty}')$ admitting a~\hyperref[eqn:HOT]{H.O.T.} map. By (LC), the limiting pair $(\tC_\infty, \tC'_\infty)$ is not $\sln$ equivalent to $(\tC,\tC')$ violating OT-separability.
\end{proof}

The following result is, in some sense, a converse to the preceding result.

\begin{prop}\label{prop:NotHotNotC11}
   Let $\nabla u:\Omega \rightarrow \Omega'$ be an optimal transport map between bounded convex sets.
   Let $(\mathtt{C}, \tC')$ be the tangent cone of $(\Omega,\Omega')$ and $(x, \nabla u(x)) \in \p\Omega \times \p \Omega'$. If $(\tC, \tC')$ does not admit a solution to~\eqref{eqn:HOT}, then $u$ is not $C^{1,1}$ at $x$.
\end{prop}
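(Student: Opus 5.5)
We argue by contraposition: assuming $u$ is pointwise $C^{1,1}$ at $x$, we produce a solution of~\eqref{eqn:HOT} on the tangent cone pair $(\tC,\tC')$. After translating, we may assume $x=0$, $\nabla u(0)=0$ and $u(0)=0$. Pointwise $C^{1,1}$ at $0$ means
\[
C^{-1}|X|^2\le u(X)\le C|X|^2 .
\]
By the discussion preceding Proposition~\ref{prop:OTsep->round}, this two-sided bound forces the centered sections $S^c_h(u,0)$ to be round. We will also use $|\nabla u(X)|\le C|X|$; this follows from the upper quadratic bound together with convexity, applied on the minimal convex extension $\bar u$.

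\textbf{Rescaling.} Consider the quadratic blow-ups
\[
u_r(X):=r^{-2}u(rX)\quad\text{on } r^{-1}\Omega .
\]
These satisfy the same two-sided bounds uniformly in $r$, and
\[
\det D^2u_r(X)=\frac{g(rX)}{g'(\nabla u_r(X))}, \qquad \nabla u_r(r^{-1}\Omega)=r^{-1}\Omega' .
\]
Because the sections are round, no $\sln$ normalization is needed, unlike in Proposition~\ref{prop:OTsep->round}; the matrices $M_j$ stay bounded, and we may take them to be the identity. As $r\to0$, the sets $r^{-1}\Omega$ and $r^{-1}\Omega'$ converge locally in the Hausdorff sense to $\tC$ and $\tC'$. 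The functions $u_r$ are locally uniformly Lipschitz and convex, so a subsequence converges locally uniformly to a convex $\varphi$ on $\tC$ with $C^{-1}|X|^2\le\varphi\le C|X|^2$. By stability of Alexandrov solutions, and continuity of $g,g'$ at the boundary point, $\varphi$ solves $\det D^2\varphi=\text{const}>0$. Rescaling $\varphi$ by a constant, or equivalently using $g(0)/g'(0)$, normalizes this to $\det D^2\varphi=1$.

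\textbf{Gradient image.} The condition $\nabla\varphi(\tC)=\tC'$ passes to the limit by the standard argument: Hausdorff convergence of gradient images of converging convex functions on converging convex domains, combined with the mass balance $|\nabla\varphi(K)|=|K|$ for compact $K$. This gives $\nabla\varphi(\tC)\subset\ol{\tC'}$ and full-measure coverage.

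\textbf{Homogeneity (main obstacle).} A subsequential blow-up is not automatically $2$-homogeneous. Here we invoke the monotonicity formula of~\cite{TristanFreid}, the same input as in~\cite[Theorem 4.1]{TristanFreid} used in Proposition~\ref{prop:OTsep->round}. The monotone quantity attached to the sections is bounded above and below by roundness, so it has a limit as $h\to0$. Its constancy along the blow-up sequence then forces the limit $\varphi$ to be $2$-homogeneous, i.e.\ $\varphi(tX)=t^2\varphi(X)$. The cited theorem already produces a homogeneous blow-up on the limiting pair $(\tC_\infty,\tC'_\infty)$. Since in the round case $\tC_\infty=\tC$ and $\tC'_\infty=\tC'$, that homogeneous limit is a solution of~\eqref{eqn:HOT} on $(\tC,\tC')$, contradicting the hypothesis. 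Hence $u$ is not $C^{1,1}$ at $x$.

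The delicate points are two: checking that the monotonicity argument applies under only the stated density assumptions, and confirming that roundness gives convergence to the original cone pair, without an $\sln$ twist, by compactness of the bounded normalizing matrices.
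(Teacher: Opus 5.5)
Your proposal is correct and follows essentially the same route as the paper: pointwise $C^{1,1}$ at $x$ gives round sections, the monotonicity blow-up of \cite[Theorem 4.1]{TristanFreid} produces an H.O.T.\ map on the limit pair, and because the normalizing matrices stay bounded that pair is $M\cdot(\tC,\tC')$ for some $M\in\sln$, so $\varphi_\infty\circ M$ solves~\eqref{eqn:HOT} on $(\tC,\tC')$ itself (this is slightly more precise than writing $\tC_\infty=\tC$). Your extra rescaling, stability and gradient-image steps are already contained in the cited theorem, so they add detail but no new ingredient; note also that the density term there should read $g'(r\nabla u_r(X))$, not $g'(\nabla u_r(X))$.
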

\begin{proof}
    If $u$ is $C^{1,1}$ at $x \in \p \Omega$, then the sections of $u$ are round and so by~\cite[Theorem 4.1]{TristanFreid}, there exists a \hyperref[eqn:HOT]{H.O.T.} map on $(\tC, \tC')$ which is an $\sln$-degeneration of $(\tC, \tC')$, a contradiction.
\end{proof}

\subsection{Positive alignment and locally closed orbits}\label{sec: posAlginLC}
We consider the pair of open convex cones $(\tC, \tC')$ and show that positive alignment is equivalent to the existence of a homogeneous convex function whose gradient maps $\tC$ to $\tC'$.

\begin{lem}\label{lem:posAlign}
    The following are equivalent:
    \begin{enumerate}
        \item[$(i)$] The pair $(\tC,\tC')$ is positively aligned.
         \item[$(ii)$] Both $-\ol{\tC^\vee}\cap \ol{\tC'}$ and $\ol{\tC} \cap -\ol{(\tC')^\vee}$ consist of only the origin.
        \item[$(iii)$] There are positive, $2$-homogeneous, convex functions $\phi : \tC \to \bR_+$ and $\phi' : \tC' \to \bR_+$ such that $\p \varphi({\tC}) = \ol{\tC'}\setminus\{0\}$ and $\p \phi' ({\tC'}) = \ol{\tC}\setminus\{0\}$.
    \end{enumerate}
\end{lem}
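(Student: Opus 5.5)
The plan is to prove the cycle $(i)\Rightarrow(ii)\Rightarrow(i)$ by elementary convex duality. We then get $(i)\Rightarrow(iii)$ by an explicit Legendre-transform construction, and close with $(iii)\Rightarrow(ii)$ using Euler's identity.

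\emph{$(i)\Rightarrow(ii)$.} Pick $X_0\in \tC\cap(\tC')^\vee$, and suppose $Y\in \ol{\tC'}\cap(-\ol{\tC^\vee})$.
\begin{itemize}
\item Since $X_0\in(\tC')^\vee$, we have $\la X_0,Y\rg\ge 0$.
\item Since $-Y\in\ol{\tC^\vee}$, we have $\la X,Y\rg\le 0$ for all $X\in\tC$.
\end{itemize}
Hence the linear function $X\mapsto\la X,Y\rg$ attains its maximum $0$ over $\tC$ at the point $X_0$. Because $\tC$ is open, this forces $Y=0$. The statement for $\ol{\tC}\cap-\ol{(\tC')^\vee}$ is symmetric, using a point of $\tC^\vee\cap\tC'$.

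\emph{$(ii)\Rightarrow(i)$.} Argue by contraposition. If $\tC\cap(\tC')^\vee=\emptyset$, separate these two disjoint convex cones by a hyperplane through the origin. This gives $Y\neq0$ with $\la X,Y\rg\ge0$ on $\tC$ and $\la X,Y\rg\le 0$ on $(\tC')^\vee$. Then $Y\in\ol{\tC^\vee}$ and $-Y\in\ol{\tC'}$ by bipolarity, so $-Y$ is a nonzero element of $\ol{\tC'}\cap-\ol{\tC^\vee}$, contradicting (ii). The degenerate cases need a brief separate check: when $(\tC')^\vee$ is lower dimensional, it is defined as a relative interior, and when $\tC'$ is the whole space, $(\tC')^\vee$ reduces to $\{0\}$. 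In each case the same separation applies after restricting to the span of the dual cone.

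\emph{$(i)\Rightarrow(iii)$.} Fix $X_0\in\tC\cap(\tC')^\vee$; by the argument above, $\la X_0,Y\rg>0$ for all nonzero $Y\in\ol{\tC'}$. Define
\[
\phi(X):=\sup_{Y\in\ol{\tC'}}\Big(\la X,Y\rg-\tfrac12\la X_0,Y\rg^2\Big)=\tfrac12 h(X)_+^2,\qquad h(X):=\sup_{Y\in\widehat\Sigma}\la X,Y\rg,
\]
where $\widehat\Sigma=\ol{\tC'}\cap\{\la X_0,\cdot\rg=1\}$ is compact by (ii). The function $\phi$ is convex, finite and $2$-homogeneous.

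\emph{Positivity on $\tC$.} Suppose $h(X)\le0$ for some $X\in\tC$. Then $X\in\ol{\tC}\cap-\ol{(\tC')^\vee}$ is nonzero, contradicting (ii). So $h>0$ on $\tC$.

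\emph{Subgradients land in $\ol{\tC'}\setminus\{0\}$.} By the envelope formula, subgradients at $X\in\tC$ are convex combinations of maximizers, so they lie in $\ol{\tC'}$. They are nonzero because Euler's identity gives $\la X,Y\rg=2\phi(X)>0$ for $Y\in\p\phi(X)$.

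\emph{Surjectivity.} This is the step I expect to be the main obstacle, and the form of $\phi$ as a Legendre transform is chosen precisely to handle it. Write $\phi=(\psi+\mathbf 1_{\ol{\tC'}})^*$ with $\psi(Y)=\frac12\la X_0,Y\rg^2$. Then the inverse subdifferential is $(\p\phi)^{-1}(Y)=\p(\psi+\mathbf 1_{\ol{\tC'}})(Y)\ni \la X_0,Y\rg X_0$, since the normal cone contains $0$. For $Y\neq0$ in $\ol{\tC'}$ the coefficient $\la X_0,Y\rg$ is positive, so $Y\in\p\phi(\la X_0,Y\rg X_0)$ with $\la X_0,Y\rg X_0\in\tC$. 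This proves $\p\phi(\tC)=\ol{\tC'}\setminus\{0\}$. Swapping roles, using a point of $\tC^\vee\cap\tC'$, gives $\phi'$.

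\emph{$(iii)\Rightarrow(ii)$.} Suppose $Y\neq 0$ lies in $\ol{\tC'}\cap-\ol{\tC^\vee}$. By (iii), $Y\in\p\phi(X)$ for some $X\in\tC$. Euler's identity for $2$-homogeneous convex functions then gives $\la X,Y\rg=2\phi(X)>0$. On the other hand, $Y\in-\ol{\tC^\vee}$ forces $\la X,Y\rg\le0$, a contradiction. The other intersection is handled in the same way using $\phi'$.
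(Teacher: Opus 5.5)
Your proof follows the paper's route. You use duality and separation for (i)$\Leftrightarrow$(ii) and Euler's identity for the implication out of (iii). For (i)$\Rightarrow$(iii) you take the square of the support function of the cross-section of $\ol{\tC'}$ cut out by a point $X_0\in\tC\cap(\tC')^\vee$, which is the paper's $v(X)=\sup_{Y\in\tC',\,\la e_n,Y\rg=1}\la X,Y\rg$. Surjectivity comes from the fact that the single ray through $X_0$ already sweeps out $\ol{\tC'}\setminus\{0\}$: the paper says $\p v(te_n)$ is the whole link, and your Fenchel-duality observation $Y\in\p\phi(\la X_0,Y\rg X_0)$ is the same fact.

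The one step whose justification is wrong is the claim that $\widehat\Sigma$ is compact ``by (ii)'', equivalently that $\la X_0,Y\rg>0$ for all nonzero $Y\in\ol{\tC'}$ ``by the argument above''. Neither (ii) nor your $(i)\Rightarrow(ii)$ argument gives this. What gives it is that $X_0$ is an \emph{interior} point of the dual cone of $\tC'$, and this holds exactly when $\tC'$ is pointed.

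For non-pointed cones the claim is false. Take $\tC=\tC'=\{X_n>0\}$ and $X_0=e_n$: these satisfy (i) and (ii), yet $\widehat\Sigma=\{Y_n=1\}$ is unbounded and $h=+\infty$ off the $e_n$-axis, so your $\phi$ is not finite on $\tC$. Statement (iii) still holds in this example, for instance with $\phi=\tfrac12\max\{X_n,|X'|\}^2$, but it needs a different construction.

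The paper's construction carries exactly the same implicit pointedness assumption. So for pointed $\tC,\tC'$ your argument is complete once the compactness is attributed to $X_0$ lying in the interior of the dual cone rather than to (ii).
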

\begin{proof}
    We first show that $(\tC')^{\vee} \cap \tC \neq \emptyset$ if and only if $ -(\tC)^{\vee} \cap \tC' = \emptyset$ which shows $(i) \Leftrightarrow (ii)$. 
    For the first direction, assume for the sake of contradiction that there is some non-zero vector $Y\in\tC'\cap -\tC^\vee$.  
    Then $\langle X,Y \rangle <0$ for all $X \in \tC$. 
    On the other hand, by assumption, there is some $X\in \tC$ such that $\langle X, Y \rangle >0$, a contradiction.
    For the other direction, suppose to the contrary that $(\tC)^{\vee} \cap \tC'= \emptyset$.
    Since $\tC^{\vee},\tC'$ are convex sets, there exists some $X$ such that
    \[
    \tC' \subset \{ Y:  \langle X, Y \rangle<0\} \qquad \text{and}\qquad  \tC^{\vee} \subset \{ Y: \la  X, Y \rangle>0\},
    \]
    This yields $X \in \tC$ and $-X \in (\tC')^{\vee}$, a contradiction.

    We now show $(iii) \Rightarrow (i)$.
    Suppose there is a non-negative homogeneous convex function $\phi(X)$ on $\tC$ such that $\p \phi(\tC)=\tC'$.  
    Suppose to the contrary that $\tC\cap -(\tC')^{\vee} \neq \emptyset$.  
    Then, there is some non-zero vector $X \in \tC$ such that $\langle X, Y \rangle <0$ for all $Y\in \tC'$.  
    On the other hand, since $\phi$ is homogeneous and convex, we have $\langle  X,\nabla \phi(X) \rangle \propto \phi(X) >0$ since $\nabla \phi$ maps $\tC$ to $\tC'$.
    Therefore, it follows that there is some $Y\in \tC'$ for which $\langle X, Y \rangle >0$, a contradiction. 
    Applying the same argument to $\phi'$ shows the dual property of positive alignment.

    To show $(i) \Rightarrow (iii)$, we consider the function $\phi(X) := v(X)^2$ where 
    \[
    v(X) := \sup_{Y \in \tC', \la e_n, Y\rg = 1}\langle X, Y \rangle\qquad \implies \qquad \p \phi(X) = 2 v(X) \p v(X) \subset \ol{\tC'} \setminus \{0\}
    \]
    recalling $e_n \in \tC \cap (\tC')^\vee$. 
    Therefore, $\p v(te_n) = \{Y_n = 1\} \cap \tC'$, which establishes the desired subgradient property.
    The same construction, but dualized using $e_n^*$, produces $\phi'$. 

\end{proof}
This lemma shows that positive alignment is a necessary condition to the existence of a homogeneous optimal transport map. 
The coordinates yielding $Q$ and $\Sigma$ as the affine links of $\tC^\vee$ and $\tC'$ are guaranteed by positive alignment. 
In fact, defining the cones $\tC^\vee := \{(ty,t) : y \in Q\}$ and $\tC' := \{ (ty, t) : y \in \Sigma\}$ directly implies positive alignment, provided that $Q \cap \Sigma \neq \emptyset$ and both are bounded.
Any point $(y, 1) \in V^*$ for $y \in Q \cap \Sigma$ shows the first half of positive alignment.
The hyperplane defined by $H := \{(y, 0) \}\subset V^*$ has its orthogonal complement generated by $e_n \in V$, which can be chosen such that $\la e_n, Y\rg > 0$ for all $Y \in \tC^\vee$ and $Y \in \tC'$ by letting $e_n$ be the inward normal direction of $H$.

\begin{example}
    If $(\tC, \tC')$ are half-spaces, $\tC = \{ X: \langle X,\nu \rangle >0\}$ and $\tC' = \{Y: \langle \nu', Y \rangle >0\}$, then they are positively aligned if and only $\langle \nu,\nu'\rangle >0$.

    On the other hand, if $\tC$ is such that $\tC^\vee \subset \p \tC'$, then the cones are not positively aligned. 
    This can occur if $\tC$ splits a line $L$, so $\tC^\vee \subset L^\perp$ which does not meet $\tC'$.
\end{example}

We now comment on condition (LC) introduced in Definition~\ref{defn:LC}. 
Natural classes of cones satisfying (LC) include polyhedral cones, piecewise algebraic cones, semianalytic and subanalytic cones, and, quite generally, cones definable in an $o$-minimal expansion of $\bR$; see~\cite{vandenDriesSurvey,ominimalbook} for definitions and properties of these objects as well as a more general survey on $o$-minimal expansions.

As a concrete example, recall that a semianalytic set is any set locally describable as a finite Boolean combination of sets of the form $\{f=0\}$ or $\{g>0\}$ for $f,g$ real analytic functions. The set of semianalytic cones already includes polyhedral and (semi)algebraic cones.

Even more broadly, any pair of cones definable in an $o$-minimal extension of $\bR$ satisfies (LC), as we show below.
We refer the reader to~\cite{ominimalbook} for background showing the above classes are $o$-minimal definable, although for the purposes of this paper, the following proposition is all that will be required.
\begin{prop}\label{prop: oMinimalLC}
    If $(\tC, \tC')$ are definable in an $o$-minimal expansion of $\bR$, they satisfy $(\textup{LC})$.
\end{prop}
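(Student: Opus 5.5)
The plan is to reduce (LC) to a statement about definable sets in a finite-dimensional parameter space, and then use the standard o-minimal fact that a definable set is locally closed at generic points. Homogeneity under $\sln$ will then upgrade this to local closedness everywhere along the orbit.

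\textbf{Step 1: encode the orbit as a definable family.} Intersect cones with the closed unit ball, so a cone $\tC$ is encoded by the compact set $K_{\tC}:=\ol{\tC}\cap\ol{\bB}$. Hausdorff convergence on compact sets is then equivalent to Hausdorff convergence of these compact sets. The sets
\[
\mathcal{F}:=\{(M,X,Y)\in \sln\times V\times V^*:\ X\in \ol{M\tC}\cap \ol{\bB},\ Y\in \ol{M^{-T}\tC'}\cap\ol{\bB^*}\}
\]
form a definable family of compact sets parametrized by $M\in\sln$. Pairs in the orbit are thus fibers $\mathcal{F}_M$. Moreover, $\mr{Aut}(\tC,\tC')=\{M:\mathcal{F}_M=\mathcal{F}_{\mr{Id}}\}$ is a definable (hence closed, Lie) subgroup.

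\textbf{Step 2: a definable parameter space containing the closure.} I will invoke the theorem on Hausdorff limits of definable families of compact sets (van den Dries; Lion--Speissegger). It says that the collection of all Hausdorff limits of fibers of $\mathcal{F}$ is again a definable family $\{\mathcal{G}_b\}_{b\in B}$ over a definable parameter set $B\subset\bR^N$. One may also arrange that $b\mapsto\mathcal{G}_b$ is injective, by taking definable choice on the equivalence relation $\mathcal{G}_b=\mathcal{G}_{b'}$, which is definable. The harder part is the topology. We need the Hausdorff topology on $\{\mathcal{G}_b\}$ to correspond to a topology on $B$ in which closure is compatible with o-minimal dimension theory. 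I would handle this by noting that the Hausdorff distance $d_H(\mathcal{G}_b,\mathcal{G}_{b'})$ is a definable function on $B\times B$. Hence $(B,d_H)$ is a definable metric space. The orbit $O$ corresponds to a definable subset $B_O\subset B$, and its Hausdorff closure $\ol{O}$ to all of $B$ (after restricting). The $\sln$-action $M\cdot b$ is definable and acts by $d_H$-homeomorphisms; it is even bi-Lipschitz for $M$ in compact sets, since $M$ distorts distances in the ball by at most $\|M\|$ up to the renormalization of the ball.

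\textbf{Step 3: generic local closedness plus homogeneity.} In a definable metric space, the set of points of $B_O$ near which $B_O$ is relatively open in its closure is definable. Its complement in $B_O$ has dimension $<\dim B_O$; this is the o-minimal frontier/dimension inequality $\dim(\ol{S}\setminus S)<\dim S$ and its local version. Hence there is at least one point $b_0\in B_O$ with a neighborhood $U$ such that $U\cap\ol{B_O}=U\cap B_O$. Now transport by $M\in\sln$. Since $M$ is a homeomorphism of $\ol{B_O}$ preserving $B_O$, every point of $B_O$ has such a neighborhood, so $O$ is open in $\ol{O}$. Therefore $\ol{O}\setminus O$ is closed, which is (LC).

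The main obstacle is Step 2. Applying dimension theory requires that the Hausdorff topology on cone pairs be realized as a definable metric topology on a finite-dimensional definable set in which frontier-dimension results hold. If the definable metric $d_H$ is awkward to work with directly, I would instead prove the needed frontier inequality by hand. Choose an injective definable parametrization of the orbit by $\sln/\mr{Aut}$, using definable choice. Use the fact that, by the curve selection lemma, any point of $\ol{O}\setminus O$ is a limit along a definable curve $M_t$ with $\|M_t\|\to\infty$ modulo $\mr{Aut}$. Then show that such limits cannot accumulate back onto $O$ at generic points, via cell decomposition of the definable map $M\mapsto \mathcal{F}_M$.
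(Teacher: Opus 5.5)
Steps 1--2 are sound, and so is the homogeneity half of Step 3. Each $M\in\sln$ acts on $\ol{O}$ by $d_H$-homeomorphisms preserving $O$, so one good point suffices. The gap is the other half of Step 3: the claim that the points of $B_O$ where $B_O$ fails to be open in its closure form a set of dimension $<\dim B_O$. The frontier inequality $\dim(\ol{S}\setminus S)<\dim S$ is a theorem about closures in the Euclidean topology of $\bR^N\supset B$. Condition (LC) is about closures in the $d_H$-topology on $B$, and nothing in Steps 1--2 relates the two. The map $b\mapsto\mathcal{G}_b$ need not be continuous. More importantly, its inverse need not be: parameters far apart in $B$ (e.g.\ $M,M'$ with $M^{-1}M'$ far from $\mr{Aut}(\tC,\tC')$) can have $d_H$-close fibers. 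That self-accumulation is exactly what (LC) must exclude, and it is what happens in the non-definable example in the footnote of Section~\ref{sec:ModuliMoment}. So applying the frontier inequality to $d_H$-closures assumes the conclusion. Your fallback does not close the gap either. The phrase ``limits cannot accumulate back onto $O$ at generic points'' is a restatement of (LC). Cell decomposition applies to definable maps into some $\bR^m$ with its Euclidean topology, not to $M\mapsto\mathcal{F}_M$ viewed in the hyperspace with $d_H$.

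The missing ingredient is a tameness theorem for definable metric spaces, and this is what the paper imports. By Walsberg's Theorem~8.3, the orbit with the (definable) Hausdorff metric is metrically locally Euclidean. Hence it is locally compact, hence open in its closure, which gives (LC) with no dimension count. Alternatively, you can repair your Step 3 with Walsberg's characterization: a definable metric space with no infinite definable discrete subset is definably homeomorphic to a definable set with its Euclidean topology. This applies to $(\ol{O},d_H)$. It is compact by Blaschke selection, so its discrete subsets are countable, whereas infinite definable sets are uncountable. A definable homeomorphism $\Phi$ preserves closures and dimension, so $\dim(\ol{S}^{d_H}\setminus S)<\dim S$ follows from the Euclidean inequality applied to $\Phi(S)$. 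With that lemma in place, your generic-point-plus-homogeneity argument is a valid alternative to the paper's local-compactness ending. It still rests on the same external input the paper uses.
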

\begin{proof}
    Let $D := \tC \cap \p\bB$ and $D' := \tC' \cap \p\bB^*$ be the restrictions of the cones to the unit spheres. 
    The $\sln$ action on points $(X,Y) \in \p\bB \times \p\bB^*$ is given by $g\cdot (X,Y) := (\frac{gX}{|gX|}, \frac{g^{-T}Y}{|g^{-T}Y|})$.
    The Hausdorff metric is definable on $o$-minimal definable sets, which is the metric we consider on the cones for $D$ and $D'$; cf.~\cite[\S 3.1]{Walsberg}.
    Furthermore,~\cite{OminimalLionSp} shows that Hausdorff limits of $o$-minimal definable sets are also definable.
    Remarkably,~\cite[Theorem 8.3]{Walsberg} shows that the $\sln$ orbit of $(D, D')$ is metrically locally Euclidean, for some large, but finite, dimension, after quotienting out by Hausdorff distance zero differences.
    In particular, the orbit is locally closed.
\end{proof}
Proposition~\ref{prop: oMinimalLC} applies, for example, to subanalytic cones since $\tC \cap \p\bB$ and $\tC' \cap \p\bB^*$ are bounded subanalytic sets, so their cones are globally subanalytic and hence definable in an $o$-minimal extension \cite{vandenDriesSurvey}. 
For our purposes, we only need the locally closedness property of $o$-minimal definable cones above as well as the general fact that any reasonably tame algebraic hypothesis is $o$-minimal definable.
For the remainder of this subsection, we assume all pairs satisfy (LC).

\subsection{Homogeneous variational framework}\label{subsec:variational}

Fix a pair of convex cones $\tC\times \tC' \subset V \times V^*$ equipped with the duality pairing. 
We consider the space of strictly positive, convex, homogeneous functions
\begin{equation}\label{HisforHonvex} 
\cH := \bigl\{\text{convex } \varphi: \tC \to \bR_+, \varphi(\lambda X) = \lambda^2 \varphi(X), 0 < \inf_P \varphi \leq \sup_P \varphi < \infty
\bigr\}.
\end{equation} 
We will assume $\varphi \in \cH$ takes on the value $+\infty$ outside $\tC$. 
We will use $v(X) := \sqrt{\varphi(X)}$ which is convex and $1$-homogeneous.
throughout, we will denote both $\varphi$ and $v$ as elements of $\cH$.
Furthermore, we will use $v(x)$ to denote the restriction of $v$ onto the affine link $\{X_n = 1\}$ with $x := \frac{1}{X_n}(X_1,\ldots, X_{n-1})$ as established above, so the identity $\varphi(X) = (X_n v(x))^2$ always holds by definition.
We may choose a representative of an equivalence class in $\bP(\cH)$ as follows: fix a choice $X_0 \in \tC$ and define $\cH_{X_0} := \{\varphi \in \cH : \varphi(X_0) = 1\}$.
We mainly use the normalization $\varphi \in \cH_{X_0}$ for $X_0 = (0,\ldots, 0,1)$ corresponding to $v(0) = 1$. 

We can now define the functionals
\begin{equation}\label{eqn:functionals}
I(\varphi) := |\{\varphi < 1\}|, \quad J(\varphi) = 2^{-n}|\{\varphi^* < 1\}\cap \tC'|, \quad \text{ and }\quad 
\cM(\varphi) := \log I(\varphi) + \log J(\varphi). 
\end{equation}
Throughout, we will denote the regions 
\[
K := \{\varphi < 1\} \qquad \text{and}\qquad K' := \{\varphi^* < 1/4\} \cap \tC' = K^\circ \cap \tC',
\]
and we define their \textbf{heights} by $\sup_K X_n$ and $\sup_{K^\circ} Y_n$, respectively. 
The constant $\frac{1}{4}$ is chosen so that the polar duality formulation holds. 
These are chosen so that $I(\varphi) = |K|$ and $J(\varphi) = |K'|$. 

\begin{remark}
The functionals $I,J$ are equivalent, up to a dimensional constant, to the weighted volume measures $\int_{\tC}e^{-\varphi}dX$ and $\int_{\tC'} e^{-\varphi^*}dY$. Indeed, using 2-homogeneity:
\[
\int_{\tC}e^{-\varphi}\,dX= \int_0^\infty e^{-t}|\{\varphi < t\}|\,dt = |\{\varphi < 1\}|\int_0^\infty e^{-t}t^{\frac{n}{2}}\, dt = \Gamma(\tfrac{n}{2}+1)I(\varphi)
\]
with a similar equation holding for $J$, with a different dimensional constant corresponding to $\{\varphi^* < \frac{1}{4}\} \cap \tC'$. 
\end{remark}

\begin{lem}\label{lem:EL-eqn}
    A $C^2$ strictly convex critical point of $\cM$ solves equation~\eqref{eqn:HOT}, after possibly rescaling.
\end{lem}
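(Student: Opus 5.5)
Since $\cM(\varphi)=\log\int_{\tC}e^{-\varphi}\,dX+\log\int_{\tC'}e^{-\varphi^*}\,dY$ up to additive dimensional constants (by the preceding Remark), we will compute the first variation in that form. Let $\varphi\in\cH$ be $C^2$ and strictly convex, and let $\psi$ be a $2$-homogeneous function on $\tC$, smooth on the link and compactly supported there, so that $\varphi_s:=\varphi+s\psi\in\cH$ for small $|s|$. The first term has derivative $-\int_{\tC}\psi e^{-\varphi}\,dX\big/\int_{\tC}e^{-\varphi}\,dX$.

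For the dual term we use the standard formula for Legendre transforms: $\frac{d}{ds}\varphi_s^*(Y)=-\psi(\nabla\varphi^*(Y))$. Strict convexity and $C^2$ regularity give that $\nabla\varphi$ is a $C^1$ diffeomorphism of $\tC$ onto its image. We first check that this image is $\tC'$, or at least that $\varphi^*$ is finite exactly there. Changing variables $Y=\nabla\varphi(X)$, for which $\varphi^*(\nabla\varphi(X))=\varphi(X)$ by Euler's relation for $2$-homogeneous functions, gives
\[
\frac{d}{ds}\Big|_{s=0}\log\int_{\tC'}e^{-\varphi_s^*}\,dY=\frac{\int_{\tC}\psi(X)\,e^{-\varphi(X)}\det D^2\varphi(X)\,dX}{\int_{\tC'}e^{-\varphi^*}\,dY}.
\]
Setting the sum of the two derivatives to zero for all admissible $\psi$ yields
\[
\int_{\tC}\psi\,e^{-\varphi}\Big(\frac{\det D^2\varphi}{\int_{\tC'}e^{-\varphi^*}}-\frac{1}{\int_{\tC}e^{-\varphi}}\Big)\,dX=0 .
\]

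Next we pass to a pointwise statement. Writing integrals over $\tC$ in polar form over the link and using that $\psi$ is $2$-homogeneous and $\det D^2\varphi$ is $0$-homogeneous, the radial integral factors out as a common positive weight. Since $\psi|_{\text{link}}$ is arbitrary, we conclude that $\det D^2\varphi\equiv c$, a positive constant. Replacing $\varphi$ by $c^{-1/n}\varphi$ keeps $2$-homogeneity and gives $\det D^2\varphi=1$.

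The main obstacle is the boundary condition $\nabla\varphi(\tC)=\tC'$, which is not visible from the Euler–Lagrange computation. Critical points are taken over $\cH$, with $\varphi^*$ the Legendre transform restricted to $\tC'$. If $\nabla\varphi(\tC)\neq\tC'$, then $J$ includes a region of $\tC'$ not hit by $\nabla\varphi$. On that region $\varphi^*$ is attained at the boundary of $\tC$, or, by the subgradient description in Lemma~\ref{lem:posAlign}, through degenerate supporting directions. We would therefore choose variations $\psi$ that are concentrated near $\partial\tC$ on the link; these still lie in $\cH$, being positive and convex after a small perturbation. Such variations change $\varphi^*$ on $\tC'\setminus\overline{\nabla\varphi(\tC)}$ at first order while changing $\int_{\tC}e^{-\varphi}$ negligibly, which contradicts criticality. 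We expect this to require some care, likely through the Aleksandrov-measure interpretation, and one may simply include it in the notion of critical point, as the phrase ``after possibly rescaling'' in the statement suggests.
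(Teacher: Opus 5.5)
There is a genuine gap, and it sits exactly at what you call the main obstacle. The change of variables $Y=\nabla\varphi(X)$ in the dual term is valid only over $A:=\{X\in\tC:\nabla\varphi(X)\in\tC'\}$, not over all of $\tC$. Two facts force this restriction:
\begin{enumerate}
\item $J$ integrates $e^{-\varphi^*}$ only over $\tC'$. Note that $\varphi^*$ is finite on all of $V^*$, since $\varphi\ge c|X|^2$ on $\tC$, so the target cone enters only through this domain of integration and not through finiteness of $\varphi^*$.
\item For $Y\in\tC'$ with $\nabla\varphi^*(Y)\in\partial\tC$, the integrand $\psi(\nabla\varphi^*(Y))$ vanishes, because the admissible variations $\psi=X_n^2\psi_0(x)$ with $\psi_0\in C^2_c(P)$ vanish near $\partial\tC$.
\end{enumerate}
Your formula with $\int_{\tC}\psi e^{-\varphi}\det D^2\varphi\,dX$ therefore presupposes $\nabla\varphi(\tC)\subset\tC'$, which is part of what must be proved. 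So the conclusion $\det D^2\varphi\equiv c$ is not justified as written.

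The same observation defeats your proposed repair. Compactly supported variations do not move $\varphi^*$ at first order on the part of $\tC'$ missed by $\nabla\varphi$, since the maximizers there lie on $\partial\tC$, where $\psi=0$. Variations that do not vanish on $\partial\tC$ need not keep $\varphi+t\psi$ convex, because you have no lower bound on $D^2\varphi$ near $\partial\tC$, and they are not the variations criticality is tested against. That argument also ignores the overshoot case $\nabla\varphi(\tC)\not\subset\tC'$, which is exactly where your formula for $\delta J$ is wrong. Finally, building the boundary condition into the notion of critical point would make the lemma circular. The phrase ``after possibly rescaling'' refers only to normalizing the constant value of $\det D^2\varphi$.

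The missing idea is that, once the indicator of $A$ is kept, the Euler--Lagrange equation itself forces the boundary condition. Criticality gives
\[
\int_{\tC}\Bigl(\frac{(\det D^2\varphi)\chi_A}{2^nJ}-\frac{1}{I}\Bigr)\psi\,e^{-\varphi}\,dX=0\qquad\text{for all admissible }\psi,
\]
so by your (correct) link reduction, $(\det D^2\varphi)\chi_A=2^nJ/I$ on $\tC$. The right-hand side is strictly positive, so $\chi_A\equiv 1$. This means $A=\tC$, i.e.\ $\nabla\varphi(\tC)\subset\tC'$, and simultaneously $\det D^2\varphi=2^nJ/I$. Rescaling $\varphi$ to arrange $I=2^nJ$ then gives $\det D^2\varphi=1$. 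This is the paper's proof.

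The surjectivity you were worried about can also be read off the same identity. Integrating gives $\int_{\nabla\varphi(\tC)}e^{-\varphi^*}\,dY=\int_{\tC}e^{-\varphi}\det D^2\varphi\,dX=\int_{\tC'}e^{-\varphi^*}\,dY$, so $\nabla\varphi(\tC)$ has full measure in $\tC'$. It is the normalization of $\cM$ by both total masses that encodes this.
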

\begin{proof}
    It is sufficient to test variations $\psi$ given by $\psi(X) := X_n^2 \psi_0(x)$ for some $\psi_0 \in C^2_c(P)$.
    For $|t|\ll 1$, the function $\varphi_t = \varphi + t\psi$ is convex and $2$-homogeneous since $\varphi$ is strictly convex. Furthermore, since $\varphi = + \infty$ outside $\tC$, we have
    \[
    \frac{d}{dt}\bigg\vert_{t = 0}(\varphi + t\psi)^*(Y) = - \psi(\nabla \varphi^*(Y)),\qquad \text{for a.e. }Y \in \tC'.
    \]
    Therefore, we can compute
    \[
    \delta I_\varphi[\psi] = -\int_{\tC}\psi e^{-\varphi}\,dX \qquad \text{and}\qquad \delta J_\varphi[\psi]=\int_{\tC'}\psi(\nabla \varphi^*( Y)) e^{-\varphi^*}\,dY.
    \]
    Setting $Y = \nabla \varphi(X)$, we see $dY = \det D^2 \varphi(X)\,dX$. Euler's identity, $\la X ,\nabla \varphi(X)\rg= 2\varphi(X)$, implies $\varphi^*(\nabla \varphi(X)) = \varphi(X)$.
    Let 
    \[
    A:= \{X \in \tC : \nabla \varphi(X) \in \tC'\}=\nabla \varphi^* (\tC').
    \]
    We compute $\delta J_{\phi}[\psi] = \int_{A} \psi e^{-\varphi}\det D^2\varphi \,dX$, which means
    \begin{equation}\label{eqn:EL-eqn}
    \delta \cM_\varphi[\psi] = \int_{\tC} \left(\frac{(\det D^2 \varphi)\chi_A}{2^nJ} - \frac{1}{I}\right)\cdot \psi e^{-\varphi}\, dX = 0\qquad \text{ for all } \psi ,
    \end{equation}
    which holds if and only if $ \det D^2\varphi = \frac{2^nJ}{I}$  and $ A = \tC$. By rescaling $\varphi$, we can arrange $I(\varphi)= 2^nJ(\varphi)$ as desired.
\end{proof}

We now provide some estimates on the functionals $I$ and $J$ using the homogeneity to reduce them to integrals on an affine link.
The definitions of $I$ and $J$ in~\eqref{eqn:functionals} agree with the functionals defined in~\cite{TristanBenjyFreid} (in turn inspired by \cite{TristanFreidYau}). The functional $J$ includes a free boundary domain $\Omega := \{v^* + \rho_{\Sigma} < 0\}$. 
\begin{lem}\label{lem:IJonlinks}
    Let $\varphi(X) = (X_n v(x))^2$.
    The functionals $I,J$ are equivalent to the following functions on the link
    \[
    I(\varphi) = \frac{1}{n}\int_P \frac{d x}{v(x)^n}\qquad \text{and}\qquad  J(\varphi) =  \int_{\Omega} (-v^*(y) - \rho_{\Sigma}(y))\, dy
    \]
\end{lem}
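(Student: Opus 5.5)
The plan is to reduce both functionals to the affine links by integrating along rays. For $I$ this is a direct change of variables. For $J$ the key step is to identify $K^\circ$ explicitly as a subgraph over the $Y'$-hyperplane, using the Legendre transform of $v$ on the link.

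\emph{The functional $I$.} Parametrize $\tC$ by $X = X_n(x,1)$ with $x \in P$ and $X_n>0$. The Jacobian of this change of variables is $dX = X_n^{n-1}\,dX_n\,dx$. Since $\varphi(X) = (X_n v(x))^2$, the set $K=\{\varphi<1\}$ is, over each fixed $x\in P$, the interval $0<X_n<1/v(x)$. Fubini then gives
\[
I(\varphi)=\int_P\int_0^{1/v(x)} X_n^{n-1}\,dX_n\,dx=\frac1n\int_P \frac{dx}{v(x)^n}.
\]

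\emph{Computing $\varphi^*$.} Let $v^\circ(Y):=\sup_{v(X)\le 1}\langle X,Y\rangle$ be the dual gauge of the $1$-homogeneous function $v$. For $\varphi=v^2$, optimizing first along rays gives
\[
\varphi^*(Y)=\sup_{s>0}\bigl(s\,v^\circ(Y)-s^2\bigr)=\tfrac14 v^\circ(Y)^2
\]
on the region where $v^\circ\ge 0$, which contains the relevant part of $\tC'$. Hence $\{\varphi^*<1\}=\{v^\circ<2\}=2\{v^\circ<1\}$. This has volume $2^n\,|\{v^\circ<1\}|$, so the normalization $2^{-n}$ in $J$ cancels, and $J(\varphi)=|K^\circ\cap \tC'|$ with $K^\circ=\{v^\circ<1\}$.

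\emph{Describing $K^\circ$.} Use the unnormalized coordinates $Y=(Y',Y_n)$; the variable $y$ in the statement is $Y'$. The condition $v^\circ(Y)\le 1$ says that $\langle X,Y\rangle\le 1$ whenever $X_n v(x)\le 1$. By homogeneity it suffices to test $X_n=1/v(x)$, which gives the condition
\[
\langle x,Y'\rangle+Y_n\le v(x)\quad\text{for all } x\in P,
\]
that is, $Y_n\le -v^*(Y')$, where $v^*$ is the Legendre transform of $v$ on the link. So $K^\circ=\{Y_n<-v^*(Y')\}$ up to a null set.

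\emph{Describing $\tC'$.} The cone over $\Sigma$ at height one is $\{Y: Y_n>\rho_\Sigma(Y')\}$. Here one uses positive alignment: after an allowed choice of the coordinate $e_n^*$, the set $\Sigma$ contains the origin of the link (or $\rho_\Sigma$ is read with the corresponding gauge convention), so that $Y'/Y_n\in\Sigma$ is equivalent to $\rho_\Sigma(Y')<Y_n$.

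\emph{Conclusion.} $K^\circ\cap\tC'$ is the region between the graphs $Y_n=\rho_\Sigma(Y')$ and $Y_n=-v^*(Y')$, lying over the set $\Omega=\{v^*+\rho_\Sigma<0\}$. Fubini in the $Y_n$ direction yields
\[
J(\varphi)=\int_\Omega\bigl(-v^*(y)-\rho_\Sigma(y)\bigr)\,dy.
\]

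The main obstacle is the bookkeeping in the last step. One must verify that the positive-part and sign issues in $v^\circ$ do not interfere on $\tC'$; this is where $Y_n>0$ and positive alignment (Lemma~\ref{lem:posAlign}) enter. One must also match the gauge convention for $\rho_\Sigma$ with the normalization of the coordinates.
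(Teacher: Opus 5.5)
Your proposal is correct and follows essentially the same route as the paper: integrate along rays for $I$; for $J$, compute $\varphi^*=\tfrac14 (v^\circ)^2$, describe the sublevel set as the subgraph $Y_n<-v^*(Y')$, intersect it with $\tC'=\{Y_n>\rho_\Sigma(Y')\}$, and apply Fubini in $Y_n$. The only real difference is that you absorb the factor $2$ through the dilation $\{\varphi^*<1\}=2\{v^\circ<1\}$ before integrating, whereas the paper substitutes $Y'=2y$ at the end; the sign issue you flag is harmless because $v^\circ\ge 0$ automatically (the origin lies in the closure of $\{v\le 1\}$), and your use of positive alignment to get $0\in\Sigma$ is exactly what the paper's description of $\tC'$ via $\rho_\Sigma$ implicitly assumes.
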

\begin{proof}
    Consider any $x \in P$.
    We can rescale by $t=\frac{1}{v(x)}$ to find the point $X = (tx, t)$ on which $\varphi(X) = 1$. Changing variables, the measure $dx$ becomes $v(x)^{n-1}dx$, and therefore 
    \[
    \int_{\{\varphi < 1\}}dX = \int_{x\in P}\left(\int_0^{\frac{1}{v(x)}}\frac{X_n^{n-1}}{v(x)^{n-1}}\,dX_n\right)dx = \frac{1}{n}\int_P \frac{dx}{v(x)^n}.
    \]
Let $u = v^*$, so by definition $u(y) = \sup_{x \in P} \la x, y\rg - v(x)$.  In particular, we have 
    \[
    Y_{n} + u(y) < 0 \qquad \iff \qquad \la x, y\rg + Y_n \leq v(x),
    \]
    and multiplying both sides by $X_n$ shows
    \[
    \la X, Y \rg \leq X_n v(x) \qquad \iff \qquad \la X, Y \rg \leq \sqrt{\varphi}.
    \]
    Define the polar gauge function $v^\circ(y) := \sup_{X \in \tC(P)} \frac{\la X, Y \rg_+}{v(X)}$.
    We compute 
    \begin{equation}\label{eqn:polarVersionOfLegendre}
    \varphi^*(Y) = \sup_{x \in P} \sup_{t >0} \,\bigl(t(\la x,Y'\rg  + Y_n)- t^2v(x)^2\bigr) = \frac{1}{4}v^\circ(Y)^2 
    \end{equation}
    Therefore, we see
    \[
    \varphi^*(Y) < 1 \quad \iff \quad v^\circ(Y) < 2 \quad \iff \quad \la x, Y'\rg + Y_n < 2v(x)\; \text{for all}\;x\in P,
    \]
    meaning that $\frac{Y_n}{2} + \sup_{x \in P}\,(\la x, Y'/2\rg - v(x)) < 0$, showing that 
    \[
    \{\varphi^* < 1\} = \{(Y', Y_n) : Y_n < -2u(Y'/2)\}.
    \]
    Intersecting this set with $\tC'=\tC(\Sigma) := \{(Y', Y_n) : Y_n > \rho_{\Sigma}(Y')\}$, so 
    \[
    J(\varphi) = 2^{-n}\int_{\bR^{n-1}}\left(\int_{\rho_{\Sigma}(Y')}^{-2u(Y'/2)} dY_n\right)\, dY' = 2^{-n}\int_{\bR^{n-1}}(-2u(Y'/2) - \rho_{\Sigma}(Y'))_+\, dY'.
    \]
    Setting $Y' = 2y$ and using the homogeneity of $\rho_{\Sigma}$, we see that $J(\varphi) = \int_{\bR^{n-1}}(-u(y) - \rho_{\Sigma}(y))_+\, dy,$ proving the claim.
\end{proof}
A useful corollary of equation~\eqref{eqn:polarVersionOfLegendre} shows that $v(X) = \phi_{K^\circ}(X)$. 
We can therefore characterize when two functions $\varphi : \tC \to \bR_+$ and $\varphi' : \tC' \to \bR_+$ are Legendre dual based on their sublevel sets.
\begin{cor}\label{cor:duality=legendre}
    The $2$-homogeneous, convex functions $\varphi: \tC \to \bR_+$ and $\varphi' : \tC' \to \bR_+$ are Legendre dual if and only if
    \[
    \{\varphi' < 1/4\} = \{\varphi < 1\}^\circ \cap \tC' \qquad \text{and}\qquad \{\varphi < 1\} = \{\varphi' < 1/4\}^\circ \cap \tC.
    \]
\end{cor}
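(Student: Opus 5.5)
The plan is to derive both directions from the identity established in the proof of Lemma~\ref{lem:IJonlinks}, namely equation~\eqref{eqn:polarVersionOfLegendre}. It says that for any $2$-homogeneous convex $\varphi$ on $\tC$, the Legendre transform restricted to $\tC'$ satisfies $\varphi^*(Y)=\tfrac14 v^\circ(Y)^2$. Here $v^\circ$ is the polar gauge, which is the support function of $K=\{\varphi<1\}$ evaluated at $Y$.

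Consequently $\{\varphi^*<1/4\}=\{Y: \sup_{X\in K}\la X,Y\rg<1\}=K^\circ$. I will verify the scaling by noting that $\la X,Y\rg<1$ for all $X$ with $\varphi(X)<1$ amounts to $\la X,Y\rg<\sqrt{\varphi(X)}$ for all $X\in\tC$, that is, $v^\circ(Y)<1$.

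\emph{Forward direction.} Assume $\varphi'=\varphi^*$ on $\tC'$. Then intersecting with $\tC'$ gives
\[
\{\varphi'<1/4\}=K^\circ\cap\tC'.
\]
For the second identity, I will use the symmetric statement. Since $\varphi'$ is $2$-homogeneous and convex on $\tC'$, $(\varphi')^*(X)=\sup_{Y\in\tC'}\la X,Y\rg-\varphi'(Y)$. Applying the same computation with the roles of the pairs swapped and the constants $1/4$ and $1$ interchanged (scaling by a factor of $2$) gives
\[
\{(\varphi')^*<1\}=\{\varphi'<1/4\}^\circ.
\]
Duality means $\varphi=(\varphi')^*$ on $\tC$, so intersecting with $\tC$ yields the second identity.

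\emph{Reverse direction.} Assume both set identities hold. A $2$-homogeneous nonnegative function is determined by its sublevel set $\{\cdot<c\}$ through $\varphi(X)=c\,\inf\{t^2: X\in tK\}$, its squared gauge. The first identity together with the computation above shows that $\varphi'$ and $\varphi^*|_{\tC'}$ have the same $1/4$-sublevel set inside $\tC'$, hence they agree on $\tC'$. Symmetrically, the second identity shows $\varphi=(\varphi')^*$ on $\tC$. These two equalities are exactly the statement that $\varphi$ and $\varphi'$ are Legendre dual as functions on the pair of cones, with the value $+\infty$ off the cones as in the paper's convention.

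The main obstacle is bookkeeping rather than substance. One issue is that $\varphi^*$ computed on all of $V^*$ need not vanish or be finite off $\tC'$, which is why both identities are needed and why Legendre duality must be understood relative to the cones. The other is fixing the constant $1/4$ correctly in the swapped computation.
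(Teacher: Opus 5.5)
Your proposal is correct and follows essentially the same route as the paper. You obtain $\{\varphi^*<1/4\}=K^\circ$ from $\varphi^*=\tfrac14(v^\circ)^2$ and restrict it to $\tC'$, use the dual identity $\{(\varphi')^*<1\}=\{\varphi'<1/4\}^\circ$ for the second set equality, and recover both functions from their sublevel sets by $2$-homogeneity, with Legendre duality understood relative to the cones. One small correction: your scaling check ``$\la X,Y\rg<1$ for all $X\in K$'' actually describes $\{\phi_K\le 1\}$ because $K$ is open, so the identity with $K^\circ=\{\phi_K<1\}$ should be read off directly from $\varphi^*=\tfrac14\phi_K^2$ with $\phi_K\ge 0$, as your main line already does.
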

\begin{proof}
Using $\{\varphi^*<1/4\}=K^\circ$, the restriction to $\tC'$ shows $K'=K^\circ\cap\tC'$.
This identity applied to $\varphi'$ provides the reverse containment $K=(K')^\circ\cap\tC$. 
Conversely, these identities completely determine $\varphi'$ and $\varphi$ as determined by their sublevel sets and homogeneity.
\end{proof}

We note that the height of $K^\circ$ is given by $v(0)$ in our conventions.
The compact link representations of $I$ and $J$ from the previous two lemmas show we can equivalently compute the first variations as 
\begin{equation}\label{eqn:IJvariations}
    \delta I_v[\psi] = -\int_P \frac{\psi}{v(x)^{n+1}}\,dx\qquad \text{and}\qquad \delta J_v[\psi] = \int_{\{u + \rho_{\Sigma} < 0\}} \psi(\nabla u(y))\,dy.
\end{equation}

\begin{defn}\label{def:saturated}
We say that $\varphi\in \cH$ is {\bf saturated} if
\[
\varphi= (\phi_{K'})^2 \quad \text{ where }\quad K' := \{\varphi^* < 1/4\} \cap \tC'.
\]
We denote the class of saturated functions by $\ul{\cH}\subset \cH$.
\end{defn}
The geometric characterization of the class of saturated functions is that $\ul{\cH}$ is the class of 2-homogeneous convex functions for which $\nabla \phi(\tC) \subset \tC'$, a property clear from the definition of the convex support function. 
We record the useful geometric reframing of saturation.
\begin{cor}\label{lem:cor:geomSaturationDescription}
    Let $\varphi \in \cH$, and denote
    \[
    K:= \{ \varphi <1\} \quad \text{and} \quad  K'= \{\varphi^* <1/4\} \cap \tC'.
    \]
    Then $\varphi$ is saturated in the sense of Definition~\ref{def:saturated} if and only if $K^{\circ} = K'-\tC^{\vee}$.
\end{cor}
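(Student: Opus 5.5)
The plan is to compare support functions on $\tC$. Two facts carry the argument. By \eqref{eqn:polarVersionOfLegendre} and the remark after Lemma~\ref{lem:IJonlinks}, $v=\sqrt{\varphi}=\phi_{K^\circ}$ on $\tC$. By Definition~\ref{def:saturated}, $\varphi$ is saturated exactly when $\sqrt{\varphi}=\phi_{K'}$ on $\tC$. So saturation is equivalent to
\[
\phi_{K'}=\phi_{K^\circ}\quad\text{on }\tC .
\]
What remains is to turn this equality of support functions on $\tC$ into the set identity $K^\circ=K'-\tC^\vee$.

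First I show $K'-\tC^\vee\subset K^\circ$ unconditionally.
\begin{itemize}
\item $K'\subset K^\circ$, because $K'=\{\varphi^*<1/4\}\cap\tC'$ and $\{\varphi^*<1/4\}=K^\circ$; this is the identity used in Corollary~\ref{cor:duality=legendre}.
\item $K^\circ-\tC^\vee\subset K^\circ$. Since $K\subset\tC$, for $Y\in K^\circ$, $Z\in\tC^\vee$ and $X\in K$ we have $\langle X,Y-Z\rangle\le\langle X,Y\rangle$.
\end{itemize}
Combining the two gives the inclusion.

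Next I compute support functions on $\tC$. For $X\in\tC$ we have $\langle X,Z\rangle\ge 0$ for all $Z\in\tC^\vee$, and this quantity can be made arbitrarily small by letting $Z\to 0$. Hence $\phi_{K'-\tC^\vee}(X)=\phi_{K'}(X)$. For $X\notin\ol{\tC}$, pick $Z\in\tC^\vee$ with $\langle X,Z\rangle<0$ and scale it up. This gives $\phi_{K'-\tC^\vee}(X)=+\infty$, and likewise $\phi_{K^\circ}(X)=+\infty$, since $K^\circ=K^\circ-\tC^\vee$ by the inclusion above.

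Now both $L_1:=K'-\tC^\vee$ and $L_2:=K^\circ$ are convex sets with $L_i-\tC^\vee=L_i$, so their support functions are $+\infty$ off $\ol{\tC}$. On the boundary $\p\tC$ the values are determined by lower semicontinuity from the interior values. Therefore $\phi_{L_1}=\phi_{L_2}$ on all of $V$ if and only if $\phi_{K'}=\phi_{K^\circ}$ on $\tC$. A closed convex set is determined by its support function, so this is equivalent to $\ol{L_1}=\ol{L_2}$.

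Finally, $L_1$ is open, being $K'$ plus the open cone $-\tC^\vee$, and $L_2=K^\circ$ is open by definition as a strict sublevel set. Two open convex sets with equal closures are equal, since each is the interior of its closure. Hence saturation is equivalent to $K^\circ=K'-\tC^\vee$.

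I expect the main obstacle to be the bookkeeping at $\p\tC$ and the open/closed conventions. This includes the relative-interior convention for $\tC^\vee$ and checking that lower semicontinuity forces agreement of the support functions on $\p\tC$. The core duality argument itself is routine.
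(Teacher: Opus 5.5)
Your argument is correct and is essentially the reasoning the paper leaves implicit. The paper states this corollary without proof, as a consequence of $v=\phi_{K^\circ}$ on $\tC$ and the identity $\phi_{K'-\tC^\vee}=\phi_{K'}$ on $\tC$ (the same computation it uses in Lemma~\ref{lem:(C,d)props}$(iii)$ and Proposition~\ref{prop:PS=SLNdegToPolystable}), and your closure/openness bookkeeping supplies the omitted passage from equal support functions to equal open sets; one small correction is that $K'-\tC^\vee$ is open because $K'=K^\circ\cap\tC'$ is already open, not because $-\tC^\vee$ is (it is only relatively open when $\tC$ is not pointed).
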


The following result shows that, given $\varphi \in \cH$, we can construct a canonically associated saturated function $\ul{\varphi} \in \ul{\cH}$ with the property that $\cM(\varphi) \leq \cM(\underline{\varphi})$ with equality if and only if $\varphi \in \underline{\cH}$.  In particular, we conclude that critical points of $\cM$ are always saturated.
\begin{lem}\label{lem:boundaryConditionAchievement}
    For any $\varphi \in \cH$, there exists a canonical saturated $\ul{\varphi} \in \ul{\cH}$ satisfying $I(\varphi) \leq I(\ul{\varphi})$ and $J(\varphi) = J(\ul{\varphi})$.
    Furthermore, any unsaturated function has a strictly $\cM$-increasing deformation, so it cannot be critical.
\end{lem}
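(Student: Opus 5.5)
The natural candidate is $\ul{\varphi} := (\phi_{K'})^2$, where $K' = \{\varphi^*<1/4\}\cap\tC' = K^\circ\cap\tC'$ and $K=\{\varphi<1\}$; that is, we saturate by the support function of $K'$. The proof then has three parts: show $\ul{\varphi}\in\ul{\cH}$, compare the functionals $I$ and $J$, and produce a strictly increasing deformation when $\varphi\neq\ul{\varphi}$.

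\emph{Comparison of $I$.} Since $K'\subset K^\circ$, we have $\phi_{K'}\le \phi_{K^\circ}$. By the corollary of \eqref{eqn:polarVersionOfLegendre}, $\phi_{K^\circ}=v=\sqrt{\varphi}$, so $\ul{\varphi}\le\varphi$ on $\tC$. Hence $\{\varphi<1\}\subset\{\ul{\varphi}<1\}$, which gives $I(\varphi)\le I(\ul{\varphi})$.

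\emph{$\ul{\varphi}\in\cH$.} Positivity of $\ul{\varphi}$ on the link $P$ needs care. Positive alignment (Lemma~\ref{lem:posAlign}) gives $\ol{\tC}\cap -\ol{(\tC')^\vee}=\{0\}$. Together with $K'$ containing a neighbourhood piece of $\tC'$ near $0$ (because $\varphi^*$ is finite and continuous there), this shows that $\sup_{Y\in K'}\la X,Y\rg>0$ for $X\in\ol{P}$. Compactness of $\ol P$ then gives $\inf_P\ul\varphi>0$. The upper bound is immediate from $\ul{\varphi}\le\varphi$.

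\emph{Comparison of $J$.} Set $\ul{K}'=\{\ul{\varphi}^*<1/4\}\cap\tC'$. We have $\{\ul\varphi<1\}^\circ=\{\phi_{K'}<1\}^\circ$, which is the closed convex hull of $K'\cup\{0\}$ enlarged by the recession directions coming from the domain restriction to $\tC$; concretely it equals $K'-\tC^\vee$, since $\phi_{K'}$ is restricted to $\tC$ (compare Corollary~\ref{lem:cor:geomSaturationDescription}). Intersecting with $\tC'$ gives $\ul K'=(K'-\tC^\vee)\cap\tC'\supseteq K'$.

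The reverse inclusion is the crux. From $\ul\varphi\le\varphi$ we get $\ul\varphi^*\ge\varphi^*$, so $\{\ul\varphi^*<1/4\}\subset\{\varphi^*<1/4\}$ and hence $\ul K'\subset K'$. Therefore $\ul K'=K'$, and $J(\ul\varphi)=J(\varphi)$. The same identity, rewritten as $\{\ul\varphi<1\}^\circ=\ul K'-\tC^\vee$, shows via Corollary~\ref{lem:cor:geomSaturationDescription} that $\ul\varphi$ is saturated. Since $\ul\varphi$ depends only on $K'$, the construction is canonical.

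\emph{Strict increase for unsaturated $\varphi$.} If $\varphi\notin\ul\cH$, then $\ul\varphi\le\varphi$ with $\ul\varphi\ne\varphi$. Both are continuous and $2$-homogeneous, so strict inequality holds on an open subcone of $\tC$, and therefore $\{\ul\varphi<1\}\setminus\ol{\{\varphi<1\}}$ has positive measure. Thus $I(\ul\varphi)>I(\varphi)$ and $\cM(\ul\varphi)>\cM(\varphi)$.

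For the deformation, take the convex interpolation $\varphi_s$ with $\sqrt{\varphi_s}=(1-s)v+s\phi_{K'}$. Along this path $\varphi_s$ decreases pointwise in $s$, so $I$ increases strictly in $s$ and $\varphi_s^*\ge\varphi^*$. At the same time $K'\subset\{\varphi_s<1\}^\circ$, because $\sqrt{\varphi_s}\ge\phi_{K'}$. Hence $J(\varphi_s)=J(\varphi)$ for all $s$, and $\cM(\varphi_s)$ is strictly increasing on $[0,1]$. So $\varphi$ is not a critical point of $\cM$.

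\emph{Main obstacle.} The step I expect to be hardest is the positivity $\inf_P\ul\varphi>0$. The inclusions between sublevel sets are essentially formal consequences of Corollary~\ref{cor:duality=legendre}, but positivity requires that $K'$ be nondegenerate and that the pair be positively aligned. My first attempt would be to show that $K'$ contains a small cone neighbourhood of $0$ in $\tC'$, and then use $e_n\in\tC\cap(\tC')^\vee$.
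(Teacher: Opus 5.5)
Your proposal is correct and follows the paper's proof. Your $(\phi_{K'})^2$ on $\tC$ is exactly the paper's $\ul{\varphi}=(\varphi^*+\mathbf{1}_{\tC'})^*+\mathbf{1}_{\tC}$. The steps $\ul{\varphi}\le\varphi$ (so $I$ increases), $\ul{K}'=K'$ (so $J$ is unchanged), and the interpolation $v_s=(1-s)v+s\ul{v}$ are the paper's, and your check that $\inf_P\ul{\varphi}>0$ supplies a detail the paper leaves implicit.

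One small point: strict monotonicity of $\cM(\varphi_s)$ does not by itself rule out a vanishing first variation. To conclude non-criticality, record, as the paper does, that $\frac{d}{ds}\big|_{s=0}\cM(\varphi_s)=\int_P(v-\ul{v})\,v^{-(n+1)}\,dx\,/\,I(v)>0$. This follows from $\delta I_v[\psi]=-\int_P\psi\, v^{-(n+1)}\,dx$, because $J$ is constant along the path and $v-\ul{v}\ge 0$ is positive on an open subset of $P$.
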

\begin{proof}
    Define $\ul{\varphi} :=(\varphi^* + \mathbf{1}_{\tC'})^*+ \mathbf{1}_{\tC}$, which will satisfy $\ul{\varphi} \leq \varphi$, meaning $K \subset \ul{K}$, so $I(\varphi) \leq I(\ul{\varphi})$.
    However their Legendre transforms agree on $\tC'$ so this maintains $J(\ul{\varphi}) = J(\varphi)$, yielding the desired boundary condition.
    Furthermore, $K \subset \ul{K}$ implies that $v \geq \ul{v}$.

    Geometrically, this is alternatively realized by taking $\ul{K}^\circ := K' - \tC^\vee$ whose polar dual $\ul{K}$ defines the sublevel set of $1$ for $\ul{\varphi}$. 
 
    The saturation procedure is strictly energy increasing, which we quantify by 
\begin{equation}\label{eqn:saturationDefect}
    \delta \cM_v[\ul{v} - v] = \frac{\int_P (v - \ul{v})v^{-(n+1)}\,dx} {I(v)} \geq 0,
\end{equation}
which vanishes if and only if $v = \ul{v}$.
\end{proof}
Motivated by this result, we introduce the following definition.

\begin{defn}\label{defn:saturation}
Let $\varphi \in \cH$.  We define the {\bf saturation of $\varphi$} to be 
\[
\ul{\varphi} :=(\varphi^* + \mathbf{1}_{\tC'})^*+ \mathbf{1}_{\tC}\qquad \iff\qquad \ul{K}^\circ := (K^\circ \cap \tC') - \tC^\vee.
\]
\end{defn}

By Lemma~\ref{lem:boundaryConditionAchievement} we can always reduce to the case of saturated functions by replacing any $\varphi \in \cH$ with its saturation $\ul{\varphi} \in \ul{\cH}$, in the sense of Definition~\ref{defn:saturation}.

\subsection{Convex criticality and the Kantorovich formulation}
We identify the function space $\bP(\cH)$ with positive convex functions on $\ol{P}$, normalized to $v(0) = 1$.
This space is not a Banach manifold structure because convexity is not an open condition.
From a variational standpoint, this is problematic; the calculation in Lemma~\ref{lem:EL-eqn} cannot hold verbatim.
First, since the regularity of the critical point is not known a priori, and secondly since the space of variation fields is restricted.

These technical issues are circumvented by using the Kantorovich formulation~\cite{Kantorovich} of optimal transportation on the link $P$ and the work of Gangbo-McCann~\cite{Gangbo-McCann}.
In this section we describe how we can use a generalization of the min-max theory developed by Palais~\cite{Palais}, in combination with the work of Gangbo-McCann~\cite{Gangbo-McCann}, to show that critical points of the $\cM$-functional nevertheless solve the optimal transport problem. 

We first define a metric on $\bP(\cH)$ normalized to $v(0) = 1$ to remove scaling. 
For any two $v, w \in \cH$, define
\begin{equation}\label{eqn:distanceOnC}
d(v,w) := \| \log (v/w) \|_{L^\infty(P)}.
\end{equation}
By definition $d(v,w) \leq r$ if and only if $e^{-r} w \leq v \leq e^r w$ which translates to the uniform equivalence of the sets $K^\circ_v$ and $K^\circ_w$ with the same constants $e^{\pm r}$. 
An equivalent metric is given without normalization $v(0) = w(0) = 1$ by defining 
\[
d_{\bP}([v],[w]) := \inf_{r > 0} d(rv, w),
\]
which satisfies $d_{\bP}(v,w) \leq d(v,w) \leq 2d_{\bP}(v,w)$, so we may use either up to a uniform constant.
The lower bound follows immediately by taking the infimum.
For the upper bound, let $f := \log (v/w)$ and set $m:= \inf_P f$ and $M:= \sup_P f$. 
Therefore, we have $d_{\bP}([v],[w]) = \|f + \log r\|_\infty$ where the $L^\infty$ value is optimized at $\log r = -\frac{m+M}{2}$, so $d_{\bP}([v],[w]) =\frac{1}{2}(M-m)$.
If $v(0) = w(0) = 1$, then $m \leq 0 \leq M$.
Thus, $d(v,w) \leq \max \{-m,M\}$, which is the claimed upper bound. 

We define associated norms on the first-order variations $\psi$ arising from convex families $v_t = v + t\psi + o(t)$ projectively as
\begin{equation}\label{eqn:|psi|_v}
\|\psi\|_v := \sup_P\frac{|\psi|}{v} \qquad \text{and}\qquad \|\psi\|_{v, \bP} := \inf_{r}\left\|\frac{\psi}{v}-r\right\|_{L^\infty(P)}
\end{equation}
corresponding to $d$ and $d_{\bP}$.
These norms are uniformly equivalent and satisfy ${\lim_{t \to 0}\frac{d(v_t, v)}{t} = \|\psi\|_v}$.
Since $\cM(tv) = \cM(v)$, a variation is trivial projectively when $v \propto \psi$.
Since our domain is convex functions, we may only test curves $v_t \subset \cH$, so not every variation $\psi \in C^0(P)$ is admissible. 
\begin{defn}\label{defn:convexCritical}
    Let $v,w \in \bP(\cH)$. 
    We define the measures 
        \[
        d\mu := \frac{(v(x))^{-(n+1)}\chi_P\,dx}{\int_P v^{-(n+1)}\,dx}\qquad \text{and}\qquad d\nu := \frac{\chi_{\Omega_v}}{|\Omega_v|}\,dy.
        \]
    \begin{itemize}[leftmargin=1cm]
        \item[$(i)$] The \textbf{convex first variation} of $\cM$ at $v$ in the direction $w-v$ is 
    \[
    \delta^+ \cM(v; w-v) := \lim_{t \to 0}\frac{\cM\left((1-t)v + tw)-\cM(v\right)}{t},
    \]
    which, by Lemma~\ref{lem:EL-eqn} is given by
    \[
     \delta^+ \cM(v; w-v) =-\alpha_v\int_P(w(x)-v(x))\, d\mu + \beta_v \int_{\Omega_v}(w(\nabla v^*(y)) - v(\nabla v^*(y)))\,d\nu ,
    \]
    where
    \[
    \alpha_v := \frac{\int_{P}v(x)^{-(n+1)}\,dx}{I(v)}\qquad \text{and}\qquad \beta_v := \frac{|\Omega_v|}{J(v)}.
    \]
    \item[$(ii)$] We say that a function $v$ is \textbf{convex critical} if $\delta^+\cM(v;w-v) \leq 0$ for all $w \in \cH$. 
    \\
    \item[$(iii)$] Convex criticality is measured by the \textbf{convex defect}
    \begin{equation}\label{eqn:convexDefect}
        \mathfrak{a}(v) := \sup_{w \in \cH, w \neq v} \frac{(\delta^+\cM(v; w-v))_+}{\|w-v\|_v}.
    \end{equation}
    Clearly, $\mathfrak{a}(v)=0$ precisely when $\delta^+ \cM(v;w-v) \leq 0$ for all $w$. 
    \end{itemize}
\end{defn}

\begin{lem}\label{lem:(C,d)props}
    We record the following properties of the metric space $(\bP(\cH), d_{\bP})$:
    \begin{enumerate}
        \item[$(i)$]The functionals $\log I$ and $\log J$ are $n$-Lipschitz, so the energy $\cM$ bound is $2n$-Lipschitz: $|\cM(v) - \cM(w)| \leq 2 n d(v,w)$.
        \item[$(ii)$] The function space $(\bP(\cH), d_\bP)$ is complete.
        \item[$(iii)$] Saturation is distance-decreasing, $d(\ul{v},\ul{w}) \leq d(v, w)$.
        \item[$(iv)$] If $v_j \to v$ and $w_j \to w$ are Cauchy sequences, then $\delta^+ \cM(v_j; w_j - v_j) \to \delta^+ \cM(v;w-v)$.
    \end{enumerate}
\end{lem}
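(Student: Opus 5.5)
We prove the four items in order, working on the affine link $P$ with $v(0)=1$ and using the link formulas of Lemma~\ref{lem:IJonlinks}.

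For $(i)$, take $v,w$ with $d(v,w)\le r$, so $e^{-r}w\le v\le e^{r}w$ on $P$. Since $I(v)=\frac1n\int_P v^{-n}\,dx$, this gives $e^{-nr}I(w)\le I(v)\le e^{nr}I(w)$. Hence $|\log I(v)-\log I(w)|\le n\,d(v,w)$.

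For $J$ we use the polar description $K'=K^\circ\cap\tC'$ together with $v=\phi_{K^\circ}$ (the corollary after \eqref{eqn:polarVersionOfLegendre}). The inequality $v\le e^{r}w$ on $\tC$ gives $K_v^\circ\subset e^{r}K_w^\circ$, and we intersect with the cone $\tC'$, which is invariant under dilation. We get $K'_v\subset e^{r}K'_w$ and symmetrically $K'_w\subset e^{r}K'_v$. So $J$ changes by at most a factor $e^{\pm nr}$. Adding the two bounds gives the $2n$-Lipschitz estimate for $\cM$. It holds equally for $d_{\bP}$, because $\cM$ is scale invariant.

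For $(ii)$, let $(v_j)$ be $d$-Cauchy with $v_j(0)=1$. Then $\log v_j$ is uniformly Cauchy on $P$, so $v_j\to v$ uniformly on $P$ with $\log v$ bounded. Convexity and $1$-homogeneity pass to the pointwise limit. The bounds $0<\inf_P v\le\sup_P v<\infty$ follow from the uniform bound on $\log v_j$. So $v\in\cH$, and $d(v_j,v)\to0$. Completeness of $d_{\bP}$ follows from the equivalence $d_{\bP}\le d\le 2d_{\bP}$.

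For $(iii)$ we use Definition~\ref{defn:saturation}: $\ul K^\circ=(K^\circ\cap\tC')-\tC^\vee$. The inclusions $K_v^\circ\subset e^{r}K_w^\circ$ are preserved by intersecting with $\tC'$ and by Minkowski subtraction of $\tC^\vee$, since both operations commute with positive dilations. Hence $\ul K_v^\circ\subset e^{r}\ul K_w^\circ$ and conversely. Because $\ul v=\phi_{\ul K^\circ}$, this gives $e^{-r}\ul w\le\ul v\le e^{r}\ul w$, that is, $d(\ul v,\ul w)\le d(v,w)$.

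Item $(iv)$ is the main obstacle. Uniform convergence $v_j\to v$ gives $\alpha_{v_j}\to\alpha_v$ and $\mu_j\to\mu$ in total variation by dominated convergence, since $v_j^{-(n+1)}$ converges uniformly on $P$. Together with $w_j-v_j\to w-v$ uniformly, this handles the first term of $\delta^+\cM$. The delicate part is the second term, which involves $\nabla v_j^*$ on the free domains $\Omega_{v_j}=\{v_j^*+\rho_\Sigma<0\}$.

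We plan to handle it in three steps.
\begin{itemize}
\item First, $v_j^*\to v^*$ locally uniformly, since Legendre transform is $1$-Lipschitz in sup norm on the relevant compact region. It follows that $\chi_{\Omega_{v_j}}\to\chi_{\Omega_v}$ in $L^1$: the level set $\{v^*+\rho_\Sigma=0\}$ has measure zero because $v^*+\rho_\Sigma$ is convex and nonconstant.
\item Second, convexity gives a.e.\ convergence of gradients, $\nabla v_j^*\to\nabla v^*$ a.e.
\item Third, the integrands $(w_j-v_j)\circ\nabla v_j^*$ are uniformly bounded, because $\nabla v_j^*$ takes values in $\ol P$ and $w_j-v_j$ is bounded there. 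Continuity of $w-v$ on $\ol P$ and uniform convergence of $w_j-v_j$ then give a.e.\ convergence of the integrands.
\end{itemize}
Dominated convergence then yields convergence of the second term. The constants $\beta_{v_j}=|\Omega_{v_j}|/J(v_j)$ converge by the first step and by $(i)$.
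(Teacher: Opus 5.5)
Your proof is correct up to one misstated justification in $(iv)$. Items $(i)$–$(iii)$ follow the paper's argument: everything reduces to the dilation inclusions $e^{-r}K_v^\circ\subset K_w^\circ\subset e^{r}K_v^\circ$, and these survive intersecting with $\tC'$ and Minkowski subtraction of $\tC^\vee$ because both are cones.

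For $(iv)$ the paper uses the same dominated-convergence idea, but on the cones rather than the link. It writes the variations as $\int_{\tC}\psi e^{-\varphi}\,dX$ and $\int_{\tC'}\psi(\nabla\varphi^*)e^{-\varphi^*}\,dY$ and dominates with $e^{-\varphi_j}\le e^{-C|X|^2}$ and $e^{-\varphi_j^*}\le e^{-C'|Y|^2}$. Because the integration domain $\tC'$ is fixed there, no free boundary ever moves. Your link formulation spells out the a.e.\ convergence of $\nabla v_j^*$, which the paper leaves implicit. The price is that you must control the moving domains $\Omega_{v_j}$.

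The misstated justification is in that step. A convex, nonconstant function can vanish on a set of positive measure, for example $\max\{0,|y|-1\}$. What you need is $\inf(v^*+\rho_\Sigma)<0$. This holds because $J(v)>0$ forces $\Omega_v\neq\emptyset$. Then $\{v^*+\rho_\Sigma=0\}$ lies in the boundary of the convex set $\{v^*+\rho_\Sigma\le 0\}$, since $v^*+\rho_\Sigma$ is finite and continuous, so it is Lebesgue-null.

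For the domination step you should also say that all $\Omega_{v_j}$ lie in one fixed ball. This follows from $v_j^*(y)\ge -v_j(0)$, together with $\rho_\Sigma(y)\ge c|y|$ for some $c>0$, which holds because $\Sigma$ is bounded.
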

\begin{proof}
    The main characterization of the distance is 
    \begin{equation}\label{eqn:distanceGeometry}
        d(v,w) \leq r\qquad \iff \qquad e^{-r} K^\circ_v \subset K^\circ_w \subset e^{r}K^\circ_v \qquad \iff \qquad e^{-r} K_v \subset K_w \subset e^r K_v
    \end{equation}
    from which these properties follow. 
    Since $v = \phi_{K^\circ}$, we observe that $v_j \to v$ in $d$ is equivalent to the uniform convergence of functions $v_j \to v$ on $P$.

    \noindent$(i)$: From the geometric characterization~\eqref{eqn:distanceGeometry}, we have
    \[
    e^{-nr}|K_w| \leq |K_v| \leq e^{nr}|K_w|\qquad \text{and}\qquad e^{-nr}|K'_w| \leq |K'_v| \leq e^{nr}|K'_w|,
    \]
    which correspond to 
    \begin{equation}\label{eqn:LogILogJCont}
    |\log I(v) - \log I(w)| \leq n d(v,w)\qquad \text{and}\qquad |\log J(v) - \log J(w)| \leq n d(v,w),
    \end{equation}
    showing $\log I$ and $\log J$ are $n$-Lipschitz, so $\cM$ as their sum is $2n$-Lipschitz.
    In particular, $I$ and $J$ are continuous.

    \smallskip
    \noindent$(ii)$: Let $v_j$ be a Cauchy sequence, so $f_j := \log v_j$ is a Cauchy sequence in $C^0(P)$.
    Therefore, $v_j \to v$ uniformly in $C^0(P)$, so the limit $v := e^{\lim f_j}$ is convex. 
    The Cauchy property of $f_j$ provides the requisite uniform lower bound on $\inf_P v$. 
    
    \smallskip
    \noindent$(iii)$: Let $r = d(v,w)$, so from~\eqref{eqn:distanceGeometry}, we know that 
    \[
    e^{-r}K'_v \subset K'_w \subset e^r K'_v \qquad \implies \qquad e^{-r}(K'_v - \tC^\vee) \subset K'_w - \tC^\vee \subset e^r (K'_v - \tC^\vee)
    \]
    and 
    \[
    e^{-r}K'_v \subset K'_w \subset e^r K'_v \qquad \implies \qquad e^{-r}\phi_{K'_v} \leq \phi_{K'_w} \leq e^{r}\phi_{k'_v}
    \]
    which show that $d(\ul{v}, \ul{w}) \leq r$.
    
    \smallskip
    \noindent$(iv)$: The Cauchy property implies that both sequences $v_j$ and $w_j$, as well as their difference $\psi_j := w_j - v_j$, converge uniformly to $v$, $w$, and $\psi := w-v$.
    From this, we can deduce that the entire quantity in the convex first variation uniformly converges as well. 
    We have bounds $e^{-\varphi_j} \leq e^{-C|X|^2}$ and $e^{-\varphi^*_j} \leq e^{-C'|Y|^2}$ and homogenizing $|\psi_j| \leq C$ shows that by dominated convergence, we have $\delta^+\cM(v_j; w_j - v_j)$ converges to $\delta^+\cM(v;w-v)$.
\end{proof}

    The next key result uses the Kantorovich formulation of optimal transport, as well as the results of Gangbo-McCann~\cite{Gangbo-McCann} and regularity theory of Caffarelli~\cite{Caffarelli} to upgrade the convex criticality of a function to a genuine~\hyperref[eqn:HOT]{H.O.T.} map.

    \begin{prop}\label{prop:crit->HOT}
        If $v$ is convex critical, then it solves~\eqref{eqn:HOT}.
    \end{prop}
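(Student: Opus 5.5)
The plan is to read convex criticality as the statement that $v$ maximizes a linear functional over a convex set, and then to use the Kantorovich duality of Gangbo--McCann to identify that maximizer as a Brenier potential.

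\textbf{Step 1: saturation and normalization.} By Lemma~\ref{lem:boundaryConditionAchievement}, an unsaturated $v$ admits a strictly $\cM$-increasing deformation. Testing $w=\ul{v}$ in~\eqref{eqn:saturationDefect} then gives $\delta^+\cM(v;\ul{v}-v)>0$. So a convex critical $v$ is saturated, which means $\nabla\varphi(\tC)\subset\ol{\tC'}$. Since $\cM$ is scale invariant, we may also rescale so that $\alpha_v=\beta_v$.

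\textbf{Step 2: linear form of criticality.} With this normalization, $\delta^+\cM(v;w-v)\le 0$ for all $w\in\cH$ says
\[
\int_{\Omega_v} w(\nabla v^*)\,d\nu-\int_P w\,d\mu\le \int_{\Omega_v} v(\nabla v^*)\,d\nu-\int_P v\,d\mu .
\]
The functional $L(w)=\int_P w\,d\mu-\int_{\Omega_v} w(\nabla v^*(y))\,d\nu$ is linear in $w$, so $v$ minimizes $L$ over the convex cone $\cH$.

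Choosing $w=(1\pm t)v$ shows $L(v)=0$. This is the mass balance condition $\mu(P)=\nu(\Omega_v)$ after accounting for Euler homogeneity. The restricted class of variations is handled at this stage: we never differentiate in a non-convex direction. Instead, we test only convex $w$, such as $w=v+t\psi$ with $\psi$ convex, or $w=\max(v,\ell)$ for affine $\ell$.

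\textbf{Step 3: identify the pushforward.} We want to show $(\nabla v^*)_\#\nu=\mu$. Testing with $w=v+s\ell_+$ for affine functions $\ell$ and, more generally, with maxima of affine functions shows that the measure $\sigma:=(\nabla v^*)_\#\nu-\mu$ is nonpositive on every convex function on $P$.

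We then compare with the Kantorovich problem between $\nu$ on $\Omega_v$ and $\mu$ on $P$ for the cost $-\langle x,y\rangle$. By construction, $\nabla v^*$ is the gradient of a convex function. By Brenier and Gangbo--McCann, it is therefore the optimal map from $\nu$ onto its own image measure $(\nabla v^*)_\#\nu$. The dual potential pair $(v,v^*)$ attains the dual value $\int v\,d\mu+\int v^*\,d\nu$ only when $(\nabla v^*)_\#\nu=\mu$.

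Minimality of $v$ for $L$ among convex $w$ gives exactly this dual optimality. For any convex $w$, the Legendre inequality $w(x)+w^*(y)\ge\langle x,y\rangle$, with equality on the graph of $\nabla v^*$ for $w=v$, forces $\sigma(w)\le 0$ for all convex $w$. Since $\sigma$ has total mass zero (Step 2), it is a difference of probability measures with $\int w\,d(\nabla v^*)_\#\nu\le\int w\,d\mu$ for every convex $w$.

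Reversing the roles via the dual variation in $v^*$ gives the opposite convex-order inequality. Two measures that dominate each other in convex order are equal, so $(\nabla v^*)_\#\nu=\mu$. I expect this two-sided comparison to be the main obstacle. The variations $w$ are constrained to be convex and positive, so producing the reverse inequality requires perturbing through the dual side. Concretely, I would vary $K'$, or equivalently $v^*$, by convex functions on $\Omega_v$ and use the symmetric form of $\cM$ in $(\varphi,\varphi^*)$.

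\textbf{Step 4: regularity and homogenization.} Since $\mu$ and $\nu$ have positive bounded densities on convex domains, $\nabla v^*$ is a Brenier map between them. Homogenizing back to $\tC$ shows that $\varphi$ is an Alexandrov (Brenier) solution of $\det D^2\varphi=2^nJ/I$, with $\nabla\varphi(\tC)=\tC'$ up to null sets, as in Lemma~\ref{lem:EL-eqn}.

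Caffarelli's interior regularity, applied on the convex cones with constant densities, then gives smoothness and strict convexity. Surjectivity onto $\tC'$ follows from convexity of $\tC'$ and the mass balance. A final rescaling of $\varphi$ yields a solution of~\eqref{eqn:HOT}.
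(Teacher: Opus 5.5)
\textbf{The gap is in Step~3: you never prove $(\nabla v^*)_{\#}\nu=\mu$.} Once $\alpha_v=\beta_v$, criticality gives only a one-sided inequality: $\int w\circ\nabla v^*\,d\nu\le\int w\,d\mu$ for $w\in\cH$, with equality at $w=v$. In other words, $(\nabla v^*)_{\#}\nu$ is dominated by $\mu$ in convex order. That is far from equality, since a Dirac mass at the barycenter of $\mu$ is also dominated by $\mu$. Your sentence that minimality of $v$ for $L$ ``gives exactly this dual optimality'' names the right target, but it is asserted, not proved. The justification you attach to it only re-derives the one-sided inequality. You then rely on a reverse convex-order inequality that you do not prove, and the mechanism you sketch does not produce it. Perturbing $v^*\mapsto v^*+s\eta$ with $\eta$ convex changes $v$ to first order by $-\eta\circ\nabla v$. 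Criticality then gives, heuristically, $\int\eta\,d(\nabla v)_{\#}\mu\le\int\eta\,d\nu$. That is again a ``push-forward dominated by target'' statement, now on the $Y$-side, not the inequality $\mu\le_{\mathrm{cx}}(\nabla v^*)_{\#}\nu$ your argument needs.

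\textbf{The missing idea is a pointwise Legendre bound.} It converts the one-sided inequality directly into Kantorovich dual minimality, so no reverse inequality is needed. For $w\in\cH$ and a.e.\ $y$, test the supremum defining $w^*(y)$ at $x=\nabla v^*(y)$ and use $v(\nabla v^*(y))+v^*(y)=\la\nabla v^*(y),y\rg$. This gives $v^*(y)-w^*(y)\le(w-v)(\nabla v^*(y))$. Integrating against $\nu$ and applying criticality yields
\[
\int_P v\,d\mu+\int_{\Omega_v}v^*\,d\nu\le\int_P w\,d\mu+\int_{\Omega_v}w^*\,d\nu\qquad\text{for all }w\in\cH .
\]
So $v$ minimizes the Kantorovich dual for $(\mu,\nu)$; restricting to positive $w$ is harmless, since adding constants does not change the dual functional. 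By duality, an optimal plan $\gamma$ then satisfies $v(x)+v^*(y)=\la x,y\rg$ $\gamma$-a.e. Since $\nu$ is absolutely continuous, $\gamma$ is concentrated on the graph of $\nabla v^*$, which gives $(\nabla v^*)_{\#}\nu=\mu$. This is the paper's argument; it obtains the same bound by differentiating $\bigl((1-s)v+sw\bigr)^*$.

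\textbf{Your Step~1 normalization is also wrong.} Under $v\mapsto\lambda v$, both $\alpha_v$ and $\beta_v$ scale by $\lambda^{-1}$, so rescaling cannot equalize them. The equality $\alpha_v=\beta_v$ comes from criticality tested with $w=v\pm s$, which gives $\pm(\beta_v-\alpha_v)\le 0$. Likewise, $L(v)=0$ is not a mass-balance condition, since both measures are probabilities by definition. It follows from $\alpha_v=\beta_v$ together with scale invariance.
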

    \begin{proof}
        Consider any $\widehat{v} \in \cH$ and we consider the variation $\widehat{v}-v$ and let $u := v^*$ and $\widehat{u} := \widehat{v}^*$. 
        We consider the family $v_s := (1-s) v + s \widehat{v}$ and $u_s := v_s^*$ which satisfies
        \[
        u_s \leq (1-s) u + s \widehat{u} \qquad \implies \qquad \dot{u}_0 \leq \widehat{u} - u
        \]
        by Legendre duality.
        Furthermore, maximizing over all $y$, we have $\dot{u}_0 = (v - \widehat{v})(\nabla u(y))$ almost everywhere.
        Therefore, we have
        \begin{equation}\label{eqn:udiff}
            u(y) - \widehat{u}(y) \leq (\widehat{v} - v)(\nabla u(y)).
        \end{equation}
        By the convex criticality assumption (see Definition~\ref{defn:convexCritical}), we have that 
        \[
        \delta^+ \cM(v;\widehat{v}-v) := -\alpha_v\int_P (\widehat{v}-v)(x)\,d\mu + \beta_v\int_{\Omega} (\widehat{v}-v)(\nabla u(y))\,d\nu \leq 0.
        \]
        From the families $v_s :=\frac{v \pm s}{1 \pm s}$, keeping the normalization $v_s(0) = 1$, we have $\delta^+\cM_v[\pm 1] = \mp \alpha_v \pm \beta_v \leq 0$, so $\alpha_v = \beta_v$. 
        Therefore, the convex criticality assumption implies
        \[
        \int_{\Omega}(\widehat{v}-v)(\nabla u(y))\, d\nu \leq \int_P (\widehat{v}-v)\, d\mu,
        \]
        and applying~\eqref{eqn:udiff} and Legendre duality $u(x) + v(y) \geq \la x, y \rg$ yields
        \[
        \int_Pv\,d\mu + \int_\Omega u\, d\nu \leq \int_P \widehat{v}\,d\mu + \int_\Omega \widehat{u}\, d\nu.
        \]
        This inequality is the Kantorovich optimal transport inequality, so Gangbo-McCann~\cite{Gangbo-McCann} proves that $v$ is an Alexandrov solution of the Monge-Amp\`ere equation on $P$.
        Finally, the interior regularity theory of Caffarelli~\cite{Caffarelli} shows that $v \in C^{\infty}(P)$.
        Applying Lemma~\ref{lem:EL-eqn} completes the result.
        
    \end{proof}

\section{Acute cones}\label{sec:acute}
We first handle the case of generalized \textbf{acute cones}, meaning that $\tC' \subset \tC^\vee$.
This extends the result~\cite[Theorem~1.1]{TristanBenjyFreid} to include potential boundary agreement $y \in \p Q \cap \p \Sigma$.
The novel ingredient is the OT-separability hypothesis, which prevents degenerations which were previously ruled out by the compactness $\Sigma \Subset Q$ ~\cite{TristanBenjyFreid}.

\begin{lem}
    For an acute pair, there is a uniform energy bound $\sup_{\varphi \in \cH}\cM(\varphi) \leq C < +\infty$.
\end{lem}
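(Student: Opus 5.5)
The plan is to reduce the conical Mahler volume to the classical symmetric Mahler volume and then invoke the Blaschke–Santaló inequality. Recall that $\cM(\varphi)=\log|K|+\log|K'|$, up to the normalizing constant in $J$, where $K=\{\varphi<1\}\subset\tC$ and $K'=K^\circ\cap\tC'$. It therefore suffices to bound the product $|K|\,|K'|$ by a dimensional constant, uniformly in $\varphi\in\cH$.

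\emph{Step 1: use acuteness to symmetrize.} Set $L:=\mathrm{conv}(K\cup -K)$, which is an origin-symmetric convex set. Since the pair is acute, $\tC'\subset\tC^\vee$, so $\la X,Y\rg\ge 0$ for every $X\in K\subset\tC$ and every $Y\in K'\subset\tC'$. Moreover $Y\in K^\circ$ gives $\la X,Y\rg<1$ for $X\in K$. Hence $|\la \pm X,Y\rg|<1$ for all $X\in K$. This inequality passes to convex combinations, so it holds for all $X\in L$, and we obtain
\[
K'\subset L^\circ .
\]
Trivially $K\subset L$. Combining the two inclusions,
\[
|K|\,|K'|\le |L|\,|L^\circ| .
\]
This is the only place acuteness enters. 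It is exactly the sign condition that lets us ignore the lack of centering of $K$, which is the reason the conical Mahler volume fails to be bounded in general.

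\emph{Step 2: apply Blaschke–Santaló.} Since $L$ is origin-symmetric, it is centered at $0$, and the Blaschke–Santaló inequality gives $|L|\,|L^\circ|\le |\bB|^2$. Therefore $\cM(\varphi)\le 2\log|\bB|+C(n)$ for every $\varphi\in\cH$, which is the claimed uniform bound.

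\emph{Main obstacle: checking that $L$ is a genuine convex body.} We need $L$ to be bounded with nonempty interior, so that the inequality is meaningful and not vacuous. Nonempty interior is immediate because $K$ is open. For boundedness we need $K$ bounded, i.e.\ $\varphi\ge c|X|^2$ on $\tC$. In the acute case $\tC^\vee\supset\tC'$ is open and nonempty, so $\tC$ is pointed and its link $P$ is bounded. Then $\inf_P\varphi>0$ together with $2$-homogeneity gives the coercivity bound. Alternatively, if $L$ were unbounded then $L^\circ$ would be degenerate and the inequality would hold trivially once stated for possibly infinite volumes. Either way, no estimate on $\varphi$ beyond membership in $\cH$ is required.
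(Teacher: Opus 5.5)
Your proposal is correct and follows the paper's argument: symmetrize to $L=\mathrm{conv}(K\cup -K)$, use acuteness ($\la X,Y\rg\ge 0$ for $X\in K$, $Y\in\tC'$) to compare $K'$ with $L^\circ$, and conclude $|K|\,|K'|\le|L|\,|L^\circ|\le|\bB|^2$ by Blaschke--Santal\'o. The paper phrases the middle step as the identity $\phi_L=\phi_K$ on $\tC'$, giving $L^\circ\cap\tC'=K'$, whereas you prove only the inclusion $K'\subset L^\circ$, which is all that is needed; your check that $L$ is bounded covers a point the paper leaves implicit.
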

\begin{proof}
    We consider $L := \mr{ConvexHull} (K, -K) \subset V$ for $K := \{\varphi < 1\}$.
    Since $L$ is centrally symmetric, it has its Santal\'o point at the origin, so the Blaschke-Santal\'o theorem says that $|L||L^\circ| \leq \omega_n^2$ for $\omega_n = |B_1|$. 
    Acuteness provides $\la X, Y\rg \geq 0$ for all $X \in K$ and every $Y \in \tC'$. 
    Since $0 \in \ol{K}$, for any $Y \in \tC'$, $\phi_K(-Y) = 0$, so $\phi_L(Y) = \max\{\phi_K(Y), \phi_K(-Y)\} = \phi_K(Y)$.
    Therefore, $L^\circ \cap \tC' = K^\circ \cap \tC' =: K'$.
    Since $|L \cap \tC| \geq |K|$, taking the intersection with the cones demonstrates
    \[
    |K||K'| \leq |L||L^\circ| \leq \omega_n^2,
    \]
    which provides the energy upper bound. 
\end{proof}

It remains to show that a sequence $\varphi_j$ such that $\cM(\varphi_j) \to \sup_{\cH} \cM$ converges to a critical point of $\cM$.
The following proves Theorem~\ref{thm: HotExistIntro} for the first alternative in condition $(ii)$.
\begin{prop}\label{prop: Sep+Acute=HOT}
    If $(\tC, \tC')$ is OT-separable and acute, then there exists a solution $\varphi$ of~\eqref{eqn:HOT} which maximizes $\cM$. 
\end{prop}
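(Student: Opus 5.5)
We argue by direct maximization. By the preceding lemma, $\sup_{\cH}\cM<+\infty$, so we take a maximizing sequence $\varphi_j\in\cH$ with $\cM(\varphi_j)\to\sup_\cH\cM$. By Lemma~\ref{lem:boundaryConditionAchievement} we replace each $\varphi_j$ by its saturation $\ul{\varphi}_j$, which does not decrease $\cM$, and so assume every $\varphi_j$ is saturated. The plan is to renormalize by $\sln$ so that the sets $K_j=\{\varphi_j<1\}$ have John ellipsoid comparable to the unit ball, extract a Hausdorff limit, and show that the limit is a maximizer. A maximizer is convex critical, hence solves~\eqref{eqn:HOT} by Proposition~\ref{prop:crit->HOT}.

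Concretely, choose $M_j\in\sln$ and scalars $\lambda_j>0$ so that $\lambda_j M_j K_j$ has John ellipsoid equal to the unit ball centered at a bounded point. Since $\cM$ is invariant under the diagonal action $M\cdot(X,Y)=(MX,M^{-T}Y)$ and under scaling, the pushed-forward functions $\psi_j:=\varphi_j\circ(\lambda_jM_j)^{-1}$ on $M_j\tC$ have $\cM_{(M_j\tC,M_j^{-T}\tC')}(\psi_j)=\cM(\varphi_j)$. We then split into two cases.

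\emph{Case (a): $\|M_j\|$ stays bounded.} After passing to a subsequence, $M_j\to M\in\sln$. Then $d(\varphi_j,\varphi_k)$ is controlled, since the sets $K_j$ are uniformly equivalent to a fixed body. Blaschke selection gives a limit $K_\infty$ with nonempty interior and a limit function $\varphi_\infty\in\cH$. By Lemma~\ref{lem:(C,d)props}$(i)$, $\cM$ is continuous, so $\varphi_\infty$ attains the supremum. The remaining point is that $\varphi_\infty$ is positive and bounded on $P$, i.e.\ $K_\infty$ neither fills the whole cone nor collapses. This should follow because the normalization gives $0<|K_\infty|<\infty$, and because $|K'_\infty|>0$ forces $\varphi_\infty$ to be bounded below on $P$.

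\emph{Case (b): $\|M_j\|\to\infty$.} This is the main obstacle. After passing to a subsequence, $(M_j\tC,M_j^{-T}\tC')\to(\tC_\infty,\tC'_\infty)$ in the Hausdorff sense. Acuteness is a closed condition, so the limit pair is again acute, and $\psi_j\to\psi_\infty$ with $K$-sets comparable to the ball. The first step here is to show the limit cones have nonempty interiors and are positively aligned. A degenerate limit would make $|K_j\cap M_j\tC|$ or $|K_j^\circ\cap M_j^{-T}\tC'|$ tend to $0$, hence $\cM\to-\infty$, contradicting $\cM(\varphi_j)\ge\cM(\varphi_1)$. The second step is to show $\psi_\infty$ is convex critical for the limit pair. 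For this we use that $\sup\cM$ is monotone under degeneration: any competitor $w$ on the limit pair can be transplanted back to $M_j\tC$ with $\cM$ nearly preserved, by continuity of volumes under Hausdorff convergence. This gives $\sup\cM_{\infty}\ge\sup\cM\ge\limsup\cM_\infty$ along the sequence, and in fact equality, using upper semicontinuity of the volumes for the limit. So $\psi_\infty$ maximizes $\cM_{(\tC_\infty,\tC'_\infty)}$, and Proposition~\ref{prop:crit->HOT} shows $(\tC_\infty,\tC'_\infty)$ admits a H.O.T.\ map. OT-separability then forces $(\tC_\infty,\tC'_\infty)=M\cdot(\tC,\tC')$. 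Pulling $\psi_\infty$ back by $M^{-1}$ gives an element of $\cH$ for the original pair with $\cM=\sup\cM$, which is again a maximizer and hence a solution.

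The delicate part is the transplant/semicontinuity argument in case (b): a competitor on the limit cones may not be admissible on $M_j\tC$. We handle this by restricting to $M_j\tC$, saturating, and using the distance-decreasing property in Lemma~\ref{lem:(C,d)props}$(iii)$ together with local Hausdorff convergence of the truncated sets $K\cap\tC$ and $K^\circ\cap\tC'$.
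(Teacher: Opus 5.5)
Your proposal is correct and follows essentially the same route as the paper: a maximizing sequence, John normalization, the bounded/degenerating dichotomy, a transplant of competitors from the limit pair back to $(M_j\tC,M_j^{-T}\tC')$ to show the limit is a maximizer, and then OT-separability. For the transplant the paper uses $\widetilde K_j := (\widetilde L_\infty^\circ\cap \tC_j')^\circ\cap\tC_j$, i.e.\ restriction followed by saturation, exactly as you propose. The differences are minor. The paper normalizes the John ellipsoid of the symmetrized body $L_j=\mathrm{ConvexHull}\{K_j,-K_j\}$ rather than of $K_j$, and by acuteness this makes the uniform boundedness of $K_j'=L_j^\circ\cap\tC'$ immediate; your sketch leaves this key use of acuteness implicit when you claim the limit cones are non-degenerate. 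The paper also invokes \cite[Proposition~3.1]{TristanBenjyFreid} where you use Proposition~\ref{prop:crit->HOT}.
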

\begin{proof}
    Consider an energy maximizing sequence $\varphi_j$. 
    The body $L_j := \mr{ConvexHull}\{K_j, -K_j\}$ is symmetric about the origin as is $L_j^\circ$, which by acuteness satisfies $L_j^\circ \cap \tC' = K_j'$. 
    Let $E_j$ be the John ellipsoid of $L_j$, which is centered at the origin. 
    Since $\cM$ is scale-invariant, we may assume each $|E_j|$ has volume 1.
    If $E_j$, up to a subsequence, converges to some non-degenerate ellipsoid $E$, then $\varphi_j \to \varphi \in \cH$ has a convergent sequence. 
    Applying~\cite[Proposition 3.1]{TristanBenjyFreid} shows $\varphi$ solves~\eqref{eqn:HOT}.

    Otherwise, suppose $E_j$ are degenerating.
    We can normalize by $g_j \in \sln$ such that $\widehat{L}_j := g_jL_j$ and $\widehat{L}_j^\circ = g_j^{-T} L_j^\circ$. 
    The rescaled sections $\widehat{K}_j := g_j K_j$ and $\widehat{K}'_j := g_j^{-T} K'_j$ arise on the rescaled cones $(g_j \cdot \tC, g_j^{-T}\cdot \tC')$. 
    Taking the limits $\widehat{K}_\infty := \lim_{j\to \infty}\widehat{K}_j$ and $\widehat{K}'_\infty := \lim_{j \to \infty}\widehat{K}'_j$ are likewise uniformly round, so the limiting cones $(\tC_\infty, \tC'_\infty)$ generated by them are of full measure and realized by the $\sln$-degeneration $(g_j\cdot \tC, g_j^{-T} \tC') \rightsquigarrow (\tC_\infty, \tC'_\infty)$.
    By the volume convergence of the sections, we have $\cM_\infty(\varphi_\infty) = \lim_{j \to \infty}\cM(\varphi_j) = \sup_{\cH}\cM$. 
    It remains to show that $\varphi_\infty$ is an energy maximizer of the limiting pair. 
    Suppose not; then for $\widetilde{\varphi}_\infty$ a function with strictly higher energy, we can consider $\widetilde{L}_\infty := \mr{ConvexHull}\{\widetilde{K}_\infty, -\widetilde{K}_\infty\}$ and define $\widetilde{K}_j := (\widetilde{L}^\circ_\infty \cap \tC_j')^\circ \cap \tC_j$, which satisfies $\widetilde{L}_\infty^\circ \subset \widetilde{K}_j^\circ$ since polarity reverses inclusion.
    Therefore,
    \[
    \sup_{\cH}(IJ) \geq \limsup_{j} |\widetilde{K}_j||\widetilde{K}'_j| \geq |\widetilde{K}_\infty||\widetilde{K}'_\infty| > |K_\infty||K'_\infty|  
    \]
    which violates the assumption that $\varphi_j$ was energy maximizing.
    Once again, applying~\cite[Proposition 3.1]{TristanBenjyFreid} shows $\varphi_\infty$ solves~\eqref{eqn:HOT} on the limit cones $(\tC_\infty, \tC'_\infty)$.
    Since we assumed $(\tC,\tC')$ was OT-separable, this implies $(\tC_\infty, \tC'_\infty)$ is $\sln$-equivalent to $(\tC,\tC')$ and the Proposition follows.
\end{proof}

\begin{remark}
    Proposition~\ref{prop: Sep+Acute=HOT} yields a new proof of \cite[Theorem~1.1]{TristanBenjyFreid}.
    Indeed, as proved in Corollary~\ref{cor:Crossing=OTsep}, strongly acute cones are OT-separable.
\end{remark}

\section{Max-min critical points}\label{sec:maxmin}
In this section, we first describe the main ingredients necessary to find a critical point via max-min techniques: a non-trivial topology over which we can sweepout, the finiteness of a max-min width, and a Palais-Smale type condition.
We then utilize a generalized min-max theory for semicontinuous functionals to find a critical point of $\cM$, which by Proposition~\ref{prop:crit->HOT} produces a \hyperref[eqn:HOT]{H.O.T.} map. 
From now on we assume the pair is not acute, meaning $\tC' \not\subset \tC^\vee$. 
The energy $\cM$ now takes on all real values, requiring the max-min formulation to find critical points.
We will also show that in many cases OT-separability is effectively verifiable.

\subsection{Linking}\label{sec:Linking}
We now discuss the linking topology of the pair $(\tC, \tC')$.
The link $\Sigma$ decomposes where $O := \Sigma \setminus \ol{Q}$ are the obtuse points and $A := Q \cap \Sigma$ are the acute points. 
We define a quantifiable acute region as $A_{\ve} := \{y \in A : d(y, \p Q) > \ve\}$.
Because $O \subset \Sigma \setminus A_\ve$, we have an induced map on the relative homology $\iota_* H_*(\Sigma, O) \to H_*(\Sigma,\Sigma \setminus A_\ve)$ from the inclusion of pairs $\iota: (\Sigma, O) \to (\Sigma, \Sigma \setminus A_\ve)$.
\begin{defn}\label{def:Linked}
    The pair $(\tC, \tC')$ is \textbf{linked} if, for some $\ve > 0$, 
    \[
    \mr{im}\bigl(H_*(\Sigma, O) \xrightarrow{\iota_*} H_*(\Sigma,\Sigma \setminus A_\ve)\bigr) \neq 0.
    \]
\end{defn}
The definition generalizes the second alternative in Theorem~\ref{thm: HotExistIntro} $(ii)$.

Recall $\bB := \{X :\sum X_i^2 < 1\} \subset V$ and similarly for $\bB^* \subset V^*$ are the choice of unit balls induced by our coordinates.
To any function $v = \phi_{K'}$, we associate the data of $E := c + L\bB^*$, the John ellipsoid of $K'$ for $L$ a positive-definite symmetric matrix.
We say that $v$ is in \textbf{vertical John position} if $L = \mr{Id}$.
The \textbf{Steiner point}, which measures the average gradient image, is given by
\begin{equation}\label{eqn:steiner}
S_U(K') := \frac{1}{|U|}\int_U \nabla \phi_{K'}(X)\,dX,\qquad  \mathfrak{q} :=\frac{
S_U(K')}{\la e_n, (
S_U(K'))\rg} \in \Sigma,
\end{equation}
where $U = e_n + r\bB$ is some fixed open set in $\tC$ with $r$ sufficiently small. 

\begin{remark}
    The Steiner point $\mathfrak{q}$ depends on the choice of $U$, which we fix once and for all based on Lemma~\ref{lem: SteinerPointLowBound} and Lemma~\ref{lem:affineSection}. 
\end{remark}
  Since $\mathfrak{q}(v) = \mathfrak{q}(\lambda v)$ for all $\lambda > 0$, we can work in the full space $\cH$ in place of $\bP(\cH)$. 
The map $\mathfrak{q}$ defines a map $\mathfrak{q} : \bP(\cH) \to \Sigma$ which allows us to trace quantitatively where a definite piece of the mass of $K'$ lies when projected to $\{Y_n = 1\}$. 

\begin{lem}\label{lem: SteinerPointLowBound}
    For any $0 < c_\Sigma < 1$, if $r$ is sufficiently small, we have $c_{\Sigma}\leq \la e_n, S_U\rg \leq 1$. 
\end{lem}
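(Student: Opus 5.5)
The plan is to compute $S_U$ directly from the geometry of $K'$ and use homogeneity to relate $\la e_n, \nabla\phi_{K'}(X)\rg$ to $\phi_{K'}$ at nearby points. The upper bound comes from the normalization, and the lower bound comes from $U$ concentrating near $e_n$.

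Recall that $v = \phi_{K'}$ is $1$-homogeneous and that we normalize $v(e_n) = v(0) = 1$. By Euler's identity, for a.e.\ $X$ we have $\la X, \nabla \phi_{K'}(X)\rg = \phi_{K'}(X)$. Moreover $\nabla\phi_{K'}(X) \in \ol{K'} \subset \ol{\tC'}$, so every gradient has nonnegative $Y_n$-component; positive alignment, via $e_n \in (\tC')^\vee$, gives $Y_n \geq 0$ on $\tC'$.

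For the upper bound we must show $\la e_n, Y\rg \leq 1$ for $Y \in \ol{K'}$. I would first check the convention: the height of $K'$ is $\sup_{K'} Y_n = \phi_{K'}(e_n) = v(e_n) = 1$. Hence every subgradient satisfies $\la e_n, Y\rg \leq \sup_{\ol{K'}} Y_n = 1$, and averaging over $U$ gives $\la e_n, S_U\rg \leq 1$.

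For the lower bound, write $X = e_n + rZ$ with $|Z| < 1$ and $Y = \nabla\phi_{K'}(X) \in \ol{K'}$. Euler gives
\[
Y_n = \la e_n, Y\rg = \phi_{K'}(X) - r\la Z, Y\rg .
\]
The key estimate needed is a uniform bound on $|Y|$ for $Y \in \ol{K'}$, with constants independent of $v$ in our normalized class, together with $\phi_{K'}(X) \geq c$ near $e_n$. I would try to get both from homogeneity and positive alignment:
\begin{itemize}
\item Each $Y \in \ol{K'} \subset \ol{\tC'}$ has $0 \le Y_n \le 1$.
\item Pointedness of $\tC'$, or equivalently boundedness of $\Sigma$, gives $|Y'| \le C_\Sigma Y_n$, hence $|Y| \le C_\Sigma'$.
\end{itemize}
Then $|\phi_{K'}(X) - \phi_{K'}(e_n)| \le C_\Sigma' r$, since $\phi_{K'}$ is $C'_\Sigma$-Lipschitz. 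This yields
\[
Y_n \geq 1 - 2C_\Sigma' r,
\]
and so $\la e_n, S_U\rg \ge 1 - 2C'_\Sigma r \ge c_\Sigma$ once $r \le (1-c_\Sigma)/(2C'_\Sigma)$. Only $\Sigma$ enters the constant, consistent with the subscript $c_\Sigma$.

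The main obstacle is making sure that the bound $|Y| \le C$ on $\ol{K'}$ really is uniform over all $v$, and not just over each fixed $v$. This relies on $K' \subset \tC' \cap \{Y_n \le 1\}$ being contained in a fixed bounded truncated cone. That in turn requires $\Sigma$ to be bounded, which holds by the standing positive-alignment coordinates. If $\Sigma$ were unbounded (non-pointed $\tC'$), I would instead have to restrict to saturated $v$ and argue through $\tC^\vee$.
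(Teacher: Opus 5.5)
Your proposal is correct and is essentially the paper's argument. The paper bounds $Y_n=\la X,Y\rg-\la X-e_n,Y\rg$ from below by comparing the maximizer $Y=\nabla\phi_{K'}(X)$ with a height-one point $Z\in\ol{K'}$. This is exactly your Euler-identity-plus-Lipschitz estimate, giving the same bound $1-2rM$ with $M=\sup_{\tC'\cap\{Y_n<1\}}|Y|$, the same choice $r\le (1-c_\Sigma)/(2M)$, and the same reliance on the height-one normalization of $K'$ and on the boundedness of $\Sigma$ from the positive-alignment coordinates.
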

\begin{proof}
    Let $M := \sup_{\tC' \cap \{Y_n < 1\}}|Y|$, which is finite since $\tC'$ is a pointed cone. Consider any $Y \in \tC' \cap \{Y_n < 1\}$. 
    For any $X \in U$, let $Y \in K'$ satisfy $\la X, Y\rg = \phi_{K'}(X)$. 
    For any $Z \in K'$ with $Z_n = 1$, we know that $\la X, Y\rg \geq \la X, Z\rg$. 
    We can bound
    \begin{align*}
    Y_n = \la X, Y\rg - \la X - e_n, Y\rg &\geq \la X, Z\rg -|X-e_n||Y|  \\
    &\geq \la e_n ,Z\rg + \la X - e_n, z\rg - rM \geq 1 - 2rM
    \end{align*}
    and choosing $r \leq \frac{1-c_\Sigma}{2M}$ completes the proof.
\end{proof}

The following lemma establishes a uniform energy bound based on the position $\mathfrak{q}$.

\begin{lem}\label{lem:EnergyInfBound}
    For any $v$, if $\mathfrak{q} \in Q$, then $\cM(v) \leq C + n\log \frac{1}{\mr{dist}(\mathfrak{q}, \partial Q)} =: E_{\mr{dist}(\mathfrak{q}, \partial Q)}$.
\end{lem}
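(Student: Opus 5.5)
We bound $\cM(v)=\log|K|+\log|K'|$ by reducing to a Blaschke--Santal\'o estimate, as in the acute case, with the failure of acuteness controlled through the Steiner point. The Steiner point lies in $Q$, and so is acute, at distance $\delta:=\mr{dist}(\mathfrak{q},\p Q)$ from the obtuse region. The idea is to recentre the pair so that a neighbourhood of the direction of $\mathfrak{q}$ behaves acutely with quantitative margin $\delta$. By Lemma~\ref{lem:boundaryConditionAchievement} we may assume $v$ is saturated, since saturation only increases $\cM$. So $K^\circ=K'-\tC^\vee$, and $K=(K')^\circ\cap\tC$ by Corollary~\ref{cor:duality=legendre}.

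First I would use the point $Z:=S_U(K')$. By Lemma~\ref{lem: SteinerPointLowBound} it satisfies $c_\Sigma\le Z_n\le 1$, and $Z$ lies in the closed convex hull of $K'$, since it is an average of gradients of $\phi_{K'}$. Moreover $Z/Z_n=\mathfrak{q}\in Q$, with $B_\delta(\mathfrak{q})\cap\{Y_n=1\}\subset Q$. Hence $\la X,Z\rg\ge c\,\delta\,|X|$ for all $X\in\tC$, with $c$ depending only on the fixed coordinates and on $c_\Sigma$. This holds because a ball of radius $\sim\delta$ around $Z$ lies in $\tC^\vee$, and for $X\in\tC$ we have $X_n\gtrsim |X|$ by pointedness.

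The main step is to bound $|K|$ using this. Since $Z\in\ol{K'}$ and $K\subset (K')^\circ$, we get $\la X,Z\rg\le 1$ on $K$. Therefore $K\subset\tC\cap\{\la X,Z\rg\le 1\}$, which gives $|X|\le (c\delta)^{-1}$ on $K$. This alone would give $|K|\lesssim\delta^{-n}$, but $|K'|$ must be controlled jointly, since $\cM$ is $\sln$-invariant in spirit. So I would instead symmetrize around the point $Z$. Set $L:=\mr{ConvexHull}(K'-\tfrac12 Z,\ \tfrac12 Z-K')$, or more simply apply Blaschke--Santal\'o to $K'$ at its Santal\'o point. Then I would compare $(K'-Z')^\circ$ with $K$ for a point $Z'$ on the segment from $0$ to $Z$. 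The key geometric inclusion to establish is
\[
K \subset C\,\delta^{-1}\,\bigl(K'-Z'\bigr)^\circ\cap\tC .
\]
To prove it, take $X\in K$ and $W\in K'$. Then $\la X,W\rg\le 1$, and $-\la X,Z'\rg\le 0$ by the acute estimate above, so $\la X,W-Z'\rg\le 1$. The factor $\delta^{-1}$ enters when we replace $Z'$ by the centre of $K'$, because this requires $Z'$ to be interior to $K'$ quantitatively. I would use John's theorem to locate a definite-size ellipsoid inside $K'$ near $Z$. This uses the fact that $Z$ is an average of gradients over the fixed open set $U$, so a positive proportion of $K'$'s width surrounds $Z$.

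Granting the inclusion, Blaschke--Santal\'o gives $|K'|\,|(K'-s)^\circ|\le\omega_n^2$ at the Santal\'o point $s$, and a bounded-distortion comparison handles the change of centre. This yields
\[
|K||K'|\le C\,\delta^{-n},
\]
that is, $\cM(v)\le C+n\log(1/\delta)$. I expect the main obstacle to be the quantitative interiority of $Z$ relative to $K'$. This is exactly where the choice of $U$ in Lemma~\ref{lem: SteinerPointLowBound} must be used, and if it fails I would instead bound $|K|$ directly by the slab estimate and $|K'|$ via the inclusion $K'\subset K^\circ$ restricted to the cone.
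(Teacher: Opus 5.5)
Your main route has a genuine gap, and it is at the step you flagged. The Blaschke--Santal\'o comparison between $(K'-Z')^\circ\cap\tC$ and $(K'-s)^\circ$ needs $Z=S_U(K')$, or some $Z'\in[0,Z]$, to sit inside $K'$ with a margin controlled by $\delta$. Nothing supplies this: $\delta$ measures how far $\mathfrak{q}$ is inside $Q$, not how far $Z$ is inside $K'$.

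Here is a concrete failure. Take $p=(p',1)$ with $p'\in Q$. The saturated function $v(x)=1+\la x,p'\rg$ has $K'=(p-\tC^\vee)\cap\tC'$ and $\nabla\phi_{K'}\equiv p$ on $\tC$. So $Z=p$ is a vertex of $K'$, which contradicts your claim that averaging over $U$ surrounds $Z$ by a positive proportion of $K'$. Moreover, if $\p\Sigma$ passes through $Q$, move $p'$ toward $\p\Sigma$ while keeping $\mr{dist}(p',\p Q)=\delta$. This pushes the whole segment $[0,p]$ against $\p\tC'\cap\p K'$, so your ``bounded-distortion'' constant blows up with $\delta$ fixed.

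You do not need any of this, because your slab estimate is exactly the paper's proof. Indeed,
$v(X)=\phi_{K^\circ}(X)\ge\phi_{K'}(X)\ge\la X,S_U\rg=\la e_n,S_U\rg\,X_n(1+\la x,\mathfrak{q}\rg)\ge c_\Sigma\,\alpha_{\mathfrak{q}}\,X_n$,
where $\alpha_{\mathfrak{q}}:=\inf_P(1+\la x,\mathfrak{q}\rg)\sim\delta$. Lemma~\ref{lem:IJonlinks} then gives $I(v)\le\frac{|P|}{n}(c_\Sigma\alpha_{\mathfrak{q}})^{-n}$.

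Your reason for discarding this was that $|K'|$ must be controlled jointly because $\cM$ is $\mathrm{SL}(n)$-invariant. That is mistaken: $U$, $\mathfrak{q}$ and $\delta$ are tied to the fixed coordinates. Also, the height normalization $v(0)=1$, which Lemma~\ref{lem: SteinerPointLowBound} already presupposes, means $\sup_{K^\circ}Y_n=1$. Hence $K'\subset\tC'\cap\{Y_n\le 1\}$, a fixed bounded region, so $J(v)\le C$ with no reference to $K$. Adding the two bounds proves the lemma. Your closing fallback is therefore the correct proof, once this normalization is made explicit.
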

\begin{proof}
    For any $q \in Q$, let $\alpha_q := \inf_{x\in P} (1 + \la x, q\rg) > 0$.  We have that $\alpha_q \to 0$ as $q \to \p Q$ and $\alpha_q \sim \mr{dist}(q, \p Q)$ with uniform constants depending on $P$. 
    Since $v(X) = \sup_{Y \in K^\circ} \la X, Y\rg$ and the Steiner point $S_U \in K'$, we have for all $x$
    \[
    v(x) \geq \la (x, 1), S_U\rg \geq \la e_n, S_U\rg(1 + \la x, \mathfrak{q}\rg) \geq c_\Sigma (1 + \la x, \mathfrak{q}\rg) \geq c_\Sigma{\alpha_q},
    \]
    which implies that $I(v) \leq \frac{|P|}{n} c_\Sigma^{-n}\alpha_q^{-n}$.
    Under the height $1$ normalization, we have\\ \mbox{$J \leq |\tC' \cap \{ y_n < 1\} | \leq C$}.
    Combining these estimates yields $\cM(v) \leq C + n\log \frac{1}{\mr{dist}(q,\p Q)}$ as desired.

\end{proof}

We can now use the fibers of the center map $\mathfrak{q} : \cH \to \Sigma$ to decompose the function space.
We define
\[
\cO := \mathfrak{q}^{-1}(O) \qquad \text{and}\qquad \cA_\ve := \mathfrak{q}^{-1}(A_\ve)
\]
and, as above, since $\cO \subset \cH \setminus \cA_{\ve}$, we have a map $\iota_* :H_*(\cH, \cO) \to H_*(\cH, \cH\setminus \cA_\ve)$.
We will show that the linking condition~\ref{def:Linked} shows that the image of this map is non-trivial, for $\ve$ sufficiently small.

We now show that $\mathfrak{q}$ provides a homotopy equivalence between any $U \subset \Sigma$ and $\mathfrak{q}^{-1}(U)$. 
\begin{lem}\label{lem:affineSection}
    The fibers of $\mathfrak{q}$ are non-empty and contractible.
    Furthermore, the piecewise-affine function $\gamma(y) := \max\{\ve, 1 + \la \cdot, y \rg\} \in \ul{\cH}_{e_n}$ for $y \in \Sigma$ is a right inverse to $\mathfrak{q}$, meaning $\mathfrak{q} \circ \gamma = \mr{Id}$.
\end{lem}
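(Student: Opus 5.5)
The plan is to verify the right-inverse property by a direct computation, and then deduce that the fibers are contractible from an affine structure of the Steiner point $S_U$. The key observation is that $S_U(K')$ depends linearly on $v = \phi_{K'}$ (for $v \in \ul{\cH}$, equivalently on the saturated representative). Indeed, for $v,w \in \cH$ and $t\in[0,1]$ we have $\nabla((1-t)v + tw) = (1-t)\nabla v + t\nabla w$ almost everywhere on $U$. Hence $S_U((1-t)v+tw) = (1-t)S_U(v) + tS_U(w)$, and $\mathfrak{q}$ is the composition of this linear map with the radial projection $Y\mapsto Y/\la e_n, Y\rg$ onto $\{Y_n=1\}$. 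This projection is well defined because $\la e_n, S_U\rg \geq c_\Sigma > 0$ by Lemma~\ref{lem: SteinerPointLowBound}.

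\emph{Right inverse.} Fix $y\in\Sigma$. On the cone, the $1$-homogeneous extension of $\gamma(y)$ is
\[
v_y(X) = \max\{\ve X_n,\ X_n + \la X', y\rg\} = \max_{Y\in\{\ve e_n^*,\,(y,1)\}}\la X, Y\rg ,
\]
which is the support function of the segment $[\ve e_n^*, (y,1)]$. I would check the following points in order.
\begin{itemize}
\item[(a)] $v_y$ is convex, $1$-homogeneous, satisfies $v_y \geq \ve X_n > 0$ on $\tC$, and $v_y(e_n) = 1$ since $\ve<1$. Hence $\gamma(y)\in \cH_{e_n}$.
\item[(b)] $\nabla v_y$ takes values in $\{\ve e_n^*, (y,1)\} \subset \ol{\tC'}$, using $y\in\Sigma$ and the normalization of coordinates with $0 \in \ol{\Sigma}$. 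By the geometric description after Definition~\ref{def:saturated}, $v_y$ is therefore saturated.
\item[(c)] Since $\Sigma$ is bounded, $\sup_{y\in\Sigma}|y| < \infty$. So for $r$ small (uniformly in $y$) we have $1+\la x, y\rg > \ve$ for all $X=(x,1)$-directions in $U = e_n + r\bB$. Thus $\nabla v_y \equiv (y,1)$ on $U$, giving $S_U = (y,1)$ and $\mathfrak{q}(\gamma(y)) = y$.
\end{itemize}
This fixes the admissible choice of $r$ in $U$, compatible with Lemma~\ref{lem: SteinerPointLowBound}.

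\emph{Fibers.} Non-emptiness follows immediately from the right inverse. For contractibility, fix $y$ and note that
\[
\mathfrak{q}^{-1}(y) = \{v : S_U(v) \in \bR_{>0}(y,1)\}.
\]
If $S_U(v) = a(y,1)$ and $S_U(w) = b(y,1)$, then linearity gives $S_U((1-t)v+tw) = ((1-t)a+tb)(y,1)$. Moreover, the convex combination stays in $\cH$: it is convex, $2$-homogeneous after squaring, and bounded above and below on $P$. If we work in $\ul{\cH}$, saturation is preserved as well, because $\nabla$ of the combination lies in the convex set $\ol{\tC'}$. So the fiber is convex, and the straight-line homotopy $H(v,t) = (1-t)v + t\gamma(y)$ contracts it to $\gamma(y)$. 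The same formula, with $\gamma(\mathfrak{q}(v))$ in place of $\gamma(y)$, should give the claimed homotopy equivalence $\mathfrak{q}^{-1}(U)\simeq U$.

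The main obstacle I expect is continuity in the metric $d$ of \eqref{eqn:distanceOnC}, both of the homotopy and of $\mathfrak{q}$ itself. For the homotopy, the lower bound $\min\{\inf_P v, \inf_P \gamma(y)\} > 0$ turns uniform convergence into $d$-convergence of $\log$. For $\mathfrak{q}$, uniform convergence of convex functions $v_j\to v$ on a neighborhood of $\ol U$ gives $\nabla v_j\to\nabla v$ a.e. with uniform Lipschitz bounds on $U$. Dominated convergence then yields $S_U(v_j)\to S_U(v)$, exactly as in Lemma~\ref{lem:(C,d)props}$(iv)$.
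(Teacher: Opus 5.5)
Your right-inverse computation is correct and is the paper's argument. The same smallness condition $R_\Sigma r/(1-r)<1-\ve$ forces $\nabla v_y\equiv(y,1)$ on $U$. Your explicit normalization $0\in\ol\Sigma$ is what makes $\ve e_n^*\in\ol{\tC'}$, and hence $\gamma(y)\in\ul\cH$; the paper uses this tacitly.

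The gap is in the contractibility of the fibers. The Steiner point is $S_U(K')=\frac{1}{|U|}\int_U\nabla\phi_{K'}$, and $\phi_{K'}=v$ only when $v$ is saturated. For general $v\in\cH$ one has $\mathfrak q(v)=\mathfrak q(\ul v)$. This is also the definition under which Lemma~\ref{lem: SteinerPointLowBound} gives $\la e_n,S_U\rg\geq c_\Sigma$, since its proof picks a maximizer $Y\in K'$; you invoke that lemma to make $\mathfrak q$ well defined. But saturation is not affine. The polar body of $(1-t)v+tw$ is, up to closure, $(1-t)K_v^\circ+tK_w^\circ$, and its intersection with $\tC'$ generally strictly contains $(1-t)K'_v+tK'_w$. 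So the identity $S_U((1-t)v+tw)=(1-t)S_U(v)+tS_U(w)$, which you assert for all $v,w\in\cH$, holds only on $\ul\cH$. Consequently the fiber $\mathfrak q^{-1}(y)\subset\cH$ need not be convex, and the straight-line homotopy $(1-t)v+t\gamma(y)$ can leave it when $v$ is unsaturated. Redefining $S_U$ as the plain average of $\nabla v$ does not fix this: for unsaturated $v$ the gradient can leave $\ol{\tC'}$, so the normalized average need not lie in $\Sigma$.

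The missing step is the one the paper records as ``saturation preserves $\mathfrak q$ and is a retract.'' Saturation is monotone and idempotent. For $w_s:=(1-s)v+s\ul v$ you have $\ul v\leq w_s\leq v$, hence $\ul{w_s}=\ul v$ and $\mathfrak q(w_s)=\mathfrak q(v)$ for all $s\in[0,1]$. Thus $s\mapsto w_s$ deformation retracts $\mathfrak q^{-1}(y)$ onto $\mathfrak q^{-1}(y)\cap\ul\cH$, where your convexity argument is valid. For continuity of this retraction and of $\mathfrak q$, first use Lemma~\ref{lem:(C,d)props}$(iii)$ (saturation does not increase $d$), and only then pass to a.e.\ convergence of gradients as you propose.

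With this added your proof is complete. Your fiberwise homotopy $(1-t)v+t\gamma(\mathfrak q(v))$ on $\ul\cH$ then gives an explicit deformation retraction onto $\gamma(\Sigma)$. That is a more direct route to the homotopy equivalence used in Corollary~\ref{cor:homology=} than merely knowing the fibers are contractible.
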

\begin{proof}
    Let $K'_0$ and $K'_1$ arising from $v_0, v_1 \in \cH$ both have the same rescaled center $\mathfrak{q}$.
    Consider $v_t := (1-t)v_0 + t v_1$ which produces a convex interpolation of admissible $K'_t$ between any $v_0$ and $v_1$ with the same Steiner point $\mathfrak{q}$.
    Indeed, $K'_t = (1-t)K'_0 + tK'_1$ has the same rescaled center $\mathfrak{q}$ for all $t$. 
    By a general property of Steiner points, clear from ~\eqref{eqn:steiner}, we have $S_U((1-t)K'_0 + tK'_1) = (1-t)S_U(K'_0) + tS_U(K'_1)$.
    Therefore, the saturated fibers of $\mathfrak{q}$ are convex.
    Saturation preserves $\mathfrak{q}$ and is a retract, and thus contractible, exhibiting the homotopy equivalence.

    We note that the gradient image of $\varphi$ is given by $\{(ty, t) : t > 0\}$. 
    Since $y \in \Sigma$, the function $\gamma(y)$ is automatically saturated. 
    From the definition of the Steiner point~\eqref{eqn:steiner}, $S_U(K')$ for the function $\varphi_Y(X) = (\la X, Y\rg)^2$, or $v_y(x) = 1 + \la x, y \rg$ is $y$.
    We note that for $y \in \Sigma \setminus Q$, the function $1 + \la \cdot, y\rg$ is not strictly positive on $P$.
    The homogeneous extension is given by 
    \[
    v_{\ve, y}(X) := \max \{\ve X_n, X_n + \la X', y\rg\},
    \]
    so for $R_\Sigma := \max_{y \in \Sigma} |y|$ and $U := e_n + r\bB$, we may choose $\ve$ such that $R_\Sigma \frac{r}{1-r} \leq 1 - \ve$.
    We may therefore fix $0 < \ve \ll 1$ sufficiently so that $\gamma_{y}(X) = v_{\ve, y}(X)$ which has its gradient image from $U$ is contained in $\{(ty, t), t > 0\}$, so $\mathfrak{q}(\gamma(y)) = y$ as claimed.
\end{proof}
The triviality of the fibers and the section providing a homotopy inverse imply the following homology equivalence.
\begin{cor}\label{cor:homology=}
For any $D \subset \Sigma$, the map $\mathfrak{q}_*$ induces an isomorphism of homology groups $\mathfrak{q}_{*}:H_*(\cH, \mathfrak{q}^{-1}(D)) \to H_*({\Sigma}, D)$.
\end{cor}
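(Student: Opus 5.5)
The plan is to show that $\mathfrak{q}:\cH\to\Sigma$ is a homotopy equivalence of pairs $(\cH,\mathfrak{q}^{-1}(D))\to(\Sigma,D)$, with homotopy inverse given by the section $\gamma$ of Lemma~\ref{lem:affineSection}. Functoriality of relative homology then gives the isomorphism. Since $\mathfrak{q}\circ\gamma=\mr{Id}_\Sigma$ by Lemma~\ref{lem:affineSection}, it suffices to produce a homotopy $H:\cH\times[0,1]\to\cH$ from $\mr{Id}_\cH$ to $\gamma\circ\mathfrak{q}$ that preserves fibers of $\mathfrak{q}$ at every time. Such a homotopy automatically preserves $\mathfrak{q}^{-1}(D)$, so it is a homotopy of maps of pairs, and hence $\gamma_*$ and $\mathfrak{q}_*$ are mutually inverse on $H_*(\cdot,\mathfrak{q}^{-1}(D))$ and $H_*(\cdot,D)$.

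To build $H$ I proceed in two stages, as suggested by the proof of Lemma~\ref{lem:affineSection}.

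First, deform $v$ to its saturation by the straight line $v_t=(1-t)v+t\ul{v}$. Both endpoints give the same $K'$: saturation does not change $\varphi^*$ on $\tC'$, by Lemma~\ref{lem:boundaryConditionAchievement}. Hence the support functions agree along the gradient directions that matter, and one checks that $\mathfrak{q}(v_t)=\mathfrak{q}(v)$. If that check turns out awkward, I would instead replace this stage by the homotopy $(1-t)\varphi^*+t(\varphi^*+\mathbf 1_{\tC'})$ at the Legendre level. Either way the result is a fiber-preserving retraction onto $\ul{\cH}$.

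Second, on $\ul{\cH}$, use the linear interpolation $(1-t)\ul v+t\,\gamma(\mathfrak{q}(v))$ with $v$ fixed. On saturated functions $v=\phi_{K'}$, convex combination corresponds to the Minkowski combination of the $K'$, and $S_U$ is Minkowski-linear. Writing $S_0=S_U(K'_0)$ and $S_1=S_U(K'_1)$, we have $S_U(K'_t)=(1-t)S_0+tS_1$. Both $S_0$ and $S_1$ lie on the ray through $(\mathfrak{q},1)$, so their convex combination does too, and $\mathfrak{q}$ is constant along the path. I expect a normalization issue here, since $\gamma(y)$ is only a representative in $\ul{\cH}_{e_n}$. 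This is handled by the scale invariance of $\mathfrak{q}$, working with representatives normalized by $v(e_n)=1$, which is consistent with the remark that $\mathfrak{q}$ is defined on $\bP(\cH)$.

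Concatenating the two stages gives $H$. The step I expect to be the real obstacle is continuity of $H$, and of $\mathfrak{q}$ itself, in the metric $d$. Saturation is $1$-Lipschitz by Lemma~\ref{lem:(C,d)props}(iii), and convex combinations are clearly continuous. For $\gamma$, the map $y\mapsto\max\{\ve,1+\la\cdot,y\rg\}$ is continuous in $d$ because it is bounded below by $\ve$ on $P$.

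For $\mathfrak{q}$, I would argue as follows. Uniform convergence $v_j\to v$ on $P$ implies, by convexity, a.e. convergence of gradients on $U$ together with uniform bounds on $\nabla\phi_{K'}$ from the pointedness bound used in Lemma~\ref{lem: SteinerPointLowBound}. Dominated convergence then gives continuity of $S_U$, and the lower bound $\la e_n,S_U\rg\geq c_\Sigma$ makes the projectivization continuous.

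With $H$ continuous and fiber-preserving, the homotopy-invariance axiom of singular homology finishes the argument.
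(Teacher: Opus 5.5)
Your argument is correct, but it packages the proof differently from the paper. The paper first records that $\mathfrak{q}_*\colon H_*(\cH)\to H_*(\Sigma)$ and $\mathfrak{q}_*\colon H_*(\mathfrak{q}^{-1}(D))\to H_*(D)$ are isomorphisms, citing Lemma~\ref{lem:affineSection}: convex saturated fibers, saturation as a $\mathfrak{q}$-preserving retraction, and the section $\gamma$. It then applies the five-lemma to the long exact sequences of the two pairs.

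You instead build a single fiber-preserving homotopy $\mr{Id}_{\cH}\simeq\gamma\circ\mathfrak{q}$. Such a homotopy is automatically a homotopy of maps of pairs for every $D$, so $\mathfrak{q}$ and $\gamma$ are inverse homotopy equivalences of pairs and no exact-sequence argument is needed. The underlying input is the same in both versions. In fact, the paper's claim that $\mathfrak{q}^{-1}(D)\to D$ is a homology isomorphism really requires a fiber-preserving deformation of exactly your kind, since contractibility of the fibers alone would not give it. So your route mainly makes explicit what the paper leaves implicit, including the continuity of $\mathfrak{q}$ in $d$, which the paper never checks. What the paper's route buys is brevity on the page.

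Three small remarks.
\begin{enumerate}
\item The cleanest reason $\mathfrak{q}$ is constant along $v_t=(1-t)v+t\ul{v}$ is this: $\ul{v}\le v_t\le v$, and saturation is monotone and idempotent, so $\ul{v_t}=\ul{v}$. Since $\mathfrak{q}(w)$ depends only on $\ul{w}$, the claim follows.
\item Your Legendre-level fallback $\varphi^*+t\mathbf{1}_{\tC'}$ is not a continuous homotopy, because it equals $\varphi^*+\mathbf{1}_{\tC'}$ for every $t>0$. You do not need it, so this does not affect the proof.
\item A convex combination of saturated functions corresponds to the Minkowski combination of the sets $K'$ only up to re-saturation. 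However, the support functions agree on $\tC\supset U$, which is all $S_U$ sees, so the Minkowski-linearity step in your second stage is fine.
\end{enumerate}
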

\begin{proof}
    We consider the short exact sequence of chains given by the pairs $(\Sigma, D)$ and $(\cH, \mathfrak{q}^{-1}(D))$. 
    From Lemma~\ref{lem:affineSection}, we know that the induced maps $\mathfrak{q}_* : H_*(\cH) \to H_*(\Sigma)$ and $\mathfrak{q}_* : H_*(q^{-1}(D)) \to H_*(D)$ are isomorphisms.
    Therefore, we can apply the five-lemma to the long exact sequences in homology given by the pairs using the map $\mathfrak{q}_*$ which fills in the final vertical isomorphism $\mathfrak{q}_* : H_*(\mathfrak{q}^{-1}(D)) \to H_*(D)$.
\end{proof}

\begin{lem}\label{lem:tilt=king}
    Let $v \in \cH$ and consider the path $v_t(x) := v(x) + t \cdot\max\{0, 1 + \la x, \mathfrak{q}(v)\rg\}$.
    This path satisfies:
    \begin{enumerate}
        \item[$(i)$] $v_t \in \cH$ is continuous,
        \item[$(ii)$] $\mathfrak{q}(v_t) = \mathfrak{q}(v)$ for all $t$, and 
        \item[$(iii)$] if $\mathfrak{q}(v) \in O$, then $\cM(v_t) \to \infty$ as $t \to \infty$.
    \end{enumerate}
\end{lem}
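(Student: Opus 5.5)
The plan is to verify the three properties directly from the explicit form of the tilt. Write $\ell(x) := 1 + \la x, \mathfrak{q}(v)\rg$ and $\ell_+ := \max\{0,\ell\}$. Throughout, use the $1$-homogeneous extension
\[
v_t(X) = v(X) + t\max\{0, X_n + \la X', \mathfrak{q}\rg\} = v(X) + t\,\phi_{[0,(\mathfrak{q},1)]}(X),
\]
where $\phi_{[0,(\mathfrak{q},1)]}$ is the support function of the segment from $0$ to $(\mathfrak{q},1)\in \tC'$. Since $v = \phi_{K^\circ}$, this identifies the polar body of $v_t$ as a Minkowski sum:
\[
K_t^\circ = K^\circ + t[0,(\mathfrak{q},1)].
\]
All three items will be read off from this identity.

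\textbf{(i) and (ii).} For (i), $v_t$ is a sum of convex, $1$-homogeneous, nonnegative functions, and $\inf_P v_t \geq \inf_P v > 0$, so $v_t\in\cH$. For continuity in $d$, the bound $|\log(v_t/v_s)| \leq |t-s|\sup_P \ell_+/\inf_P v$ gives a Lipschitz estimate.

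For (ii), assume first that $v$ is saturated, so $v = \phi_{K'}$; otherwise saturate first, which preserves $\mathfrak{q}$ by Lemma~\ref{lem:affineSection}. On $U = e_n + r\bB$ we have $\ell>0$, since $|x|\leq r/(1-r)$ and $R_\Sigma r/(1-r) < 1$ by the choice of $r$ in Lemma~\ref{lem:affineSection}. Hence on $U$ the second term is linear with gradient $t(\mathfrak{q},1)$, and
\[
S_U(v_t) = S_U(v) + t(\mathfrak{q},1).
\]
Here $S_U(v) = \la e_n,S_U(v)\rg(\mathfrak{q},1)$, so after normalizing by the $e_n$-component the projectivized Steiner point is again $\mathfrak{q}$. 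This requires checking that the Steiner point is additive under Minkowski sums restricted to $\tC'$. Since $(\mathfrak{q},1)\in\tC'$ and $\tC'$ is a convex cone, we have $K' + t[0,(\mathfrak{q},1)] \subset K_t^\circ\cap \tC'$, and the support functions agree on $U$ because the maximizers there lie in $\tC'$.

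\textbf{(iii).} The plan is to show that $I(v_t)$ stays bounded below while $J(v_t)\to\infty$ linearly.

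For the lower bound on $I$: if $\mathfrak{q}\in O = \Sigma\setminus\ol{Q}$, then by the definition of $Q$ as the link of $\tC^\vee$ there is some $x_0\in P$ with $1+\la x_0,\mathfrak{q}\rg<0$. So $\ell_+\equiv 0$ on an open set $W\subset P$, and Lemma~\ref{lem:IJonlinks} gives
\[
I(v_t) \geq \frac1n\int_W v^{-n}\,dx =: c_0 > 0 \qquad \text{for all } t.
\]

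For the growth of $J$: by the inclusion above, $K'_t \supset K' + t[0,(\mathfrak{q},1)]$. Since $K'$ has nonempty interior, the volume of this Minkowski sum is at least $t\,|(\mathfrak{q},1)|\cdot \mathcal{H}^{n-1}(\pi K')$, where $\pi$ is the projection orthogonal to $(\mathfrak{q},1)$. Hence $J(v_t)\geq c_1 t$, and
\[
\cM(v_t) \geq \log c_0 + \log(c_1 t) \to\infty.
\]

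The main obstacle is ensuring that $W$ has positive measure, i.e.\ that $\ell$ is genuinely negative somewhere on $P$ rather than merely touching zero. This is exactly where $\mathfrak{q}\notin\ol{Q}$ is used, and it is why $O$ is defined with the closure. A secondary point is justifying the identity $K_t^\circ\cap\tC' \supseteq K'+t[0,(\mathfrak{q},1)]$ without assuming saturation. This follows from convexity of $\tC'$ and $(\mathfrak{q},1)\in\tC'$.
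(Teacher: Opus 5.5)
\emph{Note:} the paper states this lemma for all $v\in\cH$, but as explained below, (ii) only holds for saturated $v$.

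Your items (i) and (iii) are correct. For (iii) you take a slightly different route from the paper.

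\textbf{The paper's route for (iii).} It rescales: $v_t/(1+t)\to\phi_H$ uniformly on $P$, with $H=\mr{ConvexHull}\{0,\mathfrak q\}-\tC^\vee$. Hence $J(v_t/(1+t))\to|H\cap\tC'|>0$, while $I(v_t/(1+t))\gtrsim t^n$ from the open set where $\phi_H=0$.

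\textbf{Your route for (iii).} You keep $v_t$ unnormalized. You get $I(v_t)\ge\frac1n\int_W v^{-n}$ because $v_t=v$ on $W$. You get $J(v_t)\ge c_1t$ from $K'_t\supset K'+t[0,w]$ with $w=(\mathfrak q,1)$; this inclusion only needs $w\in\tC'$, not saturation. By scale invariance of $\cM$ this is the same mechanism. You obtain growth $\log t$ instead of $n\log t$; the sharper rate is recoverable from $K'_t\supset(tw-\tC^\vee)\cap\tC'$. In exchange, you never pass to the degenerate limit $\phi_H$, which is not in $\cH$.

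\textbf{The gap is in (ii).} It sits in the sentence ``otherwise saturate first, which preserves $\mathfrak q$.'' Since $\mathfrak q$ is computed from $\phi_{K'}$ with $K'=K^\circ\cap\tC'$, saturating replaces the path $v+t\ell_+$ by the different path $\ul{v}+t\ell_+$. The two paths have different sections. Indeed $K'_{v_t}=(K_v^\circ+t[0,w])\cap\tC'$ can contain translates $Y+tw$ of points $Y\in K_v^\circ\setminus\tC'$ that do not lie in $K'_v-\tC^\vee$. Such translates then become maximizers on $U$.

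\textbf{The reduction cannot be repaired.} For unsaturated $v$, (ii) is false with this definition of $\mathfrak q$. Take $n=2$, $\tC=\tC^\vee=\{X_2>|X_1|\}$, $\tC'=\{Y_2>2|Y_1|\}$, and $v=\phi_{[A,B]}$ with $A=(-0.1,1)$ and $B=(0.9,1)$.
\begin{itemize}
\item At $t=0$: on $U$, $\nabla\phi_{K'_v}$ equals $A$ where $X_1<0$ and equals $(\tfrac12,1)$ where $X_1>0$. So $\mathfrak q(v)=0.2$.
\item At $t=1$: since $\ell>0$ on $P$, we have $v_1=\phi_{[A+w,B+w]}$. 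The same computation gives maximizers $(0.1,2)$ and $(1,2)$, so $\mathfrak q(v_1)=0.275$.
\end{itemize}

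\textbf{How to fix it.} Item (ii) must be read for saturated $v$. This is the only case used: the sweepouts in Proposition~\ref{prop:widthFinite} are built from $\gamma(y)\in\ul{\cH}$, and the proof of Lemma~\ref{lem:pulltight} restricts to saturated sweepouts. The paper's ``immediate from the definitions'' tacitly assumes this as well.

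In the saturated case your argument is correct, and your phrase ``the maximizers lie in $\tC'$'' can be made precise by a sandwich:
\[
\phi_{K'}+t\phi_{[0,w]}=\phi_{K'+t[0,w]}\le\phi_{K'_t}\le\phi_{K_t^\circ}=v_t=\phi_{K'}+t\phi_{[0,w]}.
\]
So $v_t$ is again saturated. On $U$, where $\ell>0$, its gradient is $\nabla\phi_{K'}+tw$. This gives $S_U(K'_t)=(\la e_n,S_U(K')\rg+t)\,w$, whose projectivization is $\mathfrak q$.
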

\begin{proof}
    The first two properties are immediate based on the definitions.

    For $(iii)$, the normalized functions $\frac{v_t}{1 + t}$ converge uniformly on $P$ to $\phi_H,$ where $H := \mr{ConvexHull}\{0, \mathfrak{q}(v)\}-\tC^\vee$.
    Since $\mathfrak{q} \in \tC'$ is an interior point, we have that $|H| > 0$, so $J(\frac{v_t}{1+t}) \to |H| > 0$. 
    By strict obtuseness, we know that $\phi_H$ is 0 on an open set of $P$, so Lemma~\ref{lem:IJonlinks} shows that $I(\frac{v_t}{1+t}) \geq C t^{n}$, noting that $C$ depends on $\inf_P v$ and $\mathfrak{q}$.
\end{proof}

For any chain $\Gamma = \sum a_i \sigma_i$ for $\sigma_i : \Delta^k \to \cH$, we define its support as $|\Gamma| := \bigcup \sigma_i(\Delta^k)$, the image of all the functions in this chain.
From Corollary~\ref{cor:homology=} and the linking hypothesis, we can take some $\alpha \in H_*(\cH, \cO)$ whose image in $H_*(\cH, \cH\setminus \cA_\ve)$ is non-trivial, again from the five-lemma applied to the long exact sequence of a triple. 
Recall $E_\ve$ which is the constant from Lemma~\ref{lem:EnergyInfBound} arising from $A_\ve$. 
We define the space of \textbf{admissible families} as
\begin{equation}
    \mathscr{S}_k := \{\Gamma \in C_k(\cH) : \p \Gamma \in C_{k-1}(\cO), \inf_{v \in |\p \Gamma|} \cM(v) \geq E_\ve + 10\}.
\end{equation}
From this, we may define the \textbf{max-min width} of $\alpha \in H_k(\cH, \cO)$ as
\begin{equation}\label{eqn:widthDef}
    W_\alpha  := \sup_{\substack{\Gamma  \in \mathscr{S}_k \\ [\Gamma] = \alpha}}\inf_{v \in |\Gamma|}\cM(v)
\end{equation}
where we use the normalization $\inf_\emptyset = -\infty$.

\begin{prop}\label{prop:widthFinite}
    Let $\alpha \in H_*({\cH},\cO) $ satisfy $\iota_*\alpha \in H_*(\cH,\cH\setminus \cA_\ve) \neq 0$ for some $\ve >0$.
    Then, the width $W_\alpha$ exists and is finite. 
\end{prop}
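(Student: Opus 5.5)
Finiteness of $W_\alpha$ means two things: the supremum in~\eqref{eqn:widthDef} is not over an empty set (so $W_\alpha > -\infty$), and every admissible family has $\inf_{|\Gamma|}\cM$ bounded above by a uniform constant (so $W_\alpha < +\infty$). I will prove these separately.

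For the upper bound, the plan is to show that every admissible $\Gamma$ with $[\Gamma]=\alpha$ meets $\cA_\ve$. Suppose $|\Gamma| \subset \cH\setminus\cA_\ve$. Since $\p\Gamma \in C_{k-1}(\cO)$ and $\cO \subset \cH \setminus \cA_\ve$, the chain $\Gamma$ then lies entirely in $C_k(\cH\setminus \cA_\ve)$. Its class in $H_k(\cH,\cH\setminus\cA_\ve)$, namely $\iota_*\alpha$, would therefore vanish, contradicting the hypothesis. So there is some $v\in|\Gamma|$ with $\mathfrak{q}(v)\in A_\ve$, hence $\mr{dist}(\mathfrak{q}(v),\p Q)>\ve$. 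Lemma~\ref{lem:EnergyInfBound} then gives $\cM(v)\le E_{\ve}$, where $E_\ve := C + n\log(1/\ve)$ is monotone in the distance. It follows that $\inf_{|\Gamma|}\cM\le E_\ve$ for every admissible $\Gamma$, and so $W_\alpha\le E_\ve<\infty$.

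For the lower bound, I need one admissible representative of $\alpha$ with $\inf_{|\Gamma|}\cM > -\infty$. The steps are as follows.
\begin{itemize}
\item[(1)] By Corollary~\ref{cor:homology=} (applied with $D=O$), $\alpha = \mathfrak{q}_*^{-1}\beta$ for some $\beta\in H_k(\Sigma,O)$. I represent $\beta$ by a singular chain $\sigma$ in $\Sigma$ with $\p\sigma\subset O$.
\item[(2)] I push $\sigma$ into $\cH$ via the section $\gamma$ of Lemma~\ref{lem:affineSection}, setting $\Gamma_0 := \gamma_\#\sigma$. Because $\mathfrak{q}\circ\gamma=\mr{Id}$, the boundary $\p\Gamma_0$ lies in $\cO$ and $[\Gamma_0]=\alpha$. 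Both supports are compact, and $\cM$ is continuous by Lemma~\ref{lem:(C,d)props}(i) (given continuity of $\gamma$ in $d$), so $\cM$ is bounded below on $|\Gamma_0|$.
\item[(3)] To meet the requirement $\cM\ge E_\ve+10$ on the boundary, I use the tilt of Lemma~\ref{lem:tilt=king}. Define $H_t(v) := v + t\max\{0,1+\la\cdot,\mathfrak{q}(v)\rg\}$. This map is continuous in $(t,v)$ and preserves $\mathfrak{q}$. Set $\Gamma := \Gamma_0 + H_\#([0,T]\times \p\Gamma_0)$, after adjusting signs, where $T$ is large. The new boundary is $H_T(\p\Gamma_0)$, which still lies in $\cO$, and the homotopy stays inside $\cO$. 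Hence $[\Gamma]=\alpha$ in $H_k(\cH,\cO)$, and so $\Gamma$ is admissible once the boundary energy bound holds.
\item[(4)] The added piece $H([0,T]\times|\p\Gamma_0|)$ is compact. By continuity of $\cM$, the infimum of $\cM$ over $|\Gamma|$ is therefore finite, which gives $W_\alpha>-\infty$.
\end{itemize}

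The main obstacle is step~(3): obtaining $\cM(H_T(v)) \ge E_\ve+10$ \emph{uniformly} for $v$ in the compact set $|\p\Gamma_0|$. Lemma~\ref{lem:tilt=king}(iii) only gives divergence pointwise. Its proof, however, shows $I(v_t/(1+t))\ge C t^n$ with $C$ depending on $\inf_P v$ and on $\mathfrak{q}(v)$, and $J(v_t/(1+t))\to|H_{\mathfrak{q}(v)}|>0$. I would argue that on the compact set $|\p\Gamma_0|$, $\inf_P v$ is bounded below and $\mathfrak{q}(v)$ ranges over a compact subset of the open set $O$. On such a set the open region where $1+\la x,\mathfrak{q}\rg<0$ has uniformly positive measure, and $|H_{\mathfrak{q}}|$ is uniformly positive. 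This makes the constants uniform, so a single $T$ suffices. If uniformity is delicate, an alternative is to first shrink $\sigma$ so that $\p\sigma$ lies in a compact subset of $O$ bounded away from $\p Q$, and then apply the same argument.
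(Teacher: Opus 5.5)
Your proposal is correct and follows the paper's route. You lift a representative of $\alpha$ from $(\Sigma,O)$ via the section $\gamma$ of Lemma~\ref{lem:affineSection}, use the $\mathfrak{q}$-preserving tilt of Lemma~\ref{lem:tilt=king} to make the family admissible, and get $W_\alpha\le E_\ve$ because every representative must meet $\cA_\ve$, where Lemma~\ref{lem:EnergyInfBound} applies.

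The differences are minor. The paper tilts the whole chain, using a chain homotopy that stays in $\cO$ on the boundary, instead of attaching a collar along $\partial\Gamma_0$. It also leaves implicit two points you check: that the choice of large $t$ is uniform over the compact set $|\partial\Gamma_0|$, and that $\inf_{|\Gamma|}\cM>-\infty$. In that uniformity check, the constant in $I\geq Ct^n$ actually depends on $\sup_P v$ rather than $\inf_P v$; this is harmless, since $\sup_P v$ is also uniformly bounded on $\gamma(|\partial\sigma|)$.
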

\begin{proof}

    Let $\Gamma = \sum a_i \sigma_i$ be any chain realizing $\alpha \in H_*({\Sigma},O)$.
    Let $\gamma(y)$ be the section as defined in Lemma~\ref{lem:affineSection}.
    Thus, we can define $\tilde{\Gamma} = \sum a_i (\gamma \circ \sigma_i)$ which is a chain in $H_*(\cH, \cO)$ realizing $\alpha \in H_*(\cH, \cO)$

For any $\tilde{\sigma} : \Delta^k \to \cH$, define $\tilde{\sigma}_{t} : \Delta^k \to \cH$ given by 
\[
\tilde{\sigma}_t(p)(x) := \tilde{\sigma}(p)(x) + t\cdot\max\{0, 1 + \la x, \mathfrak{q}(\tilde{\sigma}(p))\rg \}.
\]
Properties $(i)$ and $(ii)$ of Lemma~\ref{lem:tilt=king} shows that $\tilde{\sigma}_i$ and $\tilde{\sigma}_{t}$ are homotopic as a chains. 
These are standard properties; see Hatcher~\cite[Proposition 2.19]{Hatcher}.
We apply this homotopy to $\tilde{\Gamma}$ yielding $\tilde{\Gamma}_t := \sum a_i (\gamma \circ \sigma_i)_t$.
Property $(iii)$ of Lemma~\ref{lem:tilt=king} shows that $\tilde{\Gamma}_t$ is admissible for $t \gg 1$.

    Now consider any non-trivial chain $\Gamma \in Z_K(\cH, \cO)$ such that $[\Gamma] = \alpha$ whose image in $H_*(\cH, \cH\setminus \cA_\ve)$ is non trivial.
    Therefore, we must have some $v \in |\Gamma|$ such that $\mathfrak{q}(v) \in A_\ve$. Lemma~\ref{lem:EnergyInfBound} therefore shows $W \leq E_\ve$.
\end{proof}

\subsection{OT-separability and Palais-Smale}\label{sec:Transverse=PS}
We now examine a Palais-Smale type condition for our max-min constructions. 
The central observation is that sequences that exhibit the failure of the Palais-Smale condition provide $\sln$-degenerations to a pair of cones with a solution of ~\eqref{eqn:HOT}.

\begin{lem}\label{lem:saturatedConvexDefect}
    The space of saturated function $(\ul{\cH}, d)$ is complete and for every $v \in \ul{\cH}$, 
    \[
    \mathfrak{a}(v) = \sup_{w \in \ul{\cH}, w \neq v}\frac{(\delta^+\cM(v;w-v))_+}{\|w-v\|_v},
    \]
    so the convex defect $\mathfrak{a}$ can be computed using only saturated variations.
\end{lem}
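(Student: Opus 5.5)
The plan is to prove the two assertions separately. Completeness of $(\ul{\cH},d)$ should follow from Lemma~\ref{lem:(C,d)props}. For the formula for $\mathfrak{a}$, I will show that replacing any competitor $w\in\cH$ by its saturation $\ul{w}$ (Definition~\ref{defn:saturation}) increases the numerator of~\eqref{eqn:convexDefect} and does not increase the denominator. One inequality is trivial, since $\ul{\cH}\subset\cH$ gives "$\geq$" immediately.

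\emph{Completeness.} Let $v_j\in\ul{\cH}$ be $d$-Cauchy. By Lemma~\ref{lem:(C,d)props}$(ii)$ it converges to some $v\in\cH$. Since $v_j=\ul{v_j}$, Lemma~\ref{lem:(C,d)props}$(iii)$ gives
\[
d(\ul{v},v_j)=d(\ul{v},\ul{v_j})\leq d(v,v_j)\to 0 .
\]
Hence $\ul{v}=\lim v_j=v$, so $v$ is saturated and $\ul{\cH}$ is closed in a complete space.

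\emph{Reduction to saturated competitors.} Fix $v\in\ul{\cH}$ and $w\in\cH$. Two inequalities are needed:
\[
\|\ul{w}-v\|_v\leq\|w-v\|_v
\qquad\text{and}\qquad
\delta^+\cM(v;\ul{w}-v)\geq\delta^+\cM(v;w-v).
\]

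For the first, I will follow the proof of Lemma~\ref{lem:(C,d)props}$(iii)$ at the level of support functions. If $|w-v|\leq r v$ on $P$, then $(1-r)v\leq w\leq(1+r)v$. Since $v=\ul{v}$ and saturation is monotone and $1$-homogeneous, this passes to $(1-r)v\leq\ul{w}\leq(1+r)v$.

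For the second, I will use the explicit formula in Definition~\ref{defn:convexCritical}$(i)$. Since $\ul{w}\leq w$, the $I$-term $-\alpha_v\int_P(\,\cdot\,-v)\,d\mu$ can only increase. The $J$-term involves only the values $(\,\cdot\,)(\nabla u(y))$ for $y\in\Omega_v$, where $u=v^*$. So it suffices to show $\ul{w}(x)=w(x)$ at every point $x=\nabla u(y)$ with $y\in\Omega_v$.

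\emph{Main obstacle.} The key claim is that $\ul{w}=w$ on the contact set $\nabla u(\Omega_v)$; I expect this to be the hardest step. By construction, $\Omega_v$ parametrizes the covectors $Y\in\tC'$ touching $K_v^\circ$. Because $v$ is saturated, $\nabla\varphi_v(X)\in\tC'$ at such points. The argument I would try first is to use Legendre duality, $\ul{w}=(w^*+\mathbf{1}_{\tC'})^*$, together with the characterization $\ul{K}^\circ=K'-\tC^\vee$ of Corollary~\ref{lem:cor:geomSaturationDescription}. The aim is to show that at $X=\nabla\varphi_v^*(Y)$ the supremum defining $\ul{w}(X)$ is attained at a covector in $\tC'$.

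If this pointwise equality fails in general, I would fall back on a first-variation comparison. The $J$-derivative along $(1-t)v+tw$ depends only on $((1-t)v+tw)^*$ restricted to $\tC'$. Saturation leaves $w^*$ unchanged on $\tC'$, so $J(\ul{w})=J(w)$ by Lemma~\ref{lem:boundaryConditionAchievement}. The remaining task would then be to show that this invariance persists to first order along the convex path issuing from the saturated point $v$. If even this fails, I would argue via~\eqref{eqn:saturationDefect} that the net change is nonnegative: the gain in the $I$-term, $\alpha_v\int_P(w-\ul{w})\,d\mu$, should dominate any loss in the $J$-term.
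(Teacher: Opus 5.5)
Your completeness argument and your estimate $\|\ul{w}-v\|_v\le\|w-v\|_v$ are correct and match the paper. The gap is the second inequality, $\delta^+\cM(v;\ul{w}-v)\ge\delta^+\cM(v;w-v)$ for an arbitrary competitor $w\in\cH$. It is false, and so is the pointwise claim that $\ul{w}=w$ on $\nabla u(\Omega_v)$. Whether $\ul{w}(X)=w(X)$ depends on whether the covector supporting $w$ at $X$ lies in $\ol{\tC'}$, which is a property of $w$, not of $v$.

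Here is a counterexample. Take $n=2$ and $\tC=\tC'=\{X_2>|X_1|\}$, so $P=(-1,1)$ and $\rho_\Sigma(y)=|y|$. Let $v(x)=1+\tfrac12|x|$, which is saturated. One computes $\Omega_v=(-\tfrac34,\tfrac34)$, $J(v)=\tfrac78$, $I(v)=\tfrac23$, and $\nabla u\equiv-1$ on $(-\tfrac34,-\tfrac12)$. Hence the $J$-term $\beta_v\int_{\Omega_v}\psi(\nabla u)\,d\nu=J(v)^{-1}\int_{\Omega_v}\psi(\nabla u)\,dy$ puts weight $\tfrac27$ on the value $\psi(-1)$, while $\alpha_v\,d\mu=I(v)^{-1}v^{-3}dx\le\tfrac32\,dx$.

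Let $w=\max\{v,L\}$ with $L$ affine, $L(1)<0$, and $L>v$ exactly on $[-1,-1+\eta)$. Here $\ul{w}$ is the supremum of affine minorants of $w$ that are nonnegative at $\pm1$. This gives $\ul{w}=\max\{v,\ell_0\}$, where $\ell_0$ is the line through $(-1+\eta,v(-1+\eta))$ and $(1,0)$. So $w-\ul{w}\ge0$ is supported in $[-1,-1+\eta]$, is linear there, and equals some $D>0$ at $x=-1$. Therefore
\[
\delta^+\cM(v;w-v)-\delta^+\cM(v;\ul{w}-v)=\beta_v\int_{\Omega_v}(w-\ul{w})(\nabla u)\,d\nu-\alpha_v\int_P(w-\ul{w})\,d\mu\ \ge\ \tfrac27D-\tfrac34D\eta>0
\]
for small $\eta$. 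Saturating a finite competitor can thus strictly lower the first variation. This defeats your main route, and also your last fallback: the gain in the $I$-term does not dominate the loss in the $J$-term.

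What the lemma needs is only the infinitesimal version of your comparison. Fix $z\in\cH$, set $z_t:=v+t(z-v)$, and saturate $z_t$ instead of $z$. Let $A$ be the supremum over saturated competitors.
\begin{itemize}
\item Your monotonicity argument gives $\|\ul{z_t}-v\|_v\le t\|z-v\|_v$.
\item Since $\ul{z_t}\le z_t$ and $\delta^+\log I(v;\cdot)$ is linear and decreasing, $\delta^+\log I(v;\ul{z_t}-v)\ge t\,\delta^+\log I(v;z-v)$.
\item For the $J$-part, the missing ingredient is concavity. One has $K'_{(1-s)v+sw}\supseteq(1-s)K'_v+sK'_w$, so by Brunn--Minkowski $s\mapsto\log J((1-s)v+sw)$ is concave. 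Hence $\delta^+\log J(v;w-v)\ge\log J(w)-\log J(v)$.
\end{itemize}
Applying the last point with $w=\ul{z_t}$ and using $J(\ul{z_t})=J(z_t)$,
\[
\delta^+\log J(v;\ul{z_t}-v)\ \ge\ \log J(z_t)-\log J(v)=t\,\delta^+\log J(v;z-v)+o(t).
\]
Combining these, $t\,\delta^+\cM(v;z-v)+o(t)\le\delta^+\cM(v;\ul{z_t}-v)\le At\|z-v\|_v$. Dividing by $t$ and letting $t\to0$ gives $\mathfrak{a}(v)\le A$; this is the paper's proof. In effect, $z_t-\ul{z_t}$ is of order $t$ in sup norm, but its $\nu$-weighted integral is only $o(t)$.

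Your second fallback points in this direction but falls short in two ways. First, it does not say that one must saturate $z_t$ rather than $w$; comparing the paths toward $w$ and toward $\ul{w}$ fails, by the example above. Second, it does not supply the concavity needed to turn the equality of values $J(\ul{z_t})=J(z_t)$ into a lower bound on the directional derivative $\delta^+\log J(v;\ul{z_t}-v)$.
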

\begin{proof}
    Completeness follows from the fact that saturation is retraction on a complete distance non-increasing on a metric space, cf.~Lemma~\hyperref[lem:(C,d)props]{\ref{lem:(C,d)props}$(iii)$-$(iv)$}.

    For $v_t := (1-t)v_1 + tv_0$, the Brunn-Minkowski inequality applied to $K'_{v_s}$ 
    \[
    \delta^+ \log J(v; w-v) \geq \log J(w) - \log J(v).
    \]
    Let $A := \sup_{w \in \ul{\cH}, w \neq v}\frac{(\delta^+\cM(v;w-v))_+}{\|w-v\|_v}$.
    For any $z \in \cH$ with $z \neq v$, let $z_t := v + t(z-v)$ and $\ul{z}_t$ its saturation for $t \leq \min\{1, \|z-v\|_v^{-1}\}$ sufficiently small.
    Therefore,
    \[
    (1-t\|z-v\|_v)v \leq \ul{z}_t \leq (1 + t\|z-v\|_v)v, \qquad \ul{z}_t \leq z_t, \qquad J(z_t) = J(\ul{z}_t).
    \]
    From equation~\eqref{eqn:IJvariations}, the first variation of $\log I$ is linear and decreasing, so 
    \[
    t \delta^+ \log I(v;z-v) + \log J({z}_t) - \log J(v) \leq \delta^+ \cM(v, \ul{z}_t-v) \leq A\|\ul{z}_t - v\|_v \leq At\|z - v\|_v
    \]
    whereby dividing by $t$ and letting $t \to 0$ proves the claim.
   
\end{proof}

The following result is one of the key ingredients for our work.
It can be viewed as a version of Lemma~\ref{lem:(C,d)props} $(iv)$ in the situation where the underlying cones are also allowed to vary.

\begin{prop}\label{prop:PS=SLNdegToPolystable}
    Let $\varphi_j \in \mathbb{P}(\ul{\cH})$ be a sequence such that $|\cM(\varphi_j)| \leq C$ and $\mathfrak{a}(\varphi_j) \to 0$.  Then, either:
    \begin{itemize}
        \item[(i)] $\varphi_j$ sub-sequentially converges to $\varphi\in \mathbb{P}(\ul{\cH})$ a critical point of $\cM$, or,
        \smallskip
        \item[(ii)] there exists an $\sln$-degeneration of $(\tC, \tC')$ to a pair $(\tC_\infty, \tC'_\infty)$ which has a solution of~\eqref{eqn:HOT}. In particular, if $(\tC_\infty, \tC'_\infty)$ is not $\sln$-equivalent to $(\tC, \tC')$, then $(\tC, \tC')$ is not OT-separable.
        \end{itemize}
    Conversely, if $(\tC, \tC')$ is not OT-separable then there exists a sequence $\varphi_j\in \mathbb{P}(\ul{\cH})$ with $|\cM(\varphi_j)| \leq C$, $\mathfrak{a}(\varphi_j) \to 0$ and which does not admit any convergent subsequence. 
\end{prop}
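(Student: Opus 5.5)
Fix representatives $\varphi_j$, write $K_j=\{\varphi_j<1\}$ and $K_j'=K_j^\circ\cap\tC'$, and apply John's theorem to a symmetrized body built from these sections, as in the proof of Proposition~\ref{prop: Sep+Acute=HOT}. Since the pair need not be acute, I would not use $\mathrm{ConvexHull}(K_j,-K_j)$. Instead I take the John ellipsoid of the symmetric body $K_j'-K_j'$, which controls the shape of $K_j'$. Saturation gives $K_j^\circ=K_j'-\tC^\vee$, so this also controls $\varphi_j=\phi_{K_j'}^2$.

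Choose $g_j\in\sln$ and scalars so that $g_j^{-T}K_j'$ is in John position with volume normalized. The bound $|\cM(\varphi_j)|\le C$ gives two-sided volume bounds on $|K_j|$ and $|K_j'|$. By Blaschke selection, after passing to a subsequence, the normalized sections $\widehat K_j=g_jK_j$ and $\widehat K_j'=g_j^{-T}K_j'$ converge in Hausdorff distance to bodies $\widehat K_\infty,\widehat K'_\infty$ with nonempty interior. I also pass to a subsequence so that the cones $(g_j\tC,g_j^{-T}\tC')$ converge on compact sets to a pair $(\tC_\infty,\tC'_\infty)$. These limit cones contain the cones generated by $\widehat K_\infty$ and $\widehat K'_\infty$, hence have nonempty interior, and positive alignment passes to the limit. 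Two cases follow.

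\emph{Case 1: $g_j$ stays bounded.} Then a further subsequence has $g_j\to g$, and we may take $g=\mathrm{Id}$. So $\varphi_j\to\varphi$ in $d_{\bP}$, and $\varphi$ lies in $\bP(\ul{\cH})$ by Lemma~\ref{lem:saturatedConvexDefect}. For any fixed $w$, Lemma~\ref{lem:(C,d)props}$(iv)$ gives $\delta^+\cM(\varphi_j;w-\varphi_j)\to\delta^+\cM(\varphi;w-\varphi)$. Together with $\mathfrak a(\varphi_j)\to 0$ and $\|w-\varphi_j\|_{\varphi_j}\to\|w-\varphi\|_\varphi$, this gives $\mathfrak a(\varphi)=0$, which is alternative (i). Proposition~\ref{prop:crit->HOT} then says $\varphi$ solves~\eqref{eqn:HOT}.

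\emph{Case 2: $\|g_j\|\to\infty$.} Then $(\tC,\tC')\leadsto(\tC_\infty,\tC'_\infty)$ is an $\sln$-degeneration. Let $\varphi_\infty$ be the saturated function on the limit pair determined by $\widehat K'_\infty$. Because $\cM$ and $\mathfrak a$ are $\sln$-invariant, the goal is $\mathfrak a_{(\tC_\infty,\tC'_\infty)}(\varphi_\infty)=0$; Proposition~\ref{prop:crit->HOT} applied on the limit pair then yields the \hyperref[eqn:HOT]{H.O.T.} map in (ii).

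I expect the main obstacle to be this lower semicontinuity of the convex defect when the cones themselves vary. To handle it, take any test $w_\infty\in\ul{\cH}(\tC_\infty,\tC'_\infty)$ with $K'_{w_\infty}$ compactly supported in the limit configuration. Transplant it back to the approximating pairs by the recipe in the proof of Proposition~\ref{prop: Sep+Acute=HOT}:
\[
\widetilde K'_j := K'_{w_\infty}\cap g_j^{-T}\tC',\qquad w_j:=\phi_{\widetilde K'_j}.
\]
Then $w_j\to w_\infty$ uniformly on compact subsets of the links. The first variation formula in Definition~\ref{defn:convexCritical}$(i)$ is an integral against $e^{-\varphi}$ on $\tC$ and $e^{-\varphi^*}$ on $\tC'$. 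The Gaussian-type bounds from the proof of Lemma~\ref{lem:(C,d)props}$(iv)$ hold uniformly because the sections are uniformly round, so dominated convergence passes this formula to the limit under Hausdorff convergence of the cones. This gives
\[
\delta^+\cM(\varphi_\infty;w_\infty-\varphi_\infty)=\lim_j\delta^+\cM(\widehat\varphi_j;w_j-\widehat\varphi_j)\le \lim_j \mathfrak a(\varphi_j)\,\|w_j-\widehat\varphi_j\|=0.
\]
General saturated $w_\infty$ are handled by approximation, using Lemma~\ref{lem:(C,d)props}$(i)$ and $(iii)$. So $\varphi_\infty$ is convex critical on the limit pair. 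The final clause of (ii) is then just the definition of OT-separability.

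For the converse, suppose $(\tC,\tC')\leadsto(\tC_\infty,\tC'_\infty)$ is a degeneration via $M_j$, the limit is not $\sln$-equivalent to $(\tC,\tC')$, and $\psi$ is an \hyperref[eqn:HOT]{H.O.T.} map on the limit pair. Define $\varphi_j$ on $(\tC,\tC')$ by pulling back along $M_j$ the saturation of $\psi$ restricted to $(M_j\tC,M_j^{-T}\tC')$, using the same transplant construction. Volume convergence gives $|\cM(\varphi_j)|\le C$. The continuity argument from Case~2, run in reverse, gives $\mathfrak a(\varphi_j)\to\mathfrak a_\infty(\psi)=0$. Finally, suppose some subsequence converged to $\varphi\in\bP(\ul{\cH})$. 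Then the normalizing matrices $M_j$ would be bounded modulo $\mathrm{Aut}$, and the limit pair would be $\sln$-equivalent to $(\tC,\tC')$, a contradiction. So no subsequence of $\varphi_j$ converges.
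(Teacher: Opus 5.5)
There is a genuine gap in Case~2: you apply Blaschke selection to $\widehat K_j=g_jK_j$ and conclude that $\tC_\infty$ has nonempty interior, but neither step is justified. Your John normalization only controls $\widehat K_j'$. For $\widehat K_j$ you know two things: two-sided volume bounds, and, by saturation, the inclusion $\frac{1}{2n}\bB\cap\tC_j\subset\widehat K_j$. Neither bounds the diameter of $\widehat K_j$. Suppose $g_j\tC$ collapses onto a cone with empty interior. Then $|\tC_j\cap\bB|\to0$, and $|\widehat K_j|\ge c$ forces $\widehat K_j$ to run off to infinity inside the thinning cone. Three things then break:
\begin{itemize}
\item Blaschke selection does not apply to $\widehat K_j$.
\item The limit is not an $\sln$-degeneration, which requires open limit cones.
\item The uniform quadratic bounds on $\widehat\varphi_j$ and $\widehat\varphi_j^*$ that your dominated-convergence step invokes (``the sections are uniformly round'') are unavailable.
\end{itemize}
Ruling out this collapse is the heart of the proof. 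It is also the place where $\mathfrak a(\varphi_j)\to0$ must be used \emph{before} any limit is taken. You use that hypothesis only afterwards, to show an already-constructed limit is critical. Your Case~1 is fine, since there the cone is fixed and open, and your converse matches the paper's argument in outline.

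The paper closes the gap with an explicit competitor. Assume $\mr{int}(\tC_\infty)=\emptyset$ and set $R_j:=|\tC_j\cap\bB|^{-1/(2n)}\to\infty$. Then the mass of $\widehat K_j$ concentrates outside $R_j\bB$. Let $\widehat Z_j$ be the John center of $\widehat K_j'$ and $\ell_j=\la\cdot,\widehat Z_j\rg$, and test $\mathfrak a$ against the shrinking competitor $\widetilde v_j=\max\{0,(1-\tfrac{1}{2R_j})\phi_{\widehat K_j'}+\tfrac{1}{2R_j}\ell_j\}$.
\begin{itemize}
\item Since $\phi_{\widehat K_j'}(X)\ge\ell_j(X)+|X|$, the competitor is at most half of $\widehat v_j$ on the far region. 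This gives $\delta^+\log I\ge\tfrac n2+o(1)$.
\item The competitor is the support function of a slightly shrunk copy of $\widehat K_j'$, so $\delta^+\log J\ge-\tfrac{n}{2R_j}$.
\item Since $\|\widetilde v_j-\widehat v_j\|_{\widehat v_j}\le1$, this yields $\mathfrak a\ge\tfrac n4$, contradicting $\mathfrak a\to0$.
\end{itemize}
Once $\tC_\infty$ is known to have interior, the volume bound gives $\widehat K_j\subset R\bB$, and hence $R^{-1}\bB^*\cap\tC_j'\subset\widehat K_j'$. From these you get two-sided quadratic bounds on $\widehat\varphi_j$ and $\widehat\varphi_j^*$, and also $|\nabla\widehat\varphi_j^*|^2\lesssim\widehat\varphi_j^*$. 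Only with these bounds in hand do your compactness, transplanting and dominated-convergence steps go through.
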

\begin{proof}
Let us first explain the idea of the proof. We first show that if the sequence $\varphi_j\in\bP(\ul{\cH})$ does not converge, then the convex sets $K_j'= \{\varphi_j^* <\tfrac{1}{4}\}$ are not projectively pre-compact in the Hausdorff topology.
By considering the John ellipsoid of $K_j'$, we generate a sequence of positive-definite symmetric matrices $L_j$ with $\|L_j\|_{\mr{HS}} +\|L_j^{-1}\|_{\mr{HS}} \rightarrow +\infty$ (by symmetry, we can drop the transpose from the dual action).  
After rescaling $L_j$ to have determinant $1$, this yields an $\sln$-degeneration $(L_j\tC_j, L_j^{-1}\tC_j')\rightsquigarrow(\tC_{\infty}, \tC_{\infty}')$. 
Since $\mathfrak{a}(\varphi_j)\rightarrow 0$, one expects that the cone pair $(\tC_{\infty}, \tC_{\infty}')$ admits a solution of~\eqref{eqn:HOT}. 
The main difficulty is to show: $(i)$ that $(\tC_{\infty}, \tC_{\infty}')$ is an open cone pair, and $(ii)$ the renormalized functions $\widehat{\varphi}_j(X):= \varphi(L_{j}^{-1}X)$ converge in sufficiently strong topologies such that $0=\lim_{j\rightarrow \infty}\mathfrak{a}(\widehat{\varphi}_j)=\mathfrak{a}(\lim_{j\rightarrow \infty}\widehat{\varphi}_j)$.

    Let $Z_j + L_j\bB$ be the John ellipsoid of $K'_j= \{\varphi_j^* <\tfrac{1}{4}\}$.  
Consider the sequence $(\tC_j, \tC'_j) := (L_j\tC, L_j^{-1} \tC')$ with the potentials $\widehat{\varphi}_j(X) := \varphi(L_j^{-1}X)$ so that $\widehat{\varphi}_j^*(Y) = \varphi^*(L_jY)$. 
We have $\widehat{K}_j' := L_j^{-1} K'_j$ which has John ellipsoid $L_j^{-1}Z_j + \bB^*$.
From Lemma~\ref{lem:saturatedConvexDefect}, it is sufficient to examine saturated degenerations, so we can describe $\varphi$ from $K'$. 
By the $\textup{GL}(n)$-invariance of $\cM$, we know $\cM_{(\tC,\tC')}(\varphi_j) = \cM_{(\tC_j,\tC_j')}(\widehat{\varphi}_j)$ and $\mathfrak{a}_{(\tC,\tC')}(\varphi_j) = \mathfrak{a}_{(\tC_j,\tC_j')}(\widehat{\varphi}_j) \leq \frac{1}{2j}$ by assumption.
By standard compactness results, we have $(\tC_j, \tC_j') \rightarrow (\tC_{\infty},\tC_{\infty}')$ in the sense of Hausdorff convergence on compact sets for some convex cones $(\tC_{\infty},\tC_{\infty}')$.  Note that $\tC_{\infty}$ may in principle have empty interior.
The key property we will prove is the containment
    \begin{equation}\label{eqn:uniformRoundPS}
        \frac{1}{2n}\bB \cap \tC_j \subset \widehat{K}_j \subset R \bB \cap \tC_j
    \end{equation}
    for a uniform constant $R$.

We first analyze some consequences of being in vertical John position. Since $0\in \del K_j$, and $K_j\subset Z_j + nL_j \bB^*$, there is some $Y_j\in \bB^*$ such that $0= Z_j+nL_jY_j$, from which it follows that $|L_j^{-1}Z_j|\leq n$.  Therefore 
    \begin{equation}\label{eq: normSectionControl}
        L_j^{-1}Z_j+\bB^* \subset \widehat{K}_j'\subset L_j^{-1}Z_j + n\bB^* \quad \text{ with }  |L_j^{-1}Z_j|\leq n ,
    \end{equation} 
    and so $|\bB^*| \leq |\widehat{K}'_j| \leq n^n |\bB^*|$.
    It follows that $J_{(\tC_j, \tC'_j)}(\widehat{\varphi}_j)$ is uniformly bounded.
    Therefore, thanks to the assumption that $\cM(\varphi_j)$ is uniformly bounded, we deduce that $I_{(\tC_j, \tC'_j)}(\widehat{\varphi}_j)$ is uniformly bounded as well.

    For $X\in \tC_j$ we have
    \[
        \phi_{\widehat{K}'_j-\tC_j^\vee}(X) :=  \sup_{Y_1 \in \widehat{K}'_j}\,\sup_{Y_2  \in - \tC_j^\vee} (\la X, Y_1\rg   + \la X, Y_2\rg ) =\sup_{Y \in \widehat{K}_j'} \la X, Y\rg \leq 2n|X|
    \]
    using $\la X, Y_2\rg \leq 0$ in the first equality and~\eqref{eq: normSectionControl} in the second inequality. 
    Therefore, we deduce that $\frac{1}{2n}\bB \cap \tC_j \subset \widehat{K}_j$.
    By the convexity of $\widehat{K}_j$ and the uniform bound $I_{(\tC_j, \tC'_j)}(\widehat{\varphi}_j) \sim 1$ we obtain $|\widehat{K}_j| \leq C$.
    Therefore, if $\mr{int}(\tC_\infty) \neq \emptyset$, we deduce the upper bound in equation~\eqref{eqn:uniformRoundPS} for some $R$ based on $\sup_j \cM(\varphi_j)$.
    
    Suppose that $\mr{int}(\tC_\infty) = \emptyset$.
    We will show that in this case $\lim_j \mathfrak{a}(v_j) > 0$, contradicting the assumptions of the proposition.
    Let $R_j := |\tC_j \cap \bB|^{-\frac{1}{2n}}$.
    Since we are assuming that $\mr{int}(\tC_\infty) = \emptyset$, we have $R_j\to \infty$. 
    By homogeneity we have $|\tC_j \cap R_j \bB| = |\tC_j \cap \bB|^{\frac{1}{2}} \to 0$.
    Note that since $\widehat{K}_j \subset \tC_j$, we have $|\widehat{K}_j \cap R_j \bB| \to 0$.
    On the other hand, thanks to the bound $I_{(\tC_j, \tC'_j)}(\widehat{\varphi}_j) \sim 1$ we have $|\widehat{K}_j| \geq c > 0$, so
    \begin{equation}\label{eq:MassConcentratesAtInfinity}
    \lim_{j \to \infty}\frac{|\widehat{K}_j \setminus {R_j}\bB|}{|\widehat{K}_j|} \to 1.
    \end{equation}
    In other words, the mass of $\widehat{K}_j$ is concentrating at infinity.
    Let $\widehat{Z}_{j} = L_j^{-1}Z_j$ be the center of the John ellipsoid of $\widehat{K}_j'$ and define $\ell_j(X) := \la X, \widehat{Z}_j\rg$.
    Using~\eqref{eq: normSectionControl} we can bound
    \begin{equation}\label{eqn:shrinkingVariationineq}
    \phi_{\widehat{K}'_j}(X) = \sup_{Y \in \widehat{K}'_j} \la X, Y\rg \geq \sup_{Y \in \widehat{Z}_j + \bB^*}\la X, Y\rg =\ell_j(X) + |X|.
    \end{equation}
    
    We now demonstrate how $\mathfrak{a}(\varphi_j) \to 0$ ensures that $\tC_\infty$ has non-empty interior.
    We argue by contradiction, exhibiting an explicit deformation with definite derivative were this not to hold.
    Note that since $v_j$ is saturated, we have the formula $\widehat{v}_j(X) =  \phi_{\widehat{K}'_j}(X)$.
    Define
    \[
    \widetilde{v}_j(X)  := \max\{0, (1-\tfrac{1}{2R_j})\phi_{\widehat{K}'_j}(X) + \tfrac{1}{2R_j}\ell_j(X)\},
    \]
    which is the support function for 
    \begin{equation}\label{eq:modifiedSectionPerturbation}
    \widetilde{K}'_j := \mr{ConvexHull}\{0, (1-\tfrac{1}{2R_j})\widehat{K}'_j + \tfrac{1}{2R_j}\widehat{Z}_j\} \subset \ol{\tC'_j}.
    \end{equation}
    Since the family $(1-t)\widehat{v}_j + t\widetilde{v}_j$ lies in $\cH_{(\tC_j, \tC'_j)}$ for $t < 1$, we can use this curve to bound $\mathfrak{a}(\widehat{v}_j)$.
    By~\eqref{eqn:shrinkingVariationineq}, we have 
    \begin{equation}\label{eqn:shrinkingVariationineqPartII}
    (1-\tfrac{1}{2R_j})\phi_{\widehat{K}'_j}(X) + \tfrac{1}{2R_j}\ell_j(X) \leq \phi_{\widehat{K}_{j}'}(X)-\frac{|X|}{2R_j} = \widehat{v}_j-\frac{|X|}{2R_j}.
    \end{equation}
    Since $\widehat{v}_j>0$ we conclude that $ 0\leq \widetilde{v}_j <\widehat{v}_j$, from which we obtain $\| \widetilde{v}_j -\widehat{v}_j\|_{\widehat{v}_j}= \sup_{P_j}\left(1-\frac{\widetilde{v}_j}{\widehat{v}_j}\right) \leq 1$. 
    Let $E_j := \{x \in P_j : \frac{|(x,1)|}{\widehat{v}_j(x)} > R_j\}$.  By~\eqref{eq:MassConcentratesAtInfinity} we have $\lim_{j\rightarrow \infty} \frac{\int_{E_j}\widehat{v}_j(x)^{-n}\,dx}{\int_{P_j}\widehat{v}_j(x)^{-n}\,dx} = 1$.
    Using~\eqref{eqn:shrinkingVariationineqPartII} we see that on $E_j$, 
    \[
    (1 - \tfrac{1}{2R_j})\widehat{v}_j(x) + \tfrac{1}{2R_j}\ell_j(x) \leq  \widehat{v}_j(x)- \frac{|(x,1)|}{2R_j}\leq  \frac{1}{2}\widehat{v}_j(x).
    \] 
    We can now compute
    \[
    \delta^+ \log I_j(\widehat{v}_j;\widetilde{v}_j -\widehat{v}_j) =n \frac{\int_{P_j}(\widehat{v}_j(x)-\widetilde{v}_j(x) )\widehat{v}_j(x)^{-(n+1)}\,dx}{\int_{P_j}\widehat{v}_j(x)^{-n}\,dx}.
    \]
    Now, since $\widetilde{v}_j - \widehat{v}_j\leq 0$ on $P_j$ and $\widetilde{v}_j - \widehat{v}_j\leq-\frac{1}{2}\widehat{v}_{j}$ on $E_j$, we obtain
    \[
    \delta^+ \log I_j(\widehat{v}_j;\widetilde{v}_j - \widehat{v}_j)\geq \frac{n}{2}\frac{\int_{E_j}\widehat{v}_j(x)^{-n}\,dx}{\int_{P_j}\widehat{v}_j(x)^{-n}\,dx} = \frac{n}{2} + {o(1)}.
    \]
    We now bound the variation of $J_{(\tC_j,\tC_j')}$ along the family $\widetilde{v}_{j,t} :=(1 - t)\widehat{v}_j + t\widetilde{v}_j$.
    Recall that $\widetilde{v}_j$ is the convex support function of $\widetilde{K}_j'$ as defined in~\eqref{eq:modifiedSectionPerturbation}. Standard properties of convex support functions yield that $\widetilde{v}_{j,t}$ is the convex support function of
    \[
    \widehat{K}_t={\rm ConvexHull}\left\{(1-t)\widehat{K}_j',\left((1-\frac{t}{2R_j})\widehat{K}_j'+\frac{t}{2R_j}\widehat{Z}_j\right)\right\}.
    \]
    It follows that $|\widehat{K}_t| \geq \left(1-\frac{t}{2R_j}\right)^n|\widehat{K}_j'|$. 
   Thus, we have
    \[
    J_{(\tC_j,\tC_j')}(\widetilde{v}_{j,t} ) \geq (1 - \tfrac{t}{2R_j})^n J_{(\tC_j,\tC_j')}(\widehat{v}_j) \qquad \implies \qquad 
    \delta^+ \log J_{(\tC_j,\tC_j')}(\widehat{v}_j;\widetilde{v}_j - \widehat{v}_j) \geq -\frac{n}{2R_j}= o(1).
    \]
    Therefore, we see $\delta^+ \cM_j(\widehat{v}_j; \widetilde{v}_j - \widehat{v}_j) > \frac{n}{4} > 0$, a contradiction.
    Thus, we conclude the outer containment in equation~\eqref{eqn:uniformRoundPS}.

    We can now finish the proof.
    Since $\widehat{K}_j, \widehat{K}_j'$ have diameters bounded from above, we can extract a convergent subsequence in the Hausdorff topology $(\widehat{K}_j,\widehat{K}_j') \rightarrow (\widehat{K}_\infty, \widehat{K}'_\infty)$.
    Since $\widehat{K}'_j - \tC^\vee_j = \widehat{K}_j^\circ$ for all $j$ thanks to saturation, it follows that $\widehat{K}_{\infty}'-\tC^{\vee}_{\infty} = (\widehat{K}_{\infty})^{\circ}$.
    By the geometric characterization of Legendre duality in Corollary~\ref{cor:duality=legendre}, we obtain that $\varphi_{\infty}:= \lim_j \widehat{\varphi}_j$ satisfies $\varphi_{\infty}\in \ul{\cH}_{\infty}$ and $\varphi_{\infty}^{*}= \lim_{j\rightarrow \infty}\varphi_j^*$.
    The containment $\widehat{K}_j \subset R\bB$ from~\eqref{eqn:uniformRoundPS} together with the equality $\widehat{K}_j' -\tC_j=\widehat{K}_j^{\circ}$ yields $R^{-1}\bB^*\cap \tC_j\subset \widehat{K}_j'$.
    Combining this bound with \eqref{eqn:uniformRoundPS} and ~\eqref{eq: normSectionControl} yields the bounds
    \begin{equation}\label{eq:quadraticGrowthPS}
    C^{-1}|X|^2 \leq \widehat{\varphi}_j(X) \leq C|X|^2 \qquad \text{and}\qquad 
    C^{-1}|Y|^2 \leq \widehat{\varphi}^*_j(Y) \leq C|Y|^2
    \end{equation}
    for a uniform constant $C>0$.  We claim that we also have $|\nabla \widehat{\varphi}^*_j(Y)|^2 \leq C\,\widehat{\varphi}^*_j(Y)$.
    Indeed, since both sides of the estimate are homogeneous of degree $2$, it suffices to check the estimate on $\widehat{K}_j'= \{\widehat{\varphi}^*_j\leq \tfrac{1}{4}\}$.
    By $2$-homogeneity, we have $ \nabla \widehat{\varphi}_j^* (2\widehat{K}_j')\subset  \ol{\widehat{K}_j}$, and so the desired estimate follows from the upper bound in~\eqref{eqn:uniformRoundPS}.

    Given $\psi \in \cH_{\infty}$ with $\|\psi\|_{\varphi_{\infty}}=1$, we have $\psi(X) \leq \varphi_{\infty}(X) \leq C|X|^2 $. 
    Let $\Psi_\ve(X) := \inf_{Z \in \tC_\infty}\{\psi(Z) + \ve^{-1}|X-Z|^2\}$.
    Then, $\Psi_\ve : \bR^n \to \bR$ is a finite, positive $2$-homogeneous convex function such that $0 \leq \Psi_\ve \leq \ve^{-1}|X|^2$ and $\Psi_\ve \to \psi$ locally on $\tC_\infty$ as $\ve \to 0$.
    Therefore, for fixed $\ve$, $\Psi_\ve\vert_{\tC_j}$ provides an admissible variation with a uniform quadratic bound.
    We now let $\Psi = \Psi_\ve$ and take $j \to \infty$ and then $\ve \to 0$. 
    By the dominated convergence theorem we have
    \[
    \int_{\tC_j}\Psi(X)e^{-\widehat{\varphi}_j(X)}dX \rightarrow \int_{\tC_\infty}\Psi(X)e^{-\widehat{\varphi}_{\infty}(X)}dX.
    \]
    Similarly, we have
    \[
    \Psi(\nabla \widehat{\varphi}^*_j(Y)) \leq C|\nabla \widehat{\varphi}^*_j(Y)|^2 \leq C^2\widehat{\varphi}^*_j(Y).
    \]
    Thus, another application of the dominated convergence theorem, together with a.e.~convergence of gradients for converging convex functions (cf.~\cite[\S~1.1.1]{Gutierrez}), implies
    \[
      \int_{\tC_j'}\Psi(\nabla \widehat{\varphi}^*_j(Y))e^{-\widehat{\varphi}^*_j(Y)}dY \rightarrow  \int_{\tC_\infty'}\Psi(\nabla \widehat{\varphi}^*_{\infty}(Y))e^{-\widehat{\varphi}^*_\infty(Y)}dY.
    \]
    Finally, we obtain
    \[
   0=\lim_{j\rightarrow \infty}\mathfrak{a}(\varphi_j) \geq C^{-1}\lim_{j\rightarrow \infty} \delta^{+}\cM(\widehat{\varphi}_j;\Psi|_{\tC_j}-\widehat{\varphi}_j)= C^{-1}\delta^+\cM(\varphi_{\infty};\psi-\varphi_{\infty}).
    \]
    Applying Proposition~\ref{prop:crit->HOT} completes the proof of the first part of the proposition.

    For the converse, assume that $\varphi_\infty$ solves~\eqref{eqn:HOT} on the pair $(\tC_\infty, \tC'_\infty)$ which is a non-trivial degeneration of $(\tC, \tC')$ by a sequence $M_j \in \sln$. 
    Then, consider $\varphi_t$ defined on $\tC_j := M_j\cdot \tC$ by $\varphi_j(X) := (\phi_{K^\circ_j}(X))^2$ where $K^\circ_j := (K_\infty^\circ \cap \tC'_j) - \tC^\vee_j$.
    This section represents the saturation of $K^\circ$ restricted to $\tC'_j$.
    By construction, $K_j \to K_\infty$ and $K^\circ_j \to K^\circ_\infty$ in the Hausdorff topology and each of $K_j,K'_j$ are uniformly bounded. 
    By construction, the limit is critical, so the argument of Proposition~\ref{prop:PS=SLNdegToPolystable} shows that along this sequence, a posteriori, the associated measures $e^{-\varphi_j}\chi_{\tC_j}$ and $e^{-\varphi^*_j}\chi_{\tC'_j}$ converge to the measures on the limiting pair.
    We claim that $\mathfrak{a}(v_j) \to 0$ along this sequence.
    Otherwise, there is some $\ve_0 > 0$ and competitors $\Psi_j$ such that 
    \[
    \delta_+ \cM(\varphi_j;\Psi_j - \varphi_j) \geq \ve_0 \|\Psi_j - \varphi_j\|_{\varphi_j}
    \]
    and we may replace $\Psi_j$ with some $\varphi_j + t(\Psi_j-\varphi_j)$ for some $t > 0$ such that $\|\Psi_j - \varphi_j\|_{\varphi_j} = \frac{1}{2}$, so $\frac{1}{2}\varphi_j \leq \Psi_j \leq \frac{3}{2} \varphi_j$ on the normalized links. 
    The links of $\tC_j^\vee$ and $\tC'_j$ are compact convex sets in $\{Y_n = 1\}$ converging to the compact links of $\tC_\infty^\vee$ and $\tC'_\infty$, from the previous argument, we can extract some $\Psi_\infty \in \cH_{(\tC_\infty, \tC'_\infty)}$ such that 
    \[
    \delta_+ \cM_{(\tC_j, \tC'_j)}(\varphi_j ; \Psi_j - \varphi_j) \to \delta_+ \cM_{(\tC_\infty, \tC'_\infty)}(\varphi_\infty ; \Psi_\infty - \varphi_\infty)
    \]
    with $\|\Psi_\infty - \varphi_\infty\|_{\varphi_\infty} = \frac{1}{2}$.
    The lefthand side is bounded below by $\frac{\ve_0}{2} > 0$, contradicting the criticality of $\varphi_\infty$,
    Therefore, pulling back $\varphi_j$ by $\varphi(M_j x)$ produces a family $\varphi_j \in \ul{\cH}$ on the fixed pair $(\tC, \tC')$ which, when John normalized, degenerates to $(\tC_\infty, \tC'_\infty)$ with $\cM(\varphi_j) \to \cM(\varphi_\infty)$ and satisfies $\mathfrak{a}(\varphi_j) \to 0$.
\end{proof}

\subsection{Existence of homogeneous optimal transport maps}
We now prove Theorem~\ref{thm: HotExistIntro} by showing that $\cH$ satisfies that the appropriate analytic properties to conclude that the max-min width $W$ is achieved by a critical point of $\cM$.
The main technical point is that $\bP(\cH)$ is not a Banach manifold, so the classical pseudo-gradient deformation of Palais~\cite{Palais}. 
We require the following pulltight operation.
\begin{lem}\label{lem:pulltight}
    There exists a sequence $v_j \in \ul{\cH}$ such that $\cM(v_j) \to W_\alpha$ and $\mathfrak{a}(v_j) \to 0$.  
\end{lem}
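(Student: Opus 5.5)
We argue by contradiction, following the deformation-lemma argument of Palais~\cite{Palais}, but using convex combinations in $\cH$ in place of a pseudo-gradient vector field, since $\bP(\cH)$ is not a Banach manifold. Suppose no such sequence exists. Then there are $\ve_0>0$ and $\delta_0>0$ such that $\mathfrak{a}(v)\geq \delta_0$ for every $v\in\ul{\cH}$ with $|\cM(v)-W_\alpha|\leq 2\ve_0$. We may shrink $\ve_0\leq 1$. By Proposition~\ref{prop:widthFinite}, $W_\alpha\leq E_\ve$, so every admissible $\Gamma\in\mathscr{S}_k$ satisfies $\cM\geq W_\alpha+10$ on $|\p\Gamma|$. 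Fix such a $\Gamma$ with $[\Gamma]=\alpha$ and $\inf_{|\Gamma|}\cM\geq W_\alpha-\eta$, where $\eta\ll\ve_0$ will be chosen later. Replacing every simplex $\sigma_i$ by its saturation $\ul{\sigma_i}$ is legitimate: saturation is continuous and $1$-Lipschitz by Lemma~\ref{lem:(C,d)props}$(iii)$, it does not decrease $\cM$ by Lemma~\ref{lem:boundaryConditionAchievement}, and it preserves $\mathfrak{q}$ by the proof of Lemma~\ref{lem:affineSection}. It is also homotopic to the identity through $(1-s)v+s\ul{v}$, which has the same $\mathfrak{q}$. Hence we may assume $|\Gamma|\subset\ul{\cH}$, and the saturated chain is still admissible and still represents $\alpha$.

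\textbf{Local improving directions.} For each $v$ in the compact set $N:=|\Gamma|\cap\{|\cM-W_\alpha|\leq 2\ve_0\}$, Lemma~\ref{lem:saturatedConvexDefect} gives a saturated $w_v$ with $\delta^+\cM(v;w_v-v)\geq \tfrac{\delta_0}{2}\|w_v-v\|_v$. After replacing $w_v$ by a point on the segment $[v,w_v]$, we may assume $\|w_v-v\|_v=\tfrac12$. By Lemma~\ref{lem:(C,d)props}$(iv)$, the map $(v',s)\mapsto \delta^+\cM\bigl((1-s)v'+sw_v;\,w_v-v'\bigr)$ is continuous. Therefore there are a neighbourhood $B_v$ of $v$ and an $s_v>0$ such that this derivative stays at least $\delta_0/8$ for $v'\in B_v$ and $s\leq s_v$. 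Since $N$ is compact (a finite union of continuous images of simplices), finitely many $B_{v_1},\dots,B_{v_m}$ cover it. Let $s_0=\min_i s_{v_i}$. Take a partition of unity $\{\rho_i\}$ subordinate to this cover and a cutoff $\chi$ which equals $1$ on $\{|\cM-W_\alpha|\leq\ve_0\}$ and vanishes outside $\{|\cM-W_\alpha|<2\ve_0\}$; all of these are continuous for $d$.

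\textbf{The deformation.} Define
\[
\eta_s(v):=\ul{\,v+s\,\chi(v)\textstyle\sum_i\rho_i(v)(w_{v_i}-v)\,}.
\]
The expression inside the saturation is a convex combination of elements of $\cH$, so it lies in $\cH$, and $\eta_s$ is continuous in $(s,v)$. Since $\chi=0$ near $|\p\Gamma|$, the boundary is fixed, so $\eta_s\circ\Gamma$ is admissible and homotopic to $\Gamma$ rel.~$\cO$; hence it still represents $\alpha$.

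\textbf{Energy increase.} Here I expect the main work. The functional $s\mapsto\cM(v+s\,\chi\sum_i\rho_i(w_i-v))$ is not linear in the direction, so I do not want to add up the first variations $\delta^+\cM(v;w_i-v)$ directly. Instead, fix $v$ and write the deformed point as $v+s\chi\,(\bar w-v)$ with $\bar w:=\sum_i\rho_i w_i$, a single convex combination. The convex first variation is a linear functional of the direction: the formula in Definition~\ref{defn:convexCritical} is linear in $w-v$, with the $\alpha_v=\beta_v$ normalisation irrelevant here. Hence $\delta^+\cM(v';\bar w-v')=\sum_i\rho_i\,\delta^+\cM(v';w_i-v')\geq\delta_0/8$ along the whole segment for $s\leq s_0$. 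Integrating along the segment and using that saturation does not decrease $\cM$ gives $\cM(\eta_{s_0}(v))\geq\cM(v)+\chi(v)s_0\delta_0/8$.

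If continuity of the first variation in $v'$ is not uniform enough on the segment, I would instead use the Lipschitz bound of Lemma~\ref{lem:(C,d)props}$(i)$ together with the fact that $d(v,\eta_s(v))\leq Cs$ to control the error. In either case, points with $\cM\leq W_\alpha+\eta$ have $\chi=1$ and are raised above $W_\alpha+s_0\delta_0/8-\eta$. Points with $\cM\geq W_\alpha+\eta$ cannot drop below $W_\alpha$, because $\cM$ never decreases along the deformation. Choosing $\eta<s_0\delta_0/16$ gives $\inf_{|\eta_{s_0}\circ\Gamma|}\cM>W_\alpha$. This contradicts the definition~\eqref{eqn:widthDef} of $W_\alpha$ and proves the lemma.
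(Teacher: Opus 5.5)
\textbf{There is a genuine gap: the order of quantifiers.} You fix $\eta$, then choose $\Gamma$ with $\inf_{|\Gamma|}\cM\ge W_\alpha-\eta$. But the step size $s_0$, and hence the guaranteed gain $s_0\delta_0/8$, is read off from a finite subcover of the compact set $N\subset|\Gamma|$. So $s_0=s_0(\Gamma)$, and the final choice $\eta<s_0\delta_0/16$ is circular.

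Nothing prevents $s_0(\Gamma)\to0$ as $\eta\to0$. The radii $s_v$ come only from pointwise continuity of $\delta^+\cM$, and the strip $\ul{\cH}\cap\{|\cM-W_\alpha|\le 2\ve_0\}$ is not compact. Controlling exactly this non-compactness is the point of a pull-tight lemma.

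The normalization $\|w_v-v\|_v=\tfrac12$ hides a second cover-dependent constant. Moving along $[v,w_v]$ can only shrink $\|w_v-v\|_v$, and $v+t(w_v-v)$ with $t>1$ need not be convex. So the gain per unit $s$ is only of order $\delta_0\min_i\|w_{v_i}-v_i\|_{v_i}$, not $\delta_0/8$.

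Your fallback does not help either: Lemma~\ref{lem:(C,d)props}$(i)$ only bounds $|\cM(\eta_s(v))-\cM(v)|$ from above, so it cannot give a lower bound on the increase.

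\textbf{What is missing is an energy gain independent of the sweepout.} The paper uses your local segment construction only infinitesimally. Reparametrized to unit $d$-speed, $H(u,t)=(1-s)u+sw$ with $s=t/(4r)$, it shows that the Degiovanni--Marzocchi weak slope satisfies $|d^+\cM(v)|\ge\mathfrak{a}(v)/8$; this needs only local existence of such deformations. Corvellec's quantitative deformation theorem on the complete space $(\ul{\cH},d)$ then produces one global deformation. It raises every $v$ with $|\cM(v)-W_\alpha|\le\delta$ to $\cM\ge W_\alpha+\delta$, where $\delta$ is fixed by the contradiction hypothesis alone. Only after this deformation is fixed is it applied, with a cutoff that fixes $|\p\Gamma_j|$, to sweepouts with $\inf\cM\ge W_\alpha-1/j$.

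\textbf{Repairing your compactness route.} A single step is not enough; you need a continuation argument.
\begin{itemize}
\item Use unit-speed directions, so that both the $d$-speed and the gain rate are uniform.
\item While the current chain has $\inf\cM\le W_\alpha$, its minimizing point had $\chi=1$ at every earlier step, because $\cM$ is nondecreasing along trajectories. Hence the total step length is bounded by a multiple of $\eta/\delta_0$.
\item The iterated deformations therefore converge uniformly by completeness, and you must restart from the (compactly supported) limit chain.
\end{itemize}

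\textbf{A smaller point.} Your linearity step needs the derivative bound at base points $(1-\theta)v+\theta\bar w$. What you established is the bound at $(1-s)v'+sw_{v_i}$. The former follows from the joint continuity in Lemma~\ref{lem:(C,d)props}$(iv)$ after shrinking the neighbourhoods, but it has to be stated.
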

\begin{proof}
    This argument follows the standard pulltight argument by Palais~\cite{Palais}, using a weaker metric deformation instead of the pseudo-gradient supplied by~\cite[Definition 2.1]{TwoItalians} and~\cite{ThreeItalians}.
    Since we use a max-min instead of min-max conventions, the inequalities will be reversed accordingly.
    Since saturation is an energy increasing homotopy which preserves the Steiner point, we may assume all sweepouts are saturated.

    We apply the following construction of the weak slope from Degiovanni-Marzocchi~\cite{TwoItalians} to $\cM$ on the complete metric space $(\ul{\cH}, d)$.
    We recall their definition of $|d^+\cM(\varphi)|$ as the supremum of all $\sigma \geq 0$ for which there exists some $r > 0$ and a continuous map $H : B_r(\varphi)\times [0,r] \to \ul{\cH}$ such that
    \[
     d(H(u,t) , u) \leq t \qquad  \text{and}\qquad \cM(H(u,t))\geq \cM(u) + \sigma t. 
    \]
    Corvellec~\cite[Theorem 2.4]{OneItalian?} quantifies an energy interval without critical points. 
    Precisely, this result states that if $A < B$ and $\sigma, \delta > 0$ and 
    \[
    A - \delta \leq \cM(\varphi) \leq B + \delta \qquad \implies \qquad |d^+\cM(\varphi)| > \sigma,
    \]
    then there is a continuous deformation $\eta : \ul{\cH} \times [0,1] \to \ul{\cH}$ satisfying:
    \begin{itemize}
        \item $d(\eta(v,t), v) \leq \frac{B-A}{\sigma }t $,
        \item $\cM(\eta(\varphi, t)) \geq \cM(\varphi)$ with equality if and only if $\eta(\varphi,t)= \varphi$, and 
        \item if $A \leq \cM(\varphi) \leq B$, then $\cM(\eta(\varphi,t)) \geq \cM(\varphi) + (B - \cM(\varphi))t$.
    \end{itemize}
    
    To apply this result, we must show that $|d^+\cM(v)|$ is controlled by $\mathfrak{a}(\varphi)$.
    Choose some $0 < a < \mathfrak{a}(v)$ and $w \in \ul{\cH}$ such that $\delta^+ \cM(v; w-v) > a \|w - v\|_{v}$.
    We can replace $w$ with $(1-s)v + sw$ for $0 < s \ll 1$ without changing the inequality, so we may assume that $r := \|w - v\|_v$ is sufficiently small.
    We consider $U$ to be a neighborhood of $v$ sufficiently small.
    From the completeness of $(\cH, d)$ and the convergence of the first variation from Lemma~\hyperref[lem:(C,d)props]{\ref{lem:(C,d)props}(iv)}, we can assume $U$ satisfies the following: for every $u \in U$, $\|w-u\|_u \leq 2r$, the curve $u_\theta := (1 - \theta)u + \theta w$ lies in $U$ for $\theta$ small, $\|w - u\|_{u_\theta} < 4r$, and $\delta^+ \cM(u_\theta; w- u_\theta) \geq\frac{ar}{2}$.  
    For small enough $t > 0$, set $s = \frac{t}{4r}$ and define $H(u,t) := (1-s)u + sw$.
    We can bound
    \begin{align*}
    d(H(u,t),u) &\leq \left\|\log \frac{(1-s)u + sw}{u}\right\|_{L^\infty(P)} \\
    &= \left\|\int_0^s\frac{w-u}{u_\theta}\,d\theta\right\|_{L^\infty(P)} \leq \int_0^s\|w-u\|_{u_\theta}\,d\theta \leq 4rs = t.
    \end{align*}
    On the other hand, $\frac{d}{d\theta}\cM(u_\theta) = \frac{1}{1-\theta} \delta^+\cM(u_\theta; w - u_\theta)$ shows that
    \[
    \cM(H(u,t)) - \cM(u) = \int_0^s\frac{1}{1-\theta}\delta^+ \cM(u_\theta;w-u_\theta)\,d\theta  \geq\frac{ar}{2}s = \frac{at}{8}.
    \]
    Therefore, $H$ as defined satisfies the condition for the weak slope $|d^+\cM(v)| >\frac{a}{8}$.
    Since $a < \mathfrak{a}(v)$ was arbitrary, we conclude the uniform constant $|d^+\cM(v)| \geq \frac{\mathfrak{a}(v)}{8}$.

    We now argue by contradiction, using an argument like that of Palais~\cite{Palais}.
    For a sequence of admissible sweepouts $\Gamma_j \in \mathscr{S}_k$ such that $[\Gamma_j] = \alpha$, and $v_j := \mr{argmin}_{v \in |\Gamma_j|}\cM(v)$ satisfies $\cM(v_j) \geq W_\alpha- \tfrac{1}{j}$.
    By admissibility, $\inf_{w \in |\p \Gamma|}\cM(w) \geq W_\alpha + 10$.
    If any sequence $\mathfrak{a}(v_j) \to 0$, then we are done.
    Suppose otherwise, then we may assume that there exists some $a > 0$ such that for all $v$  with $|\cM(v) - W_\alpha| \leq 3\delta$, we have $\mathfrak{a}(v) >a$ for some $0 < \delta < 1$. 
    Thus, from above $|d^+\cM(v)| > ca$ for some uniform $c > 0$. 
    Applying~\cite[Theorem 2.4]{OneItalian?} with $A = W_\alpha - \delta$, $B = W + \delta$, and $\sigma = \frac{a}{16}$ produces some $\eta_t$ such that $\eta_0$ is the identity and $\cM(\eta_1(v)) \geq W + \delta$ for any $v$ with $\cM(v) \in [W_\alpha - \delta, W_\alpha + \delta]$.
    Let $\tilde{\eta}(v,t)$ be $\eta_{\chi(\cM(v))t}$ compose with a cutoff $\chi$ which is $1$ on $[W_\alpha - \delta, W_\alpha + \detla]$ and $0$ outside $(W_\alpha - 2\delta, W_\alpha + 2\delta)$.
    Therefore, applying this to $\Gamma_j$ for $j \gg 1$ fixes its boundary by admissibility, the image of $\Gamma_j$ under this pulltight is homologous in $H_*(\cH, \cO)$.
    However, $\min_{v \in |(\tilde{\eta}_1)_*{\Gamma}_j|}\cM(v) > W + \delta$, which violates Proposition~\ref{prop:widthFinite}. 

\end{proof}
We may now prove the main result of this section.

\begin{theorem}\label{prop:WidthAchieved}
    Let $(\tC, \tC')$ be a pair of positively aligned, pointed cones.
    If the pair is linked and OT-separable, then there exists $\varphi \in \cH_{(\tC, \tC')}$ solving~\eqref{eqn:HOT}.
\end{theorem}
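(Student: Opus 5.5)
The plan is to assemble the max-min machinery of this section. If $\tC'\subset\tC^\vee$ the pair is acute, and Proposition~\ref{prop: Sep+Acute=HOT} already produces a maximizer of $\cM$ solving \eqref{eqn:HOT}. So we may assume the pair is not acute, where $\cM$ is unbounded and a max-min is needed.

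\emph{Step 1: a finite width.} By the linking hypothesis (Definition~\ref{def:Linked}) there is a class in $H_*(\Sigma,O)$ with non-zero image in $H_*(\Sigma,\Sigma\setminus A_\ve)$ for some $\ve>0$. Corollary~\ref{cor:homology=}, applied to $D=O$ and $D=\Sigma\setminus A_\ve$ together with naturality of $\mathfrak{q}_*$, lifts it to $\alpha\in H_*(\cH,\cO)$ with $\iota_*\alpha\neq 0$. Proposition~\ref{prop:widthFinite} then shows that $W_\alpha$ is finite, with $W_\alpha\le E_\ve$. Since saturation increases $\cM$ and preserves $\mathfrak{q}$, all sweepouts may be taken in $\ul{\cH}$.

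\emph{Step 2: a Palais--Smale sequence.} Lemma~\ref{lem:pulltight} gives $v_j\in\ul{\cH}$ with $\cM(v_j)\to W_\alpha$ and $\mathfrak{a}(v_j)\to 0$. In particular $|\cM(v_j)|\le C$, so Proposition~\ref{prop:PS=SLNdegToPolystable} applies to the projective classes of $v_j$. There are two alternatives:
\begin{itemize}
\item[(i)] A subsequence converges in $\bP(\ul{\cH})$ to a convex critical $v$. By Lemma~\ref{lem:(C,d)props}$(iv)$ and the continuity of $\cM$, $\mathfrak{a}(v)=0$. Proposition~\ref{prop:crit->HOT} then shows that $\varphi=v^2$ solves \eqref{eqn:HOT} on $(\tC,\tC')$, and we are done.
\item[(ii)] There is an $\sln$-degeneration $(M_j\tC,M_j^{-T}\tC')\leadsto(\tC_\infty,\tC_\infty')$ carrying a solution $\varphi_\infty$ of \eqref{eqn:HOT}. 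By OT-separability, $(\tC_\infty,\tC'_\infty)=M\cdot(\tC,\tC')$ for some $M\in\sln$. Pulling back, $\varphi_\infty(M\,\cdot)$ is a \eqref{eqn:HOT} solution on $(\tC,\tC')$, since the $\sln$ action preserves $\det D^2$ and the duality pairing.
\end{itemize}

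\emph{Main obstacle.} The delicate point is in alternative (ii): we must check that the object transported back is a genuine element of $\cH_{(\tC,\tC')}$, i.e.\ positive, $2$-homogeneous, with $\inf_P>0$. The pullback by a fixed $M$ preserves all of these properties, so this should be routine. The heavier difficulties are hidden in the quoted ingredients: the pulltight in a non-Banach space, and ruling out collapse of $\tC_\infty$. We take those as given. Notably, the case distinction needs no compactness of $\mr{Aut}(\tC,\tC')$, since either branch yields a solution.
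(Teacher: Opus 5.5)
Your proposal is correct and follows essentially the same route as the paper's proof. Lemma~\ref{lem:pulltight} gives a sequence with $\cM\to W_\alpha$ and $\mathfrak{a}\to 0$; Proposition~\ref{prop:PS=SLNdegToPolystable} then yields either a convergent critical limit or an $\sln$-degeneration carrying an \eqref{eqn:HOT} solution; and OT-separability lets you pull that solution back to $(\tC,\tC')$. Your extra steps only make explicit what the paper's short argument leaves implicit: the separate acute case via Proposition~\ref{prop: Sep+Acute=HOT}, the explicit lift of the linking class to $\alpha$ via Corollary~\ref{cor:homology=}, and the pullback written as $\varphi_\infty(M\,\cdot)$ with $M$ oriented correctly.
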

\begin{proof}
    From Lemma~\ref{lem:pulltight}, we have a sequence $\varphi_j$ such that $\cM(\varphi_j) \to W_\alpha$ and $\mathfrak{a}(\varphi_j) \to 0$.
    Applying Proposition~\ref{prop:PS=SLNdegToPolystable} shows that either $\varphi_j \to \varphi$ converges to a \hyperref[eqn:HOT]{H.O.T.} solution, or produces a \hyperref[eqn:HOT]{H.O.T.} solution $\varphi_\infty$ on $(\tC_\infty, \tC'_\infty)$ from an $\sln$-degeneration.
    In this latter case, OT-separability implies that $(\tC, \tC') = A \cdot (\tC_\infty, \tC'_\infty)$, so $\varphi_\infty \circ A$ solves~\eqref{eqn:HOT}.

\end{proof}

\begin{remark}\label{rmk:C->CDegen}
The OT-separability property provides a suitable substitute for the Palais-Smale condition for $\cH_{(\tC, \tC')}$, but in general OT-separability is a weaker notion. 
\end{remark}

\subsection{Verifying OT-separability}\label{sec:OTsep}
We now show how one can practically verify OT-separability for a given pair and show that this property is generic.
We show that the only degenerations that can violate OT-separability are those where $K'$ collapses to a height 1 object contained in the boundary along the shared supporting hyperplane of $\tC'$ and $\tC^\vee$.
In particular, if $\tC'$ and $\tC^\vee$ do not share any supporting hyperplanes, then the pair is OT-separable.
\begin{defn}\label{def:residue}
Let $H \subset \p {\tC'} \cap\{0\leq Y_n \leq 1\}$ 
be a convex set with $0\in \ol{H}$ and $\ol{H}\cap \{Y_n =1\} \ne \emptyset$. 
A non-negative convex function $h$ on $P$ is a \textbf{residue} if $h(x) = \phi_H(x)$. 
We may further consider its saturation, given by $\ul{H} = (H - \tC^\vee) \cap \tC'$. 
\end{defn}

 Geometrically, the notion of a residue can be realized as $K^\circ_\infty := H - \tC^\vee$ and $K_\infty := (K_\infty^\circ)^\circ$.  This section is degenerate since $K_\infty$ doesn't contain an open neighborhood of the origin within $\tC$. 
 The following estimate, arising from the variation of $I$ by a constant shows that degenerations of $K'$ must accumulate at the boundary as $J \to 0$ if the sequence is becoming convex critical. 
\begin{lem}\label{lem:IvariationEstimate}
    For any positive convex function $v$ on $P$, we have
    \[
    \inf_P v \leq \frac{\int_P v(x)^{-n}\,dx }{\int_P v(x)^{-(n+1)}\,dx}\leq n (\inf_P v) .
    \]
\end{lem}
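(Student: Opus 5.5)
The lower bound is immediate. For the upper bound I will reduce the inequality to a one-variable statement using the layer-cake formula. Then the convexity of $v$ will let me compare against the extremal cone profile, for which the inequality turns out to be an equality.

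\emph{Lower bound.} Set $m:=\inf_P v>0$. Pointwise $v^{-n}=v\cdot v^{-(n+1)}\geq m\,v^{-(n+1)}$, and integrating over $P$ gives the left inequality.

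\emph{Layer-cake reformulation.} Let $f(t):=|\{x\in P: v(x)<t\}|$, computed in $\bR^{n-1}$ since $P\subset\{X_n=1\}$ is $(n-1)$-dimensional. This $f$ is nondecreasing, vanishes for $t\le m$, and is bounded by $|P|$. For $k>0$ the layer-cake formula gives $\int_P v^{-k}\,dx=\int_m^\infty k\,t^{-k-1}f(t)\,dt$. The desired upper bound $\int_P v^{-n}\le n m\int_P v^{-(n+1)}$ therefore becomes
\[
\int_m^\infty t^{-n-2}\bigl(t-(n+1)m\bigr)f(t)\,dt\le 0 .
\]

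\emph{Convexity input.} Extend $v$ to $\ol P$ by lower semicontinuous convex closure, and pick $x_0\in\ol P$ with $v(x_0)=m$. If $m$ is not attained, I instead run the argument with a near-minimizer and let it tend to the infimum. Convexity of $v$ and of $P$ gives
\[
\{v<m+\lambda r\}\supseteq x_0+\lambda\bigl(\{v<m+r\}-x_0\bigr)\qquad\text{for }\lambda\in(0,1].
\]
Hence $g(r):=f(m+r)/r^{\,n-1}$ is nonincreasing in $r>0$; the cap $f\le|P|$ does not spoil this. Writing $f(t)=g(t-m)(t-m)^{n-1}$, the target becomes $\int_m^\infty w(t)\,g(t-m)\,dt\le0$, where
\[
w(t):=t^{-n-2}(t-m)^{n-1}\bigl(t-(n+1)m\bigr).
\]
The weight $w$ is negative on $(m,t_*)$ and positive on $(t_*,\infty)$, with $t_*=(n+1)m$. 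Since $g$ is nonincreasing, $\bigl(g(t-m)-g(t_*-m)\bigr)w(t)\le0$ everywhere. Therefore
\[
\int_m^\infty w\,g\,dt\le g(t_*-m)\int_m^\infty w\,dt ,
\]
and it suffices to show $\int_m^\infty w\,dt\le0$.

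\emph{The model computation.} This is the step where the constant $n$ must come out exactly. Scaling $t=ms$ and then substituting $u=1-1/s$, so that $s^{-n-2}(s-1)^{n-1}\,ds=u^{n-1}s^{-1}\,du$ with $s^{-1}=1-u$, gives
\[
\int_0^1 u^{n-1}\bigl[1-(n+1)(1-u)\bigr]\,du=\tfrac1n-(n+1)\Bigl(\tfrac1n-\tfrac1{n+1}\Bigr)=0 .
\]
Thus the inequality holds, with equality in the cone case $g\equiv$ const. The main care needed is in justifying the layer-cake identity near $t=m$ (integrability is fine because $v\ge m>0$) and in handling a non-attained infimum.
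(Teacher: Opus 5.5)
Your proof is correct. It uses the same convexity input as the paper, namely homotheties centered at a minimizer, but extracts the inequality by a different mechanism.

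\textbf{The paper's route.} The paper fixes $a\in P$ and compares $\int_P v^{-q}$ with the integral over the homothetic copy $T_s(P)$, where $T_s(x)=(1-s)a+sx$. From $v(T_s x)\le (1-s)v(a)+s\,v(x)$ it gets $G_q(s)\le G_q(1)$ on $[0,1]$. It then reads off the one-sided first-order condition $G_q'(1)\ge 0$, i.e.\ $(q-n+1)\int_P v^{-q}\le q\,v(a)\int_P v^{-(q+1)}$, and specializes to $q=n$ with $a$ approaching a minimizer. If you push $G_n'(1)\ge 0$ through your layer-cake formula, it becomes exactly your target $\int_m^\infty t^{-n-2}\bigl(t-(n+1)m\bigr)f(t)\,dt\le 0$.

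\textbf{How the two routes differ.} The paper takes a first variation at $s=1$. You instead use the full monotonicity of $g(r)=f(m+r)/r^{n-1}$ together with a sign-change argument against the zero-mean weight $w$.

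\textbf{What each buys.} The paper's argument is shorter and works with interior base points $a\in P$, so it needs no lsc extension and has no attainment issue. It also yields the whole family of inequalities for every $q$ and every $a$ at once. Yours requires the explicit computation $\int_m^\infty w\,dt=0$, but it makes the extremal profile visible and shows the constant $n$ is sharp: equality holds for the cone $v=m+|x-x_0|$ on all of $\bR^{n-1}$.

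\textbf{Two small remarks.} First, on a bounded $P$ the cap $f\le |P|$ forces $g(r)\to 0$, so $g$ is never constant there. The inequality is therefore strict, and $n$ is only approached, e.g.\ by $v=\ve+|x|$ as $\ve\to 0$. Second, in your near-minimizer variant $f$ no longer vanishes on $(m,v(x_0))$. That piece of the integral has the favourable sign, since $t<(n+1)v(x_0)$ there, so the argument still closes.
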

\begin{proof}
The lower bound inequality $(\inf_P v)\int_P v(x)^{-(n+1)}\,dx \leq \int_P v(x)^{-n}\,dx$
    follows pointwise and is sharp as seen by constant functions.

    Let $a \in P$ and define $T_s(x) = (1-s)a + sx$ for $0 \leq s \leq 1$, so $T_s(P) \subset P$ by convexity.
    For any $q$, we can bound
    \begin{align*}
    \int_P v(x)^{-q}\,dx &\geq \int_{T_s(P)} v(x)^{-q}\,dx = s^{n-1}\int_P v(T_s(x))^{-q}\,dx \\
    &\geq s^{n-1}\int_P((1-s)v(a) + sv(x))^{-q}\,dx =: G_q(s)
    \end{align*}
    for any $s \in [0,1]$.
    When $s = 1$, all the above are equalities, so $G_q(1) = \int_P v(x)^{-q}\,dx$ and $G_q'(1) \geq 0$.
    Differentiating $G$ computes 
    \begin{align*}
        G'_q(1) &= (n-1)\int_P v(x)^{-q}\,dx - q\int_P (v(x) - v(a))v(x)^{-(q+1)}\,dx\\
        &=  (n-1 - q)\int_P v(x)^{-q}\,dx + qv(a)\int_Pv(x)^{-(q+1)}\,dx,\\
    \end{align*}
    meaning that 
    \[
    (q -n + 1)\frac{\int_P v(x)^{-q}\,dx}{\int_P v(x)^{-(q+1)}\,dx} \leq qv(a),
    \]
    whereupon setting $q = n$ and $a \in P$ achieving the infimum of $v$ proves the claim.
\end{proof}

We now show that degenerations of normalized functions yield residues.
\begin{prop}\label{prop:residueLimit}
Consider a sequence of saturated $v_j$ such that $v_j(0) = 1$, $m_j :=\inf_P v_j \leq \frac{1}{j}$, and $m_j(\delta \cM_{v_j}[1]) \to 0$. 
Then, there exists some residue $H$ such that, up to a subsequence, $v_j \to \phi_{\ul{H}}$ uniformly. 

\end{prop}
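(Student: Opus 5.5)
The plan is to turn the hypothesis $m_j\,\delta\cM_{v_j}[1]\to 0$ into the statement that the sections $K'_j$ become vertically thin. Compactness will then give a limit $H$, and thinness will force $H$ into $\p\tC'$.

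\textbf{Step 1 (from the variation to thinness).} As in Definition~\ref{defn:convexCritical}, the variation by constants is $\delta\cM_{v_j}[1]=-\alpha_{v_j}+\beta_{v_j}$. Here
\[
\alpha_v=\frac{\int_P v^{-(n+1)}}{I(v)}=n\,\frac{\int_P v^{-(n+1)}}{\int_P v^{-n}}, \qquad \beta_v=\frac{|\Omega_v|}{J(v)}.
\]
Lemma~\ref{lem:IvariationEstimate} gives $1\le m_j\alpha_{v_j}\le n$. The hypothesis $m_j(\beta_{v_j}-\alpha_{v_j})\to 0$ then gives $m_j\beta_{v_j}\in[\tfrac12,2n]$ for large $j$. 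Since $m_j\le 1/j$, this yields $\beta_{v_j}\to\infty$, that is,
\[
\frac{J(v_j)}{|\Omega_{v_j}|}\to 0.
\]

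\textbf{Step 2 (compactness).} Because $v_j$ is saturated, $v_j=\phi_{K'_j}$ on $\tC$, with $K'_j\subset\tC'$. The normalization $v_j(0)=1$ says $\sup_{K'_j}Y_n=1$. Since $\tC'$ is pointed, $\tC'\cap\{Y_n\le 1\}$ is bounded, so all $K'_j$ lie in a fixed compact set. Moreover $0\in\ol{K'_j}$, because $\varphi_j^*$ is $2$-homogeneous and vanishes at $0$.

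By Blaschke selection, a subsequence satisfies $\ol{K'_j}\to H$ in the Hausdorff topology. The limit $H$ is compact and convex, lies in $\ol{\tC'}\cap\{0\le Y_n\le 1\}$, contains $0$, and meets $\{Y_n=1\}$. Hausdorff convergence gives uniform convergence of support functions on the bounded set $P$, so $v_j\to\phi_H$ uniformly. The function $\phi_H$ is nonnegative since $0\in H$. Since $\phi_{H}=\phi_{H-\tC^\vee}$ on $\tC$, and the latter agrees with $\phi_{\ul H}$ on $\tC$ by the saturation identity (Corollary~\ref{lem:cor:geomSaturationDescription}, Definition~\ref{defn:saturation}), the limit is $\phi_{\ul H}$ as claimed.

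\textbf{Step 3 (the limit lies in the boundary; the main obstacle).} It remains to show $H\subset\p\tC'$. By Lemma~\ref{lem:IJonlinks}, $J(v_j)=\int_{\Omega_j} f_j$, where
\[
f_j:=-u_j-\rho_\Sigma\ge 0 \quad\text{on } \Omega_j=\{f_j>0\},
\]
and $f_j(y)$ is exactly the length of the vertical fiber of $K'_j$ over $y$. Both $-u_j$ and $-\rho_\Sigma$ are concave, so $f_j$ is a nonnegative concave function on the convex set $\Omega_j$. For such functions, the average over the domain controls the maximum: $\frac{1}{|\Omega_j|}\int_{\Omega_j}f_j\ge c_n\sup f_j$, with $c_n=1/n$ in dimension $n-1$, by comparing with the cone over the maximum point.

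Step 1 then gives $\sup_{\Omega_j}f_j\to 0$. Hence every point of $K'_j$ lies within vertical distance $o(1)$ of the graph $\{Y_n=\rho_\Sigma(Y')\}$, which is $\p\tC'$. Passing to the limit gives $H\subset\p\tC'$, so $H$ is a residue in the sense of Definition~\ref{def:residue}.

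The delicate point is the concavity averaging estimate together with the uniform boundedness of the $\Omega_j$ (the latter follows from the compactness in Step 2). One must also make sure that the fiber description of $J$ from Lemma~\ref{lem:IJonlinks} is used with the correct normalization $K'=\{\varphi^*<1/4\}\cap\tC'$ and the factor $2$ rescaling there. This is what I would check first.
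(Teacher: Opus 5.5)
Your proof is correct, and its first half is the paper's argument. Lemma~\ref{lem:IvariationEstimate} gives $m_j\alpha_{v_j}\in[1,n]$, and the hypothesis then forces $\beta_{v_j}\sim\alpha_{v_j}$. So the mean fiber length satisfies $J(v_j)/|\Omega_{v_j}|=\beta_{v_j}^{-1}\lesssim m_j\to 0$.

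Where you differ is the endgame. The paper passes to a Hausdorff limit $K'_\infty$ and notes that $|K'_\infty|=0$, since $|\Omega_j|$ is bounded. It then asserts that a null convex subset of $\ol{\tC'}$ containing $0$ and a point at height one must lie in an exposed face of $\tC'$. Taken in isolation that implication is false: a segment from $0$ to an interior point of $\Sigma$ at height one is a counterexample. It therefore needs an extra input to exclude thin-but-tall limits. One such input is the uniform Lipschitz bound on the upper surface $-u_j$, which comes from $\p u_j\subset\ol{P}$.

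You instead use the concavity of the fiber length $f_j=-u_j-\rho_\Sigma$, in the form $\text{mean}\geq\frac1n\sup$. This turns the averaged smallness directly into uniform vertical thinness, $\sup_{\Omega_j}f_j\to 0$. That places $H$ on the graph of $\rho_\Sigma$, i.e.\ in $\p\tC'$, with no further information about $u_j$. So your route supplies the step the paper leaves implicit.

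One small point: the identification $\phi_H=\phi_{\ul H}$ on $\tC$ does not come from the saturation identity. It comes from the sandwich $H\subset\ul H\subset H-\tC^\vee$, where $\ul H$ must be read with $\ol{\tC'}$ because $H\cap\tC'=\emptyset$. The paper is equally informal on this point.
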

\begin{proof}
    From equation~\eqref{eqn:IJvariations}, we have $|\delta \cM_{v_j}[\pm 1]| = \left| \alpha_v - \beta_v \right| =:\ve_j$ which we assume is not growing too quickly. 
    Lemma~\ref{lem:IvariationEstimate} shows that 
    \[
    \frac{c}{m_j} \leq \alpha_{v_j} \leq \frac{C}{m_j} \qquad \implies \qquad \frac{\ve_j}{\alpha_{v_j}}  \lesssim m_j \ve_j \to 0,
    \]
    using the assumption $m_j \ve_j \to 0$.
    Therefore, for large enough $j$, we have $\frac{1}{2}\alpha_{v_j} \leq  \beta_{v_j} \leq 2 \alpha_{v_j}$, so $\beta_{v_j}^{-1} \leq C m_j$. 
    From the definition of $\beta_v$, this shows that 
    \[
    \frac{1}{\beta_{v_j}} = \frac{\int_{\Omega_j}(-u_j(y)-\rho_{\Sigma}(y))\,dy }{|\Omega_j|} \leq C m_j \to 0.
    \]
    However, $K'$ always contains some point $(y', 1) \in \tC'$ and no points $(y', y_n)$ with $y_n > 1$, so there exists some $K'_\infty = \lim K'_j$ with $|K'_\infty| = 0$, $K'_\infty \subset \ol{\tC'}$, and containing some point $(y',1)$, meaning $K'_\infty$ must be a compact subset of an exposed face of $\tC'$ and $K^\circ = K' - \tC^\vee$, which is therefore a residue.

\end{proof}
The above estimate is quite strong, and in particular shows that any Palais-Smale sequence $v_j$ such that $\delta \cM_{v_j} \to 0$ for all variations must converge uniformly to a residue.
The vast majority of residues imply the energy blows up to $\pm \infty$.
In particular, the only case where $v_j$ can degenerate to a residue $\phi_H$ with finite energy is when $H$ lies on a shared exposed face of both $\partial \tC'$ and $\partial \tC^\vee$.

\begin{defn}
We say that a residue $H$ is \textbf{obtuse} if for every saturated convergence $K'_j \to \ul{H}$ in the Hausdorff distance, $\cM(\varphi_j) \to \infty$. 
\end{defn}
By Proposition~\ref{prop:residueLimit}, if every residue is obtuse, then the pair is OT-separable.

\begin{cor}\label{cor:Crossing=OTsep}
    If $|(\ul{H} - \tC^\vee) \cap \tC'| > 0$, then $H$ is obtuse.
    In particular, if $\tC^\vee$ and $\tC'$ do not have a shared supporting hyperplane that meets $\p \tC' \cap \p \tC^\vee$, then they are OT-separable.
\end{cor}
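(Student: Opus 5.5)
The plan is to treat the two assertions separately: first show that a residue with thick saturation forces $\cM\to+\infty$, then reduce OT-separability to the absence of non-obtuse residues.

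For the first assertion, let $K'_j\to \ul{H}$ be a saturated convergence in the Hausdorff distance, and write $v_j=\phi_{K'_j}$.
\begin{itemize}
\item \emph{Lower bound on $J$.} Saturation gives $K_j^\circ = K'_j-\tC^\vee$. This converges to $\ul{H}-\tC^\vee \supset H-\tC^\vee$. Since $J(\varphi_j)=|K_j^\circ\cap \tC'|$, lower semicontinuity of volume for Hausdorff-convergent convex sets intersected with the fixed open cone $\tC'$ gives $\liminf_j J(\varphi_j)\geq |(\ul{H}-\tC^\vee)\cap\tC'|>0$. (Here I use that a residue's saturation is $(H-\tC^\vee)\cap \tC'$, so the hypothesis is exactly positivity of the limiting $J$.)
\item \emph{Divergence of $I$.} The normalized $v_j$ converge uniformly on $\ol{P}$ to $\phi_{\ul H}$. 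Because $H\subset \p\tC'$ is a degenerate set touching the origin, $\phi_{\ul H}$ vanishes at some $x_0\in\ol{P}$. Otherwise $\inf_P\phi_{\ul H}>0$ would give an open neighbourhood of the origin in $\tC$ inside $K_\infty$, contradicting the degeneracy noted after Definition~\ref{def:residue}.
\item \emph{Quantifying the divergence.} Convex functions on $\ol P$ with $\sup_P v_j\leq C$ are uniformly Lipschitz on compact subsets of $P$, so $\phi_{\ul H}(x)\leq L|x-x_0|$ near $x_0$. Since $P$ is $(n-1)$-dimensional and contains a sector with vertex $x_0$, the integral $\int_P |x-x_0|^{-n}\,dx$ diverges. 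By Lemma~\ref{lem:IJonlinks} and Fatou, $I(v_j)=\tfrac1n\int_P v_j^{-n}\to\infty$.
\end{itemize}
Together with the lower bound on $J$, this gives $\cM(\varphi_j)\to+\infty$, so $H$ is obtuse. This uses only Lemma~\ref{lem:IJonlinks}, Corollary~\ref{lem:cor:geomSaturationDescription} and Hausdorff continuity of volumes.

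For the second assertion, I argue by contrapositive: assume some residue $H$ has $|(\ul H-\tC^\vee)\cap \tC'|=0$, i.e.\ $(H-\tC^\vee)\cap\tC'=\emptyset$.
\begin{itemize}
\item \emph{Separating functional.} Hahn--Banach separation of the convex set $H-\tC^\vee$ from the open convex cone $\tC'$ gives a nonzero $X_0\in V$ with $\la X_0,\cdot\rg\geq 0$ on $\tC'$ and $\la X_0,\cdot\rg\leq 0$ on $H-\tC^\vee$.
\item \emph{Consequences.} Since $0\in\ol H$, we get $\la X_0,\cdot\rg\geq0$ on $\tC^\vee$. Since $H\subset\ol{\tC'}$, we get $\la X_0,\cdot\rg=0$ on $H$. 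Hence $X_0^\perp$ is a supporting hyperplane of both $\tC'$ and $\tC^\vee$, and it contains $H$. In particular it contains the height-one point $p\in\ol H\cap\{Y_n=1\}$, which lies in $\p\tC'$.
\end{itemize}

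The main obstacle is to show that this common hyperplane actually meets $\p\tC'\cap\p\tC^\vee$ away from the origin; as stated, the separating functional only gives a common supporting hyperplane through the origin.
\begin{itemize}
\item My first attempt is to show that $\ol{H}$, or the exposed face $X_0^\perp\cap\ol{\tC'}$ containing it, meets $\ol{\tC^\vee}\setminus\{0\}$. The idea is to use positive alignment (Lemma~\ref{lem:posAlign}) together with the finiteness of energy along the relevant degenerations. If $X_0^\perp\cap\ol{\tC'}$ were disjoint from $\ol{\tC^\vee}\setminus\{0\}$, I would perturb $X_0$ slightly into the interior of $\tC$. The resulting functional is still nonpositive on $H$ up to a small error, and I would then re-run the volume argument of the first part to get $\cM\to\pm\infty$ instead of a finite limit.
\item If that fails, the fallback is to use the additional information that residues arising from the proof of Proposition~\ref{prop:PS=SLNdegToPolystable} come from sequences with $|\cM|\leq C$. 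In that case $I$ stays bounded along the sequence, which forces the zero set of $\phi_{\ul H}$ to be a shared face direction in $\p\tC\cap (X_0^\perp)$ and hence places $p$ in $\ol{\tC^\vee}$.
\end{itemize}

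Granting that step, the hypothesis that $\tC^\vee$ and $\tC'$ share no supporting hyperplane meeting $\p\tC'\cap\p\tC^\vee$ forces every residue to satisfy $|(\ul H-\tC^\vee)\cap\tC'|>0$. By the first part every residue is then obtuse. Finally, Proposition~\ref{prop:residueLimit} says any degenerating Palais--Smale sequence of bounded energy converges to a residue, and that is impossible for obtuse residues. By Proposition~\ref{prop:PS=SLNdegToPolystable} this rules out non-trivial degenerations to pairs admitting \hyperref[eqn:HOT]{H.O.T.} maps, which is OT-separability.
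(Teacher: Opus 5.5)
\textbf{There is a genuine gap in your second part.} The step you ``grant'' is false, so OT-separability cannot be reduced to your first assertion. Read the hypothesis as excluding common supporting hyperplanes through \emph{nonzero} points of $\p\tC'\cap\p\tC^\vee$. This is the only sensible reading, since every positively aligned pair has common supporting hyperplanes through the origin, e.g.\ $X^\perp$ for $X\in\tC\cap(\tC')^\vee$. Even so, it is not true that every residue has $|(\ul{H}-\tC^\vee)\cap\tC'|>0$.

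Take the strictly obtuse pair of Figure~\ref{fig:2dConePairs}(c), with $a,b<0$. There $\p\tC'\cap\p\tC^\vee=\{0\}$, so the hypothesis holds vacuously, and the pair is OT-separable by Lemma~\ref{lem:AcuteObtuseNodegen}. Let $H$ be the segment from $0$ to the height-one point of the edge spanned by $(1,a)$.
\begin{itemize}
\item The functional $X_0=(-a,1)$ exposing that edge lies in the interior of $\tC$, so it is positive on $\ol{\tC^\vee}\setminus\{0\}$.
\item Hence $H-\tC^\vee\subset\{\la X_0,\cdot\rg<0\}$ is disjoint from $\tC'$, and $\ul{H}=\emptyset$.
\item Your Hahn--Banach hyperplane is $X_0^\perp$, which meets $\ol{\tC^\vee}$ only at the origin. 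No refinement of that step can produce a nonzero contact point.
\end{itemize}
Neither fallback repairs this. In (i), the volume argument of your first part needs $J$ bounded below, whereas here $J(v_j)=|K'_j|\to0$. In (ii), the premise is wrong: with the normalization $v_j(0)=1$ one has $J(v_j)\to0$, so bounded $\cM$ forces $I(v_j)\to\infty$, not bounded $I$. Its conclusion $p\in\ol{\tC^\vee}$ is exactly what fails in this example.

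\textbf{What your argument does prove, and what is missing.} Your separation argument correctly handles the case $\ol{H}\cap\ol{\tC^\vee}\neq\{0\}$: if the saturation is thin, $X_0^\perp$ is a common supporting hyperplane through a nonzero point of $\p\tC'\cap\p\tC^\vee$. This matches the paper's case split: a residue meeting $\tC^\vee$ has thick saturation, and one meeting a nonzero $p\in\p\tC^\vee\cap\p\tC'$ with no common supporting hyperplane must meet $\tC^\vee$. Purely obtuse residues, with $\ol{H}\cap\ol{\tC^\vee}=\{0\}$, are never claimed to have thick saturation. The paper disposes of them only through the remark after Proposition~\ref{prop:residueLimit}: a finite-energy degeneration can converge only to a residue lying on a shared exposed face of $\p\tC'$ and $\p\tC^\vee$. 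You must supply this case separately, and it is a different mechanism from $|\ul{H}|>0$. You have to compare the rate at which $I(v_j)\to\infty$ with the rate at which $J(v_j)\to0$, using the near-criticality $\tfrac12\alpha_{v_j}\le\beta_{v_j}\le2\alpha_{v_j}$ from the proof of Proposition~\ref{prop:residueLimit}.

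\textbf{Two smaller repairs in your first part.}
\begin{itemize}
\item The zero $x_0$ of $\phi_{\ul{H}}$ lies on $\p P$ for the residues that matter, so Lipschitz bounds on compact subsets of $P$ are not enough. Use the global bound coming from $K'_j\subset\ol{\tC'}\cap\{Y_n\le1\}$, which is the paper's $\p v(P)\subset\Omega$.
\item The existence of $x_0$ comes from $\inf_P v_j\to0$ in Proposition~\ref{prop:residueLimit}, not from Definition~\ref{def:residue}. For instance, $H=[0,p]$ with $p\in\p\tC'\cap\tC^\vee$ gives $\phi_{\ul{H}}=\la\cdot,p\rg_+>0$ on $\ol{P}$.
\end{itemize}
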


\begin{proof}
Assume $\ul{H}$ is saturated and has positive volume, so any sequence $v_j$ such that $K'_j \to \ul{H}$ satisfies $J(v_j) \to |\ul{H}| > 0$. 
We now show that $I(v_j) \to \infty$. 
By saturation, we know that $\p v(P) \subset \Omega$, which is uniformly bounded since $\Omega \subset \Sigma$. 
Therefore, we have uniform gradient bounds on $v_j$, so applying Lemma~\ref{lem:IJonlinks}, $I(\ul{v_j}) \to \infty$ if $\inf_P v_j \to 0$ (cf.~\cite[Lemma 1.1]{TristanBenjy}).
If a saturated residue $\ul{H}$ contains any interior point of $\tC^\vee$, then $|\ul{H}| > 0$ and the above implies it is obtuse.
Any saturated $\ul{H}$ meeting $p \in \p \tC^\vee \cap \p \tC'$ which does not have a shared supporting hyperplane must meet $\tC^\vee$. 
\end{proof}

We can now prove Theorem~\ref{thm: HotExistIntro}.
\begin{proof}[Proof of Theorem~\ref{thm: HotExistIntro}]
    Proposition~\ref{prop: Sep+Acute=HOT} covers the acute first alternative in $(ii)$.
    Corollary~\ref{cor:Crossing=OTsep} shows that $(\tC, \tC')$ are OT-separable. 
    When $(\tC, \tC')$ are crossing, then $\widetilde{H}_*(\tC'\setminus \tC^\vee) \neq 0$ implies that the cones are linked.
    Therefore, Theorem~\ref{prop:WidthAchieved} proves the second alternative in $(ii)$.
\end{proof}

\subsubsection{Examples}\label{sec:examples}
We now illustrate some examples showing how to effectively identify OT-separability. 
Proposition~\ref{prop:residueLimit} shows that to test OT-separability, we must only test degenerations that arise from residues contained in $\p \tC'$ that meet $\p \tC^\vee$.
This drastically reduces the number of degenerations to be checked, often there are finitely many or none.
    For example, strictly acute and obtuse cones, i.e.~$\Sigma \Subset Q$ and $Q \Subset \Sigma$ are OT-separable. 
    In fact, such cones satisfy a stronger, more \textit{obviously}, $C^{1,1}$-regularity, as they have no possible degenerations.
\begin{lem}\label{lem:AcuteObtuseNodegen}
    Strictly acute or obtuse cones have no positively aligned degenerations.
\end{lem}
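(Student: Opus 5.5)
Suppose $(M_j\tC, M_j^{-T}\tC')\to(\tC_\infty,\tC'_\infty)$ with $\|M_j\|\to\infty$ and positively aligned limit cones with non-empty interior. We derive a contradiction from the strict containment $\ol{\tC'}\setminus\{0\}\subset\tC^\vee$ (strictly acute case, $\Sigma\Subset Q$); the strictly obtuse case $\ol{\tC^\vee}\setminus\{0\}\subset\tC'$ is symmetric after exchanging the roles of $(\tC,\tC')$ and $(\tC'^{\vee},\tC^{\vee})$, or by applying the same argument to the dual pair. The mechanism is that strict acuteness gives a uniform positive lower bound on the pairing, and this bound is incompatible with one direction of $M_j$ blowing up.

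\textbf{Step 1: a quantitative pairing bound.} Since $\tC$ is pointed (it is the tangent cone of a bounded convex set in our setting) and $\ol{\tC'}\setminus\{0\}$ lies in the open cone $\tC^\vee$, compactness of the unit spheres gives a constant $c>0$ with
\[
\la X,Y\rg\geq c|X||Y|\qquad\text{for all } X\in\ol{\tC},\ Y\in\ol{\tC'}.
\]
To see this, note that $\la\cdot,\cdot\rg$ is continuous and positive on the compact set $(\ol{\tC}\cap\p\bB)\times(\ol{\tC'}\cap\p\bB^*)$: for $X\neq 0$ in $\ol{\tC}$ and $Y\in\tC^\vee$ interior, the pairing is strictly positive.

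\textbf{Step 2: transport the bound along the degeneration and pass to the limit.} The pairing is $\sln$-invariant, so $\la M_jX, M_j^{-T}Y\rg=\la X,Y\rg$. Choose open balls $B\Subset\tC_\infty$ and $B'\Subset\tC'_\infty$. By Hausdorff convergence on compact sets, for large $j$ we have $B\subset M_j\tC$ and $B'\subset M_j^{-T}\tC'$. Hence for every $\xi\in B$ and $\eta\in B'$, the points $X=M_j^{-1}\xi$ and $Y=M_j^T\eta$ satisfy
\[
C\geq \la\xi,\eta\rg=\la X,Y\rg\geq c\,|M_j^{-1}\xi|\,|M_j^T\eta|.
\]

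\textbf{Step 3: this forces $\|M_j\|$ to stay bounded.} This is the main point. Because $B$ is open, its elements span $V$, so $\sup_{\xi\in B}|M_j^{-1}\xi|\gtrsim\|M_j^{-1}\|$. Likewise $\sup_{\eta\in B'}|M_j^T\eta|\gtrsim\|M_j\|$. Taking suprema separately in Step 2, we obtain $\|M_j^{-1}\|\,\|M_j\|\leq C'$. Since $\det M_j=1$, we have $\|M_j\|\geq1$ and $\|M_j^{-1}\|\geq 1$, so $\|M_j\|\leq C'$. This contradicts $\|M_j\|\to\infty$.

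The only care needed is the step bounding suprema over a ball by operator norms. For this, pick $n$ linearly independent vectors $\xi_1,\dots,\xi_n\in B$; then $\|A\|\leq C(\xi_1,\dots,\xi_n)\max_i|A\xi_i|$ for any matrix $A$, with a constant independent of $A$. Positive alignment of the limit is used only to guarantee that $\tC_\infty,\tC'_\infty$ are genuine open cones, which is what makes the balls $B,B'$ available.

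For the obtuse case, the same inequality in the reverse form is obtained on the dual pair: $\ol{\tC^\vee}\setminus\{0\}\subset\tC'$ is equivalent to $\ol{\tC'^{\vee}}\setminus\{0\}\subset\tC$, which gives $\la X,Y\rg\geq c|X||Y|$ for $X\in\ol{\tC'^\vee}$ and $Y\in\ol{\tC^\vee}$. A degeneration of $(\tC,\tC')$ induces a degeneration of the dual pair $(\tC'^\vee,\tC^\vee)$ with the same matrices, because duality commutes with the action and is continuous for cones with non-empty interior and pointed limits. Running Steps 2–3 on the dual pair then gives the same contradiction. The continuity of duality under Hausdorff convergence is the place I expect to need the most care; pointedness of the limits follows from positive alignment via Lemma~\ref{lem:posAlign}.
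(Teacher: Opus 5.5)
Your strictly acute case is correct, and it takes a different and cleaner route than the paper. The paper passes to the rank-deficient limit $S=\lim_j M_j^{-T}/\|M_j^{-T}\|$ and argues by cases on $\ker S$. Your invariant bound $\la X,Y\rg\geq c|X||Y|$ instead controls the condition number $\|M_j\|\,\|M_j^{-1}\|$ as soon as both limits contain balls. It therefore excludes every degeneration with open limits and never uses positive alignment. One small correction: pointedness of $\tC$ follows from $\tC'\subset\tC^\vee$ with $\tC'$ open, not from $\tC$ being a tangent cone, since tangent cones at smooth boundary points are half-spaces.

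The obtuse case has a genuine gap. Passing to the dual pair $((\tC')^\vee,\tC^\vee)$ requires $(\tC'_\infty)^\vee$ and $\tC_\infty^\vee$ to contain balls, that is, both $\tC_\infty$ and $\tC'_\infty$ must be pointed. Your justification for this is false. Lemma~\ref{lem:posAlign} gives no pointedness, and two half-spaces with $\la\nu,\nu'\rg>0$ are positively aligned. This is not a technicality. Your own Steps 1--3, applied to the strictly acute dual pair with the same $M_j$, show that every degeneration of a strictly obtuse pair with open limits has $\tC_\infty$ or $\tC'_\infty$ containing a line. So the case you treat never occurs, and the entire content of the obtuse case lies in the case you skip.

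Such limits do occur. Take $\tC$ the open first quadrant of $\bR^2$, $\tC'$ the open cone of angles $(-\alpha,\tfrac{\pi}{2}+\alpha)$ with $0<\alpha<\tfrac{\pi}{4}$, and $M_j=\mr{diag}(\sigma_j,\sigma_j^{-1})$ with $\sigma_j\to\infty$. Then $M_j\tC=\tC$ and $M_j^{-T}\tC'\to\{Y_1>0\}$. Both limits are open and $\tC_\infty^\vee\cap\tC'_\infty\neq\emptyset$; only the condition $\tC_\infty\cap(\tC'_\infty)^\vee=\emptyset$ excludes this limit. So you need an argument that uses positive alignment substantively, showing that a limit in which one cone splits a line cannot be positively aligned. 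This is exactly what the paper's case analysis on $S$ is for:
\begin{itemize}
\item If $\ker S$ meets $\ol{\tC^\vee}\setminus\{0\}$, strict obtuseness forces $\mr{im}\,S$ into the lineality space of $\ol{\tC'_\infty}$.
\item Otherwise $\tC^\vee_\infty\subset\mr{im}\,S$. A normalized limit of $S_j^{-T}X$, for $X\in(\tC')^\vee\setminus\mr{im}\,S^T$, then gives a nonzero element of $\ol{(\tC'_\infty)^\vee}$ annihilating $\tC^\vee_\infty$. Hence $\tC^\vee_\infty\subset\p\tC'_\infty$ and positive alignment fails.
\end{itemize}
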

\begin{proof}
    Let $M_j$ be an $\sln$-degeneration and $S_j := \frac{M_j^{-T}}{\|M_j^{-T}\|_{\mr{HS}}}$.
    Because $\|M_j\|_{\mr{HS}} \to \infty$, we have that $S := \lim_{j\to\infty} S_j$ must have rank between $1$ and $n-1$. 
    We may assume $\tC_j := M_j \cdot \tC \to \tC_\infty$ and $\tC'_j := M_j^{-T} \cdot \tC' \to \tC'_\infty$ and label $Q_\infty := \tC^\vee_\infty \cap \{Y_n = 1\}$ and $\Sigma_\infty := \tC'_\infty \cap \{Y_n = 1\}$. 

    We first handle the strictly acute case.
    Suppose that $\ker S \cap \tC' = \{0\}$. 
    Therefore, on $\ol{\tC'} \cap \partial \bB^*$, $S$ is bounded away from 0, so $\tC'_\infty \subset \mr{im}\,S$ and thus $\tC'_\infty$ has no interior. 
    Otherwise, let $Y \in \ker S \cap \ol{\tC'}$, so by acuteness, $Y +\ve \bB^*\in \tC'$ for some small, but uniform, $\ve$.
    By linearity, we have that $S_j(Y + \ve Y_0) \to \ve S(Y_0)$ for any $Y_0 \in V^*$, so $\mr{im}\,S \subset \mr{Lin}(\tC'_\infty)$, the subspace of all translational symmetries of $\tC'_\infty$.
    However, each $\tC_j' \subset \tC_j^\vee$ by acuteness, so $L \subset \mr{Lin}(\tC^\vee_\infty)$.
    If the translational space of $\tC^\vee_\infty$ is non-empty, then $\tC_\infty$ has no interior. 

    Suppose now that the pair is strictly obtuse.
    If $\ker S \cap \ol{\tC^\vee} \neq \{0\}$, choose some $Y \in \ker S\cap \ol{\tC^\vee}$, so by strict obtuseness, there is some $\ve$ such that $Y + \ve \bB^* \subset \tC'$. 
    Therefore, $S_j(Y \pm\ve Y_0) \to \pm S(\ve Y_0)$, so $\mr{im}\, S \subset \mr{Lin}(\tC'_\infty)$, so $\tC^\vee_\infty$ has a non-trivial translational space, implying $\tC_\infty$ is not open. 
    Finally, if $\ker S \cap \tC^\vee = \{0\}$, then $\tC^\vee_\infty \subset \mr{im}\,S$.
    For some $X \in (\tC')^\vee \setminus \mr{im}\, S^T$, let $X_j := \frac{S_j^{-T}X}{|S_j^{-T}X|}$, where the denominator goes to infinity by the assumption $X \not\in\mr{im}\, S^T$. 
    Let $X_\infty = \lim X_j$ up to a subsequence and $S^T X_\infty = 0$ with $X_\infty \in (\tC'_\infty)^\vee$.
    Therefore, $X_\infty$ annihilates all of $\mr{im}\, S$, and in particular all of $\tC^\vee_\infty$, which implies $\tC^\vee_\infty \subset \p \tC'_\infty$, so the pair is not positively aligned.
\end{proof}

\begin{figure}
    \centering
    \setlength{\tabcolsep}{0pt}
    \begin{tabular}{ccc}
\begin{tikzpicture}[scale=0.4,line cap=round,line join=round]
  \path[use as bounding box] (-2.8,-2.35) rectangle (2.8,2.35);

  % Red rounded non-regular pentagon
  \begin{scope}[scale=1.58]
    \draw[red, line width=1.2pt]
      plot[smooth cycle, tension=0.9] coordinates {
        (0.05,1.20)
        (1.28,0.60)
        (0.90,-0.82)
        (-0.10,-1.32)
        (-1.18,0.08)
      };
  \end{scope}

  % Blue triangle, strictly inside red
  \draw[blue, line width=1.2pt]
    (0,1.40) -- (-1.22,-0.95) -- (1.16,-0.95) -- cycle;
\end{tikzpicture} & 
    
       % Figure 1: red strictly inside blue
\begin{tikzpicture}[scale=0.4,line cap=round,line join=round]
  \path[use as bounding box] (-2.8,-2.35) rectangle (2.8,2.35);

  % Blue triangle
  \draw[blue, line width=1.2pt]
    (0,2) -- (-2.15,-2) -- (2.05,-2) -- cycle;

  % Red rounded non-regular pentagon, strictly inside
  \begin{scope}[xshift=-0.08cm,yshift=-0.5cm,scale=0.66]
    \draw[red, line width=1.2pt]
      plot[smooth cycle, tension=0.9] coordinates {
        (0.05,1.00)
        (1.28,0.40)
        (0.90,-1.02)
        (-0.10,-1.52)
        (-1.18,0.28)
      };
  \end{scope}
\end{tikzpicture}  &

\begin{tikzpicture}[scale = 0.4,line cap=round,line join=round]
  \path[use as bounding box] (-2.8,-2.35) rectangle (2.8,2.35);

  % Blue triangle
  \draw[blue, line width=1.2pt]
    (0,2) -- (-2.15,-2) -- (2.05,-2) -- cycle;

  % Red rounded non-regular pentagon in mixed position
  \begin{scope}[xshift=0.08cm,yshift=-0.23cm,scale=1.46,rotate=2]
    \draw[red, line width=1.2pt]
      plot[smooth cycle, tension=0.9] coordinates {
        (0.05,1.20)
        (1.28,0.60)
        (0.90,-0.82)
        (-0.10,-1.32)
        (-1.18,0.08)
      };
  \end{scope}
\end{tikzpicture}\\
       (a) & (b) & (c)
    \end{tabular}
    \caption{We illustrate the boundaries of $Q$ in red and $\Sigma$ in blue to show how to identify the various OT-separable cases and the linking property.
    Figures (a) and (b) are strictly acute and obtuse, respectively.
    Case (c) is OT-separable and linked with $H_*({\Sigma},{\Sigma}\setminus Q) = \bZ^2$, which is entirely in degree $1$.}
    \label{fig:acute/obtuse/maxmin}
\end{figure}

\begin{remark}
    If both $Q$ and $\Sigma$ are polytopes, it seems likely there are only finitely open, positively aligned pairs of limit cones that one would need to examine to verify OT-separability.
    
\end{remark}

\begin{example}
    Let $\tC$ be a strictly convex cone and let $h : \bS^{n-2} \to \bR$ be any continuous function.
    Defining $\Sigma := \mr{ConvexHull}\{\mr{graph}_Q(\ve h)\}$, we can examine the pair $(\tC, \tC(\Sigma))$. 
    For $\ve$ sufficiently small, $\tC(\Sigma)$ is convex for any $h$.
    Furthermore, if we suppose that $h$ is differentiable and $\nabla h \neq 0$ on $h^{-1}(0)$, the pair is OT-separable by Corollary~\ref{cor:Crossing=OTsep}.
    The linking property is therefore given by $\widetilde{H}_*(h^{-1}(\bR_{\geq 0}))$, so in particular, we can prescribe any arbitrarily complicated homology in all degrees $0$ through $n-2$ simultaneously by taking a partition of unity associated with any $\bS^{n-2} = A \sqcup B$ for $A$ open.
\end{example}

\section{Equivariant homogeneous optimal transport}\label{sec:polystable}
We now consider the equivariant case in which the cones $(\tC, \tC')$ have a non-compact shared symmetry group.
Recall, we have $\mr{Aut}(\tC, \tC') := \{g \in \sln :(g\cdot \tC, g^{-T}\cdot \tC') = (\tC, \tC')\}$. 
We will assume that $G \subset \mr{Aut}(\tC, \tC')$ is a reductive subgroup; it is an interesting question if there exist \hyperref[eqn:HOT]{H.O.T.} maps between $\tC$ and $\tC'$ whose automorphism group is not reductive as this presents an obstruction in most moduli theories.
From Mostow's self-adjointness theorem~\cite{Mostow}, we may, up to a change of coordinates, assume that $G$ is self-adjoint. 
Let $K_G := G \cap \son$ be a maximal compact subgroup and for $\mathfrak{g} = \mr{Lie}(G)$, we can decompose
\[
K_G := G \cap \son,\qquad \mathfrak{k} := \mathfrak{g} \cap \mathfrak{so}(n),\qquad     \mathfrak{p} := \mathfrak{g} \cap \mr{Sym}_0(n).
\]
Every $g \in G$ is expressible as $k e^{A}$ for $k \in K_G$ and $A \in \mathfrak{p}$.

We now separate two different types of degenerations of functions, those arising from the symmetric degenerations $G \ni g \to \infty$ and the non-trivial degenerations $(\tC, \tC') \rightsquigarrow (\tC_\infty, \tC'_\infty)$. The symmetry group $G$ acts naturally on $\cH_{(\tC,\tC')}$, and this action satisfies $K_{g\cdot \varphi} = g K_\varphi$ and $K_{g \cdot \varphi}' = g^{-T} K'_{\varphi}$.  In particular, both $I$ and $J$ are $G$-invariant. 

\begin{defn}
We say a point $Y \in V^*$ is \textbf{balanced} if $\la A Y, Y\rg = 0$ for all $A \in \mathfrak{g}$.
If $Y$ is balanced, then $\lambda Y$ is also balanced for any $\lambda > 0$, so we may rescale to $y := (Y',1) \in \{Y_n = 1\}$ on the affine link. We define $\Sigma_{\mr{Bal}} := \{q \in \Sigma : q \text{ is balanced}\}$.
It is sufficient to have $\la AY, Y\rg = 0$ for $A \in \mathfrak{p}$ since for $A \in \mathfrak{k}$, $\la AY, Y\rg = \la Y, -AY\rg$, so this vanishes.
\end{defn}

By rescaling, we have an action $G$ on $\{Y_n = 1\}$ which we denote $g \star y := \frac{g\cdot (y,1)}{\la e_n, g\cdot (y,1)\rg}$.

\begin{lem}\label{lem:balancing}
    For any $y \in \Sigma$, there exists $A_y \in \mathfrak{p}$ such that $e^{A_y}\star y \in \Sigma_{\mr{Bal}}$, which is unique up to $K_G$. 
\end{lem}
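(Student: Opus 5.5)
The plan is a Kempf--Ness style argument on the orbit $\mathfrak{p}$-part. Fix $y\in\Sigma$, set $Y:=(y,1)$, and define on $\mathfrak{p}$ the function
\[
F_Y(A) := \log |e^{A}\cdot Y|^2 = \log \langle e^{2A}Y, Y\rangle ,
\]
where we use that $A\in\mathfrak{p}$ is symmetric. Since $G$ acts on $V^*$ via $g\mapsto g^{-T}$, whose $\mathfrak{p}$-part is $e^{-A}$, the correct sign convention is fixed once; it only flips $A\mapsto -A$ and does not affect the argument. The derivative of $F_Y$ at $A$ in direction $B\in\mathfrak{p}$ is
\[
\frac{2\langle B\,e^{A}Y, e^{A}Y\rangle}{|e^{A}Y|^2},
\]
so critical points of $F_Y$ are exactly those $A$ for which $e^{A}Y$ is balanced. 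Balancedness is scale invariant, so $e^{A}\star y\in\Sigma_{\mr{Bal}}$ as soon as $e^{A}\cdot Y$ lies in $\tC'$ with positive $Y_n$ component. This holds because $e^{A}\in\mr{Aut}(\tC,\tC')$ preserves $\tC'$, and pointedness together with positive alignment give $\langle e_n,\cdot\rangle>0$ on $\tC'\setminus\{0\}$. The statement is thus reduced to showing that $F_Y$ has a critical point, and that this critical point is unique modulo $K_G$.

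For convexity, along a line $t\mapsto tB$ we decompose $Y=\sum_i Y_i$ into eigenvectors of $B$ with eigenvalues $\lambda_i$. Then $F_Y(tB)=\log\sum_i e^{2t\lambda_i}|Y_i|^2$ is a log-sum-exp, hence convex, and strictly convex unless $Y$ is supported on a single eigenspace of $B$. For $A,B$ not commuting, the standard route is to restrict to geodesics in $G/K_G$, i.e.\ curves $e^{A}e^{tB}$, using the Cartan decomposition $g=ke^{A}$ already recorded and the $K_G$-invariance $|kZ|=|Z|$. The cleanest way to do this is to define $\Phi(gK_G):=\log|g^{-1}\cdot Y|$ on the symmetric space $G/K_G$, which is convex along geodesics $ge^{tB}$ by the same log-sum-exp computation.

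The main obstacle is properness, which is what yields existence. One needs $\Phi\to\infty$ along every geodesic ray, i.e.\ for every nonzero $B\in\mathfrak{p}$ the vector $Y$ must have nonzero components in both positive and negative eigenspaces of $B$. Here I would use the cone geometry. Suppose $Y$ had no component in the positive eigenspaces of $B$. Then $e^{tB}Y$ stays bounded, and after normalizing, the invariance $e^{tB}\tC'=\tC'$ pushes points of $\tC'$ toward the eigenspaces with $\lambda\le 0$. Because $\tC'$ is open and $Y$ is interior, a small ball $Y+\ve\bB^*\subset\tC'$ is moved by $e^{tB}$. Points with a positive-eigenspace component blow up in that direction, and taking differences (the ball contains $Y\pm\ve Z$) produces both $\pm Z$ directions in $\ol{\tC'}$ for some $Z$ in a positive eigenspace, contradicting pointedness. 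Concretely, $e^{tB}(Y\pm\ve Z)/e^{t\lambda_{\max}}\to\pm\ve Z$, so both lie in $\ol{\tC'}$. The same argument handles the negative eigenspaces using $t\to-\infty$. By compactness of the unit sphere in $\mathfrak{p}$ and continuity, this gives a uniform linear growth $\Phi(e^{tB}K_G)\ge c|t|-C$, so a minimizer exists. The minimizer is a critical point and hence a balanced point.

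Uniqueness follows from the strictness of convexity. If two minimizers existed, $\Phi$ would be constant along the geodesic joining them, say along $e^{A}e^{tB}$. That forces $e^{A}Y$ to lie in a single eigenspace of $B$, and then the pointedness argument above fails for $\pm B$ only if $B$ acts by $0$ on $e^{A}Y$, making the geodesic trivial on the orbit. Hence the minimizer is unique in $G/K_G$. Since the set of $g$ with $g\star y$ balanced and $g$ of the form $e^{A}$ corresponds to this minimizer, $A_y$ is unique up to $K_G$.
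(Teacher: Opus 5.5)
Your proposal is correct and follows the same Kempf--Ness route as the paper: you minimize the $K_G$-invariant log-norm $\tfrac12\log|gY|^2$ along the Cartan directions $e^{tA}$ with $A\in\mathfrak{p}$, use log-sum-exp convexity, and identify critical points with the balanced equation $\langle AY,Y\rangle=0$. Your pointedness argument (an interior point of a pointed cone invariant under $e^{tB}$ must have nonzero components in both the top and the bottom eigenspaces of $B$) does two further things: it supplies the properness that the paper only asserts, and it correctly handles the degenerate case of strict convexity, where the second derivative vanishes whenever $gY$ lies in a single eigenspace of $A$, not only when $A=0$ as the paper states.
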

\begin{proof}
Consider a basis $z_i$ such that $A z_i = w_i z_i$ for $\sum w_i = 0$ the weights of $A$.
So for a fixed $y$, we may decompose $gy = \sum \lambda_i z_i$.
Let $p_i := \frac{ |\lambda_i|^2e^{2w_jt}}{\sum |\lambda_j|^2e^{2w_jt}}$ and for $g = K_G e^{tA}$.
Consider the function
\[
B_{y}(g) := \frac{1}{2}\log \left(\sum |\lambda_i|^2e^{2 w_it}\right)\qquad \implies \qquad B''_{y}(g) = 2 \left( \sum p_i w_i^2 - \left(\sum p_i w_i\right)^2\right) \geq 0,
\]
whose second derivative vanishes if and only if all eigenvalues $w_i$ of $A$ are equal, which since $A$ is traceless implies $A = 0$.
Furthermore, $B_y(g) = B_y(kg)$ for any $k \in K_G$ since $K_G \subset \son$. 
Therefore, $B_y$ is strictly convex and one can see $B_y \to \infty$ as $g \to \infty$.
Thus, $y$ is balanced if it attains the unique minimum value of $B_y$, which is achieved for some $e^{A_y} \in \mathfrak{p}$, and its balanced representative $e^{A_y}\star y$ is unique up to $K_G$. 
At a minimum, differentiating in every direction $A \in \mathfrak{p}$ shows
\[
0 = \frac{d}{dt}\bigg\vert_{t=0}\frac{1}{2}\log |e^{tA}Y|^2 = \frac{\la AY, Y\rg}{|Y|^2}
\]
showing the balanced equation is equivalent to this minimization.
The family $e^{s A_y}\star y$ for $s \in [0,1]$ shows that balancing is a strong deformation retract from $\{Y_n = 1\}$ to the balanced locus.
\end{proof}
We define $b : \Sigma \times [0,1] \to \Sigma$ to be the balancing homotopy such that $b_0 = \mr{Id}$ and $b_1(\Sigma) = \Sigma_{\mr{Bal}}$.
Since $G$ preserves both $\Sigma$ and $Q$, this map provides a homotopy of pairs $(\Sigma, \Sigma \setminus Q) \to (\Sigma_{\mr{Bal}}, \Sigma_{\mr{Bal}} \setminus Q)$. 
From this property, we can define the balanced class of functions.
We define $O_{\mr{Bal}} := \Sigma_{\mr{Bal}} \setminus \ol{Q}$ and $A_{\mr{Bal}, \ve} := \{y \in \Sigma_{\mr{Bal}} \cap Q : d(y,\p Q) > \ve\}$.

\begin{defn}
    The pair $(\tC, \tC')$ is \textbf{$G$-linked} if
    \[
    \mr{im}\bigl[H_*(\Sigma_{\mr{Bal}}, O_{\mr{Bal}})\xrightarrow{\iota_*}H_*(\Sigma_{\mr{Bal}}, \Sigma_{\mr{Bal}} \setminus A_{\mr{Bal}, \ve})\bigr] \neq 0
    \]
    for some $\ve > 0$ sufficiently small.
\end{defn}

The following properties allow us to use the preimages of the Steiner points to deduce the corresponding linking property for a balanced function space and complete the existence argument.
\begin{lem}\label{lem:balancingSteiner}
    For any $v\in \cH$, there exists $g \in G$ such that $\mathfrak{q}(g \cdot v) \in \Sigma_{\mr{Bal}}$. 
\end{lem}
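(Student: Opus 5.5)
The plan is to reduce the claim to the balancing result of Lemma~\ref{lem:balancing}, applied to the Steiner point. The key step is to check that $\mathfrak{q}$ is equivariant for the $G$-action, at least in the averaged sense that the Steiner vector transforms linearly under $g^{-T}$.

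Recall that $K'_{g\cdot v} = g^{-T}K'_v$, so $\phi_{K'_{g\cdot v}}(X) = \phi_{K'_v}(g^{-1}X)$. Taking gradients gives
\[
\nabla \phi_{K'_{g\cdot v}}(X) = g^{-T}\nabla\phi_{K'_v}(g^{-1}X).
\]
The Steiner vector $S_U$ is an average over the fixed set $U=e_n+r\bB$. If $U$ were $G$-invariant, changing variables would give $S_U(g^{-T}K') = g^{-T}S_{U}(K')$, and after projectivizing, $\mathfrak{q}(g\cdot v) = g^{-T}\star\mathfrak{q}(v)$. Since $G$ is self-adjoint, $g\mapsto g^{-T}$ preserves $G$, and the $\star$ action on $\{Y_n=1\}$ is the one used in Lemma~\ref{lem:balancing}.

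The main obstacle is that $U$ is not $G$-invariant, so $\mathfrak{q}$ is not literally equivariant. I would try to avoid exact equivariance and use a continuity/degree argument instead.

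Consider the map $F: \mathfrak{p}\to \Sigma$, $F(A) := \mathfrak{q}(e^{A}\cdot v)$. Compose it with the balancing retraction, recording the balancing parameter: for each $y$, let $A_y\in\mathfrak p$ be the element from Lemma~\ref{lem:balancing}. We want a zero of the continuous map $A\mapsto A_{F(A)}$ after suitable identification, i.e.\ a fixed point of $A\mapsto A + A_{F(A)}$ up to first order.

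To get this, I would show two things:
\begin{itemize}
\item $F$ is continuous, which follows from Lemma~\ref{lem:(C,d)props} and the uniform convergence of support functions.
\item For $|A|\to\infty$, the point $F(A)$ is pushed toward the face of $\Sigma$ in the expanding direction of $e^{-A}$. Concretely, $\nabla\phi_{e^{-A}K'}$ concentrates on the exposed points of $e^{-A}K'$ maximizing $\langle e_n,\cdot\rangle$, and the convexity function $B_y$ from Lemma~\ref{lem:balancing} gives $\langle A, \text{balancing direction of }F(A)\rangle<0$.
\end{itemize}

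This is a boundary condition of Poincaré--Miranda/Brouwer type on a large sphere in $\mathfrak p$. It yields $A$ with $A_{F(A)}\in\mathfrak k$, i.e.\ $F(A)\in\Sigma_{\mr{Bal}}$ (using $K_G$-invariance of the balanced locus). Taking $g=e^{A}$ then proves the lemma.

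Alternatively, if one is allowed to redefine $U$ as a $K_G$-averaged set, or to replace $S_U$ by the genuinely equivariant Steiner vector, e.g.\ the $e^{-\varphi}$-weighted barycenter $\int_{\tC}\nabla\varphi\, e^{-\varphi}dX$, equivariance is exact. Then one simply takes $g = e^{-A_{\mathfrak q(v)}}$ (transposed appropriately) from Lemma~\ref{lem:balancing}. I would try this simpler route first, checking that Lemmas~\ref{lem: SteinerPointLowBound} and~\ref{lem:affineSection} still hold for the modified center.
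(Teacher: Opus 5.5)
Your overall strategy, a degree argument on a large sphere in $\mathfrak p$ to compensate for the non-equivariance of $\mathfrak q$, is the same as the paper's. The gap is that you apply it to the wrong map, and the boundary condition, which is the heart of the proof, is never established.

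\textbf{Why $A\mapsto A_{F(A)}$ fails.} The balancing element $y\mapsto A_y$ of Lemma~\ref{lem:balancing} is only defined on the open link $\Sigma$. It blows up at boundary points lying in an extremal eigenspace of some $A\in\mathfrak p$. As $|A|\to\infty$, the Steiner point $F(A)$ is driven toward the expanding eigendirections, and these meet $\overline{\tC'}$ only in $\partial\tC'$. (If an interior point $Y$ lay in one extremal eigenspace, take $Z$ in the opposite one with $Y\pm\ve Z\in\tC'$. Flowing by $e^{sA}$ in the appropriate direction and rescaling gives $\pm Z\in\overline{\tC'}$, contradicting pointedness.) So $|A_{F(A)}|\to\infty$ on large spheres, and controlling its direction is a Kempf-type optimal-destabilization problem. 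Convexity of $B_y$ relates $A_y$ only to the gradient $\mu(y)$ of $B_y$ at the identity: it gives $\la \mu(y),A_y\rg<0$, which says nothing about the sign of $\la A,A_{F(A)}\rg$.

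In fact $A_{F(A)}$ need not even be defined. Take $\tC=\tC'$ the positive orthant with basis $f_1,\dots,f_n$, $G$ the diagonal torus, and $K'=\mr{ConvexHull}\{0,f_1,\dots,f_n\}$. Once one $a_j$ is smaller than the others by a fixed margin, $\phi_{e^{-A}K'}(X)=\max_i e^{-a_i}X_i$ has the single gradient $e^{-a_j}f_j$ on all of $U$. Then $F(A)$ is a vertex of $\partial\Sigma$.

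\textbf{What the paper does instead.} It applies the degree argument to the moment map itself,
\[
\Phi(A):=\Pi_{\mathfrak p}\bigl(Z_A\otimes Z_A/|Z_A|^2\bigr),\qquad Z_A:=S_U(e^{A}K').
\]
This map is defined and continuous for every $A$, and it vanishes exactly when the Steiner point is balanced. The missing estimate has two ingredients:
\begin{itemize}
\item $Z_{tA}=e^{tA}W_t$, where $W_t$ is an average of points of the bounded set $\overline{K'}$;
\item $|Z_{tA}|\ge c\,e^{t\lambda_+(A)}$, from Lemma~\ref{lem: SteinerPointLowBound} (whose constant depends only on the $G$-invariant cone $\tC'$) together with the growth of the height of $e^{tA}K'$.
\end{itemize}
Together these force $\la AZ_{tA},Z_{tA}\rg/|Z_{tA}|^2\ge\lambda_+(A)-o(1)>0$ uniformly over unit $A\in\mathfrak p$. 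Hence on a large sphere $\Phi$ is homotopic to the identity through non-vanishing maps, and it has a zero inside. Your sentence about gradients ``concentrating'' gestures at this alignment but does not prove it.

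\textbf{Your fallback route also fails.} Changing the center map proves a statement about a different map. The lemma is needed for the specific $\mathfrak q$ that defines $\cH_{\mr{Bal}}$, the section $\gamma$, and the widths.
\begin{itemize}
\item $K_G$-averaging $U$ buys only $K_G$-equivariance, which is useless in the $\exp(\mathfrak p)$ directions: no bounded $U$ is invariant under a non-compact $G$.
\item The weighted barycenter $\int_{\tC}\nabla\varphi\,e^{-\varphi}\,dX$ is exactly equivariant, but it fails the checks you planned. For $X_0\in\tC$, each line $Z+sX_0$ meets $\tC$ (if at all) in a half-line $(s_0,\infty)$ starting on $\partial\tC$. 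Integrating $-\partial_s e^{-\varphi}$ along it gives $e^{-\varphi(Z+s_0X_0)}\ge 0$. By Fubini, $\la X_0,\int_{\tC}\nabla\varphi\,e^{-\varphi}\,dX\rg\ge 0$ for every $X_0\in\tC$. So this vector always lies in $\overline{\tC^\vee}$, and its normalization lies in $\overline{Q}$.
\end{itemize}
Such a center can never detect obtuse points. Then $\mathfrak q\circ\gamma=\mr{Id}$ fails on $O$ in Lemma~\ref{lem:affineSection}, $\cO$ becomes empty, and the linking set-up of the max-min argument collapses.
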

\begin{proof}
    Since the Steiner point is not equivariant, we use a degree argument. 

    Lemma~\ref{lem: SteinerPointLowBound} implies that for any $g \in G$, we have $|S_U(g^{-T}K')| \geq c_v \|g^{-T}\|_{\mr{HS}}$ where $c_v$ depends on $K'$. 
    Let $Z_a := S_U(e^AK')$ for $A \in \mathfrak{p}$. 
    For $\|A\|_{\mr{HS}} = 1$, we have $|Z_{tA}| \geq c_v e^{t\lambda_+(A)}$. 
    Therefore, we have
    \[
    \frac{\la A Z_{tA}, Z_{tA}\rg}{|Z_{tA}|^2} \geq \lambda_+(A) - \frac{c_v}{t} \to \lambda_+(A)
    \]
    which the constant independent of $A$. 
    Therefore, for $\la A Z_{tA}, Z_{tA}\rg > 0$ for all every unit $A$ and some $t$ sufficiently large.
    Consider map $\Phi(A) := \Pi_{\mathfrak{p}}\left(\frac{Z_{A}\otimes Z_{A}}{|Z_{A}|^2}\right)$.
    For $\{A : \|A\|_{\mr{HS}} = R\}$, this provides a degree $1$ map to the sphere, so that means some $\tilde{A}$ with $\|\tilde{A}\|_{\mr{HS}} < R$, we have $\Phi(\tilde{A}) = 0$, which means that $\la A Z_{\tilde{A}}, Z_{\tilde{A}}\rg = 0$ for all $A \in \mathfrak{p}$, and thus $\mathfrak{q}(e^{\tilde{A}}K')$ is balanced.
    
\end{proof}

\begin{lem}\label{lem:GProper}
    The $G$-action on $\Sigma$ is proper.
\end{lem}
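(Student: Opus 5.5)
The plan is to verify properness directly from the definition, using only that $\tC'$ is an open, pointed cone preserved by $G$. Here $G$ acts on $V^*$ through $g\mapsto g^{-T}$, but since $G$ is self-adjoint I write the action simply as $g\cdot Y$. I must show the following: for compact sets $K_1,K_2\subset\Sigma$, the set $\{g\in G : (g\star K_1)\cap K_2\neq\emptyset\}$ is compact. Since $G$ is closed in $\sln$, it suffices to show this set is bounded.

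\textbf{Setup.} Take $g\in G$ and $y\in K_1$ with $g\star y=q\in K_2$. Write $Y=(y,1)$ and $t:=\langle e_n, g\cdot Y\rangle>0$, so that $g\cdot Y=t\,(q,1)$. Because $K_1$ is a compact subset of the open link $\Sigma$, there is a uniform $\varepsilon>0$ such that $Y+\varepsilon\bB^*\subset\tC'$ for every $y\in K_1$. The same holds for $(q,1)$ with $q\in K_2$.

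\textbf{Key step: a two-sided bound.} I claim $\|g\|\le C t$ with $C$ depending only on $K_1,K_2,\tC'$. Suppose instead $g_j,y_j$ satisfy $\|g_j\|/t_j\to\infty$. Choose unit vectors $Z_j$ with $|g_jZ_j|=\|g_j\|$. Invariance of $\tC'$ under $G$ gives $g_jY_j\pm\varepsilon g_jZ_j\in\tC'$. Divide by $|g_jZ_j|$. The first term has size $t_j|(q_j,1)|/\|g_j\|\to0$, because $q_j$ ranges over the bounded set $K_2$. Passing to a subsequence, $g_jZ_j/|g_jZ_j|\to W$ with $|W|=1$. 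Then both $\pm\varepsilon W$ lie in $\overline{\tC'}$, which contradicts pointedness $\overline{\tC'}\cap\overline{-\tC'}=\{0\}$.

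Apply the same argument to $g^{-1}\in G$. It maps $(q,1)$, which has a uniform $\varepsilon$-ball inside $\tC'$, to $t^{-1}(y,1)$, whose $Y_n$-coordinate is $t^{-1}$. This gives $\|g^{-1}\|\le C t^{-1}$.

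\textbf{Conclusion.} Multiplying the two bounds gives $\|g\|\,\|g^{-1}\|\le C^2$, a uniform bound on the condition number. Since $\det g=1$, the product of the singular values of $g$ is $1$. Hence the largest singular value satisfies $\sigma_{\max}^{\,n}\le \sigma_{\max}^{\,n-1}\cdot \sigma_{\max}/\sigma_{\min}\cdot \sigma_{\min}\cdot\sigma_{\max}^{-(n-1)}\cdots$. More simply, $\sigma_{\max}\le(\sigma_{\max}/\sigma_{\min})^{(n-1)/n}\le C'$. So $\|g\|$ is uniformly bounded, and the set in question is a closed, bounded subset of $G$, hence compact.

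\textbf{Main obstacle.} The only delicate point is the key step. There one must normalize by the right quantity, namely $t_j$ rather than $\|g_j\|$ alone, so that the base point $g_jY_j$ disappears in the limit while the symmetric directions $\pm\varepsilon W$ survive, producing a line in $\overline{\tC'}$. The uniform $\varepsilon$-ball comes from the compactness of $K_1$ and $K_2$ inside the open link. Everything else is linear algebra together with the determinant-one normalization.
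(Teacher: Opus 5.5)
Your proof is correct and is essentially the paper's argument: the paper also normalizes $g_j^{-T}$ by $t_j=\langle e_n, g_j^{-T}(y_j,1)\rangle$. It then uses the uniform $\varepsilon$-ball around the base points, together with pointedness of $\tC'$, to bound the normalized map and its inverse, and finally uses $\det = 1$ (via $\det A_j = t_j^{-n}$) to control $t_j$ and hence $g_j^{\pm 1}$. Your singular-value display is garbled, but the bound $\sigma_{\max}\le(\sigma_{\max}/\sigma_{\min})^{(n-1)/n}$ is right. More simply, $\|g\|\ge 1$ and $\|g^{-1}\|\ge 1$ together with your two bounds already force $C^{-1}\le t\le C$, and hence $\|g\|\le C^2$.
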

\begin{proof}
    We must show that if $y_j \to y$ in $\Sigma$ and $g_j \star y_j \to z$ in $\Sigma$, then $g_j \to g$, up to a subsequence.
    Let $A_j := \frac{g_j^{-T}}{\la e_n, (y_j, 1)\rg}$, which is a linear map, inducing a homeomorphism from $\tC'$ to itself.
    By construction, it satisfies $A_j (y_j,1) \to (z, 1)$.
    Since $\tC'$ is open, there is some $\ve$ such that $(y_j,1) + \ve \bB^* \subset \tC'$ for all $j$ sufficiently large.
    Applying $A_j$ to this set shows $(g_j \star y_j, 1) + \ve A_j \bB^* \subset \tC'$.
    Since $(g_j \star y_j, 1) \to (z,1)$ is bounded, $A_j$ must be bounded in operator norm.
    Suppose otherwise, along some sequence $\pm W_j \in A_j \bB^*$ such that $\|W_j\| \to \infty$, we have $\frac{(g_j\star y_j ,1) \pm W_j}{|W_j|} \in \tC'$.
    However, that implies that $\pm W_j \in \tC'$, which contradicts that $\tC'$ is a pointed cone. 
    Therefore $\|A_j\| \leq C$.
    The same argument applied to $A_j^{-1}$ shows that $\|A_j^{-1}\| \leq C'$.
    Since $g_j \in \sln$, we we have $\det (A_j) = \la e_n, g_j^{-T}(y_j,1)\rg^{-n}$, which is therefore uniformly bounded. 
    Therefore, we have $\|g_j^{-1}\| + \|g_j\| \leq C$, so $g_j \to g$. 
\end{proof}

\begin{cor}\label{lem:H/Gcomplete}
    The $G$-action on $\bP(\cH)$ is proper.
\end{cor}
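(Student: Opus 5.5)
The plan is to reduce properness on $\bP(\cH)$ to properness on $\Sigma$ (Lemma~\ref{lem:GProper}) through the Steiner point map $\mathfrak{q}$. Concretely, suppose $v_j \to v$ in $(\bP(\cH), d_{\bP})$ and $g_j\cdot v_j \to w$ in $(\bP(\cH),d_{\bP})$ for $g_j \in G$. We want to show that $g_j$ has a convergent subsequence in $G$. We will also check the closedness part, namely that the limit relation is $w = g\cdot v$, which follows from continuity of the action once $g_j \to g$.

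\textbf{Step 1.} Work with the convex bodies $K'$. By \eqref{eqn:distanceGeometry}, $d(v_j,v)\to 0$ means $e^{-r_j}K'_v \subset K'_{v_j} \subset e^{r_j}K'_v$ with $r_j \to 0$, after projective normalization. Similarly, $g_j^{-T}K'_{v_j}$ is squeezed between $\lambda_j e^{\mp s_j}K'_w$ for some scalars $\lambda_j>0$ and $s_j\to 0$.

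\textbf{Step 2.} Fix an interior point $Y_0 \in K'_v$ and a small ball $Y_0 + \ve\bB^* \subset K'_{v_j}$, valid for all large $j$. Its image $\lambda_j^{-1} g_j^{-T}(Y_0+\ve\bB^*)$ lies in $e^{s_j}K'_w$, which is a bounded set. This bounds $\|\lambda_j^{-1}g_j^{-T}\|$. The symmetric argument, using an interior ball of $K'_w$ pulled back into $K'_{v_j}$, bounds $\|\lambda_j g_j^{T}\|$.

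\textbf{Step 3.} Since $\det g_j = 1$, the two-sided bound forces $\lambda_j$ to be bounded above and below, so $\|g_j\|+\|g_j^{-1}\|\le C$. Because $G$ is closed, we get $g_j\to g\in G$ along a subsequence. The limit identity $w = g\cdot v$ then follows since $(g,v)\mapsto g\cdot v$ is continuous in $d_{\bP}$: it acts by linear maps on $K'$, and \eqref{eqn:distanceGeometry} converts Hausdorff-type squeezing into $d$-closeness.

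\textbf{Alternative route.} One could instead balance the Steiner points via Lemma~\ref{lem:balancingSteiner} and apply Lemma~\ref{lem:GProper} to $\mathfrak{q}(g_j\cdot v_j)$ and $\mathfrak{q}(v_j)$. This needs continuity of $\mathfrak{q}$ in $d$, which follows from Lemma~\ref{lem:(C,d)props}, since uniform convergence of $v_j$ gives a.e.\ convergence of gradients on $U$. It also runs into the fact that $\mathfrak{q}$ is not equivariant. For that reason I prefer the direct convex-body argument above, which mirrors the proof of Lemma~\ref{lem:GProper}.

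The main obstacle is Step 2. It needs $K'_v$ and $K'_w$ to be bounded with nonempty interior, uniformly along the sequence. Boundedness comes from pointedness of $\tC'$ together with the height normalization $\sup_{K'}Y_n \le 1$. Nonempty interior follows from $J(v)>0$, which holds for $v\in\cH$ because $\inf_P v>0$ and $\sup_P v<\infty$. The condition $\sup_P v<\infty$ guarantees $K'$ contains a neighbourhood of a point of $\tC'$ near the origin.
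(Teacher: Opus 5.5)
Your argument is correct, but it takes a different route from the paper.

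\textbf{The paper's route.} The paper never estimates $g_j$ directly on the bodies. It introduces the normalized barycenter $\beta_\varphi := b(K'_\varphi)/\la e_n, b(K'_\varphi)\rg \in \Sigma$. This map is continuous in $d_{\bP}$, because $K'$ converges in the Hausdorff sense. Unlike the Steiner point $\mathfrak{q}$, it is also genuinely equivariant, $\beta_{g\cdot\varphi} = g\star\beta_\varphi$, since linear maps preserve barycenters. Then $\beta_{\varphi_j}\to\beta_\varphi\in\Sigma$ and $g_j\star\beta_{\varphi_j}\to\beta_\psi\in\Sigma$, and Lemma~\ref{lem:GProper} yields the convergent subsequence. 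So the paper handles exactly the non-equivariance obstacle you flagged in your alternative route, by replacing $\mathfrak{q}$ with the barycenter. This reduces properness on $\bP(\cH)$ to the finite-dimensional properness on $\Sigma$ in a few lines.

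\textbf{Your route.} Your Steps 2--3 instead rerun the mechanism inside the proof of Lemma~\ref{lem:GProper}, applied to the bodies $K'$ rather than to the cone $\tC'$. A fixed ball is mapped into a bounded set, which bounds the operator norm, and $\det g_j=1$ then pins down the projective scale $\lambda_j$. This is self-contained and gives explicit bounds on $\|g_j^{\pm1}\|$ in terms of the inner and outer radii of $K'_v$ and $K'_w$. It also makes transparent that only boundedness and nonempty interior of $K'$ are used. The paper's version is shorter and reuses an existing lemma, but it depends on finding an equivariant continuous point map.

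\textbf{A small correction to your last paragraph.} The nonempty interior of $K'$ comes from $\inf_P v>0$. That bound makes $K$ bounded, so $K^\circ$ contains a ball around $0$, and hence $K'=K^\circ\cap\tC'$ has interior. The condition $\sup_P v<\infty$ is what gives the height bound $\sup_{K'}Y_n\le v(0)$, and hence boundedness of $K'$.
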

\begin{proof}
    Let $b(K')$ be the barycenter of $K'$ and $\beta_\varphi := \frac{b(K')}{\la e_n, b\rg}$ be its rescaling to $\Sigma$. 
    Consider $\varphi_j \to \varphi$ and $\varphi_j \circ g_j \to \psi$ in $(\bP(\cH), d_\bP)$, so we must show that $g_j \to g$ converges, up to a subsequence. 
    We know that $\beta_{\varphi_j} \to \beta_{\varphi}$ since $K'_{\varphi_j} \to K'_\varphi$ in the Hausdorff distance.
    Since $g \star \beta_{\varphi_j} = \beta_{\varphi_j \circ g_j} \to \beta_\psi \in \Sigma$.
    If $g_j$ did not converge, then, by Lemma~\ref{lem:GProper}, $\beta_{\varphi_j} \to \p \Sigma$.
    However, since $\psi \in \bP(\cH)$, $\beta_\psi \in \Sigma$, a contradiction. 
\end{proof}

We can now define $\cH_{\mr{Bal}} := \mathfrak{q}^{-1}(\Sigma_{\mr{Bal}})$, $\cO_{\mr{Bal}} := \mathfrak{q}^{-1}(O_{\mr{Bal}})$, and $\cA_{\mr{Bal}, \ve} := \mathfrak{q}^{-1}(A_{\mr{Bal}, \ve})$.
Lemma~\ref{lem:balancingSteiner} shows that we may restrict to the balanced locus.
Because the $G$ acts properly on $\cH$, so the balanced functions are complete. 
If $\beta$ is a non-trivial class exhibiting the $G$-linking property, then $\alpha = \gamma_* \beta \in H_*(\cH_{\mr{Bal}}, \cO_{\mr{Bal}})$ has non-zero image in $H_*(\cH_{\mr{Bal}},\cH_{\mr{Bal}}\setminus \cA_{\mr{Bal},\ve})$ where $\gamma$ is the section from Lemma~\ref{lem:affineSection}. 
Lemma~\ref{lem:EnergyInfBound} shows that we have $\cM(\cA_{\mr{Bal}, \ve}) \leq E_{\ve} < \infty$. 
We can now define the analogous notions of admissible families and widths in the equivariant section:
\[
\mathscr{S}_{G,k} := \{\Gamma \in C_k(\cH_{\mr{Bal}}) : \p \Gamma \in C_{k-1}(\cO_{\mr{Bal}}), \inf_{v \in |\p \Gamma|} \cM(v) \geq E_{ \ve} + 10\}
\]
and for some $\alpha \in H_*(\cH_{\mr{Bal}}, \cO_{\mr{Bal}})$ whose image in $H_*(\cH_{\mr{Bal}}, \cH_{\mr{Bal}} \setminus \cA_{\mr{Bal}, \ve})$ is non-trivial, define
\[
W_\alpha^G := \sup_{\substack{\Gamma  \in \mathscr{S}_{G,k} \\ [\Gamma] = \alpha}}\inf_{v \in |\Gamma|}\cM(v).
\]

\begin{figure}
    \centering
   \begin{tikzpicture}[scale=1]

    \def\w{0.09}
    \begin{scope}
        \clip (0,0) circle (1);
        \filldraw[fill=gray!70, fill opacity=0.42, draw=gray!85,
                  dashed, line width=0.65pt, rounded corners=0.07cm]
            (-\w,0.38) rectangle (\w,0.995);
        \filldraw[fill=gray!70, fill opacity=0.42, draw=gray!85,
                  dashed, line width=0.65pt, rounded corners=0.07cm]
            (-\w,-0.995) rectangle (\w,-0.38);
    \end{scope}

    % unit circle
    \draw[blue, line width=0.8pt] (0,0) circle (1);

    % ellipse: width 2, height 1
    \draw[red, line width=0.8pt] (0,0) ellipse [x radius=1, y radius=0.5];

    % thick vertical line: outside ellipse blue, inside ellipse red
    \draw[blue, line width=1.6pt] (0,1) -- (0,0.5);
    \draw[red,  line width=1.6pt] (0,0.5) -- (0,-0.5);
    \draw[blue, line width=1.6pt] (0,-0.5) -- (0,-1);

\end{tikzpicture}
    \caption{We illustrate an equivariant linking $G$-linked pair that is not linked; $\Sigma$ is in blue and $Q$ is in red.
    The two cones share a boost symmetry with eigenvectors given by $(\pm 1, 0,1)$ with weights $\pm 1$ and $(0,1,0)$ with weight 0.
    The balanced locus $\Sigma_{\mr{Bal}}$ consists of the vertical line, which has an obtuse region in blue and an acute region in red.
    An example neighborhood of the obtuse balanced region $O_{\mr{Bal}}$ is shown by the shaded region.
    The region $A_{\mr{Bal}, \ve}$ is the portion of the vertical red line outside the shaded region, which is compactly contained in $Q \cap \Sigma$.
    The homology $H_*({\Sigma_{\mr{Bal}}} ,O_{\mr{Bal}}  ) = \bZ$, which is in degree $1$.}
    \label{fig:equivariantLinking}
\end{figure}

\begin{theorem}\label{thm:polystableWidthFinite}
    Let $(\tC, \tC')$ be positively aligned pointed cones and $G \subset \mr{Aut}(\tC, \tC')$ is reductive.
    Suppose that $(\tC, \tC')$ are $G$-linked and OT-separable. 
    Then, there exists $\varphi \in \cH_{\mr{Bal}}$ which solves~\eqref{eqn:HOT}.
\end{theorem}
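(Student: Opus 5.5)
We follow the proof of Theorem~\ref{prop:WidthAchieved}, now working on the balanced locus $\cH_{\mr{Bal}}$ and using the $G$-action to absorb non-compactness along symmetric directions.

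The first step is to show that $W^G_\alpha$ is finite, as in Proposition~\ref{prop:widthFinite}. Take a class $\beta\in H_*(\Sigma_{\mr{Bal}},O_{\mr{Bal}})$ witnessing $G$-linking and push it to $\alpha=\gamma_*\beta$ using the section $\gamma$ of Lemma~\ref{lem:affineSection}. Since $\mathfrak{q}\circ\gamma=\mr{Id}$, this lands in $\cH_{\mr{Bal}}$. The tilting deformation of Lemma~\ref{lem:tilt=king} preserves $\mathfrak{q}$, so it stays in $\cH_{\mr{Bal}}$ and makes the boundary admissible, giving $W^G_\alpha>-\infty$. Any representative must contain some $v$ with $\mathfrak{q}(v)\in A_{\mr{Bal},\ve}$, so Lemma~\ref{lem:EnergyInfBound} gives $W^G_\alpha\le E_\ve$. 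The balanced analogue of Corollary~\ref{cor:homology=} also holds, since fibers of $\mathfrak{q}$ are convex and saturation preserves $\mathfrak{q}$; it transfers non-triviality of $\beta$ to $\alpha$.

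The second step is a pulltight within the balanced class, as in Lemma~\ref{lem:pulltight}. The deformations $\eta$ produced by Corvellec's theorem need not preserve $\cH_{\mr{Bal}}$. We therefore compose them with a $G$-rebalancing. By Lemma~\ref{lem:balancingSteiner}, each $v$ admits $g\in G$ with $\mathfrak{q}(g\cdot v)\in\Sigma_{\mr{Bal}}$. Because $\cM$ is $G$-invariant, this composition does not change energies. We need the choice of $g$ to be continuous and homotopic to the identity through $G$, unique up to $K_G$, so that it defines a deformation of chains in $H_*(\cH_{\mr{Bal}},\cO_{\mr{Bal}})$. I would obtain this by running the degree argument of Lemma~\ref{lem:balancingSteiner} in families; alternatively, one can reuse the convex function $B_y$ from Lemma~\ref{lem:balancing} to get uniqueness and continuity. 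Properness of the $G$-action on $\bP(\cH)$ (Corollary~\ref{lem:H/Gcomplete}) ensures $\cH_{\mr{Bal}}/K_G$ is complete, so the weak-slope machinery applies there. Since $\cM$ and $\mathfrak{a}$ are $G$-invariant, the bound $|d^+\cM(v)|\ge\mathfrak{a}(v)/8$ descends to the balanced class. The outcome is a sequence $v_j\in\ul{\cH}\cap\cH_{\mr{Bal}}$ with $\cM(v_j)\to W^G_\alpha$ and $\mathfrak{a}(v_j)\to0$.

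The third step applies Proposition~\ref{prop:PS=SLNdegToPolystable}. Either $v_j$ converges to a convex critical point, which by Proposition~\ref{prop:crit->HOT} solves \eqref{eqn:HOT} and is balanced because $\cH_{\mr{Bal}}$ is closed; or there is an $\sln$-degeneration to a pair carrying a H.O.T. map. In the latter case OT-separability forces the limit pair to be $M\cdot(\tC,\tC')$, and pulling back by $M$ gives a H.O.T. map on $(\tC,\tC')$. Rebalancing this map by Lemma~\ref{lem:balancingSteiner} keeps it a solution and places it in $\cH_{\mr{Bal}}$.

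The main obstacle is the second step: making the balancing map continuous, or at least homotopically well-defined, on chains in the presence of $K_G$-ambiguity. Lemma~\ref{lem:balancingSteiner} only gives existence through a degree argument, because the Steiner point is not equivariant. If continuity fails, I would fall back on balancing a $G$-equivariant substitute for the center, such as the barycenter $\beta_\varphi$ used in Corollary~\ref{lem:H/Gcomplete}, and redefine $\cH_{\mr{Bal}}$ with respect to it.
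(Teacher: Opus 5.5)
Your proposal follows the paper's proof. The paper also (i) gets finiteness of $W^G_\alpha$ from Proposition~\ref{prop:widthFinite} using $\cA_{\mr{Bal},\ve}\subset\cA_\ve$, (ii) runs the pulltight of Lemma~\ref{lem:pulltight} to get $\cM(v_j)\to W^G_\alpha$ and $\mathfrak{a}(v_j)\to 0$, and (iii) concludes with Proposition~\ref{prop:PS=SLNdegToPolystable} and OT-separability. Your final rebalancing of the limit map via Lemma~\ref{lem:balancingSteiner} is harmless, since $G\subset\mr{Aut}(\tC,\tC')\subset\sln$, and it fills in a step the paper leaves implicit.

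The obstacle you isolate in step~2 is a genuine gap, and the paper leaves it open too: it only says Lemma~\ref{lem:pulltight} applies ``similarly.'' The Corvellec deformation is built from the unbalanced paths $(1-s)u+sw$. A deformed sweepout that leaves $\cH_{\mr{Bal}}$ is no longer a competitor for $W^G_\alpha$, and outside $\cH_{\mr{Bal}}$ the $G$-linking gives no energy ceiling, so no contradiction results.

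Your first two remedies do not close this gap. The minimizer of $B_{\mathfrak{q}(v)}$ balances the point $\mathfrak{q}(v)$, but $\mathfrak{q}(g\cdot v)\neq g\star\mathfrak{q}(v)$. Indeed $S_U(g^{-T}K')=g^{-T}S_{g^{-1}U}(K')$, so the function you would really need to minimize, $g\mapsto\log|g^{-T}S_{g^{-1}U}(K')|$, is not of the form $B_y$ and has no evident convexity. The degree argument of Lemma~\ref{lem:balancingSteiner}, run in families, gives existence of a balancing element but no continuous selection.

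Your barycenter fallback is the viable route. Since $b(g^{-T}K')=g^{-T}b(K')$, Lemma~\ref{lem:balancing} applies directly to the rescaled barycenter. The minimum of $B_y$ is nondegenerate for interior $y$, because an interior point of a pointed $G$-invariant cone cannot be an eigenvector of a nonzero $A\in\mathfrak{p}$. So the rebalancing element depends Lipschitz-continuously on $v$. That Lipschitz control, not mere continuity, is what you need: $d$ is $G$-invariant, and you must keep $d(H(u,t),u)\lesssim t$ after rebalancing for the weak-slope bound $|d^+\cM|\gtrsim\mathfrak{a}$ to pass to the balanced class.

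The price is that Lemma~\ref{lem:affineSection}, Corollary~\ref{cor:homology=} and Lemma~\ref{lem:tilt=king} all use that $S_U$ is linear in $v$. The barycenter is not Minkowski-additive, so these three results would have to be reproved for the redefined balanced locus.
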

\begin{proof}
    By construction, we have some class $\alpha \in H_k(\cH_{\mr{Bal}}, \cO_{\mr{Bal}})$ whose image in $H_k(\cH_{\mr{Bal}},\cH_{\mr{Bal}} \setminus \cA_{\mr{Bal}, \ve})$ is non-trivial.
    Since $\cA_{\mr{Bal},\ve} \subset \cA_{\ve}$, Proposition~\ref{prop:widthFinite} shows that $W_\alpha^G$ is finite.
    Similarly, we may apply Lemma~\ref{lem:pulltight} to find a sequence $v_j$ such that $\cM(v_j) \to W_\alpha^G$ and $\mathfrak{a}(v_j) \to 0$.
    Proposition~\ref{prop:PS=SLNdegToPolystable}, combined with the OT-separability hypothesis, therefore shows that we have some $\varphi \in \cH_{\mr{Bal}}$ solving~\eqref{eqn:HOT}.
\end{proof}

\section{Perturbations and non-uniqueness}\label{sec:notConvex}

This section investigates two aspects of the preceding work from a perturbative point of view. 
Is the max-min framework used in Section~\ref{sec:maxmin} and Section~\ref{sec:polystable} really necessary?
More specifically, since the optimal transport problem between compact convex domains is convex, does the necessity of the max-min argument arise from a poor choice of variational framework? Secondly, is the topological linking condition sharp?  To address these questions we exhibit a pair of cones with two distinct and isolated~\hyperref[eqn:HOT]{H.O.T.} maps.
This immediately implies that the homogeneous optimal transport problem does not have a convex formulation.
Furthermore, we show that these maps arise on cones which do not satisfy the linking condition Definition~\ref{def:Linked}, suggesting that this topological condition can be relaxed in some settings.

Let $\varphi$ be a~\hyperref[eqn:HOT]{H.O.T.} map on a pair $(\tC, \tC')$. We can study the existence of ~\hyperref[eqn:HOT]{H.O.T.} on nearby cones $(\widetilde{\tC}, \widetilde{\tC}')$ as a linearized version of the second boundary value problem.
Consider the metric $g=D^2\varphi$ on $\tC$.  In radial variables $g := dr^2 + r^2 h$ where $h$ is the induced metric on $S := \{\varphi = \frac{1}{2}\}$.
The linearized second boundary value problem for the Monge-Amp\`ere equation is
\begin{equation}\label{eqn:linearization}
    \Delta_h \psi + 2n\psi = a,\qquad \kappa \p_\nu \psi = b,  \qquad \kappa :=\sqrt{(D \rho')^T D^2\varphi (D\rho')}
\end{equation}
for $y := \nabla \varphi(X)$, $\rho'$ a homogeneous convex function defining $\tC' = \{\rho' < 0\}$, and $a,b$ functions determining the desired perturbation data.
Therefore, we say that $\psi$ is a \textbf{Jacobi field} if $\psi$ solves equation~\eqref{eqn:linearization} with $(a,b) = (0,0)$. 
These are the directions in which~\eqref{eqn:linearization} is not locally invertible. 
By the Fredholm alternative, a perturbative direction $(a,b)$ gives rise to a first-order deformation exactly when it is orthogonal to every Jacobi field, meaning
\[
\int_S a \psi\, dV_h - \int_{\p S}\frac{b}{\kappa}\psi\,dA_h = 0.
\]
We say $\varphi$ is \textbf{non-degenerate} if there are no non-trivial Jacobi fields, and then the implicit function theorem implies that all nearby perturbations admit~\hyperref[eqn:HOT]{H.O.T.} maps. 
In particular, if $\tC' = \tC^\vee$ and $\varphi$ is non-degenerate, there exist perturbations that unlink the cones, but will still have~\hyperref[eqn:HOT]{H.O.T.} maps.

In the case where $G := \mr{Aut}(\tC, \tC')$ is non-compact, a map $\varphi$ is non-degenerate modulo $G$ if the only Jacobi fields arise from the $G$-action. In this case, one can apply the perturbation theory using the Lyapunov–Schmidt reduction.

\begin{prop}\label{prop: notUnique}
    There exist pairs $(\widetilde{\tC}, \widetilde{\tC}')$ which are OT-separable, unlinked, and exhibit two distinct, isolated solutions of ~\eqref{eqn:HOT}.
\end{prop}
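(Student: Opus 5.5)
We construct the example by perturbing a self-dual, highly symmetric pair. The natural starting point is $\tC=\tC'=\tC^\vee$ with $\tC$ the round (Lorentz) cone $\{X_n>|X'|\}$, or a product-type cone such as the positive orthant. There $\varphi_0(X)=\tfrac12|X|^2$, suitably normalized, is an explicit \hyperref[eqn:HOT]{H.O.T.} map, and $G=\mr{Aut}(\tC,\tC')$ is non-compact and reductive. For the round cone $G=\mr{SO}(n-1,1)$, and its orbit through $\varphi_0$ gives the family of maps $\varphi_0\circ g$, $g\in G$. We choose the base pair so that $\varphi_0$ is non-degenerate modulo $G$, meaning that the only solutions of~\eqref{eqn:linearization} with $(a,b)=(0,0)$ are the infinitesimal $G$-motions $\psi=\tfrac{d}{dt}\varphi_0\circ e^{tA}|_{t=0}$ for $A\in\mathfrak p$. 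We verify this by separating variables on the link $S=\{\varphi_0=\tfrac12\}$, a spherical cap or hyperbolic slice. The equation $\Delta_h\psi+2n\psi=0$ with the Neumann-type condition reduces to an explicit eigenvalue problem. Its kernel should consist exactly of restrictions of quadratic forms $\la AX,X\rg$, $A\in\mathfrak p$.

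Next we perturb the boundary of $\tC'$ and $\tC$ by data $(a,b)=\ve(a_1,b_1)$ that breaks all of $G$. By Lyapunov--Schmidt reduction, solutions for the perturbed pair near the orbit $G\cdot\varphi_0$ correspond, to first order, to critical points of a reduced function $F_\ve$ on the finite-dimensional orbit $G/K_G\cong\mathfrak p$. To leading order, $F_\ve(A)=\ve\,\Phi(A)+O(\ve^2)$, where
\[
\Phi(A)=\int_S a_1\,\psi_A\,dV_h-\int_{\p S}\frac{b_1}{\kappa}\psi_A\,dA_h
\]
is the Fredholm pairing along the transported family $e^{A}\cdot\varphi_0$, since that pairing is the obstruction. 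We choose $(a_1,b_1)$ supported in a compact region of the link and localized along two well-separated directions. We arrange $\Phi$ to be a proper Morse function on $\mathfrak p$ with at least two nondegenerate critical points. Concretely, $\Phi$ should look like a sum of two bumps centered at boost parameters $\pm A_0$, whose interaction creates two distinct local maxima. Nondegenerate critical points of $\Phi$ persist for small $\ve$, so we obtain two distinct isolated \hyperref[eqn:HOT]{H.O.T.} maps on the perturbed pair $(\widetilde\tC,\widetilde\tC')$. Perturbing further by a generic small deformation makes $\mr{Aut}(\widetilde\tC,\widetilde\tC')$ trivial while keeping these critical points isolated.

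It remains to check the two structural properties. For OT-separability we use Corollary~\ref{cor:Crossing=OTsep}. We choose the perturbation so that the new $\widetilde\tC'$ and $\widetilde\tC^\vee$ cross transversally wherever their boundaries meet, so they share no supporting hyperplane at common boundary points. This holds if $b_1$ changes sign with nonvanishing derivative along its zero set, exactly as in the example following Figure~\ref{fig:acute/obtuse/maxmin}. For unlinkedness, we take the sign pattern of the boundary perturbation $h$ so that the obtuse region $O=\Sigma\setminus\ol Q$ is contractible and its inclusion is trivial in relative homology. The simplest choice is for $O$ to be a single topological disk, pushed out in one direction. This kills $\mr{im}\,\iota_*$ in Definition~\ref{def:Linked}. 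We then need this sign requirement to be compatible with $\Phi$ having two critical points. Since $\Phi$ depends on both $a_1$ (interior data) and $b_1$, we use $a_1$ freely to shape $\Phi$, and reserve $b_1$ for the sign conditions.

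The main obstacle is the non-degeneracy modulo $G$ of the base solution, together with the rigorous Lyapunov--Schmidt setup. The link is noncompact in the hyperbolic directions and the boundary operator in~\eqref{eqn:linearization} degenerates there, so Fredholmness needs weighted spaces. We would first try the orthant, or a cone whose link is compact, where Fredholm theory on the compact manifold with boundary $S$ is standard. A Bochner/Reilly-type identity for $\Delta_h+2n$ would then show that the kernel is exactly the quadratic forms from $\mathfrak p$. Only after this would we compute $\Phi$ explicitly.
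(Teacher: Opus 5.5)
Your architecture matches the paper's. You perturb the self-dual Lorentz pair in $\bR^4$, use that the kernel of the linearization at $\tfrac12|X|^2$ is exactly the infinitesimal boosts, reduce over the boost parameter $a\in\bR^3$, get OT-separability from Corollary~\ref{cor:Crossing=OTsep} via a transverse sign change, and get unlinkedness from a disk-shaped obtuse region. The gap is the step that carries the actual content: ``use $a_1$ freely to shape $\Phi$, and reserve $b_1$ for the sign conditions.'' There is no free interior datum. In \eqref{eqn:HOT} the right-hand side is fixed to $1$, and the only data are the two cones. Moving the target cone produces pure Neumann data, $(g,b)=(0,H)$ in the paper's notation. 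Moving the source cone is, by Legendre duality ($\varphi_a^*=\tfrac12|A_{-a}Y|^2$ for $\varphi_a=\tfrac12|A_aX|^2$), the same as moving the target of the dual problem at parameter $-a$. Concretely, suppose $\widetilde\tC$ and $\widetilde\tC'$ have angular profiles $\theta<\tfrac\pi4+\ve H_1(\omega)$ and $\theta<\tfrac\pi4+\ve H_2(\omega)$. Then to first order the reduced function is a multiple of
\[
\Phi_H(a)=\int_{\bS^2}\frac{H(\omega)}{(s_a-a\cdot\omega)^{4}}\,d\sigma(\omega),\qquad H(\omega):=H_2(\omega)+H_1(-\omega).
\]
You can also see this by differentiating $\cM$ at $\tfrac12|A_aX|^2$ in $\ve$. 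Since $\widetilde\tC^\vee$ has profile $\tfrac\pi4-\ve H_1(-\omega)+O(\ve^2)$, this $H$ is exactly the angular gap between $\widetilde\tC'$ and $\widetilde\tC^\vee$. So the obtuse region is $\{H>0\}$, and every requirement falls on this one function on $\bS^2$:
\begin{itemize}
\item OT-separability asks that $0$ be a regular value of $H$;
\item unlinkedness asks that $\{H>0\}$ have trivial reduced homology, e.g.\ $H^{-1}(0)$ a single Jordan curve;
\item two isolated solutions ask that $\Phi_H$ have two nondegenerate critical points.
\end{itemize}
Your proposal never exhibits such an $H$.

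Finding it is not routine, and your ``two bumps'' picture points the wrong way. View $a\mapsto(a,s_a)$ as the hyperboloid model of $\mathbb{H}^3$. Each kernel $(s_a-a\cdot\omega)^{-4}$ solves $\Delta_{\mathbb{H}^3}f=8f$, so $\Phi_H$ has no positive local maxima and no negative local minima. Moreover $\Phi_H(a)$ grows like $|a|^2H(a/|a|)$, so it is not proper once $H$ changes sign. Bumps of $H$ register at the sphere at infinity, not as bumps centred at interior parameters $\pm A_0$. Two disjoint obtuse bumps would also make $\{H>0\}$ disconnected, hence linked.

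The paper's fix uses Szulkin's harmonic cubic $P=x^3-3xy^2+z^3-\tfrac32(x^2+y^2)z$, whose nodal set on $\bS^2$ is one simple closed curve. With $a=(1,0,0)$ and $\ell=\nabla P(a)\cdot(x,y,z)$, the function $P-\ell$ has two nondegenerate saddle points $\pm a$. One then uses $\Phi_Y(q)=C_1P(q)+O(|q|^5)$, $\Phi_\ell(q)=C_2\ell(q)+O(|q|^3)$ and the rescaling $q=\sqrt\delta\,w$. The choice $H_\delta=C_1^{-1}Y-\delta C_2^{-1}\ell$ then yields two nondegenerate critical points near $\pm\sqrt\delta\,a$, while $H_\delta^{-1}(0)$ stays a transverse Jordan curve.

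Finally, the obstacle you flag is not the real one. The Lorentz link is a compact spherical cap with smooth boundary, so standard Schauder and Fredholm theory applies. The noncompactness of the boost orbit is handled by pulling the perturbed target back by $A_a^{-1}$ and always linearizing at $\tfrac12|X|^2$. The orthant, whose link is a simplex with corners, would be harder, not easier.
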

\begin{proof}
    Let $(\cL, \cL')$ be dual Lorentz cones in $\bR^4$: $\cL := \{(tx,t) : x\in B_1^3, t > 0\}$ and $\cL^\vee = \cL'$. 
    We define the perturbed target cone in direction $H : \bS^2 \to \bR$ as
    \begin{equation}\label{eqn:PerturbedTarget}
    \cL'_{H} := \{\rho \xi : \rho > 0, \xi = (\sin(\theta)\omega, \cos(\theta)) \in \bS^3, \omega \in \bS^2, 0 \leq \theta < \tfrac{\pi}{4} + H(\omega)\}
    \end{equation} 
    for which we will find solutions for $\cL'_{\ve H}$ for $0 < \ve \ll 1$ sufficiently small.
    For $a \in \bR^3$ and $s_a := \sqrt{1 + |a|^2}$, we consider the function
    \[
    \Phi_H(a) := \int_{\bS^2}  \frac{H(\omega)}{(s_a - a \cdot \omega)^4}\,d\sigma(\omega).
    \]
    The following lemma produces~\hyperref[eqn:HOT]{H.O.T.} maps on the perturbed cone pair $(\cL, \cL_{\ve H}')$ based on the critical points of $\Phi_H$. 
    Afterwards, we show how to produce an example of an $H$ such that $\Phi_H$ has multiple non-degenerate critical points, which then completes the proof.
    \begin{lem}\label{lem:LorentzPerturb}
         For every non-degenerate critical point $a$ of $\Phi_H$, there exists some $\ve_a > 0$ such that for all $0 < \ve < \ve_a$, there exists some $\varphi_{\ve, a}$ solving~\eqref{eqn:HOT} on the perturbed pair $(\cL, \cL'_{\ve H})$.
         Furthermore, $\varphi_{\ve, a}$ is isolated.
    \end{lem}
    \begin{proof}
         Let $u$ be any $2$-homogeneous function on $\cL$.  
         We may express $u = \rho^2 f(\xi)$ for $\xi := \frac{X}{|X|} \in \bS^3$ and $\rho(X) = |X|$.
         For the Lorentz cone, we can parametrize all $\xi \in \cL \cap \bS^{3}$ as
         \begin{equation}\label{eqn:ScoordinatesSphere}
         S := \{(\sin(\theta)\omega, \cos(\theta)) : \omega \in \bS^2, \theta \in [0,\tfrac{\pi}4)\}
         \end{equation}
         whose boundary is $\frac{1}{\sqrt{2}}(\omega, 1)$ for all $\omega \in \bS^2$. 
         We compute $\nabla u = \rho (2 f\xi + \nabla_{\bS^3} f)$ for $\nabla_{\bS^3}$ denoting the spherical gradient, and the Hessian
         \[
         D^2 u = \begin{pmatrix}
             2f & (\nabla_{\bS^3} f)^T \\
             \nabla_{\bS^3} f & \nabla_{\bS^3}^2 f + 2f \,\mr{Id}
         \end{pmatrix}.
         \]
         The identity map $u_0 : \cL \to \cL'$ is generated by $f_0 := \frac{1}{2}$, so $D^2 u_0 = \mr{Id}$. 
         We consider a perturbation $f_s := \frac{1}{2} + s\psi$ and we compute 
        \[
        \frac{d}{ds}\bigg\vert_{s=0} \log \det D^2 u_{f_s} = 2\psi  + \mr{tr}(\nabla_{\bS^3}^2 \psi +2\psi\,\mr{Id}) =  \Delta_S \psi + 8\psi.
        \]
        
        We now examine the linearization of the boundary condition.
        For $H \in C^{2,\alpha}(\bS^2)$, we can define the perturbed boundary given by $\theta = \frac{\pi}{4} + H(\omega)$ using the coordinates for $S$ in equation~\eqref{eqn:ScoordinatesSphere}.
        Let $\rho_{H}$ be a homogeneous function defining this perturbation, so $\cL'_H := \tC(S'_H)$ as defined in equation~\eqref{eqn:PerturbedTarget}.
        We compute the gradient and its normalization
        \[
        \nabla u(\rho \xi) = \rho (2f(\xi)\xi + \nabla_{\bS^3} f(\xi)),  \qquad T_f(\xi) := \frac{2f(\xi)\xi +\nabla_{\bS^3} f(\xi)}{|2f(\xi)\xi +\nabla_{\bS^3} f(\xi)|} 
        \]
        and the total gradient image is generated by the image $T_f(S)$. 
        By convexity, we can examine this by considering the gradient image of $\p S$ where $\theta = \frac{\pi}{4}$.
        The gradient equality $\nabla u(\cL) = \cL'$ is given by $T_f(\p S) = \p S'_H$.
        We linearize this at $H_s := sH$ and $f_s := \frac{1}{2} + s \psi$, and we have
        \[
        \frac{d}{ds}\bigg\vert_{s=0}T_{f_s} = \nabla_{\bS^3} \psi ,
        \]
        so the linearized second boundary conditions is $\p_\nu \psi = H$ on $\p S$. 
        Let 
        \[
        X := C^{2,\alpha}(\ol{S}), \qquad P :=C^{2,\alpha}(\bS^2),\qquad \text{and}\qquad Y:= C^{0,\alpha}(\ol{S}) \times C^{1,\alpha}(\bS^2).
        \]
        We consider the operator $\cF : X  \times P\to Y$defined by
        \[
        \cF(f, H) = (\log \det D^2 (\rho^2 f), \theta(T_f)- \tfrac{\pi}{4} - H(\omega(T_f))),
        \]
        where $\theta, \omega$ are the angular coordinate functions from equation~\eqref{eqn:ScoordinatesSphere}.
        We know that $\cF(\tfrac{1}{2},0) = (0,0)$ is a solution.
        For general inhomogeneous data, the above computations show perturbing $u_0$ in the direction $(g,b) \in C^{0,\alpha}(S) \times C^{1,\alpha}(\bS^2)$ is given to first-order by 
        \begin{equation}\label{eqn:LinearizedHotLorentz}
        \begin{cases}
            (\Delta_{\bS^3} + 8)\psi &= g \text{ in }S,\\
            \p_\nu \psi &=b \text{ on }\p S
        \end{cases}
        \end{equation}
        for $g$ and $b$ defining the interior and boundary data perturbations.
        We have
        \[
        D_f \cF\left(\frac{1}{2},0\right)[\psi] = ((\Delta_{\bS^3} +8)\psi , \p_\nu \psi) \qquad \text{and}\qquad D_H \cF \left(\frac{1}{2},0\right)[H ] = (0,-H),
        \]
        showing that $D\cF\left(\tfrac{1}{2},0\right)[\psi, H] = ((\Delta_S + 8)\psi, \p_\nu \psi - H)$.
        
        Let $L$ be the operator corresponding to the homogeneous equation~\eqref{eqn:LinearizedHotLorentz}.
        The harmonics $\psi(\theta, \omega) = v(\theta)Y_m(\omega)$ where $-\Delta_{\bS^2}Y_m =m(m+1)Y_m$, so $v$ satisfies
        \[
        v'' + 2\cot(\theta)v' + (8 - \csc^2(\theta)m(m+1))v = 0\qquad \text{and}\qquad v'(\tfrac{\pi}{4}) = 0.
        \]
        For $m = 0$, $v = \cos^2(\theta) - \frac{1}{4}$, which fails the boundary condition.
        For $m = 1$, $v = \sin(\theta)\cos(\theta)$, yielding solutions
        \[
        K := \{\psi_c(\theta, \omega) := \sin(\theta)\cos(\theta)(c \cdot \omega),  c \in \bR^3\} .
        \]
        For $m \geq 2$, we consider the Rayleigh quotient formulation for $\psi$, which, with the Neumann condition, implies $8\int_S \psi^2 \,dV = \int_S |\nabla \psi|^2\,dV$.
        We compute $|\nabla \psi|^2 = (v')^2Y_m^2 + \csc^2(\theta) |\nabla_{\bS^2}Y_m|^2$ and, by definition, $\int_{\bS^2}|\nabla Y_m|^2 = m(m+1)\int_{\bS^2} Y_m^2$.
        Therefore, we have
        \begin{equation}\label{eqn:RayleighExpanded}
        8\int_0^\frac{\pi}{4}v^2 \sin^2(\theta)\, d\theta = \int_0^\frac{\pi}{4}(v')^2\sin^2(\theta)\,d\theta + m(m+1)\int_0^\frac{\pi}{4}v^2\, d\theta. 
        \end{equation}
        On this range of $\theta$, $\sin^2(\theta) \leq \frac{1}{2}$, so equation~\eqref{eqn:RayleighExpanded} implies the inequality
        \[
        (8 - 2m(m+1))\int_0^\frac{\pi}{4}v^2\sin^2(\theta)\,d\theta \geq \int_0^\frac{\pi}{4} (v')^2\sin^2(\theta)\,d\theta.
        \]
        However, for $m \geq 2$, the left-hand side is negative, which is a contradiction.
        Therefore, the kernel of $L$ is precisely the $m = 1$ modes, which is three dimensional.
        We can identify the kernel $K$ by differentiating the Lorentz boosts; define
        \[
        P_a := \mr{Id} + \frac{aa^T}{1 + s_a},\qquad A_a=\begin{pmatrix} P_a&a\\ a^T&s_a\end{pmatrix}, \qquad s_a:= \sqrt{1+|a|^2},
        \]
        which satisfy $\det A_a = 1$, $A_a \cL = \cL'$ and $u_a(X) := \frac{1}{2} |A_aX|^2$, which is a~\hyperref[eqn:HOT]{H.O.T.} map from $\cL$ to $\cL'$. 
        By construction, we see that $u_a(A_a^{-1}X) = u_0(X)$. 
        Therefore, differentiating this map at $a = 0$ identifies the kernel of $L$, so we can consider $a$ as a coordinate on the kernel $K$.

        The perturbed target cone $\cL'_{\ve H}$ is defined by 
        \[
        \p S'_{\ve H} := \{(\sin(\tfrac{\pi}{4} + \ve H(\omega))\omega, \cos(\tfrac{\pi}{4} + \ve H(\omega))): \omega \in \bS^2\},
        \]
        and we want to solve $\cF(f, \ve H) = 0$.
        We can decompose 
        \[
        X = K \oplus X_1 \qquad \text{and}\qquad  Y = \mr{im}\, L \oplus Y_1.
        \]
        The Neumann realization of $\Delta_{\bS^3} + 8$ is self-adjoint.
        The Schauder theory implies $L$ is Fredholm and therefore has index $0$, meaning  $\dim K = \dim Y_1 = 3$ from the above computation.
        Since $L$ has index $0$, outside the kernel, $L$ provides an isomorphism $L : X_1 \to \mr{im}\, L$.

        For each $a$ and $0 < \ve \ll 1$, let $h_{a,\ve} : \bS^2 \to \bR$ be such that its graph over $\p S$ is $\p (A_a^{-1}\cL'_{\ve H}) \cap \bS^3$. 
        Since $A_a^{-1}$ preserves $\cL'$, we have $h_{a, 0} = 0$.
        The following function $\cG$ corresponds to a perturbed solution of~\eqref{eqn:HOT} for $(\cL, \cL'_{\ve H})$ after composing with $A_a$:
        \[
        \cG(a, w, \ve) := \cF(\tfrac{1}{2} + w, h_{a,\ve}), \qquad \cG(a,0,0) = 0, \quad D_w \cG(a,0,0)=L\vert_{X_1}.
        \]
        We now do a Lyapunov–Schmidt reduction (cf.~\cite[Theorem 5.1]{Guo-WuLyapunov}) to find a \hyperref[eqn:HOT]{H.O.T.} solution ${u} \in \cH_{(\cL, \cL'_{\ve H})}$. 
        We can decompose $\cG$ into its projection onto $\mr{im}\, L$ and its $Y_1$ component. 
        The implicit function theorem for Banach spaces solves for the projection of $\cG$ onto $\mr{im}\,L$, thereby providing some $w(a,\ve)$ where $\Pi_{\mr{im}\,L}(\cG(a, w(a,\ve),\ve)) = 0$.
        We must also find when the finite dimensional piece in $Y_1$ vanishes.
        We label
        \[
        r(a, \ve) := \Pi_{Y_1}(\cG(a, w(a,\ve),\ve)) \in Y_1 \cong \bR^3. 
        \]
        In summary, we seek $w(a,\ve)$ and $r(a, \ve)$ such that
        \[
        \cG(a, w, \ve) = 0 \qquad \iff \qquad w = w(a,\ve) \text{ and } r(a,\ve) = 0,
        \]
        where $w$ is given by the infinite dimensional implicit function theorem on $L : X_1 \to \mr{im}\, L$ and $r$ is given by solving the finite dimensional piece.
       
        Linearizing at $u_0$, the Fredholm alternative shows that equation~\eqref{eqn:LinearizedHotLorentz} is solvable for $(g,b)$ if and only if $\int_S g\psi_c \,dV = \int_{\p S} b \psi_c\, dA$ for all $\psi_c$.
        If $(g,b) = (0,H)$, this is equivalent to
        \begin{equation}\label{eqn:FredholmAlt}
        \int_{\p S }H(\omega)\psi_c \,dA = 0\qquad \iff \qquad \int_{\bS^2}H(\omega)\omega \, d\sigma = 0.
        \end{equation}
        We now compute this obstruction at an arbitrary point $a \in \bR^3$, corresponding to acting on the target by $A_a^{-1}$, finding solutions near $u_a$ instead of near $u_0$. 
        The action on $\p S$ is given by $A_a^{-1}(\omega, 1) = (s_a - a\cdot \omega) \left(\frac{P_a \omega - a}{s_a - a\cdot \omega },1\right)$.
        Therefore, the infinitesimal boundary displacement of $H(\omega)$ is given by $(s_a - a\cdot \omega)^{-2}H(\omega)$ and the Jacobian is $d\sigma \mapsto (s_a - a\cdot \omega)^{-2}d\sigma$.
        Therefore, equation~\eqref{eqn:FredholmAlt} at $a$ is given as
        \begin{equation}\label{eqn:FredholmAlta}
        \int_{\bS^2}H(\omega)(s_a - a\cdot \omega)^{-5}(P_a \omega - a)\,d\sigma = 0,
        \end{equation}
        which is $\frac{1}{4}P_a\nabla \Phi_H(a) = 0$.
        We differentiate $r(a,\ve)$ at $\ve = 0$ yielding
        \[
        \p_\ve r(a,0) =\Pi_{Y_1}(D_w \cG(a,0,0)[\p_\ve w(a,0)] + \p_\ve\cG (a,0,0) ).
        \]
        Since $D_w \cG(a,0,0) = L$, the first term is $L(\p_\ve w(a,0))$, which has no $Y_1$ component.
        Therefore, $\p_\ve r(a,0) = \Pi_{Y_1}\p_\ve \cG(a,0,0)$, which is precisely the Fredholm alternative in equation~\eqref{eqn:FredholmAlta}.
        Therefore, we have
        \[
        r(a,\ve) = c_0 \ve P_a \nabla \Phi_H(a) + O(\ve^2)
        \]
        for some $c_0 \neq 0$.
        Furthermore, we have that $D_a(P_a \nabla \Phi_H(a))\vert_{a = a_0} = P_{a_0}D^2 \Phi_H(a_0)$.
        If $a_0$ is a non-degenerate critical point, this is invertible. 
        Since $r(a,0) \equiv 0$ since each $u_a$ is a solution, we have $\frac{r(a,\ve)}{\ve}$ extends to $0$ in a $C^1$ manner, so the finite dimensional implicit function theorem yields $a_\ve = a_0 + O(\ve)$ with $r(a_\ve,\ve) = 0$, and $D_a r(a_\ve, \ve) = c_0 \ve P_a D^2 \Phi_H(a_0) + O(\ve^2)$.
        Since both $P_a$ and $D^2 \Phi_H(a_0)$ are invertible, $a_\ve$ is the only zero of $r(\cdot, \ve)$ in a neighborhood of $a_\ve$. 
    \end{proof}

    We now exhibit an $H$ such that $\Phi_H$ has $2$ non-degenerate critical points and $H^{-1}(0)$ is a Jordan curve in $\bS^2$.
    The fact that $H^{-1}(0)$ is a Jordan curve means that $\Sigma_{\ve H} := \cL'_{\ve H} \cap \{t = 1\}$ in relation to $Q := \cL' \cap \{t = 1\}$ will be unlinked, so the previous lemma proves the result. 
    The following harmonic polynomial appears in Szulkin~\cite{SzulkinExample} and we use some of its properties proved therein.
    Let $P(x,y,z) := x^3 -3xy^2 + z^3  - \frac{3}{2}(x^2 + y^2)z$ which is a harmonic cubic. 
    The function $P$ has a unique critical point at $0$. 
    Furthermore, $P^{-1}(0)$ is homeomorphic to the plane and $P^{-1}(0)\vert_{\bS^2}$ is a simple closed curve.
    Let $Y$ be the associated spherical harmonic of $P\vert_{\bS^2}$.
    Since $P$ is a homogeneous cubic, $P(q) = -P(-q)$.
    For $a := (1,0,0)$, we define $\ell(x,y,z) := \nabla P(a) \cdot (x,y,z)$.
    Therefore, $\nabla (P-\ell)(\pm a) = 0$ and $D^2 P(a) = \begin{pmatrix}
       6 & 0 & -3\\
       0 & -6 & 0 \\
       -3 & 0 & 0
   \end{pmatrix}$ is non-degenerate.
   Therefore, $\pm a$ are both non-degenerate critical points of $P - \ell$. 
  We can expand using homogeneity
  \[
  \Phi_Y(q) = C_1 P(q) + O(|q|^5) \qquad \text{and}\qquad \Phi_{\ell}(q) = C_2 \ell(q) + O(|q|^3).
  \]
  For $H_\delta := C_1^{-1}Y - \delta C_2^{-1}\ell$, by homogeneity of $P$ and $\ell$, we have the expansion
  \[
  \delta^{-\frac{3}{2}}\Phi_{H_\delta}(\delta^\frac{1}{2}w) = P(w) - \ell(w) + O(\delta), 
  \]
  so for small $\delta$, we know that $\Phi_{H_\delta}$ has two non-degenerate critical points at $q_{\pm,\delta} = \pm \sqrt{\delta} a + O(\delta^{\frac{3}{2}})$.
  Therefore, Lemma~\ref{lem:LorentzPerturb} provides the perturbed cone $\widetilde{\tC}'$ and two distinct~\hyperref[eqn:HOT]{H.O.T.} potentials on $(\cL, \widetilde{\tC}')$ by choosing $\widetilde{\tC}' = \tC(\Sigma_{\ve H})$ for $\ve \leq \min\{\ve_{q_{+,\delta}}, \ve_{q_{-,\delta}}\}$. 
    
    Finally, since $H$ does not vanish at its critical points, Corollary~\ref{cor:Crossing=OTsep} shows that $(\cL, \cL'_{\ve H})$ is OT-separable.
\end{proof}
\begin{remark}
One can generalize the above construction using higher order harmonic polynomials to create $2k$ distinct solutions on similar perturbations.
\end{remark}

\section{Obstructions}\label{sec:obstructions}
In this section, we analyze and prove obstructions to solving~\eqref{eqn:HOT}.
The first obstruction comes from the splitting theorem below saying $\tC$ and $\tC'$ must split off lines that pair perfectly to admit a~\hyperref[eqn:HOT]{H.O.T.} map.
From this, we characterize a necessary condition on the global boundary $C^{1,1}$-regularity given by isomorphic skeletal decompositions of $\Omega$ and $\Omega'$.
We further examine another broad class of obstructions arising from a moment map condition.
These geometric obstructions for the tangent cone pair to $(x,T(x))$ inhibit the $C^{1,1}$-boundary regularity of optimal transport at $(x, T(x))$.

\subsection{Splitting theorem}
The first result states that cone pairs that split off lines and have a~\hyperref[eqn:HOT]{H.O.T.} solution must be products, justifying the previous reduction to strict cones. 
We recall the Brascamp-Lieb inequality and rigidity statement (cf.~\cite{BLsource}).
\begin{theorem}[Brascamp-Lieb]\label{thm:BL}
    Let $\Omega \subset \bR^n$ be an open convex set.
    Assume $V \in C^2(\Omega)$ and $D^2 V >0$.
    For any $f \in W^{1,2}(\Omega, d\mu)$, 
    \[
        \int_\Omega
       \left(f-\int_\Omega f\,d\mu\right)^2d\mu \leq \int_\Omega \la (D^2 V)^{-1}\nabla f, \nabla f\rg\,d\mu,\qquad d\mu := \frac{e^{-V}\,dx }{\int e^{-V}\,dx}.
    \]
    Furthermore, equality holds precisely when $f -\int_\Omega f\,d\mu = \la a, \nabla V\rg$ for some $a \in \bR^n$ such that $\la a, \nu(x)\rg = 0$ for all $x \in \p \Omega$. 
\end{theorem}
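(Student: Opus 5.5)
The plan is the standard $L^2$ (H\"ormander/Bochner) duality argument for the weighted Neumann Laplacian. Set $L := \Delta - \la \nabla V, \nabla \cdot\rg$. With Neumann boundary conditions $L$ is symmetric on $L^2(\Omega, d\mu)$, and integration by parts gives
\[
\int_\Omega g\,(-Lu)\,d\mu = \int_\Omega \la \nabla g,\nabla u\rg\,d\mu \quad \text{whenever } \p_\nu u = 0 \text{ on } \p\Omega.
\]
First I reduce to $g := f - \int_\Omega f\,d\mu$ with $f$ smooth and compactly supported up to the boundary, by density in $W^{1,2}(\Omega,d\mu)$. Since $\int g\,d\mu = 0$, I then solve the weighted Neumann problem $-Lu = g$ in $\Omega$, $\p_\nu u = 0$ on $\p\Omega$. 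This is done weakly via Lax--Milgram on mean-zero $W^{1,2}(d\mu)$, with coercivity coming from a weighted Poincar\'e inequality, which itself holds on convex sets with a convex potential. Interior elliptic regularity then gives $u \in C^{2}$ inside $\Omega$.

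The key identity is the integrated Bochner formula with boundary term:
\[
\int_\Omega (Lu)^2\,d\mu = \int_\Omega \Bigl(|D^2u|^2 + \la D^2V\,\nabla u,\nabla u\rg\Bigr)d\mu + \int_{\p\Omega} \mathrm{II}(\nabla u,\nabla u)\,\frac{e^{-V}}{\int e^{-V}}\,dA .
\]
Here the Neumann condition makes $\nabla u$ tangential, and the boundary term equals the second fundamental form, which is $\geq 0$ by convexity of $\Omega$. Dropping the nonnegative terms $|D^2u|^2$ and $\mathrm{II}$, I obtain $\int g^2\,d\mu \geq \int \la D^2V\nabla u,\nabla u\rg\,d\mu$. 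Then
\[
\int_\Omega g^2\,d\mu = \int_\Omega \la\nabla f,\nabla u\rg\,d\mu \leq \Bigl(\int_\Omega \la (D^2V)^{-1}\nabla f,\nabla f\rg d\mu\Bigr)^{1/2}\Bigl(\int_\Omega \la D^2V\nabla u,\nabla u\rg d\mu\Bigr)^{1/2},
\]
where the inequality is Cauchy--Schwarz in the metric $D^2V$. Combining the two estimates and dividing by $(\int g^2\,d\mu)^{1/2}$ gives the inequality.

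For rigidity, equality forces every discarded term to vanish. First, $D^2u \equiv 0$, so $u$ is affine with $\nabla u \equiv a$ constant. The Neumann condition then gives $\la a,\nu\rg = 0$ on $\p\Omega$. Equality in Cauchy--Schwarz gives $\nabla f = \lambda D^2V\,a$, and comparing with $g = -Lu = \la a,\nabla V\rg$ fixes $\lambda = 1$, so $f - \int f\,d\mu = \la a,\nabla V\rg$. Conversely, if $f$ has this form with $a$ tangent to $\p\Omega$, then $u = \la a,x\rg$ solves the Neumann problem with $D^2u = 0$ and vanishing boundary term, and $\nabla f = D^2V\,a$ makes Cauchy--Schwarz sharp, so equality holds.

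The main obstacle is justifying the Bochner identity with its boundary term when $\p\Omega$ is merely convex (possibly with corners, as for cones), $V$ is only $C^2$ in the interior, and $D^2V$ may degenerate or blow up near $\p\Omega$ or at infinity. I would first prove the inequality on an exhaustion by smooth, bounded, uniformly convex domains $\Omega_k \uparrow \Omega$, replacing $V$ by $V + \ve|x|^2$ and passing to the limit by monotone and dominated convergence; the inequality is stable under these limits. Rigidity is more delicate because approximation destroys exact equality. For that I would work directly on $\Omega$: from $\int|D^2u|^2\,d\mu \leq \int g^2\,d\mu - \int\la D^2V\nabla u,\nabla u\rg\,d\mu$ (obtained via the limiting procedure using lower semicontinuity), equality forces $D^2u = 0$ a.e. in $\Omega$. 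From there the tangency $\la a,\nu\rg = 0$ follows from the weak Neumann formulation tested against arbitrary functions.
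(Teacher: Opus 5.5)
The paper gives no proof of this theorem. It is quoted from the literature and used only in Theorem~\ref{thm:splitting}, and there only through the identity $f-\int f\,d\mu=\la a,\nabla V\rg$. The fact that $a_{\ell'}$ lies in the lineality space is proved separately there, via the flow $X_t$.

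Your H\"ormander--Bochner argument is the standard route. It is sound for the inequality and for the conclusions $D^2u\equiv 0$, $\nabla u\equiv a$, $f-\int f\,d\mu=\la a,\nabla V\rg$, after two routine adjustments:
\begin{enumerate}
\item Do the density step on each $\Omega_k$, not in $W^{1,2}(\Omega,d\mu)$. Convergence in $W^{1,2}(\mu)$ does not control $\int\la(D^2V)^{-1}\nabla f,\nabla f\rg\,d\mu$ where $D^2V$ degenerates. On $\overline{\Omega_k}$ all weights are two-sided bounded, and $k\to\infty$ is monotone convergence, so the $\ve|x|^2$ regularization is unnecessary.
\item To identify the limit of the $u_k$ with the weak Neumann solution on all of $\Omega$, you need a Poincar\'e constant for $\mu_k$ that is uniform in $k$ (e.g.\ Bobkov's second-moment bound for log-concave measures). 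The bound on $\int\la D^2V\nabla u_k,\nabla u_k\rg\,d\mu_k$ alone gives no control of $\nabla u_k$ near $\p\Omega$ where $D^2V$ may degenerate.
\end{enumerate}

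The genuine gap is your last sentence, deriving tangency from the weak Neumann formulation. Once $\nabla u\equiv a$ and $g=\la a,\nabla V\rg$, the weak Neumann identity reduces to $\int_\Omega\mathrm{div}(w\,e^{-V}a)\,dx=0$ for all admissible $w$. That only says $\la a,\nu\rg\,e^{-V}=0$ on $\p\Omega$ in the trace sense. Under the stated hypothesis $V\in C^2(\Omega)$, $V$ may tend to $+\infty$ at $\p\Omega$, and then this carries no information. In fact the tangency claim is false in that generality. Take $\Omega=(0,\infty)$ and $V(x)=x-4\log x$, so $V''=4/x^2>0$ and $d\mu=\tfrac{1}{24}x^4e^{-x}\,dx$, and let $f=V'=1-4/x$. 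Then:
\begin{itemize}
\item $f\in W^{1,2}(\Omega,d\mu)$ and $\int f\,d\mu=0$;
\item both sides of the inequality equal $1/3$;
\item the weak Neumann solution is exactly $u=x$, because the boundary term $w\,x^4e^{-x}$ vanishes at $0$, so $a=1$;
\item yet $\la a,\nu(0)\rg=-1$.
\end{itemize}

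What your argument actually establishes is the equality case with boundary condition $\la a,\nu\rg e^{-V}=0$. To get $\la a,\nu\rg=0$ you need an extra hypothesis, for example that $V$ is locally bounded up to $\p\Omega$, so that $e^{-V}$ has a positive trace. Under that hypothesis your test-function argument goes through. It holds in the paper's application, where $V=\varphi$ satisfies $\varphi(X)\le C|X|^2$.
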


We now prove the following splitting theorem, which is a stronger version of Theorem~\ref{thm: introSplitting}.
\begin{theorem}\label{thm:splitting}
    Let $\varphi$ be a~\hyperref[eqn:HOT]{H.O.T.} map from $(\tC, \tC')$.
    Suppose $L := \ol{\tC} \cap (-\ol{\tC})$ and $L' := \ol{\tC'} \cap (-\ol{\tC'})$ are the translation subspaces of $\tC$ and $\tC'$.
    Let $(\tC_0, \tC'_0) := (\tC \cap W, \tC' \cap W')$ for $W := (L')^\perp$ and $W' = L^\perp$.
    Then, $L$ and $L'$ pair perfectly and the map $\varphi$ splits as $\varphi(w + \ell) = \varphi_0(w) + \frac{1}{2}\la \ell, B\ell\rg$ for $w \in \tC_0$, $\ell \in L$, and $B :L \to L'$ a linear isomorphism.
    The function $\varphi_0 : \tC_0 \to \bR_+$ satisfies $\det D^2\vert_W\varphi \equiv c$ and $\nabla \varphi_0(\tC_0) = \tC'_0$, which is a~\hyperref[eqn:HOT]{H.O.T.} map on $(\tC_0, \tC'_0) \subset W \times W'$ up to scaling.
\end{theorem}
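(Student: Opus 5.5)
The approach is to apply the Brascamp--Lieb inequality (Theorem~\ref{thm:BL}) with $V=\varphi$ on $\Omega=\tC$, testing it against linear functions in the directions of $L$. First, some basic structure. Since $\tC+L=\tC$, every $\ell\in L$ satisfies $\la \ell, Y\rg = 0$ for all $Y\in\tC^\vee$. Positive alignment then forces $\tC^\vee\subset L^\perp$. Since $\nabla\varphi(\tC)=\tC'$ and $\varphi$ is convex, the directional derivative $\p_\ell\varphi$ is a linear function of the gradient image. Our goal is to show that $f(X):=\la \ell,\nabla\varphi(X)\rg-c_\ell$ saturates Brascamp--Lieb.

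Set $d\mu=e^{-\varphi}dX/\int_\tC e^{-\varphi}$. For $f=\p_\ell\varphi$ we have $\nabla f=D^2\varphi\,\ell$, so the right-hand side of Theorem~\ref{thm:BL} equals $\int\la D^2\varphi\,\ell,\ell\rg\,d\mu$. On the left-hand side, we push forward by $\nabla\varphi$. By \eqref{eqn:HOT} and the identity $\varphi^*(\nabla\varphi)=\varphi$, the measure $\mu$ is carried to $\nu=e^{-\varphi^*}dY/\int e^{-\varphi^*}$ on $\tC'$, and $f$ becomes the linear function $Y\mapsto\la\ell,Y\rg$. Meanwhile, integrating by parts in the $\ell$-direction, which is a translation direction of $\tC$ and so produces no boundary terms, gives $\int \p_\ell^2\varphi\,d\mu=\int(\p_\ell\varphi)^2d\mu$ and $\int\p_\ell\varphi\,d\mu=0$. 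Hence the right-hand side equals $\int(\p_\ell\varphi)^2d\mu$, which is exactly the left-hand side with mean zero. So equality holds in Brascamp--Lieb.

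The rigidity statement then gives $\p_\ell\varphi=\la a,\nabla\varphi\rg$ with $\la a,\nu\rg=0$ on $\p\tC$. Taking $a=\ell$ recovers the trivial identity, so we need to extract more. Instead, we apply the same argument to $\varphi^*$ on $\tC'$ with $\ell'\in L'$. This yields that $\p_{\ell'}\varphi^*(Y)=\la \ell',\nabla\varphi^*(Y)\rg$ is linear in $X=\nabla\varphi^*(Y)$ only if it is affine in $Y$. The cleaner route I would try first is a second-variation or Pr\'ekopa-type computation showing that $\p_\ell^2\varphi$ is constant. The equality case of Brascamp--Lieb for the function $g=\p_\ell\varphi$ forces $g$ to be affine along the gradient flow. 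This gives $\nabla\p_\ell\varphi=D^2\varphi\,\ell$ constant, that is, $D^2\varphi\,\ell=B\ell$ for a fixed vector $B\ell\in V^*$. So $\nabla\varphi(X+t\ell)=\nabla\varphi(X)+tB\ell$. Because $\nabla\varphi(\tC)=\tC'$ is invariant under these translations, $B\ell\in L'$. Then $B:L\to L'$ is injective by positive definiteness of $D^2\varphi$, and applying the dual argument to $\varphi^*$ gives injectivity $L'\to L$. Hence $\dim L=\dim L'$, and $\la\ell,B\ell\rg>0$ shows that the pairing $L\times L'\to\bR$ is perfect.

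Integrating gives $\varphi(w+\ell)=\varphi(w)+\la\ell,\nabla\varphi(w)\rg+\tfrac12\la\ell,B\ell\rg$. To remove the cross term, we restrict to $w\in W=(L')^\perp$. We need $\la\ell,\nabla\varphi(w)\rg=0$ there, which follows after choosing $w$ as the component orthogonal to $L'$ under the perfect pairing; equivalently, we decompose $V=W\oplus L$ and $V^*=W'\oplus L'$ as dual splittings. The mixed Hessian block then vanishes, $D^2\varphi=D^2|_W\varphi\oplus B$, and $\det D^2|_W\varphi=1/\det B=c$. Gradient images split accordingly, giving $\nabla\varphi_0(\tC_0)=\tC'_0$.

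The main obstacle is justifying the integration by parts and the Brascamp--Lieb rigidity on the unbounded cone with only interior regularity. I would handle this via truncation using the Gaussian decay $e^{-\varphi}\le e^{-C|X|^2}$, and the fact that translations along $L$ preserve $\tC$, so no boundary flux arises.
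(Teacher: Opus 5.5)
There is a genuine gap at the step from ``equality in Brascamp--Lieb'' to ``$D^2\varphi\,\ell$ is constant.'' You prove equality for the test function $f=\p_\ell\varphi$ against the potential $\varphi$ on $\tC$. That equality holds for \emph{every} convex $2$-homogeneous $\varphi$ on an $\ell$-invariant cone. The two integrations by parts along $\ell$ never use $\det D^2\varphi=1$, and $f=\la \ell,\nabla\varphi\rg$ is already of the extremal form $\la a,\nabla\varphi\rg$ with $a=\ell$ tangent to $\p\tC$. So its rigidity statement is empty, as you noticed, and it cannot imply the splitting. For example, $\varphi=|X|^2+\epsilon X_1^4/|X|^2$ on $\{X_2>0\}$ is strictly convex for small $\epsilon$ and attains that equality, yet $\p_1^2\varphi$ is not constant. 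Your sentence ``the equality case for $g=\p_\ell\varphi$ forces $g$ to be affine along the gradient flow, hence $D^2\varphi\,\ell$ is constant'' is therefore a non sequitur. $\p_\ell\varphi$ is trivially linear in $Y=\nabla\varphi(X)$; what you need is that it is linear in $X$. Your fallback, running ``the same argument'' with $\p_{\ell'}\varphi^*$ on $\tC'$, is tautological for the same reason. The Pr\'ekopa-type computation is left unspecified and does not fill the hole.

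The missing idea is to apply Brascamp--Lieb on the \emph{other} cone to a \emph{linear} test function, and your pushforward computation already contains it. Since $\nabla\varphi_\#\mu=\nu$ (this is where $\det D^2\varphi=1$ enters) and $D^2\varphi^*(\nabla\varphi(X))=D^2\varphi(X)^{-1}$, your two identities say the following about $g(Y):=\la \ell,Y\rg$: it has $\nu$-mean zero and $\int g^2\,d\nu=\int\la (D^2\varphi^*)^{-1}\ell,\ell\rg\,d\nu$. That is equality in Theorem~\ref{thm:BL} for the potential $\varphi^*$ on $\tC'$. Rigidity then gives $\la\ell,Y\rg=\la b,\nabla\varphi^*(Y)\rg$ for a fixed $b\in V^*$. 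Differentiating gives $D^2\varphi^*(Y)\,b=\ell$, i.e.\ $D^2\varphi(X)\ell=b=:B\ell$ for all $X$, which is exactly the constancy you need. From there your remaining steps go through:
\begin{itemize}
\item $B\ell\in L'$ by translation invariance of $\nabla\varphi(\tC)=\tC'$;
\item $B$ is injective in both directions;
\item the pairing is perfect since $\la\ell,B\ell\rg>0$;
\item $\la\ell,\nabla\varphi(X)\rg=\la X,B\ell\rg$ by $1$-homogeneity of $\p_\ell\varphi$, which kills the cross term on $W$.
\end{itemize}
This repaired argument is the mirror image of the paper's proof. There one takes $\ell'\in L'$, integrates by parts on $\tC'$, and applies Brascamp--Lieb on $(\tC,\varphi)$ to $X\mapsto\la X,\ell'\rg$ to get $D^2\varphi(X)a_{\ell'}=\ell'$. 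One then shows $a_{\ell'}\in L$ via the flow $X_t=\nabla\varphi^*(\nabla\varphi(X)+t\ell')=X+ta_{\ell'}$.
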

\begin{proof}

    Let $H(X) = D^2 \varphi$.    Let $K := \{\varphi < 1\}$ and $K' := \{\varphi^* < 1/4\}$, which are bounded convex subsets of $\tC$ and $\tC'$, respectively, and contain relatively open neighborhoods of the origin. We have $c|X|^2 \leq \varphi(X) \leq C|X|^2$ and $c'|Y|^2 \leq \varphi^*(Y) \leq C'|Y|^2$ for some constants $c,C,c',C'$.
    
    Fix any $\ell' \in L'$ non-zero, so $\tC' + t\ell' = \tC'$. 
    Integrating by parts shows the following identities:
    \begin{equation}\label{eqn:pa}
    \int_{\tC'}(\partial_{\ell'}  \varphi^*(Y)) e^{-\varphi^*(Y)}\,dY = 0
    \end{equation}
    and
    \begin{equation}\label{eqn:paa}
    \int_{\tC'}(\p_{\ell'} \varphi^*(Y))^2 e^{-\varphi^*(Y)}\,dY = \int_{\tC'}(\p^2_{\ell'\ell'} \varphi^*(Y)) e^{-\varphi^*(Y)}\,dY. 
    \end{equation}
    From Legendre duality, for $Y = \nabla \varphi(X)$, we have $\varphi^*(\nabla \varphi(X)) = \la X, \nabla \varphi(X)\rg- \varphi(X) = \varphi(X)$ by Euler criterion.
    Using $\det D^2 \varphi = 1$, this change of variables preserves Lebesgue measure and
    \[
    \p_{\ell'} \varphi^*(Y) = \la \nabla \varphi^*(Y), \ell'\rg  = \la X, \ell'\rg =: f_{\ell'}(X)
    \qquad\text{and}\qquad
    \p_{\ell'\ell'} \varphi^*(Y) = \la H(X)^{-1}\ell', \ell'\rg,
    \]
    which means that equations~\eqref{eqn:pa} and~\eqref{eqn:paa} imply
    \[
        \int_{\tC}f_{\ell'} e^{-\varphi(X)}\,dX = 0\qquad\text{and}\qquad
    \int_{\tC}f_{\ell'}^2e^{-\varphi(X)}\,dX = \int_{\tC}\la H(X)^{-1}\ell', \ell'\rg e^{-\varphi(X)}\,dX.
    \]
    Dividing the above identities by $\int_\tC e^{-\varphi(X)}\,dX$ and using $\nabla f_{\ell'} = \ell'$, the equality case of the Brascamp-Lieb inequality Theorem~\ref{thm:BL} is attained for the potential $\varphi$.
    The rigidity states that $\la X, \ell'\rg = \la a_{\ell'}, \nabla \varphi\rg$ for some vector $a_{\ell'}$. 
    Taking the gradient of this shows that 
    \begin{equation}\label{eqn:Ha=alpha}
    H(X) a_{\ell'} = \ell' 
    \end{equation}
    for all $X \in \tC$.

    We must now prove that $a_{\ell'} \in L$. 
    For $X \in \tC$, consider $X_t := \nabla \varphi^*(\nabla \varphi(X) + t\ell')$, and differentiating this using~\eqref{eqn:Ha=alpha} yields
    \begin{equation}\label{eqn:xtderive}
    \frac{d}{dt}X_t = D^2 \varphi^*(\nabla \varphi(X) + t\ell')\ell' = H(X)^{-1}\ell' = a_{\ell'} ,
    \end{equation}
    which shows that $X_t = X + ta_{\ell'}$. 
    
    Since $\nabla \varphi^*(\tC') = \tC$, $X_t \in \tC$ for all $X$, which implies that $a_{\ell'} \in L$.
    It remains to show that $a_{\ell'} \neq 0$, which follows from $\la a_{\ell'}, \ell'\rg = \la a_{\ell'}, H(X)a_{\ell'}\rg > 0$ since $H$ is positive-definite.
    Therefore, no non-zero $\ell' \in L'$ annihilates all of $L$, so $a_{\ell'}:L' \to L$ is injective.
    Repeating this argument by duality shows that $L$ and $L'$ are isomorphic.
    From this, we define $B : L \to L'$ to be the map $B\ell := H(X)\ell$. 
    
    From equation~\eqref{eqn:xtderive} and the fact that $\ell' \mapsto a_{\ell'}$ is an isomorphism from $L' \to L$, we realize that $B = H(X_0)\vert_L$ for any $X_0 \in \tC$, so this is independent of $X$ from the Brascamp-Lieb rigidity.
    By the symmetry and positivity of $H$, we have $\la BX, Y\rg = \la X, BY\rg$ and $\la X, BX\rg > 0$ for all $X \in L$ non-zero.
    The fact that the inner product is a perfect pairing on $L$ and $L'$ shows that $V = L \oplus W$ and $V^* = L' \oplus W'$. 
    Let $X = \ell + w$ for $\ell \in L$ and $w \in W$.
    We then compute that $D^2\varphi(X)[\ell, w] = \la H(X)\ell, w\rg = \la B \ell, w\rg = 0$. 
    Similarly, for $\ell_1,\ell_2 \in L$, $D^2\varphi(X)[\ell_1, \ell_2] = \la H(X)\ell_1, \ell_2\rg = \la B \ell_1, \ell_2\rg$.
    Integrating these over $L$ yields
    \[
    \varphi(w +\ell) = \varphi_0(w) +\frac{1}{2}\la \ell, B\ell\rg + \lambda(\ell)
    \]
    for some $\lambda \in L^*$, but 2-homogeneity forces $\lambda = 0$. 
    Finally, taking the gradient shows $\nabla\vert_W \varphi_0(\tC_0) = \tC'_0$ and since $D^2\varphi$ is block diagonal, $\det D^2 \varphi_0$ is constant.
    
\end{proof}

The splitting theorem provides an immediate obstruction to the global $C^{0,1}$ regularity of $\nabla u$ solving~\eqref{eqn:OT} based on the stratifications of the boundaries of $\Omega, \Omega'$.  Recall that the tangent cone to a convex set $\ol{\Omega}$ at $x_0$ is 
\[
\mr{Tan}_{x_0}(\Omega) := \{t(x-x_0) : x \in \mr{int}(\Omega), t> 0  \}.
\]
We can then filter $\ol{\Omega}$ by
\[
\mr{Sk}_*(\Omega) : =\mr{Sk}_0(\Omega) \subset \mr{Sk}_1(\Omega) \subset \cdots \subset \mr{Sk}_{n-1}(\Omega) \subset \mr{Sk}_n(\Omega) = \Omega 
\]
where 
\[
\mr{Sk}_j(\Omega) := \{x \in \ol{\Omega} : \mr{Tan}_x(\Omega) \text{ does not split off more than }j \text{ lines} \}.
\]
We define $\mr{Sk}_*(\Omega') $ similarly for $\Omega'$.
Note that $\mr{Sk}_{n-1}(\Omega) = \partial \Omega$.
See for example~\cite{SchneiderSkeletons} for details on skeletal filtrations for convex bodies. The splitting theorem, combined with Proposition~\ref{prop:NotHotNotC11} implies the following;
\begin{cor}\label{cor:skeletalIso}
    Let $\nabla u : \Omega \to \Omega'$ solves~\eqref{eqn:OT}.
    If $u \in C^{1,1}(\ol{\Omega})$, then $\mr{Sk}_*(\Omega) \cong \mr{Sk}_*(\Omega')$ as skeleta, and $T = \nabla u$ is an isomorphism of skeletal filtrations. 
\end{cor}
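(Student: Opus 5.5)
The plan is to combine Proposition~\ref{prop:NotHotNotC11} with the splitting Theorem~\ref{thm:splitting}, applied at every boundary point, and then use continuity of $T=\nabla u$ to promote the pointwise matching into an isomorphism of filtrations.

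First, I would fix $x\in\ol\Omega$ and set $(\tC,\tC'):=(\mr{Tan}_x\Omega,\mr{Tan}_{T(x)}\Omega')$. Interior points are harmless: for $x\in\Omega$ the interior regularity of Caffarelli gives $T(x)\in\Omega'$, and both tangent cones are all of $\bR^n$.

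For $x\in\p\Omega$, I would use that $u\in C^{1,1}(\ol\Omega)$, so $u$ is pointwise $C^{1,1}$ at $x$. The contrapositive of Proposition~\ref{prop:NotHotNotC11} then gives a solution $\varphi$ of \eqref{eqn:HOT} on $(\tC,\tC')$. Concretely, the roundness of sections together with \cite[Theorem 4.1]{TristanFreid} produces the blow-up limit on the undegenerated tangent cone pair. Here I need $T(x)\in\p\Omega'$ and Dini densities, so that the monotonicity formula applies; for the former I would use that $T$ extends to a homeomorphism $\ol\Omega\to\ol{\Omega'}$ by Caffarelli's global $C^{1,\delta}$ regularity applied to both $u$ and $u^*$.

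Next, I would apply Theorem~\ref{thm:splitting} to $\varphi$. The translation subspaces $L\subset\tC$ and $L'\subset\tC'$ pair perfectly, so $\dim L=\dim L'$. Hence $\mr{Tan}_x\Omega$ splits off exactly as many lines as $\mr{Tan}_{T(x)}\Omega'$, and therefore $x\in\mr{Sk}_j(\Omega)$ if and only if $T(x)\in\mr{Sk}_j(\Omega')$ for every $j$. Since $T$ is a bijection $\ol\Omega\to\ol{\Omega'}$, this gives $T(\mr{Sk}_j(\Omega))=\mr{Sk}_j(\Omega')$ for all $j$, i.e.\ $T$ is a filtered homeomorphism.

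The main obstacle is making sure the hypotheses of the blow-up theorem hold at every boundary point. In particular, Proposition~\ref{prop:NotHotNotC11} must really yield a \hyperref[eqn:HOT]{H.O.T.} map on the actual tangent cones, without any $C^{1,\alpha}$-epigraph assumption. I would handle this by noting that global $C^{1,1}$ of $u$, together with the Monge--Amp\`ere equation, gives uniform two-sided quadratic bounds. Then the rescalings $u(rX)/r^2$ are precompact, the domains $r^{-1}(\Omega-x)$ converge in Hausdorff sense to $\tC$, and the limit is a $2$-homogeneous solution with $\nabla\varphi(\tC)=\tC'$. The homogeneity would come from the monotonicity formula of \cite{TristanFreid}.

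A secondary point is the finer part of Theorem~\ref{thm:splitting}: the derivative $DT(x)$ on $L$ is the isomorphism $B:L\to L'$. I would record this to state that $T$ matches the linear strata infinitesimally as well, not merely setwise.
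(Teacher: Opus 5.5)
Your proposal is correct and follows the paper's route. The paper obtains the corollary by combining Proposition~\ref{prop:NotHotNotC11} (a \hyperref[eqn:HOT]{H.O.T.} map on the tangent cone pair at each boundary point, via the blow-up of \cite{TristanFreid}) with the splitting Theorem~\ref{thm:splitting}, which gives $\dim L=\dim L'$ at $(x,T(x))$. Your extra details fill in steps the paper leaves implicit: the two-sided quadratic bound from $D^2u\le C$ and $\det D^2u\ge c$, and $T$ being a homeomorphism of closures. One caution: drop the closing claim that $DT(x)|_L=B$, since $T$ need not be differentiable at boundary points and blow-ups are not known to be unique (cf.\ Conjecture~\ref{conj:Master}$(\mathbf{D})(ii)$). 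The corollary does not need it.
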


\begin{remark}\label{rmk:skeletonGlobalOb}
    An isomorphism of skeleta necessarily implies that the combinatorial data of the number of vertices, edges, faces, etc.~are the same, but is in fact stronger.
    For example, a cube and a tetrahedron with two truncated vertices both have $8$ vertices, $12$ edges, and $6$ faces, but are not isomorphic as skeleta.
\end{remark}

The condition that $\nabla u:\Omega \rightarrow \Omega'$ defines an isomorphism of skeletal filtrations provides a much stronger obstruction to global $C^{1,1}$-regularity than just an isomorphism of skeleta. Even pairs $(\Omega, \Omega')$ which have isomorphic skeleta can be obstructed from global $C^{1,1}$-regularity if each sub-skeleton cannot be compatibly matched through the gradient map of a convex function; see Figure~\ref{fig:ovidiu} for an example.

\begin{figure}
    \centering
    \begin{tikzpicture}[scale=2,
    x={(1.00cm,0.00cm)},
    y={(0.14cm,0.38cm)},
    z={(-0.04cm,1.05cm)}]

\pgfmathsetmacro{\zb}{-sqrt(2/3)/4}
\pgfmathsetmacro{\zt}{ 3*sqrt(2/3)/4}
\pgfmathsetmacro{\yy}{1/sqrt(3)}
\pgfmathsetmacro{\yh}{1/(2*sqrt(3))}

% Blue tetrahedron
\coordinate (B1) at ( 0,   0,   \zt);
\coordinate (B2) at (-0.5,-\yh, \zb);
\coordinate (B3) at ( 0.5,-\yh, \zb);
\coordinate (B4) at ( 0,   \yy, \zb);

% Red tetrahedron (dual / opposite)
\coordinate (R1) at ( 0,   0,  -\zt);
\coordinate (R2) at ( 0.5, \yh, -\zb);
\coordinate (R3) at (-0.5, \yh, -\zb);
\coordinate (R4) at ( 0,  -\yy, -\zb);

% ------------------------------------------------------
% Transparent face shading
% ------------------------------------------------------

% Blue faces
\fill[blue!45, opacity=0.11] (B1)--(B2)--(B3)--cycle;
\fill[blue!30, opacity=0.08] (B1)--(B3)--(B4)--cycle;
\fill[blue!60, opacity=0.07] (B1)--(B4)--(B2)--cycle;
\fill[blue!25, opacity=0.05] (B2)--(B3)--(B4)--cycle;

% Red faces
\fill[red!45, opacity=0.11] (R1)--(R2)--(R3)--cycle;
\fill[red!30, opacity=0.08] (R1)--(R3)--(R4)--cycle;
\fill[red!60, opacity=0.07] (R1)--(R4)--(R2)--cycle;
\fill[red!25, opacity=0.05] (R2)--(R3)--(R4)--cycle;

% ------------------------------------------------------
% Edges
% ------------------------------------------------------
\draw[blue!75!black, line width=0.8pt]
  (B2)--(B3)--(B4)--cycle
  (B1)--(B2) (B1)--(B3) (B1)--(B4);

\draw[red!80!black, line width=0.8pt]
  (R2)--(R3)--(R4)--cycle
  (R1)--(R2) (R1)--(R3) (R1)--(R4);

% Vertex bullets
\foreach \P in {B1,B2,B3,B4}
  \fill[blue!75!black] (\P) circle (0.95pt);
\foreach \P in {R1,R2,R3,R4}
  \fill[red!80!black] (\P) circle (0.95pt);

\end{tikzpicture}

    \caption{An optimal transport potential $u$ from a tetrahedron to its dual cannot be $C^{1,1}$ up to the boundary, despite having isomorphic skeleta.
    By the symmetries, it provides a local solution of $\bR \times \tC_0$ to $\tC_0' \times \bR$ for $\tC_0, \tC'_0$ both pointed $2$-dimensional cones.
    The local solution interchanges the line with the boundaries of $\tC_0$ and $\tC'_0$, so the blowup model is $(\tC_\infty, \tC_\infty') = (\bR^3_+, \bR^3_+)$, a half-space on both sides.}
    \label{fig:ovidiu}
\end{figure}

\subsection{Symplectic structure and moment map}\label{sec:ObMoment}

A second class of obstructions arises from an $\mathfrak{sl}(n)^*$-valued moment map defined on an infinite dimensional symplectic manifold.
While this construction is essentially formal, it provides genuine obstructions to the existence of \hyperref[eqn:HOT]{H.O.T.} maps, and serves as important motivation for the discussion in Section~\ref{sec:ModuliMoment} below.

Let $(\tC, \tC')\subset V\times V^*$ be positively aligned convex cones.
For simplicity, we assume that $(\tC, \tC')$ are smooth, strictly convex cones.  
We make the following definition.
\begin{defn}
    A \textbf{section} $\Gamma$ of $\del \tC$ is a codimension-$1$ submanifold of $\del \tC\setminus\{0\}$ meeting every ray of $\del\tC$ transversely and exactly once.
\end{defn}
Consider the Fr\'echet manifold
\[
\widehat{\cQ} := \{ (\tC, \Gamma; \tC',\Gamma'): (\tC,\tC')\subset V\times V^*, \text{ and } \Gamma \subset \del C, \Gamma'\subset \del C'\}
\]
where $\Gamma,\Gamma'$ are sections of $\del\tC$ and $\del \tC'$, respectively.
We remark that the $(\tC, \tC')$ data in the definition of $\widehat{\cQ}$ are ancillary; indeed, one can recover $\tC$ (resp.~$\tC'$) from the sections $\Gamma$ (resp.~$\Gamma'$) by taking the cones generated $\Gamma$ (resp.~$\Gamma'$) and the origin.
Let $E$ denote the Euler vector field of the rescaling action on $V$, and $E'$ the Euler vector field on $V^*$. 
For simplicity we also fix compatible inner products and let $\nu_{\tC}$ (resp.~$\nu_{\tC'}$) be a outward pointing normal vector to $\tC$ (resp.~$\tC'$). The sections $\Gamma$ and $\Gamma'$ are oriented by the volume forms
\[
d\sigma_{\Gamma} := \iota_E \iota_{\nu_{\tC}}\Omega_V \qquad \text{and}\qquad d\sigma_{\Gamma'} := \iota_{E'} \iota_{\nu_{\tC'}}\Omega_{V^*}.
\]
Define
\[
\Lambda := \{tX : 0 \leq t \leq 1, X \in \Gamma\} \qquad \text{and} \qquad \Lambda' := \{tY : 0 \leq t \leq 1, Y \in \Gamma'\} .
\]
We can identify the tangent space to $\widehat{\cQ}$ as
\[
T_{(\tC, \Gamma; \tC',\Gamma')}\widehat{\cQ}= C^{\infty}(\Gamma, V/T\Gamma) \oplus C^{\infty}(\Gamma', V^*/T\Gamma').
\]
Let $(\eta, \eta')\in C^{\infty}(\Gamma, V/T\Gamma)\oplus C^{\infty}(\Gamma', V^*/T\Gamma')$. We define a $1$-form on $\widehat{\cQ}$ by
\begin{equation}\label{eqn:ThetaOmegaDefs}
\widehat{\lambda}_{\Gamma,\Gamma'}(\eta,\eta') := \frac{1}{n}\int_{\Gamma} \iota_\eta\iota_E \Omega_V -\frac{1}{n}\int_{\Gamma'}\iota_{\eta'}\iota_{E'} \Omega_{V^*}.
\end{equation}
Then $\Omega = d\lambda$ gives rise to a (formal) symplectic structure on $\widehat{Q}$.
Indeed, explicitly we have
\[
\Omega( (\eta_1,\eta_1'),(\eta_2,\eta_2'))= \int_{\Gamma}\iota_{\eta_2}\iota_{\eta_1}\Omega_{V} - \int_{\Gamma'}\iota_{\eta_2'}\iota_{\eta_1'}\Omega_{V^*}.
\]
Clearly $\widehat{\Omega}$ is closed, and one can check directly that it is non-degenerate, and hence $(\widehat{\cQ}, \widehat{\Omega})$ is an infinite dimensional symplectic manifold.

An action of $\sln$ is given by 
\[
g \cdot (\tC, \Gamma; \tC', \Gamma') = (g\cdot \tC, g \cdot \Gamma, g^{-T}\cdot \tC', g^{-T} \cdot \Gamma').
\]
The symplectic form is preserved under pullback by the action of $\sln$ because, by definition, the action preserves $\Omega_{V},\Omega_{V^*}$. We can find a moment map for this action $\widehat{\mu} : \widehat{\cQ} \to \mathfrak{sl}(n)^*$ given by
\[
\langle \widehat{\mu}(\Gamma, \Gamma'), A \rangle = \lambda_{\Gamma,\Gamma'}(\eta_{A}, \eta'_{A}),
\]
where $\eta_{A}(X)=AX$ and $\eta'_{A}(Y)= -A^{T}Y$ are the infinitesimal generators of the $1$-parameter subgroups $e^{tA}$ and $e^{-tA^{T}}$.
Substituting the formula for $\lambda$ yields
\[
\la \widehat{\mu}, A\rg = \frac{1}{n} \int_\Gamma \la AX, \nu_{\tC}\rg\,d\sigma_\Gamma - \frac{1}{n} \int_{\Gamma'} \la \nu_{\tC'}, A^{T}Y\rg\,d\sigma_{\Gamma'}
\]
which, by the coarea formula, we can express as
\[
    \int_{\Lambda}\la AX, \nu_{\tC}\rg\,d\cH^{n-1}= \int_{\Gamma} \int_0^1t \la AX, \nu_{\tC}\rg t^{n-2}\,dt\,d\sigma_\gamma = \frac{1}{n} \int_\Gamma \la AX, \nu_{\tC}\rg \,d\sigma_\Gamma,
\]
and similarly on $V^*$ for $\Gamma'$ and $\Lambda'$. 
Therefore, we also have
\[
\widehat{\mu} = \int_\Lambda X \otimes \nu_{\tC}\,d\cH^{n-1} - \int_{\Lambda'} \nu_{\tC'}\otimes Y\,d\cH^{n-1}.
\]

Let $\varphi \in \cH_{(\tC, \tC')}$ satisfy $\nabla \varphi(\tC) = \tC'$.
Define a submanifold $\cQ \subset \widehat{\cQ}$ by
\[
\cQ := \{(\tC, \tC',\varphi) : \varphi \in \cH_{(\tC, \tC')}, \nabla \varphi(\tC) = \tC'\}
\]
defining $\Gamma, \Gamma'$ from $\varphi$ from
\[
\Gamma_{\varphi}= \{\varphi=1\}\cap \del\tC \qquad\text{and}\qquad  \Gamma'_{\varphi^*}= \{\varphi^*=1\}\cap \del\tC'.
\]
Let $\cX := \{ (\tC,\tC'): (\tC,\tC')\text{ are positively aligned convex cones}\}$ and denote by $p : \cQ \to \cX$ the projection map forgetting the section data. We have the maps $\cX \xleftarrow{p}\cQ \xhookrightarrow{\iota}\widehat{\cQ}$.

Evaluating the moment map at a point $(\Gamma_{\varphi},\Gamma_{\varphi^*}')$ yields the formula
\begin{equation}\label{eqn:muvarphidef}
\mu_\varphi:= \widehat{\mu}(\Gamma_{\varphi},\Gamma_{\varphi^*}')= 
 \int_{\Lambda_{\varphi}} X \otimes \nu_\tC\, d\cH^{n-1} - \int_{\Lambda_{\varphi^*}'} \nu_{\tC'} \otimes Y\, d\cH^{n-1},
\end{equation}
where $\Lambda_{\varphi} := \ol{\{\varphi < 1\}} \cap \p \tC$ and $\Lambda'_{\varphi^*} := \ol{\{\varphi^* < 1\}} \cap \p \tC'$.

\begin{lem}\label{lem:muAcontraction}
    Let $\varphi \in \cH_{(\tC,\tC')}$ satisfy $\nabla \varphi(\tC) = \tC'$. 
    For any $A \in \mathfrak{sl}(V)$, if $\nabla \varphi (\tC) = \tC'$, we have 
    \[
    \la \mu_\varphi, A \rg = \int_{\{\varphi = 1\}}(\det D^2 \varphi -1)\frac{\la AX, \nabla \varphi\rg }{|\nabla \varphi|} \,d\cH^{n-1}.
    \]
    In particular, if $\varphi$ solves~\eqref{eqn:HOT}, then $\mu_\varphi = 0$. 
\end{lem}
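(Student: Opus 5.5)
Our plan is to rewrite each boundary integral in \eqref{eqn:muvarphidef} as a bulk integral over $K:=\{\varphi<1\}$ (resp.\ $K'$) with the divergence theorem, and then relate the two bulk integrals by the change of variables $Y=\nabla\varphi(X)$. Fix $A\in\mathfrak{sl}(V)$ and the vector field $X\mapsto AX$. Since $\operatorname{tr}A=0$, this field is divergence free. The boundary of $K$ splits into the lateral part $\Lambda_\varphi\subset\p\tC$ and the cap $\{\varphi=1\}\cap\tC$. The divergence theorem then gives
\[
\int_{\Lambda_\varphi}\la AX,\nu_\tC\rg\,d\cH^{n-1}=-\int_{\{\varphi=1\}}\frac{\la AX,\nabla\varphi\rg}{|\nabla\varphi|}\,d\cH^{n-1}.
\]
In the same way, using the field $Y\mapsto -A^TY$ on $K'$, the term $\int_{\Lambda'_{\varphi^*}}\la\nu_{\tC'},A^TY\rg$ becomes an integral of $\la A^TY,\nabla\varphi^*\rg/|\nabla\varphi^*|$ over the cap $\{\varphi^*=1\}\cap\tC'$, with the opposite sign.

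Next we transport the dual cap back to $\tC$. The hypothesis $\nabla\varphi(\tC)=\tC'$ and Euler's identity $\varphi^*(\nabla\varphi(X))=\varphi(X)$ show that $\nabla\varphi$ maps $\{\varphi=1\}\cap\tC$ diffeomorphically onto $\{\varphi^*=1\}\cap\tC'$. At corresponding points we have $\nabla\varphi^*(Y)=X$, so the integrand becomes $\la A^T\nabla\varphi,X\rg=\la AX,\nabla\varphi\rg$. The normal-weighted area element should pick up exactly the Jacobian $\det D^2\varphi$. To see this cleanly, we prefer to work with flux through level sets. By the coarea formula and $2$-homogeneity, the cap integral equals $\frac{d}{ds}\big|_{s=1}$ of the corresponding volume integral over the sublevel set $\{\varphi<s\}$. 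Equivalently, writing $g(X)=\la AX,\nabla\varphi\rg$, we have
\[
\int_{\{\varphi=1\}}\frac{g}{|\nabla\varphi|}\,d\cH^{n-1}=c_n\int_{\{\varphi<1\}}g\,dX,
\]
with a constant $c_n$ coming from homogeneity; the same identity holds on the dual side. The dual bulk integral $\int_{\{\varphi^*<1\}\cap\tC'}\la A^TY,\nabla\varphi^*\rg\,dY$ is then converted by $Y=\nabla\varphi(X)$, $dY=\det D^2\varphi\,dX$, into $\int_{K}\la AX,\nabla\varphi\rg\det D^2\varphi\,dX$. Subtracting the two pieces and converting back to a cap integral produces the factor $(\det D^2\varphi-1)$. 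This is where we must be careful with the paper's normalization of sublevels: $1$ versus $1/4$, and the matching of $\{\varphi<1\}$ with $\{\varphi^*<1\}$, which are images of each other under $\nabla\varphi$ since $\varphi^*\circ\nabla\varphi=\varphi$. One caveat is that converting back to a cap integral after weighting by $\det D^2\varphi$ is not simply the reverse of the coarea identity, because $\det D^2\varphi$ is $0$-homogeneous but not constant. So at that step we integrate radially, using that $g\,\det D^2\varphi$ is $2$-homogeneous along rays; the radial integration then gives the stated surface formula.

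The step we expect to be the main obstacle is the sign and constant bookkeeping: the orientations $d\sigma_\Gamma$ with outward normals, the $1/n$ factors from the coarea formula on $\Lambda$, and verifying that the constants on both sides agree so that exactly $\det D^2\varphi-1$ appears. We would first check the identity on the model case $\tC=\tC'=\R^n_+$ with $\varphi=\frac12|X|^2$, where both sides vanish, and then on a perturbation, to fix the constants. The final claim is then immediate: if $\varphi$ solves \eqref{eqn:HOT}, the integrand vanishes for every $A$, so $\mu_\varphi=0$.
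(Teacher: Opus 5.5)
Your proof is correct. It differs from the paper's at the one nontrivial step, where the cap integral over $\{\varphi^*=1\}$ is transported back to $\{\varphi=1\}$.

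\textbf{Common start.} Both arguments apply the divergence theorem to the divergence-free fields $AX$ on $\{\varphi<1\}$ and $-A^TY$ on $\{\varphi^*<1\}\cap\tC'$. Note that the dual domain is $2K'$, not $K'$. This reduces $\la\mu_\varphi,A\rg$ to
\[
-\int_S\la AX,\nu_S\rg\,d\cH^{n-1}+\int_{S'}\la\nu_{S'},A^TY\rg\,d\cH^{n-1}
\]
over the two caps $S,S'$.

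\textbf{The paper's route.} It stays on the caps. It uses $\nu_S=Y/|Y|$ and $\nu_{S'}=X/|X|$, and asserts the surface Jacobian $d\cH^{n-1}(Y)=\frac{|X|}{|Y|}\det D^2\varphi(X)\,d\cH^{n-1}(X)$ for $Y=\nabla\varphi(X)$, without proof.

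\textbf{Your route.} You lift each cap flux to a bulk integral via
\[
\int_{\{\varphi=1\}}h\,|\nabla\varphi|^{-1}\,d\cH^{n-1}=\tfrac{n+2}{2}\int_{\{\varphi<1\}}h\,dX,
\]
which holds for every $2$-homogeneous $h$, and likewise for $\varphi^*$. You then use only the volume change of variables $dY=\det D^2\varphi\,dX$ on $\nabla\varphi(\{\varphi<1\})=\{\varphi^*<1\}\cap\tC'$, together with $\la A^T\nabla\varphi(X),X\rg=\la AX,\nabla\varphi(X)\rg$, and project back.

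\textbf{What your route buys.}
\begin{itemize}
\item It justifies the point the paper leaves unproved. The surface Jacobian is itself a consequence of this kind of cone-volume and homogeneity reasoning.
\item You never need $\nabla\varphi$ to map the caps diffeomorphically onto each other. The bulk area formula suffices.
\end{itemize}

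\textbf{Your caveat is unnecessary.} Because $\det D^2\varphi$ is $0$-homogeneous, the integrand $\la AX,\nabla\varphi\rg(\det D^2\varphi-1)$ is again $2$-homogeneous. The return trip is therefore the same identity with the same constant $\frac{n+2}{2}$, and the constants cancel automatically. No separate radial argument or model-case calibration is needed. Also, no $1/n$ factor enters, because $\mu_\varphi$ in \eqref{eqn:muvarphidef} is already written over $\Lambda_\varphi$ and $\Lambda'_{\varphi^*}$.
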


\begin{proof}
    
    By convexity, the sublevel sets of $\varphi, \varphi^*$ have finite perimeters.
    We can consider the outward pointing normal vectors $\nu$ and $\nu'$ of the reduced boundaries $\p^* \tC$ and $\p^*\tC'$ which are defined $\cH^{n-1}$-almost everywhere.
    Let $S := \{\varphi = 1\}$ and $S' := \{\varphi^* = 1\}$ be the interior boundaries and $\Gamma := \p \tC \cap \ol{K}$ and $\Gamma' := \p\tC' \cap \ol{2K'}$ be the cone boundary components, where we recall $K := \{\varphi < 1\}$ and $K' := \{\varphi^* < 1/4\} \cap \tC'$.
    
    Since $A$ is traceless, we know $\mr{div}(AX) = 0$, so the divergence theorem shows
    \[
    \int_S \la AX, \nu_S\rg\,d\cH^{n-1} + \int_\Lambda\la AX, \nu\rg\,d\cH^{n-1} = \int_{K}\mr{div}(AX)\, dX= 0,
    \]
    and likewise
    \[
    \int_{S'} \la  \nu_{S'}, -A^TY\rg\,d\cH^{n-1} + \int_{\Lambda'}\la \nu',-A^TY\rg\,d\cH^{n-1} = \int_{2K'} \mr{div}(-A^TY)\,dY= 0.
    \]
    Therefore, we have
    \[
    \la \mu_\varphi, A\rg =  -\int_S\la AX,\nu_S\rg\,d\cH^{n-1} +
    \int_{S'}\la \nu_{S'}, A^TY\rg\,d\cH^{n-1}.
    \]
    
    By Legendre duality, we may apply the coordinate change $Y = \nabla \varphi(X)$ whose Jacobian is $\det D^2\varphi$, so $d\cH^{n-1}(Y) =\frac{|X|}{|Y|} \det D^2\varphi(X)\, d\cH^{n-1}(X)$.
    Since $S, S'$ are level sets of $\varphi, \varphi^*$, the normals are expressible as $\nu_S = \frac{Y}{|Y|}, \nu_{S'} = \frac{X}{|X|}$.
    Therefore, we may compute
   \begin{align*}
    \int_{S'}\la \nu_{S'}, A^TY\rg\,d\cH^{n-1}(Y)  &=  \int_{S'} \frac{\la X, A^TY\rg}{|X|}  \,d\cH^{n-1}(Y)\\
    &= \int_S \det D^2\varphi(X) \frac{\la X, A^TY\rg}{|Y|} \,d\cH^{n-1}(X) \\
    &= \int_S \det D^2\varphi(X) \frac{\la AX,Y\rg}{|Y|} \,d\cH^{n-1}(X).
    \end{align*}
    Since $\la AX,\nu_S\rg = \frac{\la AX,Y\rg}{|Y|}$, we have
    \[
        \la\mu_\varphi,A\rg = \int_S (\det D^2\varphi-1) \frac{\la AX,\nabla\varphi(X)\rg} {|\nabla\varphi(X)|} \,d\cH^{n-1}
    \]
    as claimed. 
    If $\varphi$ solves~\eqref{eqn:HOT}, then $\det D^2 \varphi = 1$, so $\mu_\varphi \equiv 0$. 
   
\end{proof}

We make the following definition:

\begin{defn}\label{def:nesting}
A pair of convex cones $(\tC,\tC')$ is {\bf nesting} along a family $e^{tA}$ for some non-zero $A \in \mathfrak{sl}(n)$ if the pair satisfies 
\[
\tC \subset e^{tA} \cdot \tC \qquad \text{and}\qquad  \tC'\subset e^{-tA^{T}} \cdot \tC' \qquad \text{for all }t > 0.
\]
\end{defn}

By Lemma~\ref{lem:muAcontraction}, nesting obstructs existence of homogeneous optimal transport maps.

\begin{prop}\label{prop:nesting}
    Suppose that $(\tC, \tC')$ are nesting along $e^{tA}$.
    If there exists a solution of~\eqref{eqn:HOT}, then $(e^{tA} \tC, e^{-tA^T}\tC') = (\tC, \tC')$ so $e^{tA} \in \mr{Aut}(\tC, \tC')$.
    In particular, if $\{e^{tA}\} \not\subset \mr{Aut}(\tC, \tC')$, then there does not exist a solution of~\eqref{eqn:HOT}.
\end{prop}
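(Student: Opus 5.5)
The plan is to use the moment map identity of Lemma~\ref{lem:muAcontraction}: a solution $\varphi$ of~\eqref{eqn:HOT} has $\mu_\varphi=0$, hence $\la\mu_\varphi,A\rg=0$. We will compute this pairing a second way, directly from the boundary formula~\eqref{eqn:muvarphidef}, and show that nesting forces each boundary term to have a definite sign. Vanishing then forces $\la AX,\nu_{\tC}\rg\equiv0$ on $\p\tC$ and $\la\nu_{\tC'},A^TY\rg\equiv 0$ on $\p\tC'$, i.e.\ the flows are tangent to the cone boundaries, and so they preserve the cones.

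\emph{Step 1: nesting gives signs.} Differentiating $\tC\subset e^{tA}\tC$ at $t=0^+$ shows that the vector field $-AX$ points weakly into $\tC$ along $\p\tC$. Indeed, $e^{-tA}\tC\subset\tC$ for $t>0$, so the backward flow preserves $\tC$. Hence $\la AX,\nu_{\tC}\rg\ge 0$ $\cH^{n-1}$-a.e.\ on $\p\tC$, where $\nu_{\tC}$ is the outward normal. Similarly, $e^{tA^T}\tC'\subset\tC'$ gives $\la A^TY,\nu_{\tC'}\rg\le 0$ on $\p\tC'$.

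\emph{Step 2: the pairing is strictly positive unless tangent.} Plug these signs into
\[
\la\mu_\varphi,A\rg=\int_{\Lambda_\varphi}\la AX,\nu_{\tC}\rg\,d\cH^{n-1}-\int_{\Lambda'_{\varphi^*}}\la\nu_{\tC'},A^TY\rg\,d\cH^{n-1}.
\]
Both terms are nonnegative. Since $\mu_\varphi=0$ by Lemma~\ref{lem:muAcontraction}, both integrands vanish a.e. By homogeneity, $\Lambda_\varphi$ and $\Lambda'_{\varphi^*}$ meet every boundary ray. Therefore $\la AX,\nu_{\tC}\rg=0$ on all of $\p^*\tC$, and likewise $\la A^TY,\nu_{\tC'}\rg=0$ on $\p^*\tC'$.

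\emph{Step 3: tangency gives invariance.} We argue that $e^{tA}\tC=\tC$. The cones $\tC_t:=e^{tA}\tC$ are nested increasing in $t$. The open set $\tC_t\setminus\tC$ is a conical region whose volume within the unit ball, $V(t)$, is nondecreasing. Its derivative is the boundary flux $\int_{\p\tC_t\cap\bB}\la AX,\nu_t\rg$. Conjugating by $e^{tA}$, which preserves volume since $\operatorname{tr}A=0$, shows this flux is a pullback of the $t=0$ flux computed on a slightly deformed ball, and that flux vanishes. A cleaner alternative route is through support functions: the tangency condition says that $h_{\tC}$, the indicator of $\ol{\tC^\vee}$, is invariant infinitesimally, so the dual cone is preserved by the linear flow. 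Invariance of a closed convex cone under a linear ODE whose field is tangent on the boundary then follows from Nagumo's viability theorem applied in both time directions. Either argument yields $e^{tA}\tC=\tC$ and $e^{-tA^T}\tC'=\tC'$, so $e^{tA}\in\mr{Aut}(\tC,\tC')$. The final claim of the proposition is the contrapositive.

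\emph{Main obstacle.} The principal difficulty is the regularity in Step 3, together with the a.e.\ normals in Step 2. Lemma~\ref{lem:muAcontraction} was set up for smooth strictly convex cones. For general convex cones one should work on reduced boundaries and invoke Nagumo using Clarke tangent cones. I expect the tangency of the linear field to the convex set $\ol{\tC}$ at a.e.\ boundary point, combined with convexity (tangency at smooth points implies tangency in the normal-cone sense everywhere by density and upper semicontinuity of normal cones), to suffice.
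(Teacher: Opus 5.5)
Your proposal is correct and is essentially the paper's proof: nesting gives $\la AX,\nu_{\tC}\rg\ge 0$ and $\la\nu_{\tC'},A^TY\rg\le 0$, so both terms of $\la\mu_\varphi,A\rg$ are nonnegative, and $\mu_\varphi=0$ from Lemma~\ref{lem:muAcontraction} forces both boundary integrands to vanish. The paper then simply asserts invariance of the cones, so your Step~3 is a useful addition. In your flux argument, the identity $\la Ae^{tA}Z,e^{-tA^T}\nu_{\tC}(Z)\rg=\la AZ,\nu_{\tC}(Z)\rg$ carries the tangency to every $e^{tA}\tC$, which justifies the step the paper leaves implicit.
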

\begin{proof}
    Suppose that some $\varphi$ solving~\eqref{eqn:HOT} exists.
    From Lemma~\ref{lem:muAcontraction}, we know that the moment map $\mu \in \mathfrak{sl}(V)^*$ must vanish identically. 
    The nesting property demonstrates that
    \[
    \la AX, \nu\rg \geq 0\qquad \text{and} \qquad \la \nu', -A^TY\rg \geq 0
    \]
    for $\cH^{n-1}$ generic $X \in \p^* \tC$ and $Y \in \p^* \tC'$.
    Lemma~\ref{lem:muAcontraction} shows that for all $A \in \mathfrak{sl}(V)$
    \[
    \int_\Lambda\la AX, \nu\rg\,d\cH^{n-1}  = \int_{\Lambda'}\la  \nu', A^TY\rg\,d\cH^{n-1}.
    \]
    From nesting, the left-hand side is non-negative and the right-hand side is non-positive, so both must vanish.
    Therefore, we must have $(\tC, \tC') = (e^{tA} \tC, e^{-tA^T}\tC')$ and $\{e^{tA}\} \subset \mr{Aut}(\tC, \tC')$.
\end{proof}

\begin{figure}
    \centering
    \begin{tikzpicture}

    % outer unit circle
    \draw[red, thin] (0,0) circle (1);

    % family of ellipses
    \foreach \tt/\op in {-2/0.25,-1/0.45,0/0.7,1/0.45,2/0.25} {
        \pgfmathsetmacro{\E}{exp(2*\tt)}
        \pgfmathsetmacro{\cx}{\E/(1+\E)}
        \pgfmathsetmacro{\a}{1/(1+\E)}
        \pgfmathsetmacro{\b}{1/sqrt(2*(1+\E))}
        \draw[blue, thin, opacity=\op] (\cx,0) ellipse [x radius=\a, y radius=\b];
    }

    % formal limit as t -> -infinity: x^2 + 2y^2 = 1
    \draw[blue, thin, dashed] (0,0) ellipse [x radius=1, y radius={1/sqrt(2)}];

\end{tikzpicture}
    \caption{The boundaries of the links $Q$ and $\Sigma$ are illustrated in red and blue, whose cones share a supporting hyperplane. The various blue ellipses are the image under the Lorentz boost preserving the red cone with positive weight at $(-1,0,1)$.
    This example fails OT-separability because the limiting link, displayed in dashed blue, admits a \hyperref[eqn:HOT]{H.O.T.} map given by a quadratic.
    Furthermore, this example is nesting (see Definition~\ref{def:nesting}) as seen by the equations of the blue ellipses $x^2 + 2y^2 + e^{2t}(1-x)^2 = 1$. 
    Therefore, there does not exist a \hyperref[eqn:HOT]{H.O.T.} map, (cf.~Proposition~\ref{prop:nesting}).}
    \label{fig:nestingEllipsesBoost}
\end{figure}

\begin{remark}
    In the previous section, we constructed pairs $(\cL, \cL'_{\ve H})$ of \hyperref[eqn:HOT]{H.O.T.} maps between the Lorentz cone and a perturbation of it.
    The obstruction we found there can be expressed as the lack of nesting of the perturbed pair along any boost symmetry of $\cL$.  
\end{remark}

Nesting is preserved under products, so $(\tC_0 \times \tC_1, \tC'_0 \times \tC'_1)$ is nesting if $(\tC_0, \tC'_0)$ is. This produces non-existence of~\hyperref[eqn:HOT]{H.O.T.} maps in every dimension, including examples with arbitrarily sophisticated linking topology.

\begin{example}
    Suppose both $\tC$ and $\tC'$ are unlinked simplicial cones.
    Up to duality, we may assume $\tC^\vee$ contains the ray spanned by a vertex $v_1 \in \Sigma$ and let $F := \mr{ConvexHull}(v_2,\ldots, v_n)$ be its opposing exposed face. 
    Suppose that $\tC'$ does not intersect the plane spanned by $F$.
    Consider the basis $e_i^* := (v_i,1) \in V^*$, then along the $1$-parameter subgroup given by $\mr{diag}(t^{n-1}, t^{-1},\ldots, t^{-1})$, the pair is nesting.
    More generally, only $\tC$ has to be simplicial.
    Other cones with symmetries with similar properties produce further nesting pairs. 

\end{example}

\section{Towards optimal transport and the moduli space of convex cones}\label{sec:ModuliMoment}% Are you HOT or NOT? I'm HOT. 

We now explain a conjectural picture relating optimal regularity of optimal transport to the existence of a moduli space of convex cone pairs admitting solutions of~\eqref{eqn:HOT}. While our discussion considers only the constant density setting, we expect this picture to generalize in a natural way to positive homogeneous densities.
Let
\[
(\cQ, \Omega)=\{(\tC, \tC', \varphi) : \varphi \in \cH_{(\tC, \tC')} \text{ such that } \nabla \varphi(\tC) = \tC'\}
\]
be the subspace of $(\widehat{\cQ},\widehat{\Omega})$ consisting of pairs of positively aligned cones equipped with a positive, convex, $2$-homogeneous function with $\nabla \varphi(\tC) = \tC'$.
The $2$-form $\Omega := \widehat{\Omega}\vert_\cQ$ is the restriction of the symplectic form, which may be degenerate.
In Section~\ref{sec:ObMoment}, we found that the group $\sln$ acts on $(\cQ,\Omega)$ by symplectomorphisms and this action provides a moment map $\mu: \cQ\rightarrow \mathfrak{sl}(V)^*$ given by equation~\eqref{eqn:muvarphidef}.
Let
\[
    \cX= \{ (\tC,\tC') : (\tC, \tC') \text{ are positively aligned} \}. 
\]
By Lemma~\ref{lem:posAlign}, there is a natural $\sln$-equivariant fibration $p:\cQ\rightarrow \cX$ given by forgetting $\varphi$. We make the following definition.

\begin{defn}\label{defn:sympReducStab}
    We say that a point $x\in \cX$ is:
    \begin{itemize} 
    \item {\bf M-semistable} if $\ol{\sln\cdot x} \cap p(\mu^{-1}(0)) \neq \emptyset$.
    We define the semistable locus by $\cX^{ss} :=\{ x\in \cX: x \text{ is M-semistable}\}$.

    \item {\bf M-polystable} if $x$ is M-semi-stable and $\sln \cdot x$ is closed in $\cX^{ss}$.  
    We denote the polystable locus by $\cX^{ps}$.
    
    \item {\bf M-stable} if $x \in \cX^{ps}$ and $\mr{Aut}(\tC, \tC')\subset \sln$ is compact.

    \item {\bf M-unstable} if $x \in \cX\setminus \cX^{ss}$.
    \end{itemize}
\end{defn}

Our conjectural picture relates the construction of a Hausdorff moduli space $\cX^{ps}/\!/\sln$ to the regularity/singularity dichotomy for optimal transport.
A minimal requirement to have any type of moduli theory is to satisfy the (LC) condition of Definition~\ref{defn:LC}.\footnote{The following elementary example was suggested by ChatGPT 5.6-Sol. Let $h_\alpha(\theta) := 1 + \ve \cos(\theta) + \ve \cos(\sqrt{2}\theta + \alpha)$ for some $\alpha \in \bR/2\pi \bZ$ and define $\tC_\alpha:=\{(u,v,z):u,v>0,\ |z|<\sqrt{uv}\,h_\alpha(\log(u/v))\}$
One can construct diverging sequence $M_j, N_j \in \sln$ such that $\tC_\pi = \lim M_j\cdot  \tC_0$ and $\tC_0 = \lim N_j \cdot \tC_\pi$, but $\tC_0$ and $\tC_\pi$ are not $\sln$-equivalent.
Let $Q := \frac{1}{2}|X|^2$, so the pairs $(\tC_0, \nabla Q(\tC_0))$ and $(\tC_\pi, \nabla Q(\tC_\pi))$ have \hyperref[eqn:HOT]{H.O.T.} maps given by $Q$.
One can further construct diverging maps $M_j$ such that $\lim_{j \to \infty} M_j \cdot (\tC_0, \nabla Q(\tC_0)) \to (\tC_0, \nabla Q(\tC_0))$, so the failure of Palais-Smale in Remark~\ref{rmk:C->CDegen} occurs while our method still finds a max-min critical point.\label{footnote:NonLCEx}}
Consider the following local optimal transport problem.

\begin{defn}\label{def:local}
Let $(\Omega, \Omega')\subset V\times V^*$ be a pair of open convex sets with $0\in \del \Omega$ and $0\in \del \Omega'$.
A solution of the {\bf local optimal transport problem} consists of the following data:
\begin{itemize}
\item open neighborhoods $N\subset V$ and $N^*\subset V^*$ with $0\in N$, and $0\in N^*$, and
\item a convex function $u:N\cap \Omega\rightarrow \bR$ solving the local optimal transport problem
\begin{equation}\label{eq: localOTmap}\tag{$\textup{LOT}$}
\det D^2u=1, \quad \nabla u(N\cap \Omega)=N^*\cap \Omega',\quad \nabla u(0)=0.
\end{equation}
\end{itemize}
\end{defn}

In the following discussion, we assume for simplicity that $\Omega$ and $\Omega'$ are locally $C^{1,\alpha}$-epigraphs over their tangent cones.
We state the following guiding conjecture.

\begin{conj}\label{conj:Master}
Let $(\Omega, \Omega')\subset V\times V^*$ be a pair of open convex sets with $0\in \del \Omega$ and $0\in \del \Omega'$.
If $(\tC,\tC')$ denotes the tangent cones of $(\Omega,\Omega')$ at $(0,0)$ and satisfies $(\mr{LC})$, then:
    
     \smallskip $(\mathbf{A}):$ There exists a solution of the local optimal transport problem~\eqref{eq: localOTmap} if and only if $(\tC,\tC')$ is M-semistable.

    \smallskip $(\mathbf{B}):$  If $(\tC,\tC')$ is M-semistable, then it admits a polystable degeneration.

    \smallskip $(\mathbf{C}):$ The pair $(\tC,\tC')$ admits a solution of~\eqref{eqn:HOT} if and only if $(\tC,\tC')$ is M-polystable.

    \smallskip $(\mathbf{D}):$ Assume there exists a solution of the local optimal transport problem~\eqref{eq: localOTmap}.  
    Then:
\begin{itemize}
    \item[$(i)$] $u$ is $C^{1,1}$ at $0\in \del \Omega$, and $u^*$ is $C^{1,1}$ at $0\in \del \Omega'$ if and only if the pair $(\tC,\tC')$  is M-polystable.
    \item[$(ii)$] Furthermore, if $u, u^*$ are $C^{1,1}$, then
    \begin{equation}\label{eq: convToBlowUp}
    u(x)= u_{\infty}(x) + o(|x|^2)
    \end{equation}
    where $u_{\infty}(x)$ is a solution of~\eqref{eqn:HOT} on $(\tC,\tC')$.
    \end{itemize}
    
\end{conj}
Furthermore, it is an interesting question if the automorphism group $\mr{Aut}(\tC, \tC')$ of a polystable pair is necessarily reductive; this is the case for pairs of of the form $(\tC,\tC)$ (with the identity map), and for polystable pairs in dimension 2 as explained below.
We now explain how the monotonicity formula implies some parts of Conjecture~\ref{conj:Master}.  

\begin{lem}\label{lem: localOTsemistable}
In the setting of Conjecture~\hyperref[conj:Master]{\ref{conj:Master} $(\mathbf{A})$}, suppose that there exists a solution of the local optimal transport problem~\eqref{eq: localOTmap}. If $(\tC, \tC')\in \cX$ denote the tangent cones to $(\Omega,\Omega')$ at $(0,0)$, then the pair $(\tC, \tC')$ is M-semistable.
\end{lem}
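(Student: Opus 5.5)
We use the local solution to produce, via the monotonicity formula of \cite{TristanFreid}, a blow-up limit, and then check that this limit is a point of $p(\mu^{-1}(0))$ lying in the closure of the $\sln$-orbit of $(\tC,\tC')$.

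First, take the local solution $u$ of \eqref{eq: localOTmap} on $N\cap\Omega$ with $\nabla u(0)=0$, and normalize $u(0)=0$. We consider the centered sections $S^c_h(u,0)$ for $h\to 0$. The local $C^{1,\alpha}$-epigraph assumption lets us replace $(\Omega,\Omega')$ by $(\tC,\tC')$ at small scales, with errors that vanish after rescaling. The monotonicity formula and blow-up analysis of \cite[Theorem 4.1]{TristanFreid} are local near the boundary point, so they apply to the local problem as well; we localize using the neighborhoods $N,N^*$ and note that the densities equal $1$. Let $A_h$ be the John normalization of $S^c_h(u,0)$ and set $M_h:=A_h/(\det A_h)^{1/n}\in\sln$. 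The rescaled potentials $u_h(X):=h^{-1}u(\sqrt h\,A_h^{-1}X)$ (with the determinant absorbed) then converge along a subsequence $h_j\to0$. Their limit is a $2$-homogeneous convex $\varphi$ solving \eqref{eqn:HOT} on a limit pair $(\tC_\infty,\tC'_\infty)$, which is the Hausdorff limit of $(M_{h_j}\tC,M_{h_j}^{-T}\tC')$.

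Second, we show that $(\tC_\infty,\tC'_\infty)\in\ol{\sln\cdot(\tC,\tC')}$. There are two cases. If $\|M_{h_j}\|$ stays bounded, we pass to a convergent subsequence $M_{h_j}\to M$; then $(\tC_\infty,\tC'_\infty)=M\cdot(\tC,\tC')$, which lies in the orbit. Otherwise, $\|M_{h_j}\|\to\infty$ and the limit is an $\sln$-degeneration, which lies in the orbit closure by definition. In both cases the limit is positively aligned: it has non-empty interior because the normalized sections are uniformly round, and it is positively aligned by Lemma~\ref{lem:posAlign}, since $\varphi$ is a positive homogeneous convex function with $\nabla\varphi(\tC_\infty)=\tC'_\infty$. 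Hence $(\tC_\infty,\tC'_\infty)\in\cX$.

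Third, Lemma~\ref{lem:muAcontraction} shows that $\mu_\varphi=0$, so $(\tC_\infty,\tC'_\infty,\varphi)\in\mu^{-1}(0)$. Projecting by $p$ gives a point of $p(\mu^{-1}(0))$ in $\ol{\sln\cdot(\tC,\tC')}$, which is exactly M-semistability. One technical point: Lemma~\ref{lem:muAcontraction} is stated for smooth, strictly convex cones. We either verify that its divergence-theorem computation only needs reduced boundaries (as its proof indicates), or we interpret $\mu^{-1}(0)$ in that generalized sense.

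The main obstacle is the first step: justifying that the monotonicity and compactness argument of \cite[Theorem 4.1]{TristanFreid} applies to a merely local solution, and that the limit has non-degenerate cones. I would first localize the monotonicity quantity to sections $S^c_h\Subset N$, which holds for small $h$ by Caffarelli's strict convexity and $C^{1,\delta}$ estimates applied locally. The uniform roundness of the normalized sections and of their dual sections then gives the full-dimensionality of $(\tC_\infty,\tC'_\infty)$.
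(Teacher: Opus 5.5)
Your proposal is correct and follows the same route as the paper: the blow-up from the monotonicity formula of \cite[Theorem 4.1]{TristanFreid} gives a pair $(\tC_\infty,\tC'_\infty)\in\ol{\sln\cdot(\tC,\tC')}$ carrying a solution of \eqref{eqn:HOT}, and Lemma~\ref{lem:muAcontraction} then places this pair in $p(\mu^{-1}(0))$. Your additional points (the bounded-$M_j$ case, positive alignment of the limit, and the regularity caveat for Lemma~\ref{lem:muAcontraction}) supply details that the paper's two-line proof leaves implicit.
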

\begin{proof}
    This is a consequence of the monotonicity formula and Lemma~\ref{lem:muAcontraction}.
    Indeed, by \cite[Theorem 4.1]{TristanFreid} there exists a sequence of linear transformations $M_j \in {\rm SL}(V)$ such that $(M_j\tC, M_j^{-T}\tC')$ converge in the Hausdorff sense to a pair of convex cones $(\tC_{\infty}, \tC'_{\infty})$ admitting a homogeneous optimal transport map.
    By Lemma~\ref{lem:muAcontraction}, $(\tC_{\infty}, \tC'_{\infty})\in p(\mu^{-1}(0))$, and hence $(\tC,\tC')$ is M-semistable in the sense of Definition~\ref{defn:sympReducStab}.
\end{proof}

\begin{remark}
    For the purposes of our conjectural picture, the reader may safely assume that $(\Omega, \Omega')$ are precisely conical near $(0,0)$ in Conjecture~\hyperref[conj:Master]{\ref{conj:Master} $(\mathbf{A})$}.
\end{remark}

We note that the argument of Lemma~\ref{lem: localOTsemistable} yields the following direct corollary.

\begin{cor}
    Assuming Conjecture~\hyperref[conj:Master]{\ref{conj:Master} $(\mathbf{A})$} and \hyperref[conj:Master]{$(\mathbf{C})$}, if $x:=(\tC,\tC')\in \cX$ is M-semistable, then
    \[
    \overline{\sln\cdot x} \cap \cX^{ps}\ne \emptyset.
    \]
    That is, every semi-stable point $x\in \cX$ admits a polystable degeneration.
\end{cor}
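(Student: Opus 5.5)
The plan is to read the corollary as a formal consequence of Conjecture~\ref{conj:Master} $(\mathbf{A})$ and $(\mathbf{C})$, used together with the monotonicity argument of Lemma~\ref{lem: localOTsemistable}. Let $x=(\tC,\tC')$ be M-semistable.

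The first step applies $(\mathbf{A})$ in the direction ``semistable $\Rightarrow$ local solution''. Take $(\Omega,\Omega')$ to be the truncations of the cones themselves, for instance $\Omega=\tC\cap\bB$ and $\Omega'=\tC'\cap\bB^*$. These are exactly conical near the origin, as the preceding remark allows, and they trivially satisfy the $C^{1,\alpha}$-epigraph hypothesis. Their tangent cones at $(0,0)$ are $(\tC,\tC')$, and we assume (LC). Part $(\mathbf{A})$ then gives a convex solution $u$ of~\eqref{eq: localOTmap}.

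The second step repeats the proof of Lemma~\ref{lem: localOTsemistable}. By \cite[Theorem 4.1]{TristanFreid}, renormalizing the centered sections of $u$ at $0$ produces matrices $M_j\in\sln$ with $(M_j\tC,M_j^{-T}\tC')\to(\tC_\infty,\tC'_\infty)$, where the limit is a positively aligned pair with nonempty interiors carrying a solution $\varphi_\infty$ of~\eqref{eqn:HOT}. Two cases arise. If $\|M_j\|$ stays bounded, pass to a convergent subsequence $M_j\to M\in\sln$, so the limit is $M\cdot x$, which lies in the orbit. If $\|M_j\|\to\infty$, the limit is an $\sln$-degeneration. In either case $(\tC_\infty,\tC'_\infty)\in\overline{\sln\cdot x}$.

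The third step invokes $(\mathbf{C})$: a pair admitting a solution of~\eqref{eqn:HOT} is M-polystable. Hence $(\tC_\infty,\tC'_\infty)\in\cX^{ps}$, and this limit point gives $\overline{\sln\cdot x}\cap\cX^{ps}\neq\emptyset$.

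The main point needing care is the application of \cite[Theorem 4.1]{TristanFreid} to a \emph{local} solution rather than a global optimal transport map between bounded domains. I would argue that the monotonicity formula and the blow-up analysis use only the equation near $0$ and the local gradient-image condition, so they apply verbatim; this is exactly what the proof of Lemma~\ref{lem: localOTsemistable} already presumes. I would also check that the blow-up limit is genuinely positively aligned and open, so that it defines a point of $\cX$; this is supplied by the theorem's conclusion that $\varphi_\infty$ is a H.O.T.\ map with $\nabla\varphi_\infty(\tC_\infty)=\tC'_\infty$, combined with Lemma~\ref{lem:posAlign}.
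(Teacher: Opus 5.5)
Your proposal is correct and follows the paper's argument: the paper obtains the corollary directly from the proof of Lemma~\ref{lem: localOTsemistable}, using $(\mathbf{A})$ to produce a local solution, the monotonicity formula of \cite{TristanFreid} to obtain a pair in $\overline{\sln\cdot x}$ admitting a H.O.T.\ map, and $(\mathbf{C})$ to conclude that this pair is M-polystable. One implicit point, which the paper also leaves unstated, is that applying $(\mathbf{C})$ to the limit pair $(\tC_\infty,\tC'_\infty)$ formally requires that pair to satisfy $(\mr{LC})$ as well, not only $x$.
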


From the moduli point of view, it is a natural question if the putative polystable degeneration in Conjecture~\hyperref[conj:Master]{\ref{conj:Master} $(\mathbf{B})$} is unique.
This conjecture is a type of uniqueness statement for blow-ups of optimal transport maps in the sense of \cite{TristanFreid}.
It amounts to the statement that there exists a pair of convex cones $(\tC_{\infty},\tC'_{\infty})$ admitting a solution of~\eqref{eqn:HOT} such that, for any solution $u$ of the local optimal transport problem~\eqref{eq: localOTmap}, the blow-up of $u$ is a homogeneous optimal transport map defined on $(\tC_{\infty},\tC'_{\infty})$.
Note that we do not claim that all solutions of~\eqref{eq: localOTmap} have the same blow-up, only that any blow-up is defined on the same underlying pair of convex cones.

The following lemma explains the relationship between M-polystability and the notion of OT-separability.

\begin{lem}\label{lem: OTsep-Mstab}
Let $(\tC, \tC') \in \cX$ satisfy condition $(\mr{LC})$.
\begin{itemize}
    \item[$(i)$] If $(\tC, \tC')$ is M-polystable, then $(\tC,\tC')$ is OT-separable.
    \item[$(ii)$] Assuming Conjecture~\hyperref[conj:Master]{\ref{conj:Master} $(\mathbf{A})$}, if $(\tC,\tC')$ is M-semistable and OT-separable, then $(\tC,\tC')$ is M-polystable.
\end{itemize}
\end{lem}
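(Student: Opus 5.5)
For $(i)$ I would argue directly from the definitions, with (LC) supplying the topology. Suppose $x=(\tC,\tC')$ is M-polystable, and let $(\tC,\tC')\leadsto(\tC_\infty,\tC'_\infty)=:x_\infty$ be an $\sln$-degeneration such that $x_\infty$ admits a solution $\varphi_\infty$ of~\eqref{eqn:HOT}.

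The key steps are as follows.

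\begin{enumerate}
\item By Lemma~\ref{lem:muAcontraction}, $\mu_{\varphi_\infty}=0$, so $x_\infty\in p(\mu^{-1}(0))$.
\item Since $\ol{\sln\cdot x_\infty}\ni x_\infty$, the point $x_\infty$ is M-semistable, i.e. $x_\infty\in\cX^{ss}$.
\item By construction $x_\infty\in\ol{\sln\cdot x}$.
\item Polystability says $\sln\cdot x$ is closed in $\cX^{ss}$. Hence $x_\infty\in\ol{\sln\cdot x}\cap\cX^{ss}=\sln\cdot x$, so $x_\infty=M\cdot x$ for some $M\in\sln$. This is exactly OT-separability.
\end{enumerate}

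The only care needed is that the Hausdorff topology on $\cX$ is the one used in both definitions, and that the limit pair is positively aligned with nonempty interiors so that it lies in $\cX$. Both are built into the definition of an $\sln$-degeneration. With that, (LC) is not even needed for $(i)$.

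For $(ii)$, assume $x$ is M-semistable and OT-separable. We must show $\sln\cdot x$ is closed in $\cX^{ss}$. Let $y\in\ol{\sln\cdot x}\cap\cX^{ss}$, and first suppose $y\notin\sln\cdot x$. By (LC), the boundary $\ol{\sln\cdot x}\setminus\sln\cdot x$ is closed. Because $y$ is M-semistable, $\ol{\sln\cdot y}$ contains a point $z\in p(\mu^{-1}(0))$. Since $z\in\ol{\sln\cdot y}\subset\ol{\sln\cdot x}$, the point $z$ lies in the closed invariant set $\ol{\sln\cdot x}$, and by closedness of the boundary, $z$ lies in the boundary as well (the boundary is $\sln$-invariant and closed, and it contains $y$).

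Next I would use Conjecture~\hyperref[conj:Master]{\ref{conj:Master} $(\mathbf{A})$} to upgrade $z\in p(\mu^{-1}(0))$ to a genuine~\eqref{eqn:HOT} solution on $z$. The idea is that $\mu_\varphi=0$ should be realizable, via the local problem~\eqref{eq: localOTmap} and the monotonicity formula, by a degeneration of $z$ carrying a~\eqref{eqn:HOT} map, and that degeneration is still in the boundary of the orbit of $x$. In either case we obtain an $\sln$-degeneration $x\leadsto w$ with $w\notin\sln\cdot x$ admitting a~\eqref{eqn:HOT} solution, contradicting OT-separability. Here divergence of the matrices follows from $w\notin\sln\cdot x$.

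This upgrade from vanishing moment map to an actual~\eqref{eqn:HOT} solution is the main obstacle, and it is exactly where the conjectural input $(\mathbf{A})$ enters. I would first try to use $(\mathbf{A})$ on a conical domain pair modeled on $z$ (conical near the origin, per the remark) to produce a local solution. I would then blow it up by \cite[Theorem 4.1]{TristanFreid} to a pair $w\in\ol{\sln\cdot z}$ carrying an~\eqref{eqn:HOT} map, and invoke (LC) once more to keep $w$ off the orbit $\sln\cdot x$.

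Finally, the remaining case $y\in\sln\cdot x$ is trivial, so every $y\in\ol{\sln\cdot x}\cap\cX^{ss}$ lies in $\sln\cdot x$. Thus the orbit is closed in $\cX^{ss}$, and $x$ is M-polystable.
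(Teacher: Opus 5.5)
Your proposal is correct and follows the paper's proof essentially step for step. Part $(i)$ is the same argument via Lemma~\ref{lem:muAcontraction} and closedness of the orbit in $\cX^{ss}$, and part $(ii)$ combines Conjecture~\ref{conj:Master}$(\mathbf{A})$, the monotonicity blow-up, a diagonal argument, OT-separability and (LC) in the same way. The one difference is your detour through an intermediate point $z\in p(\mu^{-1}(0))\cap\ol{\sln\cdot y}$; this is harmless but unnecessary, since $y$ is itself M-semistable and $(\mathbf{A})$ applies to it directly, and in either case the homogeneous optimal transport map lives on a degeneration $w$ (of $z$ or $y$), not on $z$ itself, which is what your argument actually uses.
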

\begin{proof}
We first prove $(i)$.
Suppose $(\widehat{\tC}, \widehat{\tC}') \in \overline{\sln \cdot(\tC,\tC')}$ admits a solution of~\eqref{eqn:HOT}.
By Lemma~\ref{lem:muAcontraction}, $(\widehat{\tC}, \widehat{\tC}')$ is M-semistable.
Since we are assuming $(\tC,\tC')$ is  M-polystable, we conclude that $(\widehat{\tC}, \widehat{\tC}')\in \sln\cdot (\tC,\tC')$.
Thus, $(\tC,\tC')$ is OT-separable.

We now prove $(ii)$.
Suppose $(\widehat{\tC}, \widehat{\tC}') \in \overline{\sln \cdot(\tC,\tC')}$ is M-semistable.
Since we are assuming Conjecture~\hyperref[conj:Master]{\ref{conj:Master} $(\mathbf{A})$}, the pair $(\widehat{\tC},\widehat{\tC}')$ admits a solution of the local optimal transport problem~\eqref{eq: localOTmap}.
By the monotonicity formula, there is a pair of cones $(\tC_{\infty},\tC_{\infty}')\in \overline{\sln \cdot(\widehat{\tC}, \widehat{\tC}')}$ admitting a solution of~\eqref{eqn:HOT}.
By a diagonal argument we see that
\[
(\tC_{\infty},\tC_{\infty}')\in \overline{\sln \cdot(\tC, \tC')}.
\]
The OT-separability condition implies $(\tC_{\infty},\tC_{\infty}')\in \sln \cdot(\tC, \tC')$.
By condition $(\mr{LC})$, this implies $(\widehat{\tC},\widehat{\tC}')\in \sln \cdot(\tC, \tC')$ and therefore $(\tC,\tC')$ is M-polystable.
\end{proof}

We can prove one direction of Conjecture~\hyperref[conj:Master]{\ref{conj:Master} $(\mathbf{D})$} part $(i)$ using the monotonicity formula.  
Suppose that $u$ solves the local optimal transport problem~\eqref{eq: localOTmap}.  
If the tangent cones $(\tC,\tC')$ to $(\Omega,\Omega')$ at $(0,0)$ is M-stable, then $(\tC, \tC')$ is OT-separable.
Therefore, from Proposition~\ref{prop:OTsep->round}, $u$ is $C^{1,1}$ at $0\in \del \Omega$, and $u^*$ is $C^{1,1}$ at $0\in \del \Omega'$.

\begin{remark}
Conjecture~\hyperref[conj:Master]{\ref{conj:Master} $(\mathbf{D})$} part $(ii)$ is of a different nature.  Given a solution $u$ of~\eqref{eq: localOTmap}, once one knows the existence of some blow-up $u_{\infty}$ of $u$ (in the sense of \cite{TristanFreid}) defined on the same cones $(\tC,\tC')$, the asymptotics~\eqref{eq: convToBlowUp} amount to a uniqueness of blow-ups result.
\end{remark}

Conjecture~\hyperref[conj:Master]{\ref{conj:Master} $(\mathbf{A})$}, together with \cite{TristanFreid} easily implies one direction of Conjecture~\hyperref[conj:Master]{\ref{conj:Master} $(\mathbf{D})$}.

\begin{lem}
    Suppose $(\tC,\tC')$ is a pair of M-semistable convex cones.  Assuming Conjecture~\hyperref[conj:Master]{\ref{conj:Master} $(\mathbf{A})$}, if $(\tC,\tC')$ is M-polystable, then $(\tC,\tC')$ admits a solution of~\eqref{eqn:HOT}.
\end{lem}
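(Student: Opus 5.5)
The plan is to reduce the statement to Conjecture~\hyperref[conj:Master]{\ref{conj:Master} $(\mathbf{A})$} together with the monotonicity formula of \cite[Theorem 4.1]{TristanFreid}, and then use the closedness of the orbit in the semistable locus, exactly as in the proof of Lemma~\ref{lem: OTsep-Mstab}.

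\emph{Step 1: obtain a local solution.} Since $(\tC,\tC')$ is M-semistable, Conjecture~\hyperref[conj:Master]{\ref{conj:Master} $(\mathbf{A})$}, applied with $(\Omega,\Omega')=(\tC,\tC')$ (which is permitted by the preceding remark that the domains may be taken exactly conical near the origin), yields a solution $u$ of the local optimal transport problem~\eqref{eq: localOTmap} on $N\cap\tC$.

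\emph{Step 2: blow up.} Next we apply the monotonicity formula \cite[Theorem 4.1]{TristanFreid} to $u$ at the origin, normalizing the centered sections $S^c_h(u,0)$ as $h\to 0$. This produces matrices $M_j\in\sln$ and a Hausdorff limit
\[
(M_j\tC,M_j^{-T}\tC')\to(\tC_\infty,\tC'_\infty)
\]
carrying a solution $\varphi_\infty$ of~\eqref{eqn:HOT}. By Lemma~\ref{lem:muAcontraction}, this limit lies in $p(\mu^{-1}(0))$, so in particular it is M-semistable. It also lies in $\overline{\sln\cdot(\tC,\tC')}$.

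\emph{Step 3: use polystability.} Because $(\tC,\tC')$ is M-polystable, its orbit is closed in $\cX^{ss}$. The limit $(\tC_\infty,\tC'_\infty)$ lies in $\cX^{ss}$ and in the closure of the orbit, so it belongs to $\sln\cdot(\tC,\tC')$. Thus $(\tC_\infty,\tC'_\infty)=M\cdot(\tC,\tC')$ for some $M\in\sln$. The pulled-back function $\varphi_\infty\circ M$ is then $2$-homogeneous, convex, has Hessian determinant $1$ since $\det M=1$, and maps $\tC$ onto $\tC'$ by the transformation rule for the pair action. Hence it solves~\eqref{eqn:HOT} on $(\tC,\tC')$.

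\emph{Main obstacle.} The delicate point is Step 2. It requires that the limit produced by the monotonicity formula be a genuine positively aligned pair with nonempty interiors, that is, a point of $\cX$, so that closedness of the orbit in $\cX^{ss}$ can be applied. This should follow from the uniform roundness of the normalized sections in \cite[Theorem 4.1]{TristanFreid}. If that roundness is not available directly in the local setting, the fallback is to argue as in Proposition~\ref{prop:PS=SLNdegToPolystable}, using the John normalization together with volume bounds to rule out collapse. A related subtlety is that the sequence $M_j$ need not diverge; if it stays bounded, the limit is already in the orbit and the conclusion follows trivially.
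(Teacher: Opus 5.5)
Your proof is correct and follows the paper's argument. Conjecture~(A) gives a local solution, the monotonicity formula gives a blow-up pair in $\overline{\sln\cdot(\tC,\tC')}$ carrying a solution of~\eqref{eqn:HOT}, and polystability forces that pair into the orbit. The only difference is cosmetic: the paper packages your Step~3 as ``M-polystable implies OT-separable'' (Lemma~\ref{lem: OTsep-Mstab}$(i)$), and the proof of that lemma is exactly your inline argument via Lemma~\ref{lem:muAcontraction} and closedness of the orbit in $\cX^{ss}$ (incidentally showing that the (LC) hypothesis attached to that lemma is not needed here).
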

\begin{proof}
    Since $(\tC,\tC')$ is a pair of M-semistable cones, and we are assuming Conjecture~\hyperref[conj:Master]{\ref{conj:Master} $(\mathbf{A})$}, we deduce the existence of a solution of the local optimal transport problem~\eqref{eq: localOTmap}.
    Applying the monotonicity formula, we find a pair of cones $(\tC_{\infty},\tC'_{\infty})\in \overline{\sln\cdot (\tC,\tC')}$ admitting a solution of~\eqref{eqn:HOT}.
    On the other hand, since $(\tC,\tC')$ is M-polystable, it is OT-separable by Lemma~\ref{lem: OTsep-Mstab} part $(i)$.
    Therefore $(\tC_{\infty},\tC'_{\infty})\in \sln\cdot (\tC,\tC')$, and the result follows. 
\end{proof}

\subsection{Moduli space in dimension two}\label{sec:modulin=2}
In this section we prove Conjecture~\hyperref[conj:Master]{\ref{conj:Master} $(\mathbf{A})$}, \hyperref[conj:Master]{$(\mathbf{B})$}, and \hyperref[conj:Master]{$(\mathbf{C})$} for cones in $\bR^2$.  

The classification of cones in $\bR^2$ admitting homogeneous optimal transport maps was carried out in \cite{TristanFreid}; we recall some aspects here and explain how the classification fits into the conjectural picture described above.
In $\bR^2$, every convex cone is ${\rm SL}(2)$ equivalent to either a half-space or an orthant. 
By the splitting theorem, Theorem~\ref{thm:splitting}, we know that for an optimal transport map to exist, either both cones are half-spaces or neither is a half-space.
In the mixed half-space/pointed cone case, the pair is nesting and hence obstructed by Proposition~\ref{prop:nesting}.
In the half-space to half-space case, a~\hyperref[eqn:HOT]{H.O.T.} solution exists whenever the pair is positively aligned. 
\begin{figure}
\centering
\begin{tikzpicture}[
    x=1cm,y=1cm,
    line cap=round,
    Cprimeedge/.style={blue!70!black, line width=.45pt},
    Cdualedge/.style={red!75!black, line width=.45pt},
    Cdualarrow/.style={red!75!black, line width=.9pt, -{Latex[length=2mm]}}
]

\def\W{1.8}
\def\H{1.35}
\def\R{1.25}
\def\RR{1.7}

% #1 = xshift
% #2 = yshift
% #3 = label
\newcommand{\halfspacebase}[3]{%
\begin{scope}[shift={(#1,#2)}]
    % C' = upper half-space y>0, shown in a truncated window
    \path[fill=blue, fill opacity=.22]
        (-\W,0) -- (\W,0) -- (\W,\H) -- (-\W,\H) -- cycle;

    % boundary line y=0
    \draw[Cprimeedge] (-\W,0) -- (\W,0);

    % origin
    \fill (0,0) circle (1pt);

    % subfigure label
    \node at (0,-0.32) {\scriptsize #3};
\end{scope}
}

% ---------------------------------------------------
% (a) C^\vee is a single ray
% ---------------------------------------------------
\begin{scope}[shift={(0,0)}]
    % blue upper half-space
    \path[fill=blue, fill opacity=.22]
        (-\W,0) -- (\W,0) -- (\W,\H) -- (-\W,\H) -- cycle;
    \draw[Cprimeedge] (-\W,0) -- (\W,0);
    \fill (0,0) circle (1pt);

    % single vector / ray
    \draw[Cdualarrow] (0,0) -- (68:\R);

    \node at (0,-0.32) {\scriptsize (a)};
\end{scope}

% ---------------------------------------------------
% (b) C^\vee is a strict cone inside the upper half-space
% ---------------------------------------------------
\begin{scope}[shift={(4.2,0)}]
    % blue upper half-space
    \path[fill=blue, fill opacity=.22]
        (-\W,0) -- (\W,0) -- (\W,\H) -- (-\W,\H) -- cycle;
    \draw[Cprimeedge] (-\W,0) -- (\W,0);
    \fill (0,0) circle (1pt);

    % strict cone whose closure lies in y>0
    \path[fill=red, fill opacity=.22]
        (0,0) -- (35:\RR) -- (145:\RR) -- cycle;

    \draw[Cdualedge] (0,0) -- (35:\RR);
    \draw[Cdualedge] (0,0) -- (145:\RR);

    \node at (0,-0.32) {\scriptsize (b)};
\end{scope}

\end{tikzpicture}
\caption{We show the two positively aligned cases where $\tC'$, illustrated in blue, is a half-space which we may assume is the upper half-space.
Case (a) shows the scenario in which $\tC$ is also a half-space, so its dual, in red, is a line in the interior given by $\{t Y_0 : t > 0\}$.
This case is M-polystable, preserved by the symmetry that scales $Y_0$ and the $x$-axis inversely to remain in $\sln$.
Case (b) shows where $\tC$ is a strict cone, which is strictly M-semistable, and for any $Y_0 \in \tC^\vee$, the same 1-parameter family above realizes the polystable degeneration as $\tC^\vee \leadsto \{tY_0 : t > 0\}$, which is the degeneration to (a), where a \hyperref[eqn:HOT]{H.O.T.} map exists.}
\label{fig:n=2halfspace}
\end{figure}

We first consider the case where at least one cone is a half-space; so let this be $\tC'$, which we may assume is the upper half-space, so the two cases of positively aligned cones are illustrated in Figure~\ref{fig:n=2halfspace}:
\begin{itemize}
    \item $\tC$ is a half-space: its normal lies in the interior of $\tC'$, Figure~\hyperref[fig:n=2halfspace]{\ref{fig:n=2halfspace}(a)}.
    \item $\tC$ is a pointed cone: its dual must fully lie in the interior, Figure~\hyperref[fig:n=2halfspace]{\ref{fig:n=2halfspace}(b)}.
    After a shear, it can be assumed to contain the point $(0,1)$, so this pair is nesting, for example along $\mr{diag}(e^t, e^{-t})$ in the standard coordinates, and degenerates to Figure~\hyperref[fig:n=2halfspace]{\ref{fig:n=2halfspace}(a)}.
\end{itemize}
Local solutions realizing Figure~\hyperref[fig:n=2halfspace]{\ref{fig:n=2halfspace}(b)} arise, for example, by taking optimal transport maps from a square to a triangle. 

\begin{figure}
\centering
\begin{tikzpicture}[
    x=1cm,y=1cm,
    line cap=round,
    Cedge/.style={blue!70!black, line width=.45pt},
    Cpedge/.style={red!75!black, line width=.45pt}
]

\def\R{2.1}

% #1 = xshift
% #2 = yshift
% #3 = lower angle for C'
% #4 = upper angle for C'
% #5 = label
\newcommand{\conepair}[5]{%
\begin{scope}[shift={(#1,#2)}]

    % Fixed cone C = first quadrant
    \path[fill=blue, fill opacity=.22]
        (0,0) -- (\R,0) -- (0,\R) -- cycle;

    % Variable cone C'
    \path[fill=red, fill opacity=.22]
        (0,0) -- (#3:\R) -- (#4:\R) -- cycle;

    % Boundary rays of C
    \draw[Cedge] (0,0) -- (\R,0);
    \draw[Cedge] (0,0) -- (0,\R);

    % Boundary rays of C'
    \draw[Cpedge] (0,0) -- (#3:\R);
    \draw[Cpedge] (0,0) -- (#4:\R);

    % subfigure label
    \node at (.85,-0.57) {\scriptsize #5};

\end{scope}
}

% One-row layout, 10% bigger than previous version
\conepair{0.00}{0}{18}{72}{(a)}
\conepair{2.77}{0}{0}{90}{(b)}
\conepair{5.54}{0}{-18}{108}{(c)}
\conepair{8.31}{0}{-18}{58}{(d)}
\conepair{11.08}{0}{0}{58}{(e)}
\conepair{13.85}{0}{-18}{90}{(f)}

\end{tikzpicture}

\caption{We display all possible pairs of cones in $\bR^2$.
Up to $\sln$, we may assume that $\tC^\vee$, shown in blue, is the first quadrant and the target $\tC'$ is shown in red.
The first three cones admit~\hyperref[eqn:HOT]{H.O.T.} maps and are (a) acute, (b) M-polystable where $\tC^\vee = \tC'$, and (c) obtuse.
The pairs (a) and (c) are M-stable.
Cone (d) is M-unstable and (e) and (f) are the two strict M-semistable pairs.
We note that all of (d)--(f) are nesting along the 1-parameter subgroup $\mr{diag}(e^{-t}, e^t)$.
The pairs (e) and (f) degenerate to (b) along this family as $t \to \infty$ and $t \to -\infty$ respectively.}\label{fig:2dConePairs}
\end{figure}

We now assume both cones are pointed.  Using the action of ${\rm SL}(2)$, we may assume that $\tC^\vee$ is the first quadrant.
The target cone $\tC'$ is defined by its outer edges, which can lie inside, outside, or share an edge with the boundary of $\tC^\vee$, and these possibilities are represented schematically in Figure~\ref{fig:2dConePairs}.
Therefore, we let $\tC'$, be the cone generated by $(1,a)$ and $(b,1)$.  We will compute the moment map below, but let us summarize the conclusions of the analysis.
\bigskip
\begin{itemize}
    \item {\bf The M-polystable cases:} In Figure~\ref{fig:2dConePairs}, we have \hyperref[fig:2dConePairs]{(a)} where both $a,b > 0$, \hyperref[fig:2dConePairs]{(b)} where $a = b = 0$, and \hyperref[fig:2dConePairs]{(c)} where $a,b < 0$ as the three cases where a solution exists \cite{TristanFreid}. 

    \item {\bf The M-unstable cases:} In Figure~\ref{fig:2dConePairs} case \hyperref[fig:2dConePairs]{(d)}, $a < 0$ and $b > 0$, is M-unstable.
    This case is nesting and hence no homogeneous optimal transport map exists by Proposition~\ref{prop:nesting}.  Furthermore, there is no $\mr{SL}(2)$-degeneration of the pair $(\tC,\tC')$ to a pair of positively aligned half-spaces, or to a pair of M-polystable cones.  Hence there is no solution of~\eqref{eq: localOTmap}.  Similar analysis applies to the case $a>0$ and $b<0$.

    \item {\bf The M-semistable cases:} In Figure~\ref{fig:2dConePairs} when exactly one of $a$ or $b$ vanishes, we get the M-semistable cases of \hyperref[fig:2dConePairs]{(e)} and \hyperref[fig:2dConePairs]{(f)}.
These final three cases are nesting, so solutions are obstructed by Proposition~\ref{prop:nesting}.  
On the other hand, cases \hyperref[fig:2dConePairs]{(e)} and \hyperref[fig:2dConePairs]{(f)} admit unique $M$-polystable degenerations given by $M_t := \mr{diag}(e^t, e^{-t})$ acting on $V^*$. 
As $t \to -\infty$, case \hyperref[fig:2dConePairs]{(e)} degenerates to \hyperref[fig:2dConePairs]{(b)} and as $t \to +\infty$, case \hyperref[fig:2dConePairs]{(f)} degenerates to \hyperref[fig:2dConePairs]{(b)}. 
\end{itemize}

Let us compute the moment map. 
Let $\Lambda_{\varphi}$ be $\{te_1 : 0 \leq t \leq r_1\} \cup \{te _2 : 0 \leq t \leq r_2\}$ representing $\ol{\{\varphi < 1\}} \cap \tC$.
Let $X_1 := r_1 e_1$ and $X_2 := r_2 e_2$. 
By duality, we have $Y_1 := \frac{2}{r_1}(1,a)$ and $Y_2 := \frac{2}{r_2} (b,1)$ given by the boundary of $\ol{\{\varphi^* < 1\}} \cap \p \tC'$.
Using that the outward normals are $-e_2^*$ and $-e_1^*$ on the two components of $\Lambda$, we compute
\[
\int_\Lambda X \otimes \nu_{\tC}\, d\cH^1 = -\int_0^{r_1} te_1 \otimes e_2^*\,dt - \int_0^{r_2}te_2 \otimes e_1^*\,dt = -\frac{1}{2} \begin{pmatrix}
    0 & r_1^2\\ r_2^2 & 0
\end{pmatrix}.
\]
On the dual side, the normal vectors are $\frac{(a,-1)}{\sqrt{1 + a^2}}$ and $\frac{(-1,b)}{\sqrt{1 + b^2}}$.
Since $Y(t) = t(1,a)$, we have $|Y'(t)| = \sqrt{1 + a^2}dt$ and similarly for $Y(t) = t(b,1)$.
Therefore, along $Y(t)$ we have $\nu_{\tC'}\,d\cH^{n-1} = (a,-1)\,dt$ and similarly along the other boundary.
From this, we can compute
\[
\int_{\Lambda'} \nu_{\tC'}\otimes Y\,d\cH^1 =\int_0^\frac{2}{r_1}(a,-1)\otimes t(1,a)\, dt + \int_0^\frac{2}{r_2}(-1,b)\otimes t(b,1)\, dt= \frac{2}{r_1^2}\begin{pmatrix}
    a & a^2 \\ -1 & -a
\end{pmatrix} + \frac{2}{r_2^2}\begin{pmatrix}
    -b & -1\\
    b^2 & b
\end{pmatrix}.
\]
From equation~\eqref{eqn:muvarphidef}, we therefore have
\[
\mu_\varphi =\int_{\Lambda} X\otimes \nu_{\tC}\,d\cH^1 - \int_{\Lambda'} \nu_{\tC'}\otimes Y\,d\cH^1  = \begin{pmatrix}
    -\frac{2a}{r_1^2} + \frac{2b}{r_2^2} & -\frac{r_1^2}{2} - \frac{2a^2}{r_1^2} + \frac{2}{r_2^2}\\
    -\frac{r_2^2}{2} + \frac{2}{r_1^2} - \frac{2b^2}{r_2^2} & \frac{2a}{r_1^2} -\frac{2b}{r_2^2}
\end{pmatrix}
\]
which vanishes exactly when $ar_2^2 = br_1^2$ and $r_1^2r_2^2 = 4(1 - ab)$. 
Recalling that we have the flexibility to choose $r_1$ and $r_2$ to satisfy the above equations, we see that the moment map can only vanish if either $ab > 0$ (and then we choose positive $r_1$ and $r_2$ to solve the system) or $a = b = 0$ (which imposes $r_1r_2 = 2$).
These cases correspond exactly to the $M$-polystable cases in the above classification.  Alternatively, by \cite{TristanFreid}, these pairs of cones exactly correspond to the pairs of cones admitting homogeneous optimal transport maps.
This proves Conjecture~\hyperref[conj:Master]{\ref{conj:Master} $(\mathbf{C})$}.

\begin{figure}
    \centering
    \setlength{\tabcolsep}{0pt}
    \begin{tabular}{@{}c@{\hspace{1.6cm}}c@{}}
       \includegraphics[height=2cm]{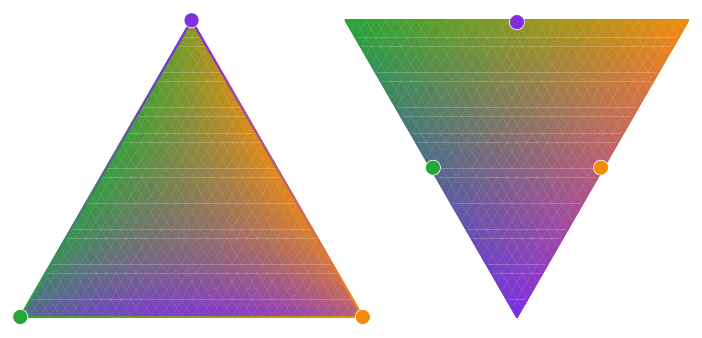}  &
       \includegraphics[height=2cm]{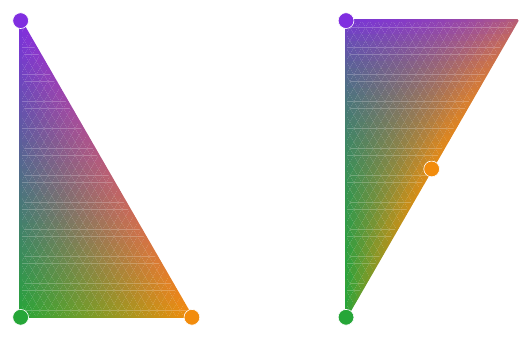} \\
       (a) & (b)
    \end{tabular}
    \caption{Figure (a) shows an optimal transport map from an equilateral triangle to a rotated copy, both with unit densities.
    The image of the optimal transport is illustrated by the matching color gradient. By the symmetries and uniqueness of~\eqref{eqn:OT}, the corners map to the midpoints as marked.
    This realizes the M-semistable case of a strict cone to a half-space.
    If we slice this in half, we get (b), which has the other M-semistable local solution between two pointed cones where $\tC'$ and $\tC^\vee$ share one boundary edge.}
    \label{fig:semistable}
\end{figure}
The completion of the conjecture follows from Lemma~\ref{lem:AcuteObtuseNodegen} and Proposition~\ref{prop:OTsep->round}.
Every M-semistable model case presented in Figure~\ref{fig:2dConePairs} can be realized as a local solution, in the sense of Definition~\ref{def:local}, by maps between triangles with unit density. 
For example, the M-semistable case is given by mapping a 30-60-90 triangle to a reflection of itself; see Figure~\ref{fig:semistable}.
The strictly semistable pairs \hyperref[fig:2dConePairs]{(e)} and \hyperref[fig:2dConePairs]{(f)} in Figure~\ref{fig:2dConePairs} admit unique polystable degenerations to case \hyperref[fig:2dConePairs]{(b)}, a pair $(\tC_\infty, \tC'_\infty)$ of pointed cones with $\tC_\infty^\vee = \tC'_\infty$. On the other hand, the semi-stable situation in which a pointed cone admits a local optimal transport map to a half-space, as depicted in \hyperref[fig:semistable]{\ref{fig:semistable}(a)}, admits a unique polystable degeneration to a pair of positively aligned half-spaces.

\end{document}